\definecolor{pku}{RGB}{139,0,18}
\definecolor{purple}{RGB}{154,0,154}
\definecolor{green}{RGB}{0,154,0}
\definecolor{orange}{RGB}{255,127,0}
\let\f=\frac
\def\e{\epsilon}
\def\R{\mathbb{R}}
\def\II{{\rm I\kern-0.5exI}}
\def\III{{\rm I\kern-0.5exI\kern-0.5exI}}
\newcommand{\norm}[1]{\lVert #1 \rVert}
\newcommand{\RR}{\mathbb{R}}
\newcommand{\BR}{\mathbb{R}}
\DeclareMathOperator*{\argmin}{argmin}
\DeclareMathOperator*{\argmax}{argmax}
\DeclareSymbolFont{bbold}{U}{bbold}{m}{n}
\DeclareSymbolFontAlphabet{\mathbbold}{bbold}
\newcommand{\beq}{\begin{equation}}
\newcommand{\eeq}{\end{equation}}
\newcommand{\beqo}{\begin{equation*}}
\newcommand{\eeqo}{\end{equation*}}
\renewcommand{\th}{\theta}
\newcommand{\id}{id}
\newcommand{\vp}{\varphi}
\newcommand{\tr}{tr}
\DeclareMathOperator{\sgn}{sgn}
\numberwithin{equation}{section}
\newtheorem{theorem}{Theorem}[section]
\newtheorem{lemma}[theorem]{Lemma}
\newtheorem{prop}[theorem]{Proposition}
\newtheorem{corollary}[theorem]{Corollary}
\theoremstyle{remark}
\newtheorem{remark}[theorem]{Remark}
\theoremstyle{definition}
\newtheorem{definition}[theorem]{Definition}
\title{Tumor Growth with Nutrients: Regularity and Stability}
\author[M.\;Jacobs]{Matt Jacobs}
\address{Department of Mathematics, Purdue University, West Lafayette, IN}
\email{jacob225@purdue.edu}
\author[I.\;Kim]{Inwon Kim}
\address{Department of Mathematics, UCLA, Los angeles, CA}
\email{ikim@math.ucla.edu}
\author[J.\;Tong]{Jiajun Tong}
\address{Beijing International Center for Mathematical Research, Peking University, Beijing, China}
\email{tongj@bicmr.pku.edu.cn}
\thanks{I.\;Kim is partially supported by NSF Grant DMS-1900804.
J.\;Tong is partially supported by the Peking University Start-up Fund.}
\begin{document}

\begin{abstract}
In this paper we study a tumor growth model with nutrients. The model presents dynamic patch solutions due to the contact inhibition among the tumor cells. We show that when the nutrients do not diffuse and the cells do not die, the tumor density exhibits regularizing dynamics. In particular, we provide contraction estimates, exponential rate of asymptotic convergence, and boundary regularity of the tumor patch. These results are in sharp contrast to the models either with nutrient diffusion or with death rate in tumor cells.
\end{abstract}
\maketitle
\section{Introduction}

A model system that appears in literature describing tumor growth with nutrients is
\beq\label{eqn: P}
\left\{\begin{array}{l}
\rho_t - \nabla\cdot(\rho \nabla p) = (n- b) \rho,\quad \rho\leq 1,\quad p\in P_\infty (\rho),\\ \\
n_t - D\Delta n = -\rho n, \quad n\to c>0 \mbox{ as } |x| \to\infty
\end{array}\right.\tag{P}
\eeq
set in $Q:=\R^d \times [0,\infty)$, with $b$, $D$ and $c$ being non-negative constants (see e.g.\;\cite{MRS14, PQV, PTV14,david2021free}).
It is equipped with the initial conditions
\beq
\rho(x,0) = \rho_0(x),\quad n(x,0) = n_0(x).
\label{eqn: IC}
\eeq
Here $\rho$ and $n$ respectively denote the density of tumor cells and the nutrients.
The cells grow by consuming the nutrients which are supplied from the external environment, while they die at a constant rate $b$ in the meantime.
The pressure $p\geq 0$ can be understood as the Lagrange multiplier for the constraint $\rho \leq 1$ that represents the contact inhibition in cells.
In \eqref{eqn: P}, $P_\infty(\rho)$ denotes the Hele-Shaw graph
\[
P_\infty(\rho) =
\begin{cases}
0, & \mbox{if } \rho\in [0,1), \\
[0,+\infty), & \mbox{if } \rho = 1.
\end{cases}
\]
It is well-known that the resulting solution features time-evolving patches of congested cell region where $\rho$ equals $1$. We call this set a {\it tumor patch} for later reference.

\medskip

 This model, while relatively simple, presents a complex phenomena. One can view the system as a singular limit of a reaction-diffusion system, where one first takes $p=\rho^{m-1}$ and then sends $m$ to $+\infty$; see \cite{PQV,david2021free}.  With finite $m$, the reaction-diffusion system has been actively studied  in the literature: see \cite{KMMUS,kitsu97, M04}.

 \medskip

  The well-posedness of \eqref{eqn: P} is not hard to achieve: see Section~\ref{sec: summary of results} for more discussions. On the other hand, qualitative behavior of the solutions of \eqref{eqn: P} is much less understood. In particular, the growth of the tumor patch appears to generate fingering phenomena, as observed by numerical experiments \cite{kitsu97, MRS14, PTV14} even when the patch is almost radial and when $n$ is initially a constant. Such ``dendric growth" is well-known to persist in models of bacterial growth that aggressively consumes nutrients \cite{Ben94}. When $D>0$, the formation of dendritic patterns is conjectured to occur because there are more nutrients available near the tips of dendritic fingers compared to valleys \cite{MRS14} (in the valleys there are more surrounding bacteria to consume the nutrients).  As a result, dendritic tips grow faster than the valleys, leading to the amplification of instabilities.  When $D=0$ and $b>0$, numerical experiments still observe the fingering phenomena, possibly due to the  movement of tumor cells toward the necrotic core, where cells decay from the maximal density (c.f.~Figure \ref{fig:death_dendrites}).  The scale of the aforementioned instabilities in terms of $b$ and $D$ remains to be understood. While we do not pursue regularity analysis in such general cases, a variational scheme is introduced to approximate \eqref{eqn: P} and yield well-posedness in general settings: see Theorem~\ref{thm:wellposedness}.  Although there are many other possible ways to approximate the equation and obtain the well-posedness, the variational scheme we introduce is particularly numerically efficient and preserves many desirable properties of the true system (c.f.~the discussion in Section \ref{ssec:wp}).

  \begin{figure}
   \centering
    \includegraphics[width=.25\textwidth]{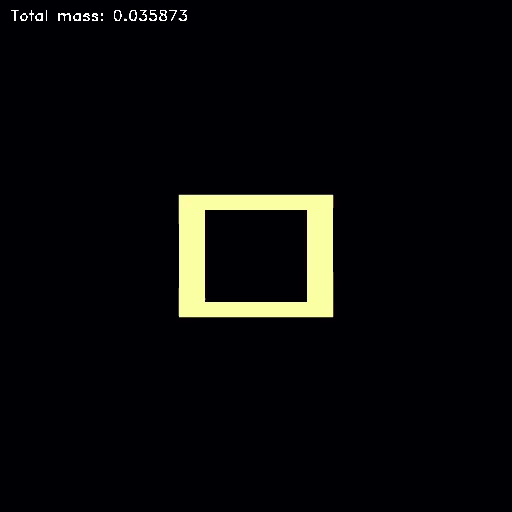}
      \includegraphics[width=.25\textwidth]{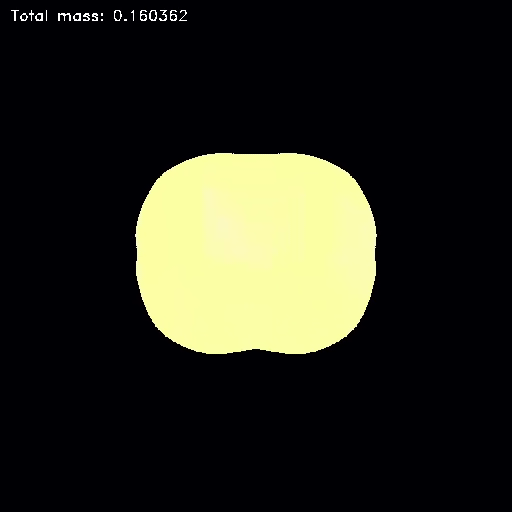}
      \includegraphics[width=.25\textwidth]{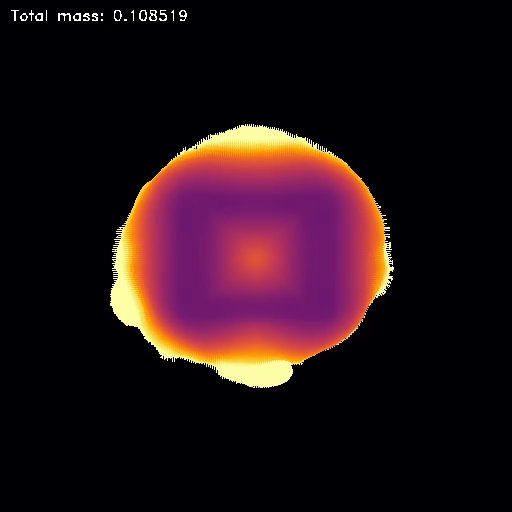}
      \includegraphics[width=.25\textwidth]{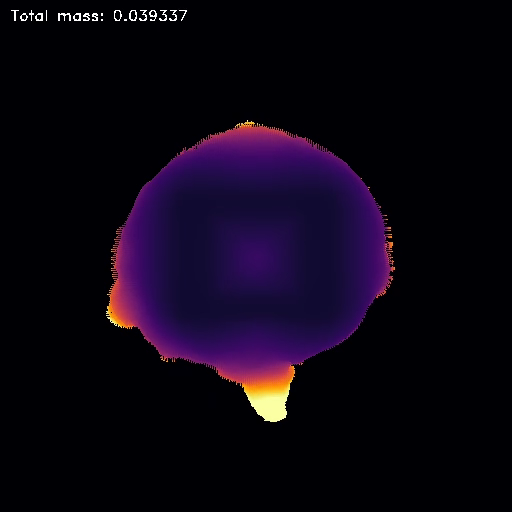}
       \includegraphics[width=.25\textwidth]{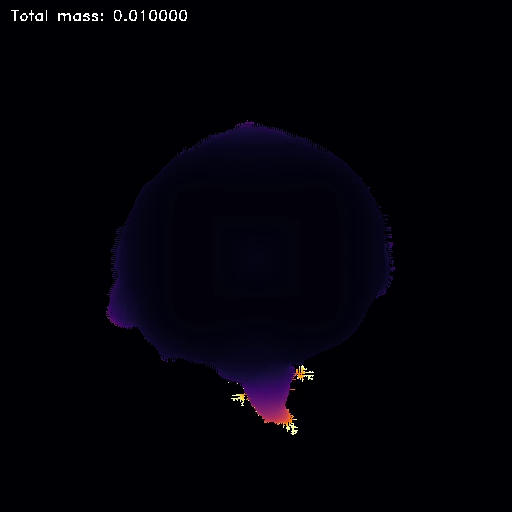}
        \includegraphics[width=.25\textwidth]{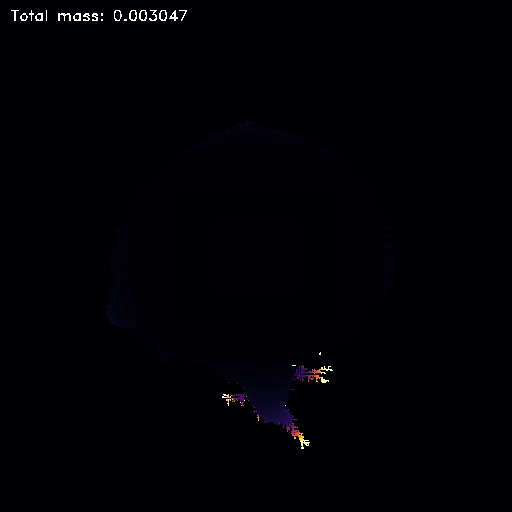}
    \caption{\footnotesize Numerical simulations from \cite{jacobs_lee} of the tumor growth system with $D=0$ and $b=0.4$ and $n_0=2$. The first image represents the initial patch density.  Brighter colored pixels indicate larger density values. The evolution shows dendritic growth at the boundaries once cells begin to die in the interior.}
    \label{fig:death_dendrites}
  \end{figure}


 \medskip

 The main goal of this paper is to study the singular case $b=D=0$, with particular focus on the dynamics and regularity of the tumor patch evolution.  This case is particularly interesting, as numerical experiments in \cite{MRS14} suggest that, when $b$ is fixed to be zero, the dendritic behavior becomes more and more branched and irregular as the diffusion parameter $D$ becomes smaller and smaller. Surprisingly, we find that once the diffusion parameter is set to zero, there is no dendritic growth whatsoever and the evolution is regularizing.
 Roughly speaking, we will show that there is no rough growth of the tumor patch other than those caused by topological changes (Theorem \ref{thm:reg}). Moreover, when $n$ is initially a constant, we can further show that the dynamics of the tumor patch can be understood in terms of a single-variable nutrient-free system (Theorem \ref{thm:master_dynamics}).   We hope our findings serve as the first step to understanding the complex behavior of the system \eqref{eqn: P} with general values of $b$ and $D$. In particular, reconciling our results with the numerical experiments in \cite{MRS14} would be a very interesting future direction of study.

 \medskip

Our results are based on the following rather unexpected {\it comparison principle} when $b= D = 0$  (Proposition~\ref{prop:comparison}), for two different solutions $(\rho^i,p^i,n^i)$ $(i=0,1)$ of \eqref{eqn: P}:
\[
\mbox{ Let }\eta^i:= n^i_0 - n^i.\mbox{ If }\rho^1 \leq \rho^0\mbox{ and }n^1\leq n^0\mbox{ at }t=0\mbox{,  then }\rho^1 \leq \rho^0\mbox{ and }\eta^1 \leq \eta^0\mbox{ for all }t\geq 0.
\]
This comparison property is somewhat unintuitive, as the smaller density should have more nutrients available for growth, raising the posibility that the ordering could be violated as the system evolves. As such, observe that this is not a standard comparison principle. While the initial ordering is with the nutrient variable,  the ordering at later times are associated with the $\eta$-variable.
To see why this should hold, we introduce the variable $w(x,t):= \int_0^t p(x,s)\, ds$.  Integrating the $\rho$-equation of \eqref{eqn: P} in time and noticing that $n\rho = -n_t=\eta_t$ when $D = 0$, we obtain
\beq
\rho - \Delta w  = \rho_0 + \int_0^t (n\rho)\, ds = \rho_0 +\eta,
\label{eqn:elliptic equation}
\eeq
and
\begin{equation}\label{eqn:eta}
\eta_t = (n_0-\eta)\rho.
\end{equation}
The time integrated system reveals that the total growth of the density at time $t$ only depends on $\eta$, the total amount of nutrients consumed by the time $t$, rather than the amount of available nutrients at time $t$. Thus, by working with this version of the problem, the possibility of a comparison property becomes much more evident.
Indeed, a large part of our analysis, including the proof of the above comparison principle, will be based on this time-integrated system \eqref{eqn:elliptic equation}-\eqref{eqn:eta}.

\medskip

Heuristically speaking, since $\rho$ only grows and the set $\{\rho=1\}$ expands in time, the pressure variable is positive in the growing parts of $\{\rho=1\}$.
More precisely, the set $\{\rho=1\}$ can be decomposed as the union $\{w >0\}\cup \{\rho_0=1, n_0=0\}$ (c.f.~Lemma \ref{lem:rho_1_characterization}).
On the other hand, $\rho = \rho_0 + \eta$ in $\{w=0\}$. Thus, $(w,\eta)$ satisfies
\beqo
\left\{\begin{array}{l}
(1-\eta -\rho_0)\chi_{\{w(\cdot,t)>0\}} -\Delta w(\cdot,t) = 0 \quad \hbox{in } \mathbb{R}^d,\\ \\
\eta_t = (n_0-\eta)\rho.
\end{array}\right. 
\eeqo
It is then not difficult to see that the aforementioned comparison principle holds for this system.
Since the above system is in the form of an obstacle problem, let us discuss the free boundary regularity of the set $\{w>0\}$. 
The standard theory for the obstacle problem yields that, as long as $\eta$ is less than $1$ near the boundary of $\{w>0\}$ and is $C^{\alpha}$, the boundary of $\{w>0\}$ has $C^{1,\alpha}$-regularity, away from cusp-type  singular points \cite{Blank,Caff98}. Thus, when we study the boundary regularity of the tumor patches, an important step in our analysis is to ensure the regularity of $n$, the non-degeneracy of $w$, and the nonexistence of cusp points on the patch boundary.

\medskip

We will see in Theorem~\ref{thm: convergence when n_0 is less than 1} and Corollary~\ref{cor:1} that, if $n_0(x)\geq 1$, the pressure dominates the evolution and there is a generic (local-in-time) regularity of the patch boundary after some finite time, whereas for $n_0(x)<1$, the pressure diminishes exponentially fast as the nutrient vanishes, and the tumor patch approaches an asymptotic profile.
The finite-time regularization result for $n_0\geq 1$ features similarity to those of porous medium equations \cite{CVW87} and the Hele-Shaw flow \cite{K06}.
In the case of $n_0<1$, the large-time regularity of the tumor patch remains open in general, and it may be possible that fractal structures persist as the set approaches its asymptotic profile. Nonetheless, if the asymptotic tumor patch is sufficiently far away from the convex hull of its initial position, we can show that the patch evolution turns smooth within finite time.

\medskip

Our last main result focuses on the problem when the initial nutrient is constant, namely when $n_0(x) \equiv c$. In this case, very surprisingly, one can characterize the behavior of the tumor patch solution in \eqref{eqn: P} through what we call \emph{master dynamics}.
There are two of them, stated in Proposition \ref{prop: master dynamics} and Proposition \ref{prop: master dynamics 2} respectively.
Let us take the first one as an example.
Consider the following problem that concerns the density evolution only:
\beq\label{eqn: HS}
\partial_t \rho_* -\nabla\cdot (\rho_*\nabla p_* ) = \rho_0,\quad \rho_*\leq 1,\quad p_*\in P_\infty(\rho_*),\quad \rho_*|_{t = 0} = \rho_0.  \tag{HS}
\eeq
$\rho_0$ is the initial data in \eqref{eqn: IC} which is assumed to be a patch here.
\eqref{eqn: HS} is reminiscent of the classic Hele-Shaw flow, whose regularity and long-time convergence property is relatively well-understood in various  settings \cite{EJ, K06, CJK07}.
We will show in Proposition \ref{prop: master dynamics} that, once $n_0 = c>0$ is given, the corresponding $\rho$-evolution in \eqref{eqn: P} is simply a re-scaled (in time) version of the $\rho_*$-evolution in \eqref{eqn: HS}, and the re-scaling depends on $n_0$ explicitly.
The $n$-evolution in \eqref{eqn: P} can be readily represented as well.
In other words, once we understand the density evolution in \eqref{eqn: HS}, which is parameter-free and much simpler than \eqref{eqn: P}, we can fully characterize all the $(\rho,n)$-evolutions in \eqref{eqn: P} corresponding to all different values of $n_0>0$.
This is why we call $\{\rho_*(\cdot,t)\}_{t\geq 0}$ the master dynamics.
The second master dynamics is proposed in a similar spirit; see Proposition \ref{prop: master dynamics 2}.
Given that the value of $n_0$ has a non-trivial impact on the patch solution dynamics in \eqref{eqn: P}, and that $n$ does not stay as a constant over time, this connection is far from being apparent.
We should emphasize that neither of the master dynamics can be obtained by a mere change of space and time variables in \eqref{eqn: P}, and even the relation between the two master dynamics is also highly non-trivial.
Their proof relies on a very special property of the system \eqref{eqn: P} when $b = D = 0$, $n_0(x)$ is a constant, and $\rho$ is a patch solution (see Lemma \ref{lem: general test function}): for any harmonic function, its average on the set $\{\rho =1\}$ is time-invariant.

\medskip

The rest of the paper is organized as follows. In Section \ref{sec: summary of results}, we state our main results and discuss their implications. In Section \ref{sec: discrete in time scheme}, we introduce a discrete-in-time variational scheme to approximate \eqref{eqn: P} in the general setting of $b,D\geq 0$ and prove well-posedness.  Sections \ref{sec: D is 0}-\ref{sec: master dynamics} are devoted to the case $b=D=0$. In Section \ref{sec: D is 0}, we first study the elliptic equation \eqref{eqn:elliptic equation} (see a more precise formulation in \eqref{eq:elliptic}), and then prove general properties of the time-integrated system when the initial nutrient is bounded. 
Section \ref{sec:regularity} is focused on free boundary regularity of the tumor patch solutions under suitable geometric assumptions.
Finally, in Section \ref{sec: master dynamics}, we prove the master dynamics when $n_0$ is constant.
Then we apply that to characterize long-time behavior and uniform boundary regularity of the patch solution $\rho$.

\section{Summary of Main Results}
\label{sec: summary of results}
\subsection{Well-posedness in the general setting}\label{ssec:wp}

We first define weak solutions of \eqref{eqn: P} as follows.
\begin{definition}\label{def: weak solution}
Let $n_0\in L^{\infty}(\R^d)\cap BV(\BR^d)$ and $\rho_0\in L^1(\BR^d)\cap BV(\RR^d)$ such that $\rho_0\in [0,1]$ almost everywhere.
Fix $T>0$ and denote $Q_T:=\BR^d\times [0,T]$.
Non-negative functions $\rho \in C([0,T]; L^1(\RR^d))\cap L^\infty([0,T];BV(\BR^d))$, $p\in L^2([0,T]; H^1(\RR^d))$, and $n\in L^{\infty}(Q_T)\cap L^\infty([0,T];BV(\BR^d))$ on $Q_T$ are said to form a weak solution of \eqref{eqn: P} in $Q_T$, if they satisfy:
\begin{enumerate}[(i)]
\item $\rho\in [0, 1]$ and $p(1-\rho)=0$ in $Q_T$;
\item For any $\psi\in H^1(Q_T)$ that vanishes at $t=T$, we have
\[
\int_0^T\int_{\RR^d}\nabla \psi\cdot \nabla p- \rho\partial_t \psi \,dx\,dt =\int_{\RR^d} \psi(x,0)\rho_0\,dx
+\int_0^T\int_{\RR^d} \psi (n-b)\rho\,dx\,dt;
\]
\item In addition,
\[
\partial_t n-D\Delta n=-\rho n \hbox { in }  \mathcal{D}'(Q_T), \quad n(\cdot,0) = n_0.
\]
\end{enumerate}
\end{definition}
Here and in what follows, by saying a function $f\in BV(\BR^d)$, we mean that the total variation of $f$ is finite, and yet $f$ may not be integrable on $\BR^d$.

Using an adaptation of the minimizing movements scheme introduced in \cite{JKT21}, we establish the following well-posedness result.

\begin{theorem}[Well-posedness, a summary of Propositions \ref{prop:existence}, \ref{prop:complementarity}, and \ref{prop: uniqueness} and Remark \ref{rmk: compact support}]\label{thm:wellposedness}
Let $n_0\in L^{\infty}(\R^d)\cap BV(\BR^d)$.
Let $\rho_0\in BV(\RR^d)$ be compactly supported, such that $\rho_0\in [0,1]$ almost everywhere.
Then for given $b, D\geq 0$, and any $T>0$, there exists a unique weak solution $(\rho, p, n)$ of \eqref{eqn: P} in $Q_T = \RR^d\times [0,T]$ in the sense of Definition \ref{def: weak solution}.

Moreover, $\rho$ and $p$ are compactly supported in $Q_T$, and satisfy the complementarity relation in the distribution sense:
$$
p(\Delta p+n-b)=0 \hbox { in } \mathcal{D}' (Q_T).
$$

If, additionally, $\rho_0(x)\in \{0,1\}$ almost everywhere in space, then for every time $t\in [0,T]$ we have $\rho\in \{0,1\}$ almost everywhere in space.
\end{theorem}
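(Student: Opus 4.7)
The plan is to adapt the discrete-in-time minimizing movements (JKO-type) scheme of \cite{JKT21} to accommodate the reaction term $(n-b)\rho$ and the nutrient coupling. Fix a time step $\tau>0$. Given $(\rho_\tau^{k-1},n_\tau^{k-1})$, I would first update $n_\tau^k$ by one implicit step of $\partial_t n - D\Delta n = -\rho_\tau^{k-1} n$; then set $\tilde \rho_\tau^{k-1}:=e^{(n_\tau^k-b)\tau}\rho_\tau^{k-1}$ to incorporate the reaction; and finally define $\rho_\tau^k$ as the minimizer of
\[
\rho \mapsto \tfrac{1}{2\tau} W_2^2(\rho,\tilde \rho_\tau^{k-1})
\]
subject to $0\leq \rho\leq 1$. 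The pressure $p_\tau^k$ emerges as the Lagrange multiplier for the constraint $\rho \leq 1$, and the Euler–Lagrange relation $\rho_\tau^k = (\mathrm{id}-\tau\nabla p_\tau^k)_\# \tilde \rho_\tau^{k-1}$ is the discrete analogue of the $\rho$-equation.

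For existence, I would derive $\tau$-uniform estimates: pointwise bounds $0\leq \rho_\tau^k\leq 1$ and $0\leq n_\tau^k\leq \normlinf{n_0}$ from the structure of each step; a spatial $BV$ bound preserved by both the $L^2$-projection onto $\{\rho\leq 1\}$ and the Wasserstein proximal step; and an $L^2_t H^1_x$ estimate on $p_\tau$ from a discrete energy identity. Via Aubin–Lions and Helly, piecewise-constant interpolants of $\rho_\tau$ and $n_\tau$ converge strongly in $C([0,T];L^1_{\mathrm{loc}})$, while $p_\tau$ converges weakly in $L^2_t H^1_x$, producing a weak solution in the sense of Definition~\ref{def: weak solution}. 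Compact support of $\rho$ and $p$ comes from finite propagation speed at the discrete level: each JKO step expands $\mathrm{supp}\,\rho_\tau^k$ by at most $O(\sqrt{\tau})$, and the uniform $L^\infty$ bound on the growth factor $e^{(n-b)\tau}$ gives a deterministic expansion rate over $[0,T]$.

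The complementarity relation $p(\Delta p + n - b)=0$ is obtained formally by multiplying the $\rho$-equation by $p$ and using $p(1-\rho)=0$ to eliminate $p\,\partial_t\rho$; rigorously, one tests the discrete Euler–Lagrange equation against $p_\tau^k$ on the saturation set and passes to the limit, combining strong $L^1$-convergence of $(\rho_\tau,n_\tau)$ with the weak $H^1$-convergence of $p_\tau$. Uniqueness follows from a standard $\dot H^{-1}$-contraction/doubling-variable estimate: $n$ is a locally Lipschitz function of $\rho$ via $\partial_t n - D\Delta n = -\rho n$, so for two solutions $(\rho^i,p^i,n^i)$ a Grönwall argument on $\norm{\rho^1-\rho^2}_{\dot H^{-1}}$ closes using the monotonicity intrinsic to the Hele-Shaw graph. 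Patch preservation follows from the bang-bang property of the JKO step: when $\rho_\tau^{k-1}$ is a characteristic function, so is $\tilde \rho_\tau^{k-1}/\max(\tilde \rho_\tau^{k-1},1)$, and the constrained optimal transport pushes the excess mass outward to form a new characteristic function; this property is stable under $L^1$-limits.

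The main technical obstacle is transferring the complementarity relation $p(1-\rho)=0$ from the discrete scheme to the limit: the weak $H^1$-convergence of $p_\tau$ alone is insufficient, and one typically needs a Minty-type monotonicity argument combined with the discrete energy dissipation to identify the limit Lagrange multiplier. The coupling with the nutrient compounds this difficulty, since strong $L^1$-compactness of both $\rho_\tau$ and $n_\tau$ must be used simultaneously to pass to the limit in the nonlinear source $(n-b)\rho$; in particular, care is required when $b>0$, where the support of $\rho$ may shrink and the $BV$ bound must be propagated through potentially retracting free boundaries.
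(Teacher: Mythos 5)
Your overall plan (a JKO-type minimizing-movements scheme with a reaction term, energy dissipation, $BV$ bounds, and Aubin--Lions/Riesz--Fr\'echet--Kolmogorov compactness) follows the same skeleton as the paper, and the patch-preservation reasoning is close to the paper's Lemma~\ref{lem:density_01}. However, two of your steps would not close as written. The compact-support argument via ``each JKO step expands $\mathrm{supp}\,\rho_\tau^k$ by at most $O(\sqrt{\tau})$'' is quantitatively fatal: summing $O(\sqrt{\tau})$ over $T/\tau$ steps yields $O(T/\sqrt{\tau})$, which diverges as $\tau\to 0$. The paper does not localize support at the discrete level at all; it establishes it after passing to the limit (Remark~\ref{rmk: compact support}) by comparing with the explicit radial barrier $\tilde\rho=\chi_{B_{r(t)}}$, $\tilde p=\frac{\|n_0\|_\infty}{2d}(r(t)^2-|x|^2)_+$, $r(t)=r_0\exp(\frac{1}{d}\|n_0\|_\infty t)$, which is a supersolution of \eqref{eqn: model rho problem} with source $\|n_0\|_\infty\tilde\rho$, and invoking the $L^1$-comparison Lemma~\ref{lem: comparison}.

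More seriously, your uniqueness argument via a $\dot H^{-1}$/Hilbert-duality contraction is precisely the route the paper explicitly rules out: when $D=0$, the nutrient $n$ is not in $L^1_t H^1_x$, so the Perthame--Quir\'os--V\'azquez duality argument cannot be applied directly to the coupled system. The paper circumvents this by a different mechanism (Proposition~\ref{prop: uniqueness}): given two solutions with sources $f=(n-b)\rho$ and $\tilde f=(\tilde n-b)\tilde\rho$, it introduces $f_*=\max\{f,\tilde f\}$ and an auxiliary solution $\rho_*$ of the uncoupled $\rho$-equation with source $f_*$; comparison (Lemma~\ref{lem: comparison}) gives $\rho,\tilde\rho\leq\rho_*$, mass conservation gives $\|\rho_*(t)-\rho(t)\|_{L^1}=\|f_*-f\|_{L^1(Q_t)}$ (and similarly for $\tilde\rho$), hence $\|\tilde\rho(t)-\rho(t)\|_{L^1}\leq\|f-\tilde f\|_{L^1(Q_t)}$; this is closed with a Duhamel $L^1$-bound on $\tilde n-n$ (valid even at $D=0$, since the heat semigroup is an $L^1$-contraction) and Gr\"onwall. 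Your proposal needs a substitute of this kind. Finally, the Minty-type monotonicity you worry about is not required: strong $L^1$ convergence of $\rho^\tau$ together with weak $L^2_tH^1_x$ convergence of $p^\tau$ already gives $p^\tau(1-\rho^\tau)\rightharpoonup p(1-\rho)$, identifying $p(1-\rho)=0$; and the relation $p(\Delta p+n-b)=0$ in $\mathcal{D}'$ is obtained directly from the weak formulation (Proposition~\ref{prop:complementarity}) using one-sided time mollifications $\psi^{\epsilon}(x,t)=\frac{1}{\epsilon}\int_t^{\min(T,t+\epsilon)}\psi(x,s)\,ds$ and the sign relation $\rho\partial_t\psi^{\epsilon}\leq\partial_t\psi^{\epsilon}$, without any passage to the limit on products of derivatives of $p^\tau$.
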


As we mentioned above, we obtain this result via a variational approximation scheme, which is a simplified version of the more general scheme introduced in \cite{JKT21}.  Although there are many different ways to establish the well-posedness of the system (such as the degenerate diffusion approach considered in \cite{PQV} and \cite{GKM}), we emphasize our variational scheme, as it has a very efficient numerical implementation via the Back-and-Forth method \cite{bfm, bfm_gf}.   For instance, the images displayed in Figure \ref{fig:death_dendrites} are computed on a high-resolution $1024\times 1024$ grid.  Carrying out simulations of this size has been out of reach for previous methods, owing to the difficult nonlinearities in the equation.  The efficiency of the scheme will be studied further in the upcoming paper \cite{jacobs_lee}.  In addition to the favorable numerics, the scheme also preserves many desirable properties of the true system, such as the patch-preserving property and various important estimates that will prove useful in our analysis of the scheme.

\subsection{Contraction and stability estimates}
\label{sec: comparison}

The rest of the results focus on the case $b=D=0$.
In this setting, it will be useful to introduce the quantity $\eta: = n_0-n$ which represents the amount of nutrient that has been consumed.
It solves
\[
\partial_t \eta = (n_0-\eta)\rho,\quad \eta(x,0) = 0,
\]
which can be derived from \eqref{eqn: P}, and which is equivalent to the $n$-equation.
Therefore, in what follows, in the case of $b = D = 0$, we will use $(\rho,p,n)$ and $(\rho,p,\eta)$ interchangeably as the solution of \eqref{eqn: P}.
In this case, the system \eqref{eqn: P} enjoys a lot of nice properties, especially for patch solutions, i.e., $\rho_0$ and $\rho$ take the value $0$ or $1$ almost everywhere in space at all times.

We first state the following $L^1$-stability estimate for patch solutions, from which the comparison principle can be readily derived.

\begin{theorem}[$L^1$-contraction, a simplified version of Theorem~\ref{thm:contraction}]
Suppose that $(\rho^i, p^i, \eta^i)$ $(i=0,1)$ are weak solutions of \eqref{eqn: P}, starting from initial datum $(\rho^i_0, n^i_0)$ respectively.
If $\rho_0^i$ are patches and $n^0_0\in L^{\infty}(\RR^d)$, then
\begin{equation*}
\norm{(\rho^1(\cdot,t)-\rho^0(\cdot,t))_+}_{L^1}\leq N(t)\norm{(n^1_0-n^0_0)_+}_{L^1}+M(t)\norm{(\rho_0^1-\rho_0^0)_+}_{L^1}.
\end{equation*}
Here $L^r= L^r(\RR^d)$,
\beq
N(t)
=
\begin{cases}
\frac{e^{(\norm{n^0_0}_{L^{\infty} }-1)t}-1}{\norm{n^0_0}_{L^{\infty} }-1} & \textup{if} \;\norm{n^0_0}_{L^{\infty} }\neq 1,\\
t &\textup{otherwise},
\end{cases}
\label{constant N}
\eeq
and
\beq
M(t)=
\begin{cases}
\frac{\norm{n^0_0}_{L^{\infty} }e^{(\norm{n^0_0}_{L^{\infty} }-1)t}-1}{\norm{n^0_0}_{L^{\infty} }-1} & \textup{if} \;\norm{n^0_0}_{L^{\infty} }\neq 1,\\
t+1 &\textup{otherwise}.
\end{cases}
\label{constant M}
\eeq

\end{theorem}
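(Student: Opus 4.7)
The plan is to exploit the time-integrated formulation \eqref{eqn:elliptic equation}--\eqref{eqn:eta} in terms of $w^i(x,t) := \int_0^t p^i(x,s)\,ds$ and $\eta^i$, viewing the elliptic equation as a stationary obstacle problem for $w^i$ with source $f^i := \rho_0^i + \eta^i$, coupled to the pointwise ODE for $\eta^i$. Throughout I write $\bar\rho := \rho^1 - \rho^0$, $\bar\eta := \eta^1 - \eta^0$, $\bar n_0 := n_0^1 - n_0^0$, $A(t) := \norm{(\bar\rho)_+(\cdot,t)}_{L^1}$, $B(t) := \norm{(\bar\eta)_+(\cdot,t)}_{L^1}$, and set $c := \norm{n_0^0}_{L^\infty}$.

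The first main step is an obstacle-problem comparison giving an instantaneous bound on $A(t)$ in terms of $B(t)$ and the initial gap. Monotone dependence of the obstacle problem on the source implies that, if $(\tilde w, \tilde\rho)$ is the solution associated with the majorant source $\tilde f := \max(f^0, f^1)$, then $\tilde \rho \geq \rho^i$ pointwise for $i=0,1$. Hence $(\bar\rho)_+ \leq \tilde\rho - \rho^0$ pointwise, and since $(\tilde\rho - \rho^0) - \Delta(\tilde w - w^0) = (f^1 - f^0)_+$, integrating over $\RR^d$ (using compact support from Theorem~\ref{thm:wellposedness}) yields
\[
A(t) \;\leq\; \norm{(f^1 - f^0)_+(\cdot,t)}_{L^1} \;\leq\; \norm{(\rho_0^1 - \rho_0^0)_+}_{L^1} + B(t) \;=\; A(0) + B(t).
\]

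The second step is a Kato-type estimate on the pointwise ODE. Subtracting the $\eta$-equations gives
\[
\partial_t \bar\eta + \rho^1 \bar\eta = \rho^1 \bar n_0 + n^0 \bar\rho.
\]
Multiplying by a smooth approximation of $\chi_{\{\bar\eta > 0\}}$ and integrating in $x$, using $\rho^1,\bar\eta_+,n^0 \geq 0$ and $\bar\rho \leq (\bar\rho)_+$, produces
\[
B'(t) + \int \rho^1 (\bar\eta)_+\,dx \;\leq\; \norm{(\bar n_0)_+}_{L^1} + \int n^0 (\bar\rho)_+\,dx.
\]
Because $\rho^0$ is non-decreasing in $t$ for patch solutions, on $\{(\bar\rho)_+(\cdot,t) > 0\}$ one has $\rho^0(\cdot,s) = 0$ for all $s \leq t$, hence $\eta^0(\cdot,t) = 0$ and $n^0(\cdot,t) = n_0^0$; thus $\int n^0 (\bar\rho)_+\,dx \leq c\,A(t)$.

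The final step is a Gronwall closure combining $A \leq A(0) + B$ with $B' \leq \norm{(\bar n_0)_+}_{L^1} + cA$ and the initial datum $B(0) = 0$. Doing this naively produces constants with $e^{ct}$ rather than $e^{(c-1)t}$; the improvement by the factor $e^{-t}$ — and hence the precise $M(t), N(t)$ stated — comes from carefully retaining the dissipation $\int \rho^1 (\bar\eta)_+$ on the left, which in combination with the obstacle relation effectively contributes $-B(t)$ to the right-hand side of the inequality for $B'$. I expect the main obstacle to be twofold: (i) justifying the Kato/sign computations rigorously at the level of weak solutions, for which $\rho^i$ are only $BV$ in space and $p^i \in L^2_tH^1_x$, so that the relevant test functions must be regularized and limits taken; and (ii) extracting the sharp factor $c-1$ from the Gronwall step, which requires balancing the absorption term $\int \rho^1 (\bar\eta)_+$ against the obstacle-based control $A \leq A(0) + B$ through the patch-preserving structure of $\rho^0$.
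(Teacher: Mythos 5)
Your proposal is correct and follows essentially the same structure as the paper's proof of Theorem~\ref{thm:contraction}: (i) an $L^1$ contraction $A(t)\le A(0)+B(t)$ coming from the time-integrated elliptic obstacle formulation, (ii) a differential inequality $B'(t)+B(t)\le\|(\bar n_0)_+\|_{L^1}+cA(t)$ from the pointwise $\eta$-ODE using $(1-\rho^1)\eta^1=0$ (Lemma~\ref{lem:01}) to turn $\int\rho^1(\bar\eta)_+\,dx$ into $B(t)$, and (iii) Gronwall. The one place you diverge is in Step 1: you derive the contraction by constructing the majorant solution $\tilde\rho$ of the obstacle problem with source $\max(f^0,f^1)$ and then subtracting (the technique the paper uses in Proposition~\ref{prop: uniqueness}), whereas the paper instead cites Proposition~\ref{prop:contraction}, which is proved by testing the difference of the two elliptic equations against a smoothed sign function $f_\delta(w^1-w^0)$. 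Both yield the same key inequality, so this is a cosmetic rather than substantive difference. Two small remarks: the patch-preserving argument you use to bound $\int n^0(\bar\rho)_+\,dx\le cA(t)$ is more than needed, since $n^0 = n_0^0 - \eta^0 \le n_0^0 \le c$ holds pointwise everywhere simply because $\eta^0\ge 0$; and the ``Kato justification'' concern you raise is not serious here, since the $\eta$-equation is a pointwise ODE in $t$ (no spatial derivatives when $D=0$), so the sign computation requires no spatial regularization.
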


Based on the $L^1$-contraction and symmetries of the Laplacian operator, we can derive the following $BV$-estimates for $\rho$.  Note that here we consider a somewhat non-standard norm that we call the $BV_A$-norm, where $A$ is an antisymmetric matrix.  The $BV_A$-norm is the $L^1$-norm of the product $\nabla \rho\cdot Ax$.  For antisymmetric matrices $A$, $\nabla \rho\cdot Ax$ picks up a non-radial component of the derivative of $A$, with a stronger weight as one moves further from the origin.  By bounding this norm, we obtain a stronger control (compared to the vanilla $BV$-norm) on the behavior of the non-radial components of the boundary variation as the tumor grows.


\begin{theorem}[$BV$-estimates, Proposition \ref{prop: BV estimates}]
Given an antisymmetric matrix $A\in \RR^{d\times d}$ and some $g\in L^1(\RR^d)$, define
\[
\norm{g}_{BV_A(\RR^d)}:=\sup_{\vp\in C^{\infty}_c(\RR^d), \norm{\vp}_{L^{\infty}(\RR^d)}\leq 1}\int_{\RR^d} g(x) Ax\cdot \nabla \vp\,dx.
\]
Let $(\rho,p,\eta)$ be a weak solution of \eqref{eqn: P} with initial data $(\rho_0, n_0)$.
If $\rho_0\in L^1(\BR^d)\cap BV(\RR^d)$ is a patch, and $n_0\in L^{\infty}(\RR^d)\cap BV(\RR^d)$, then, with $N(t)$ and $M(t)$ given above in \eqref{constant N} and \eqref{constant M},
\begin{align*}
\norm{\rho(\cdot,t)}_{BV(\RR^d)}\leq &\;
N(t) \norm{n_0}_{BV(\RR^d)}+M(t)\norm{\rho_0}_{BV(\RR^d)},\\
\norm{\rho(\cdot,t)}_{BV_A(\RR^d)}\leq &\;
N(t)\norm{n_0}_{BV_A(\RR^d)}+M(t)\norm{\rho_0}_{BV_A(\RR^d)}.
\end{align*}
\end{theorem}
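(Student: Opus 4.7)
The plan is to exploit the rigid-motion invariance of \eqref{eqn: P} when $b=D=0$ and apply the $L^1$-contraction (Theorem~\ref{thm:contraction}) to $\rho$ and a spatially transformed copy of itself: a translation will yield the $BV$ estimate, and an infinitesimal rotation will yield the $BV_A$ estimate. By the uniqueness part of Theorem~\ref{thm:wellposedness}, if $\Phi:\RR^d\to\RR^d$ is any Euclidean isometry, then $(\rho\circ\Phi, p\circ\Phi, \eta\circ\Phi)$ is the weak solution of \eqref{eqn: P} with initial data $(\rho_0\circ\Phi, n_0\circ\Phi)$. Rigid motions preserve the patch structure of $\rho_0$ and the quantity $\|n_0\|_{L^\infty}$, so the contraction constants $N(t), M(t)$ remain the same for the transformed solution. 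Applying the $L^1$-contraction in both orderings to this pair produces the two-sided estimate
\[
\|\rho(\cdot,t) - \rho\circ\Phi(\cdot,t)\|_{L^1} \le N(t)\,\|n_0 - n_0\circ\Phi\|_{L^1} + M(t)\,\|\rho_0 - \rho_0\circ\Phi\|_{L^1},
\]
which is the single engine driving both inequalities.

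For the $BV$ estimate I take $\Phi(x) = x + h e_i$, divide by $h$, and pass $h\to 0^+$. For any vector field $\varphi\in C_c^\infty(\RR^d;\RR^d)$ with $|\varphi|\le 1$, the identity $\int g\,\partial_i\varphi_i\,dx = \lim_{h\to 0^+}\tfrac{1}{h}\int [g(x)-g(x+he_i)]\,\varphi_i(x)\,dx$ combined with the two-sided estimate (summed over $i$) yields the $BV$ bound via the dual characterization $\|g\|_{BV} = \sup_\varphi \int g\,\nabla\cdot\varphi$. For the $BV_A$ estimate I take $\Phi = R_h := \exp(hA)\in SO(d)$, the one-parameter family of rotations generated by the antisymmetric matrix $A$. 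For any scalar $\varphi\in C_c^\infty(\RR^d)$ with $\|\varphi\|_\infty\le 1$, a change of variables shows
\[
\int \rho(x,t)\,Ax\cdot\nabla\varphi(x)\,dx = \lim_{h\to 0^+}\frac{1}{h}\int\big[\rho(R_h y,t)-\rho(y,t)\big]\,\varphi(y)\,dy,
\]
and then the two-sided estimate for $\Phi = R_h$ yields
\[
\Big|\int \rho(x,t)\,Ax\cdot\nabla\varphi\,dx\Big| \le N(t)\,\|n_0\|_{BV_A} + M(t)\,\|\rho_0\|_{BV_A},
\]
provided $\limsup_{h\to 0^+}\|g - g\circ R_h\|_{L^1}/h \le \|g\|_{BV_A}$ for the BV data $g\in\{n_0,\rho_0\}$. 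Taking the supremum over admissible $\varphi$ then closes the argument.

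The main technical obstacle is the justification of this last limit for non-smooth data. For smooth $g$, the fundamental theorem of calculus gives $g(R_h x)-g(x) = \int_0^h A R_s x\cdot\nabla g(R_s x)\,ds$; since $R_s\in SO(d)$, the change of variables $y = R_s x$ (noting that $A R_s x = Ay$ after substitution) and Fubini produce the clean bound $\|g - g\circ R_h\|_{L^1}\le h\,\|g\|_{BV_A}$. For general BV data I would approximate by mollifications $g_\varepsilon$, apply the smooth bound, and send $\varepsilon\to 0^+$ using $L^1$-continuity of the rotation action together with $\|g_\varepsilon\|_{BV_A}\to \|g\|_{BV_A}$, which is standard under mollification and the lower semicontinuity of the $BV_A$ seminorm. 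The translation version needed for the plain $BV$ bound is completely analogous and simpler, using only $g(x+he)-g(x) = h\int_0^1\nabla g(x+she)\cdot e\,ds$.
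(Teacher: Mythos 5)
Your proof is correct, but it takes a genuinely different (and in some respects cleaner) route than the paper. The paper proceeds in two stages: Proposition~\ref{prop:fbv} (which applies the \emph{elliptic} $L^1$-stability of Proposition~\ref{prop:contraction} to $(\rho,w,\eta)$ and an affine transform $(\rho_s,w_s,\eta_s)$ of it, using invariance of the elliptic obstacle problem under Euclidean isometries) yields the intermediate bound $\norm{\rho(\cdot,t)}_{BV}\le\norm{\rho_0+\eta(\cdot,t)}_{BV}$; a separate Gronwall argument on the differentiated $\eta$-equation then controls $\norm{\eta(\cdot,t)}_{BV}$ in terms of $N(t)$ and $M(t)$, giving the final estimate. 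You instead apply the \emph{parabolic} $L^1$-contraction Theorem~\ref{thm:contraction} — which already packages that Gronwall step — directly to $\rho$ and the rigid-motion image $\rho\circ\Phi$, using uniqueness (Proposition~\ref{prop: uniqueness}) plus the rigid-motion invariance of \eqref{eqn: P} (valid since $b=D=0$) to identify $\rho\circ\Phi$ as the solution launched from $(\rho_0\circ\Phi,n_0\circ\Phi)$, and that $N(t),M(t)$ are unchanged because $\norm{n_0\circ\Phi}_{L^\infty}=\norm{n_0}_{L^\infty}$. Taking difference quotients in $\Phi=x+he$ and $\Phi=\exp(hA)$ then yields the two estimates. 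The constants come out identically; what you trade is one extra appeal to well-posedness/uniqueness (to recognize the transformed triple as the weak solution) against the paper's need to run the Gronwall step by hand inside the proof. Your technical lemma $\norm{g-g\circ R_h}_{L^1}\le h\norm{g}_{BV_A}$ via the chain rule and the rotation change of variables is sound, and the mollification argument for general $BV$ data goes through since $\norm{g_\eps}_{BV_A}\to\norm{g}_{BV_A}$ (upper bound by splitting $A(y+z)$ with $\int|z|\varphi_\eps\,dz\to 0$, lower bound by lower semicontinuity). One small remark: when you ``sum over $i$'' to reconstruct the full $BV$ bound from directional difference quotients, note the paper itself performs an equivalent step in Proposition~\ref{prop:fbv} (``taking supremum over $v$ with $|v|\le 1$''), so you are no worse off; if the $BV$ norm is understood as the total variation of the vector measure $\nabla g$, both arguments are using that $\sup_{|v|\le 1}\|\nabla g\cdot v\|_{L^1}$ controls it up to a dimensional constant, which does not affect the qualitative statement.
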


From the comparison principle as well as analysis on radial solutions, one can conclude the following result on the long-time behavior of the solution when $\|n_0\|_{L^\infty} < 1$.
In this case, the tumor eventually stops growing and approaches a stationary solution.

\begin{theorem}[Theorem \ref{thm: convergence when n_0 is less than 1}]
\label{thm: convergence when n_0 is less than 1 intro}
Suppose that $\norm{n_0}_{L^{\infty}(\RR^d)}<1$ and $\rho_0(x)\in \{0,1\}$.
Let $(\rho, p,\eta)$ be as given above.
Then
\begin{align*}
\norm{\rho(\cdot,t)-\rho_{\infty}}_{L^1(\RR^d)}
\leq &\; \frac{\norm{n_0\rho_0}_{L^1(\RR^d)}}{1-\norm{n_0}_{L^{\infty}(\RR^d)}} e^{(\norm{n_0}_{L^{\infty}(\RR^d)}-1)t},\\
\norm{\eta(\cdot,t)-n_0\rho_{\infty}}_{L^1(\RR^d)}\leq &\; \frac{\norm{n_0\rho_0}_{L^1(\RR^d)}}{1-\norm{n_0}_{L^{\infty}(\RR^d)}} e^{(\norm{n_0}_{L^{\infty}(\RR^d)}-1)t},
\end{align*}
where $\rho_{\infty}$ solves the elliptic equation
\[
(1-n_0)\rho_{\infty}-\Delta w_{\infty}=\rho_0, \quad w_{\infty}(1-\rho_{\infty})=0.
\]
\end{theorem}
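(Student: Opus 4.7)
The plan is to work in the time-integrated formulation \eqref{eqn:elliptic equation}--\eqref{eqn:eta} and extract a conservation identity from which a Gr\"onwall estimate produces the exponential rate. Note that although Theorem~\ref{thm:contraction} gives a contraction bound, the constants $N(t), M(t)$ only remain bounded (rather than decaying) when $\norm{n_0}_{L^\infty}<1$, so a different mechanism is needed to obtain exponential convergence.

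First I would establish pointwise monotonicity in $t$. That $\eta$ is non-decreasing is immediate from $\eta_t = (n_0-\eta)\rho\geq 0$; that the patch $\Omega(t):=\{\rho(\cdot,t)=1\}$ is non-decreasing follows from $w(\cdot,t) = \int_0^t p\,ds$ being non-decreasing (as $p\geq 0$) combined with the characterization $\{\rho=1\} = \{w>0\}\cup\{\rho_0=1,\, n_0=0\}$ (Lemma \ref{lem:rho_1_characterization}). Hence the pointwise limits $\rho_\infty^* := \lim_{t\to\infty}\rho$ and $\eta_\infty^* := \lim_{t\to\infty}\eta$ exist, and the ODE for $\eta$ forces $\eta_\infty^* = n_0\rho_\infty^*$ a.e.: on $\{\rho_\infty^* = 1\}$ the ODE drives $\eta\uparrow n_0$, while on $\{\rho_\infty^* = 0\}$ monotonicity gives $\rho\equiv 0$ and hence $\eta\equiv 0$. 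Passing to the limit in \eqref{eqn:elliptic equation} together with $w(1-\rho)=0$ identifies $(\rho_\infty^*, w_\infty^*)$ as a solution of the obstacle problem in the theorem statement, and uniqueness then yields $\rho_\infty^* = \rho_\infty$. In particular $\rho_\infty\in\{0,1\}$, so $\Omega_\infty := \{\rho_\infty = 1\}$ is a patch and $\rho\leq \rho_\infty$, $\eta\leq n_0\rho_\infty$ pointwise.

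Now set $F(t) := \int_{\RR^d}(\rho_\infty - \rho)\,dx$ and $G(t) := \int_{\RR^d}(n_0\rho_\infty - \eta)\,dx$, both non-negative. Subtracting the stationary and time-dependent forms of \eqref{eqn:elliptic equation} gives $(\rho_\infty - \rho) - \Delta(w_\infty - w) = n_0\rho_\infty - \eta$, and integrating over $\RR^d$ (the Laplacian drops out by compact support of $w_\infty - w$) yields the conservation identity $F(t) = G(t)$. On the other hand $F'(t) = -\int_{\RR^d} n\rho\,dx = -\int_{\Omega(t)} n\,dx$. Splitting $G$ across $\Omega_\infty\setminus\Omega(t)$ and $\Omega(t)$,
\[
G(t) = \int_{\Omega_\infty\setminus\Omega(t)} n_0\,dx + \int_{\Omega(t)}(n_0-\eta)\,dx \leq \norm{n_0}_{L^\infty}\,F(t) + \int_{\Omega(t)} n\,dx = \norm{n_0}_{L^\infty}\,F(t) - F'(t).
\]
Combined with $F=G$ this is $F'(t)\leq -(1-\norm{n_0}_{L^\infty})F(t)$, hence $F(t)\leq F(0)\,e^{(\norm{n_0}_{L^\infty}-1)t}$.

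Finally, integrating the stationary equation gives $F(0) = \int(\rho_\infty-\rho_0) = \int n_0\rho_\infty$; writing $\int n_0\rho_\infty = \int n_0\rho_0 + \int n_0(\rho_\infty-\rho_0) \leq \norm{n_0\rho_0}_{L^1} + \norm{n_0}_{L^\infty}F(0)$ produces $F(0)\leq \norm{n_0\rho_0}_{L^1}/(1-\norm{n_0}_{L^\infty})$, closing the first estimate; the second follows from $G=F$. I expect the main obstacle to be the limit identification in the first step: monotonicity supplies pointwise (and $L^1_{\rm loc}$) convergence of $\rho,\eta,w$, but verifying $\eta_\infty^* = n_0\rho_\infty^*$ a.e.\ across the free boundary and invoking uniqueness of the limiting obstacle problem (which will come from the variational tools developed in Section~\ref{sec: D is 0}) requires care; once those are in place, the Gr\"onwall step is immediate.
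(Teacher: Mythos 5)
Your argument is correct, and it is genuinely different in structure from the one in the paper, even though both ultimately hinge on the same conservation fact, namely that $\int_{\RR^d}\rho\,dx-\int_{\RR^d}\eta\,dx$ is constant in time (equivalently, $\int\partial_t\rho=\int\partial_t\eta$). The paper's proof differentiates the mass identity $\int\partial_t\rho=\int(n_0\rho-\eta)$ once more, obtains the second-order inequality $\int\partial_{tt}\rho+(1-\|n_0\|_{L^\infty})\int\partial_t\rho\leq 0$, and integrates: this delivers the exponential decay of $\int\partial_t\rho$ directly, and the existence of $\rho_\infty$ together with the rate then both follow in one stroke from integrating from $t$ to $\infty$; no limit needs to be identified in advance. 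You instead first establish by monotonicity that $\rho_\infty^*$, $\eta_\infty^*=n_0\rho_\infty^*$ exist and solve the obstacle problem, then compare to them: setting $F=\|\rho_\infty-\rho\|_{L^1}$ and $G=\|n_0\rho_\infty-\eta\|_{L^1}$, the elliptic formulation gives $F=G$, and splitting $G$ over $\Omega_\infty\setminus\Omega(t)$ and $\Omega(t)$ yields $F'\leq(\|n_0\|_{L^\infty}-1)F$, so a first-order Gr\"onwall closes the estimate. Your route requires a bit more upfront work (identifying $\rho_\infty$ and checking $\eta\leq n_0\rho_\infty$ before the Gr\"onwall can even be stated), but it makes the contraction structure $F=G$ explicit; it also bounds $F(0)=\int n_0\rho_\infty$ by a small auxiliary computation, whereas the paper's constant $\int n_0\rho_0$ appears directly as $\int\partial_t\rho|_{t=0}$. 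Both yield the stated constant, and both are valid.
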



\subsection{Geometry and regularity of the patch boundary}
Under suitable assumptions on the initial patch data $\rho_0$ and the initial nutrient $n_0$, we can study boundary regularity of the growing part of the set $\{\rho(\cdot,t)=1\}$.
We will use reflection invariance of the problem and comparison principle, using reflection-based geometry of the sets, to achieve these. Such an argument was used first in \cite{FK} and later in \cite{KK20, KKP} to obtain regularity results for interface motions with reflection invariance.

\medskip

The following version of the comparison principle plays an important role.

\begin{theorem}[Reflection comparison, Proposition \ref{prop: reflection comparison}]
Suppose that $(\rho, p, \eta)$ is a solution of \eqref{eqn: P} starting from the initial data $(\rho^0, n^0)$.  Given a hyperplane $H$, let $\rho_H, \rho^0_H, \eta_H$, and $n^0_H$ denote the reflections of $\rho, \rho^0, \eta$, and $n^0$ about the hyperplane $H$ respectively. Let $H^+$ be one of the half spaces generated by $H$.  If $\rho^0_H\leq \rho^0$ and $n^0_H\leq n^0$ almost everywhere in $H^+$, then $\rho_H\leq \rho$ and $\eta_H\leq \eta$ almost everywhere in $H^+$.
\end{theorem}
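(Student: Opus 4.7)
The plan is to adapt the proof of the $L^1$-contraction and comparison principle (Theorem \ref{thm:contraction} and Proposition \ref{prop:comparison}) to the half-space $H^+$, using the reflection invariance of \eqref{eqn: P}. Since $\Delta$, $\nabla\cdot$, and the algebraic nonlinearities all commute with reflection, the reflected triple $(\rho_H, p_H, \eta_H)$ is itself a weak solution of \eqref{eqn: P} with initial data $(\rho^0_H, n^0_H)$. Moreover, for every $t$ the triples $(\rho, p, \eta)$ and $(\rho_H, p_H, \eta_H)$ agree pointwise on $H$, so the positive parts $(\rho_H - \rho)_+$ and $(\eta_H - \eta)_+$ vanish on $H$. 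This last fact is what makes a ``one-sided'' contraction argument on $H^+$ close: any trace-type contribution on $H$ produced by cutting off with $\chi_{H^+}$ will be trivial or have a favorable sign.

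Next, I would pass to the time-integrated formulation, with $w := \int_0^t p\,ds$ and $w_H := \int_0^t p_H\,ds$, so that \eqref{eqn:elliptic equation} gives
\[
(\rho_H - \rho) - \Delta(w_H - w) = (\rho^0_H - \rho^0) + (\eta_H - \eta),
\]
together with the complementarity $w(1-\rho) = 0 = w_H(1-\rho_H)$. Testing against a regularized approximation of $\chi_{\{\rho_H > \rho\}}\,\chi_{H^+}$, applying Kato's inequality to $w_H - w$, and using complementarity to absorb the $\Delta(w_H - w)$ term, I would obtain
\[
\int_{H^+}(\rho_H - \rho)_+\,dx \;\leq\; \int_{H^+}(\rho^0_H - \rho^0)_+\,dx + \int_{H^+}(\eta_H - \eta)_+\,dx,
\]
where the first term on the right vanishes by hypothesis and no boundary term from $H$ survives because $(\rho_H - \rho)_+ \equiv 0$ on $H$. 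A parallel argument on the ODE $(\eta_H - \eta)_t = (n^0_H - n^0)\rho_H + n^0(\rho_H - \rho) - (\eta_H - \eta)\rho$ gives a matching $L^1(H^+)$ bound for $(\eta_H - \eta)_+$ in terms of $(n^0_H - n^0)_+|_{H^+} \equiv 0$ and the two positive parts. A coupled Gronwall inequality applied to the sum $\int_{H^+}\!\big[(\rho_H - \rho)_+ + (\eta_H - \eta)_+\big]\,dx$, which vanishes at $t=0$, then forces both integrals to remain zero for all $t \geq 0$, yielding the two desired a.e.\ inequalities on $H^+$.

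The main obstacle will be making the Stampacchia-type testing rigorous at the level of weak solutions: $p$ and $p_H$ are only in $L^2_t H^1_x$, and the free boundaries $\partial\{w>0\}$, $\partial\{w_H>0\}$ may meet $H$ in irregular sets, so simply plugging $\chi_{H^+}\,\chi_{\{\rho_H>\rho\}}$ into the weak formulation is not immediately justified. The cleanest remedy, I believe, is to carry the argument out at the discrete level of the variational scheme from Section \ref{sec: discrete in time scheme}: each JKO step reduces to an elliptic obstacle problem on $\RR^d$ whose reflection comparison on $H^+$ is a classical consequence of the maximum principle (with $w=w_H$ on $H$ playing the role of a Dirichlet condition), and the scheme itself is reflection-invariant, so the pair of inequalities propagates from step $k$ to step $k+1$. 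Passing to the limit along the convergence framework of Theorem \ref{thm:wellposedness} then gives the continuous-time statement.
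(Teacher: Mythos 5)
Your continuum-level outline is essentially the paper's proof, and your instincts about what makes the half-space argument close (reflection invariance, $w|_H=w_H|_H$, passing to the time-integrated system) are exactly right. However, the ``main obstacle'' you identify is not actually there, and so the detour through the discrete scheme is unnecessary. The point you are missing is that Proposition~\ref{prop:contraction} is already stated for an arbitrary open $\Omega$ with smooth boundary, and a half-space qualifies. That proposition works entirely at the $w$-level, where $w\in L^\infty_t W^{2,r}_x$ for every $r<\infty$ by elliptic regularity, so the Stampacchia/Kato step (testing against $f_\delta(w_H-w)$) is classical; the irregularity of $\partial\{w>0\}$ and the low regularity of $p$ never enter. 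The boundary condition needed is simply $w_H=w$ on $H$, which is trivial. Applying Proposition~\ref{prop:contraction} with $\Omega=H^+$, $\rho^0_0=\rho^0_H$, $\rho^1_0=\rho^0$, $\eta^0=\eta_H$, $\eta^1=\eta$ (and noting $(\rho^0_H-\rho^0)_+=0$ on $H^+$) gives $\norm{(\rho_H-\rho)_+}_{L^1(H^+)}\leq\norm{(\eta_H-\eta)_+}_{L^1(H^+)}$ immediately. Combining this with the $\eta$-ODE and Gronwall closes the proof exactly as you sketch.

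Two small bookkeeping issues in your $\eta$-step. First, for patch solutions one uses $\eta\rho=\eta$ (Lemma~\ref{lem:01}), so the relevant ODE is $\partial_t\eta=n_0\rho-\eta$, not $\partial_t\eta=(n_0-\eta)\rho$; your expansion $-(\eta_H-\eta)\rho$ should be $-(\eta_H-\eta)$, which is precisely what feeds into the Gronwall inequality with the decay factor that the paper exploits. Second, you do not actually need a coupled Gronwall on the sum $\int(\rho_H-\rho)_++\int(\eta_H-\eta)_+$: since the contraction already bounds the $\rho$-part by the $\eta$-part, a one-variable Gronwall on $\norm{(\eta_H-\eta)_+}_{L^1(H^+)}$ alone suffices. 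Neither issue is fatal, but the cleaner route is the one Proposition~\ref{prop:contraction} hands you directly.
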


We say a set $S\subset\RR^d$ satisfies {\it $r$-reflection} if it contains the ball $B_r(0)= \{|x| < r \}$, and if for any hyperplane $H$ not intersecting with $B_r(0)$, the reflected image of $S$ with respect to $H$ is a subset of $S$ when restricted on the side of $H$ that contains the origin; see Definition \ref{def: set reflection}.
It is a notion that is stronger than being star-shaped, tailored to work with the reflection comparison above. With this concept, we obtain the following results based on the above theorem.

\begin{theorem}[Corollary \ref{cor:1}]\label{thm:reg}
Suppose $n_0$ is $C^1$, and its super-level sets satisfy $r$-reflection for some $r>0$.
Let $\Omega_0$ be an open bounded set in $\R^d$ contained in $B_r(0)$, and $\rho_0 = \chi_{\Omega_0}$.
Let $T(R)$ be defined as in Corollary \ref{star:shaped}.
Then the followings hold for any $0<\alpha<1$ and a dimensional constant $c_d$:
 \begin{enumerate}[(a)]
\item If $n_0(x)\geq 1$ on $\BR^d$, then for any $R>c_d r$, $\partial\{w(\cdot,t)>0\}$ is uniformly $C^{1,\alpha}$ in a unit neighborhood for any finite time range within $[T(R), \infty)$.
\item If $n_0(x)<1$ on $\BR^d$, the same holds in for any finite time range within  $[T(R), \infty)$ if $R>c_dr$ satisfies $B_R(0)\subset \{w_{\infty}>0\}$.
    Here $w_\infty$ is defined in Theorem \ref{thm: convergence when n_0 is less than 1 intro} above (or equivalently, Theorem \ref{thm: convergence when n_0 is less than 1}).
\end{enumerate}
\end{theorem}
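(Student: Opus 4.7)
The plan is to combine the reflection comparison principle (Proposition~\ref{prop: reflection comparison}) with classical obstacle-problem regularity. First I would propagate the $r$-reflection hypothesis from the initial data to the solution for all $t\geq 0$. Fix a hyperplane $H$ disjoint from $\overline{B_r(0)}$ and let $H^+$ be the half-space containing the origin. The $r$-reflection of the super-level sets of $n_0$ gives $n_0^H\leq n_0$ on $H^+$, and since $\Omega_0\subset B_r(0)\subset H^+$, its reflection lies entirely in $H^-$, so $\rho_0^H\equiv 0\leq \rho_0$ on $H^+$. Proposition~\ref{prop: reflection comparison} then yields $\rho_H(\cdot,t)\leq \rho(\cdot,t)$ and $\eta_H(\cdot,t)\leq \eta(\cdot,t)$ on $H^+$ for every $t$. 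In particular $\{\rho(\cdot,t)=1\}$ satisfies $r$-reflection, and via the decomposition $\{\rho=1\}=\{w>0\}\cup\{\rho_0=1,\,n_0=0\}$ from Lemma~\ref{lem:rho_1_characterization}, so does $\{w(\cdot,t)>0\}$.

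Next, a standard geometric lemma for $r$-reflection sets shows that at any boundary point $x$ with $|x|\geq c_d r$ the set enjoys a uniform interior/exterior cone condition whose opening angle depends only on $|x|/r$. Hence $\partial\{w(\cdot,t)>0\}$ is uniformly Lipschitz outside $\overline{B_{c_d r}(0)}$, and cusp-type singular points of the obstacle problem are ruled out there.

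To obtain the inclusion $B_R(0)\subset\{w(\cdot,t)>0\}$ for $t\geq T(R)$ with $R>c_d r$, I would appeal to Corollary~\ref{star:shaped} directly in case (a), while in case (b) the hypothesis $B_R(0)\subset\{w_\infty>0\}$, together with the exponential $L^1$-convergence of $\rho$ to $\rho_\infty$ from Theorem~\ref{thm: convergence when n_0 is less than 1 intro} and the Lipschitz structure just established, lets one propagate the inclusion to all sufficiently large times. Since $\rho_0=0$ on $\{|x|>R\}$, the elliptic equation
\[
-\Delta w(\cdot,t)=(1-\eta(\cdot,t)-\rho_0)\chi_{\{w(\cdot,t)>0\}}
\]
reduces there to a standard obstacle problem with coefficient $1-\eta$. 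Combining the Lipschitz geometry, the non-degeneracy supplied by the Lipschitz cone, and H\"older regularity of $\eta$ (obtained by integrating $\eta_t=(n_0-\eta)\rho$ and using $n_0\in C^1$ together with the Lipschitz-in-space structure of the patch) puts the problem into the Caffarelli--Blank framework and yields $C^{1,\alpha}$ regularity of the free boundary for every $\alpha\in(0,1)$, uniformly on compact time subintervals of $[T(R),\infty)$.

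The hard part will be maintaining uniform quantitative control on non-degeneracy and H\"older regularity of the source $1-\eta-\rho_0$ over time intervals of arbitrary length. In case (a), $n_0\geq 1$ keeps the pressure from degenerating and the required bound is essentially straightforward. In case (b), where the dynamics approaches a possibly singular asymptotic profile, the strict inclusion $B_R\subset\{w_\infty>0\}$ is precisely what buys a uniform lower bound on $1-\eta$ near the free boundary through the long-time limit; translating this margin, together with the reflection-based Lipschitz geometry, into the precise flatness and source-regularity hypotheses of Caffarelli-type theorems, with constants independent of $t$ on each finite subinterval, is the technical heart of the argument.
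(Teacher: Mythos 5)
Your plan correctly reproduces the first half of the paper's argument: propagating $r$-reflection via Proposition~\ref{prop: reflection comparison}, deducing uniform exterior-cone / Lipschitz geometry for $\partial\{w(\cdot,t)>0\}$ outside $B_{c_d r}$, establishing $B_R\subset\Omega_t$ for $t\geq T(R)$ (Corollary~\ref{star:shaped}, plus the $w\to w_\infty$ convergence in case (b)), and then reading off the obstacle problem $-\Delta w=(1-\eta-\rho_0)\chi_{\{w>0\}}$. Where the proposal breaks down is the treatment of the H\"older continuity of $\eta$, which is the hinge that lets you invoke Blank's theorem. You assert that H\"older regularity of $\eta$ follows by ``integrating $\eta_t=(n_0-\eta)\rho$ and using $n_0\in C^1$ together with the Lipschitz-in-space structure of the patch.'' That inference does not hold. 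Since $\eta(x,t)=n_0(x)\bigl(1-e^{-(t-T_x)_+}\bigr)$ with $T_x:=\inf\{t:w(x,t)>0\}$, spatial H\"older continuity of $\eta$ is exactly H\"older continuity of the arrival time $T_x$. A uniformly Lipschitz, monotone-increasing family $\{\Omega_t\}$ does \emph{not} force $T_x$ to be H\"older: the boundary could sit still for an arbitrarily long time and then jump, making $|T_x-T_y|$ large for nearby $x,y$ while the boundary remains Lipschitz throughout.

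What is missing is a quantitative lower bound on the propagation speed, which is the technical core of the paper's proof. This is Proposition~\ref{pressure:nondeg}: one uses the direct comparison principle Lemma~\ref{direct_cp} (against an explicit annular barrier) together with Dahlberg's lemma (Lemma~\ref{lem:D}) for the superharmonic pressure inside the Lipschitz cone to show that if $x_0\in\partial\{p(\cdot,t_0)>0\}$, then $B_r(x_0)\subset\{p(\cdot,t_0+Cr^\alpha/\bar n)>0\}$ with $\alpha=2-k(\theta)>0$. That is also where the dimensional constant $c_d$ comes from: $R>c_d r$ is required so that the Lipschitz cone at boundary points has opening $\theta>\theta_d$, the threshold at which the cone-harmonic $h$ in~\eqref{growth} has sub-quadratic growth ($k(\theta)<2$); otherwise $\alpha\le 0$ and the estimate is vacuous. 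The statement ``non-degeneracy supplied by the Lipschitz cone'' in your proposal gestures at this but does not supply the barrier construction or the Dahlberg comparison that actually produce the exponent. Without this step, the argument cannot enter the Caffarelli--Blank framework because you have no modulus of continuity for the coefficient $1-\eta$.

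Two smaller points. First, the theorem only claims $C^{1,\alpha}$ regularity on \emph{finite} time ranges inside $[T(R),\infty)$, so your worry about ``maintaining uniform quantitative control over time intervals of arbitrary length'' is misplaced here (that concern is addressed separately in Theorem~\ref{thm: uniform regularity}, and only for constant $n_0$). Second, in case (b) the role of $B_R\subset\{w_\infty>0\}$ is not to provide a lower bound on $1-\eta$ near the free boundary through the limit; it is simply to guarantee that $B_R\subset\Omega_t$ eventually (via monotone convergence $w\nearrow w_\infty$), so that Lemma~\ref{star_Lipschitz} applies with a bound $\bar n>0$ on the nutrient on the relevant finite time window.
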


In both cases of the above theorem, since we can start with $\rho_0 = \chi_{\Omega_0}$ where $\Omega_0$ is quite arbitrary, the evolution of the set $\Omega_t:=\{\rho(\cdot,t)=1\}$ may go through topological singularities such as merging of the free boundaries.
However, the above results state that, if the initial nutrient $n_0$ is ``well-prepared" outside of $B_r$, then after a finite time there is no further topological changes in the evolution, and $\Omega_t$ evolves with smooth boundary outside of $B_r$.

\subsection{The case of constant $n_0$}
When $n_0$ is constant, even stronger characterization can be provided for the evolution of patch solutions $\rho$.
In particular, the $\rho$-evolution in \eqref{eqn: P} coincides with density evolutions in some nutrient-free and parameter-free Hele-Shaw-type systems, up to explicit rescaling in space and time.
Such very surprising relation cannot be derived from trivial change of variables, but it crucially relies on special properties of the system \eqref{eqn: P} under the given assumptions.

\begin{theorem}[Two master dynamics, Lemma \ref{lem: general test function}, Proposition \ref{prop: master dynamics} and Proposition \ref{prop: master dynamics 2}]\label{thm:master_dynamics}
Let $(\rho, p, n)$ solve \eqref{eqn: P} with $\rho_0(x)\in \{0,1\}$ almost everywhere and $n_0(x)= n_0>0$ being constant.
Then $n$ can be represented in terms of $\rho$ using \eqref{eqn: n solution formula}.
Let $m(t)$ be explicitly defined by \eqref{eqn: volume}.
Then the total mass of the tumor satisfies
\[
\int_{\BR^d} \rho(x, t)\, dx = m(t)\int_{\BR^d} \rho_0(x)\, dx.
\]
Moreover,
\begin{enumerate}
\item
Let $(\rho_*,p_*)$ be a weak solution of
\beq\label{eqn: HS revisited}
\partial_t \rho_* -\nabla\cdot (\rho_*\nabla p_* ) = \rho_0,\quad \rho_*\leq 1,\quad p_*\in P_\infty(\rho_*),\quad \rho_*|_{t = 0} = \rho_0.
\tag{HS}
\eeq
Then $\rho(x,t) = \rho_* (x, m(t)-1 )$ for all $t\geq 0$.
\item
Let $(\rho_\dag, p_\dag)$ be a weak solution of
\beq
\label{eqn: evolution under potential}
\partial_t \rho_\dag -\nabla \cdot \Big[\rho_\dag \nabla \big( p_\dag +V(x)\big)\Big] = 0,\quad \rho_\dag \leq 1,\quad p_\dag\in P_\infty(\rho_\dag),\quad \rho_\dag|_{t=0} = \rho_0,
\tag{HS'}
\eeq
where $V(x) = \frac{|x|^2}{2d}$.
Then
$\rho(x,t) = \rho_\dag ( m(t)^{-\f1d} x,\, \ln m(t) )$ for all $t\geq 0$.
\end{enumerate}
Here the notion of weak solutions is provided in Definition \ref{def: weak solution p equation} below.
\end{theorem}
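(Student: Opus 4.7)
The plan is to reduce the theorem to a single \emph{harmonic moment identity} (Lemma~\ref{lem: general test function}): for every $h$ harmonic on a neighborhood of $\overline{\Omega(t)}\cup\overline{\Omega_0}$, where $\Omega(t) := \{\rho(\cdot,t)=1\}$,
\beq
\label{eq:plan-moment}
\int_{\Omega(t)} h(x)\,dx \;=\; m(t)\int_{\Omega_0} h(x)\,dx.
\eeq
Granting this, the representation of $n$ is obtained separately by solving $\partial_t n = -n\rho$ pointwise in $x$, giving $n(x,t) = n_0\exp\!\bigl(-\int_0^t\rho(x,s)\,ds\bigr)$, i.e.\ \eqref{eqn: n solution formula}.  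The total-mass formula is \eqref{eq:plan-moment} evaluated at $h\equiv 1$.  The two master dynamics identify $\Omega(t)$ with (rescaled) patches of \eqref{eqn: HS revisited} and \eqref{eqn: evolution under potential} by matching their harmonic moments and appealing to Sakai-type uniqueness of quadrature domains.

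To prove \eqref{eq:plan-moment} I would carry out two testings.  First, testing the time-integrated identity $\rho-\Delta w = \rho_0+\eta$ against $h$ (integration by parts is legal because $w$ is compactly supported in space) yields $\int h\rho\,dx = \int h\rho_0\,dx + \int h\eta\,dx$.  Because $n_0$ is constant, \eqref{eqn: n solution formula} gives $\eta = n_0-n$ on $\Omega(t)$ and $\eta\equiv 0$ outside, so, setting $I(t):=\int_{\Omega(t)} h\,dx$, this rearranges to $\int_{\Omega(t)} hn\,dx = I(0) - (1-n_0)I(t)$.  Second, testing the $\rho$-equation against $h$ and using $\rho\nabla p = \nabla p$ (valid for a patch since $p=0$ on $\partial\Omega(t)$) together with harmonicity of $h$ on $\spt p$ gives $I'(t) = \int_{\Omega(t)} hn\,dx$.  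Combining the two identities yields the linear ODE $I'(t)+(1-n_0)I(t)=I(0)$ with initial datum $I(0)=\int_{\Omega_0} h\,dx$, whose explicit solution is $I(t) = m(t)I(0)$ with the $m(t)$ appearing in \eqref{eqn: volume}.

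For master dynamics~(1), the same testing procedure applied to \eqref{eqn: HS revisited} gives $\frac{d}{ds}\int h\rho_*\,dx = \int h\rho_0\,dx$, hence $\int_{\Omega_*(s)} h\,dx = (1+s)\int_{\Omega_0} h\,dx$.  At the rescaled time $s=m(t)-1$ the sets $\Omega(t)$ and $\Omega_*(m(t)-1)$ therefore share the same harmonic moments $m(t)\int_{\Omega_0} h\,dx$ on a common neighborhood.  Running $h$ over Newtonian/logarithmic kernels based at external points and invoking uniqueness of bounded open quadrature domains containing $\Omega_0$ forces $\Omega(t) = \Omega_*(m(t)-1)$ up to null sets.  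For master dynamics~(2) I would use the time-dependent harmonic test family $f_\tau(y):=h(e^{\tau/d}y)$, which is harmonic in $y$ and satisfies the key identity $\partial_\tau f_\tau = \frac{1}{d}\nabla f_\tau\cdot y$.  When plugged into the analogous computation for \eqref{eqn: evolution under potential}, the drift coming from $\nabla V = x/d$ exactly cancels $\partial_\tau f_\tau$ after integration by parts, so $\int_{\Omega_\dag(\tau)} f_\tau\,dy$ is independent of $\tau$.  Setting $\tau=\ln m(t)$ and rescaling $x=m(t)^{1/d}y$ shows that $m(t)^{1/d}\Omega_\dag(\ln m(t))$ has the same harmonic moments as $\Omega(t)$, and Sakai uniqueness again gives $\Omega(t)=m(t)^{1/d}\Omega_\dag(\ln m(t))$.

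The main obstacle is making \eqref{eq:plan-moment} effective enough to invoke quadrature-domain uniqueness.  This requires a common neighborhood of $\overline{\Omega(t)}\cup\overline{\Omega_0}$ on which $h$ is harmonic, together with simultaneous compact support of $w$, $\eta$, and $p$; both follow from monotone growth of the patch and the uniform-in-time boundedness of its support guaranteed by the patch-preserving construction of Theorem~\ref{thm:wellposedness}.  A secondary technical point is justifying the integration by parts generating the drift term for \eqref{eqn: evolution under potential}, since $\nabla V$ is unbounded; this is handled by a spatial cutoff argument exploiting the boundedness of $\Omega_\dag(\tau)$ and the decay of $p_\dag$ outside its support.
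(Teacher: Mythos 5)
Your proposal captures the essential mechanism---the harmonic moment identity of Lemma~\ref{lem: general test function} together with a uniqueness principle---but routes through the argument differently from the paper at several points. For the moment identity itself, you derive the ODE $I'(t)+(1-n_0)I(t)=I(0)$ by combining the Baiocchi-transformed elliptic identity with a direct time-differentiation of the transport equation; the paper instead plugs the formula $\eta=n_0\int_0^t e^{-(t-\tau)}\rho\,d\tau$ into the elliptic equation and solves the resulting Volterra integral equation by Duhamel (Lemma~\ref{lem: general test function}). These are equivalent; your ODE version is arguably leaner for harmonic $g$, while the paper's version also handles non-harmonic $g$, which it needs. For Master Dynamics~(1), you match harmonic moments and invoke Sakai-type quadrature-domain uniqueness. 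The paper works entirely inside its own framework: Lemma~\ref{lem: identity for gamma convolve with patches} shows $\Gamma*\rho(\cdot,t)=m(t)\,\Gamma*\rho_0$ on $\{w(\cdot,t)=0\}$, Lemma~\ref{lem: equivalent source} uses this to construct $\tilde w = w-\Phi$ solving the obstacle problem with source $m(t)\rho_0$, and then Proposition~\ref{prop:comparison} (elliptic uniqueness) gives the conclusion once one notes the Baiocchi transform of \eqref{eqn: HS revisited} has source $(1+s)\rho_0$. These are mathematically the same statement, but the paper's route is self-contained; if you go through Sakai you should be explicit that you need the subharmonic (not just harmonic) quadrature inequality, which does hold because $\int\Delta h\cdot p\geq 0$ when $\Delta h\geq 0$.

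For Master Dynamics~(2) the divergence from the paper is more substantial. Your time-dependent test family $f_\tau(y)=h(e^{\tau/d}y)$ is a nice computation and indeed gives $\tau$-invariance of $\int_{\Omega_\dag(\tau)}f_\tau$, which after rescaling matches the moments of $\Omega(t)$. But to close the argument via Sakai you additionally need to know that $m(t)^{1/d}\Omega_\dag(\ln m(t))$ is a genuine (subharmonic) quadrature domain for $m(t)\chi_{\Omega_0}\,dx$, and in particular that it contains $\Omega_0$; the Baiocchi transform is not straightforward for \eqref{eqn: evolution under potential} because of the inward drift, so this is not immediate and you would have to argue it (e.g.\ by first proving your moment inequality for subharmonic $f_\tau$ and then invoking the structure theory). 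The paper avoids this entirely: it first shows that $\rho$ also solves a third equivalent elliptic formulation $\tilde{\tilde\rho}-\Delta\tilde{\tilde w}=\rho_0+\int_0^t \frac{m'(\tau)}{m(\tau)}\tilde{\tilde\rho}(\cdot,\tau)\,d\tau$, differentiates to get the evolution \eqref{eqn: a third equivalent dynamics} with source $\frac{m'(t)}{m(t)}\rho$, and then verifies \emph{directly} that the spatial/temporal rescaling of $(\rho_\dag,p_\dag)$ satisfies \eqref{eqn: a third equivalent dynamics}, concluding by PDE uniqueness. That direct verification is short and sidesteps any quadrature-domain considerations for the drift flow. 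I would recommend either filling in the subharmonic-QD step for $\Omega_\dag$ or adopting the paper's verification-by-uniqueness strategy for part~(2).
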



Thanks to the master dynamics, we can characterize the long-time behavior of the patch solutions when $n_0>0$ is constant in $\BR^d$ (c.f.~Theorem \ref{thm: convergence when n_0 is less than 1 intro}) under a suitable rescaling.

\begin{theorem}[Proposition \ref{prop: long-time asymptotics for constant n_0 no less than 1}]
Suppose $n_0>0$ is constant in $\BR^d$.
Let $m(t)$ be defined in \eqref{eqn: volume}.
Assume $\Omega_0$ to be a bounded open set, such that $B_{r_1}(0)\subset \Omega_0 \subset B_{r_2}(0)$ for some $r_1,r_2>0$.
Let $r_\infty>0$ be defined such that $|B_{r_\infty}(0)| = |\Omega_0|$.

Let $(\rho,p,n)$ solve \eqref{eqn: P} with $\rho_0 = \chi_{\Omega_0}$.
Then there exists a constant $C>0$ only depending on $r_1$ and $r_2$, but not on $n_0$, such that
\beqo
W_2\Big( \rho\big(m(t)^{\f1d} x,t\big), \chi_{B_{r_\infty}(0)}(x) \Big)\leq C m(t)^{-\f1d}
\eeqo
for all $t\geq 0$.
Here $W_2$ denotes the 2-Wasserstein distance.
\end{theorem}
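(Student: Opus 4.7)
The strategy is to apply the second master dynamics from Theorem \ref{thm:master_dynamics} to reduce the claim to exponential convergence of the potential-driven Hele-Shaw flow \eqref{eqn: evolution under potential} to its equilibrium. Under the change of variables $y=m(t)^{1/d}x$ and $\tau=\ln m(t)$, the master dynamics gives
\[
\rho\big(m(t)^{1/d}x,\,t\big) \;=\; \rho_\dag(x,\tau),
\]
where $\rho_\dag$ solves \eqref{eqn: evolution under potential} with $\rho_\dag|_{\tau=0}=\rho_0=\chi_{\Omega_0}$. The claim therefore reduces to establishing
\[
W_2\big(\rho_\dag(\cdot,\tau),\,\chi_{B_{r_\infty}(0)}\big) \leq C\,e^{-\tau/d},\qquad \tau\geq 0,
\]
with $C$ depending only on $r_1$ and $r_2$, after which the substitution $\tau=\ln m(t)$ gives the desired $m(t)^{-1/d}$ rate.

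\textbf{Key steps.} I would first verify that $\chi_{B_{r_\infty}(0)}$ is a stationary patch solution of \eqref{eqn: evolution under potential}: on any ball $B_R$, the pressure satisfies $-\Delta p_\dag = \Delta V = 1$ with $p_\dag|_{\partial B_R}=0$, whose explicit solution $p_\dag(x)=(R^2-|x|^2)/(2d)$ yields $\nabla(p_\dag+V)\equiv 0$ in $B_R$, so the free-boundary velocity vanishes. Moreover $\chi_{B_{r_\infty}(0)}$ is the unique minimizer of the energy $E[\rho]:=\int_{\BR^d} V\rho\,dx$ among densities with $0\leq \rho\leq 1$ and $\int\rho=|\Omega_0|$, since packing the saturated density closest to the origin minimizes $V$. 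Next I would interpret \eqref{eqn: evolution under potential} as the $W_2$-gradient flow of $E$ on $\{0\leq \rho\leq 1\}$, an interpretation naturally aligned with the minimizing-movements scheme of Section \ref{sec: discrete in time scheme}. Because $\nabla^2 V = I/d$, the potential $V$ is $(1/d)$-displacement convex, and the constraint indicator $U[\rho]:=\int u(\rho)\,dx$ with $u=0$ on $[0,1]$ and $+\infty$ otherwise is displacement convex by McCann's criterion ($s\mapsto s^du(s^{-d})$ is non-increasing and convex). The standard contraction theorem for gradient flows of $(1/d)$-geodesically convex functionals (Ambrosio--Gigli--Savar\'e) then yields
\[
W_2\big(\rho_\dag(\cdot,\tau),\,\chi_{B_{r_\infty}(0)}\big) \leq e^{-\tau/d}\,W_2\big(\rho_0,\,\chi_{B_{r_\infty}(0)}\big).
\]
Since $\rho_0$ and $\chi_{B_{r_\infty}(0)}$ share the common mass $|\Omega_0|\in[\omega_d r_1^d,\,\omega_d r_2^d]$ and are both supported in $B_{r_2}(0)$, one has $W_2(\rho_0,\chi_{B_{r_\infty}(0)})\leq 2r_2\sqrt{|\Omega_0|}$, a quantity controlled purely by $r_1$ and $r_2$ and independent of $n_0$. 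Substituting $\tau=\ln m(t)$ completes the proof.

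\textbf{Main obstacle.} The delicate step is to justify the abstract contraction estimate in our specific setting: one must confirm that the weak solution of \eqref{eqn: evolution under potential} furnished by Theorem \ref{thm:wellposedness}/Theorem \ref{thm:master_dynamics} actually coincides with the $W_2$-gradient-flow trajectory of $E+U$, so that the Ambrosio--Gigli--Savar\'e contraction applies. A self-contained alternative that avoids the full abstract machinery is a direct Wasserstein energy computation: differentiate $\phi(\tau):=W_2^2(\rho_\dag(\cdot,\tau),\chi_{B_{r_\infty}(0)})$ along the Brenier map of an optimal plan, plug in $\partial_\tau\rho_\dag=\nabla\cdot[\rho_\dag\nabla(p_\dag+V)]$, and exploit the $(1/d)$-convexity of $V$ together with the sign structure of the pressure (via $p_\dag\geq 0$ and $p_\dag(1-\rho_\dag)=0$, which makes the constraint term dissipative against transport toward the equilibrium) to derive $\phi'(\tau)\leq -(2/d)\,\phi(\tau)$. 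Gr\"onwall's inequality then gives the exponential decay with the correct rate $1/d$.
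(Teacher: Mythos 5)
Your reduction through the second master dynamics (Proposition \ref{prop: master dynamics 2}) is exactly the paper's opening move: rescale by $\beta(t)=m(t)^{1/d}$, change time to $\tau=\ln m(t)$, and reduce to exponential $W_2$-convergence for the potential-driven Hele--Shaw flow \eqref{eqn: evolution under potential}. Where you diverge is in how that exponential decay is established. The paper does not reprove it: it cites \cite[Theorem 5.6]{AKY14}, which delivers precisely the contraction $W_2(\rho_\dag(\cdot,\tau),\chi_{B_{r_\infty}})\le e^{-\tau/d}W_2(\rho_\dag(\cdot,0),\chi_{B_{r_\infty}})$. Because that cited result requires Lipschitz boundary of the initial patch (which $\Omega_0$ need not have), the paper waits until a time $T$ at which $\Omega_t$ has absorbed $B_{2r_2}(0)$ and hence acquires a Lipschitz boundary by Corollary \ref{star:shaped}(b), applies \cite{AKY14} from time $\ln m(T)$ onward, and handles the range $[0,T]$ trivially by confinement (both measures supported in $B_{r_2}(0)$). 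It then observes that the same bound carries over to $n_0\in(0,1)$ because those solutions are just finite-time excerpts of the master $\rho_\dag$-dynamics.

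The genuine gap in your proposal is in the middle: you propose to reprove this contraction from scratch via displacement convexity of $V$ and of the hard constraint, invoking the Ambrosio--Gigli--Savar\'e theory, and you yourself flag the delicate issue — that one must identify the weak solution of \eqref{eqn: evolution under potential} furnished by the scheme with the $W_2$-gradient-flow trajectory of $E[\rho]=\int V\rho$ subject to $\rho\le 1$. That identification (and the rigorous version of your ``direct energy computation,'' in particular the claim that the pressure/constraint term is dissipative against transport toward the equilibrium) is nontrivial; it is exactly the content of \cite{AKY14}, so your ``self-contained alternative'' is essentially a sketch of that paper's proof rather than an elementary substitute. Moreover, you do not address whether the contraction result you invoke actually holds for a general bounded open $\Omega_0$ without Lipschitz regularity; if it requires Lipschitz data (as \cite{AKY14} does), you would still need the paper's two-stage argument (confinement on $[0,T]$, contraction on $[T,\infty)$). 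So: the high-level structure is correct and coincides with the paper, your displacement-convexity explanation correctly identifies why the rate is $1/d$ and why $\chi_{B_{r_\infty}}$ is the equilibrium, but the core decay estimate is asserted rather than established, and the Lipschitz-regularity hypothesis that the paper takes pains to satisfy is not dealt with.
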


Lastly, we present a regularity result for the rescaled solutions when $n_0>1$. It is possible to use viscosity solutions approach to study \eqref{eqn: HS revisited} and \eqref{eqn: evolution under potential} as pressure-driven free boundary problems.
For instance, for \eqref{eqn: HS revisited} we have
$\rho = \chi_{\{p>0\}}$ and the set $\{p>0\}$ evolves according to
$$
\left\{\begin{array}{lll}
-\Delta p =\chi_{\Omega_0}&\hbox{ in }\{p>0\},\\
V_n = |\nabla p|&\hbox{ on }\partial\{p>0\}.
\end{array} \right.
$$
In the case of a classic Hele-Shaw problem, the interior pressure equation is replaced by $-\Delta p =0$ on a perforated domain, with the value of $p$ being prescribed along the fixed inner boundary.
In such case, the free boundary regularity has been studied in \cite{CJK07}. While we expect parallel results to hold for our problem, it seems not straightforward to verify this.

\begin{theorem}[Theorem \ref{thm: uniform regularity}]
Fix $n_0>1$, and let $\rho_0$, $r_1$, $r_2$, and $m(t)$ be as in above theorem.
Then there is $\alpha\in (0,1)$ and $T>0$ which depends on $r_1$, $r_2$, and $n_0$, such that the followings hold for all $t \geq T$.
\begin{enumerate}[(a)]
\item
The rescaled set $\tilde{\Omega}_t:= m(t)^{-1/d}\Omega_t$ has uniformly $C^{1,\alpha}$-boundary;
\item
The rescaled nutrient variable $\tilde{n}(x,t):= n(m^{1/d}(t)x,t)$ is uniformly bounded in $C^{\alpha}(\{|x|\geq 2m^{-1/d}(t)r_2\})$.
\end{enumerate}
\end{theorem}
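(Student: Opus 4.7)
\emph{Proof strategy.} My plan is to reduce to the parameter-free problem \eqref{eqn: evolution under potential} via the master dynamics, and then obtain uniform boundary regularity by a free-boundary argument based on the obstacle-problem framework from Section \ref{sec: D is 0}.

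First, by Theorem \ref{thm:master_dynamics}(2), we have $\rho(x,t) = \rho_\dag(m(t)^{-1/d}x,\ln m(t))$, so the rescaled set satisfies $\tilde\Omega_t = \Omega_\dag(\tau)$ with $\tau = \ln m(t)$, where $\Omega_\dag(\tau) := \{\rho_\dag(\cdot,\tau) = 1\}$. Since $n_0 > 1$, the explicit expression \eqref{eqn: volume} forces $m(t) \to \infty$ (in fact exponentially), so the regime $t \geq T$ corresponds to $\tau \geq \tau_0$ for some large $\tau_0$, and part (a) is equivalent to uniform $C^{1,\alpha}$-regularity of $\partial \Omega_\dag(\tau)$ for $\tau \geq \tau_0$. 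The parallel statement for part (b) amounts to showing that $\tilde n$, expressed in the $\tau$-variable, is uniformly $C^\alpha$ on the complement of an appropriate neighborhood of the origin.

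Next, I will upgrade the $W_2$-closeness of the rescaled density to $\chi_{B_{r_\infty}(0)}$ (which is essentially what the preceding long-time asymptotics Proposition says, rewritten in the $\tau$-variable via the master dynamics) to a quantitative Hausdorff closeness of $\Omega_\dag(\tau)$ to $B_{r_\infty}(0)$. The main tool is the reflection comparison of Proposition \ref{prop: reflection comparison}: the potential $V(x) = |x|^2/(2d)$ is invariant under reflections across hyperplanes through the origin, so the reflection machinery underlying Section \ref{sec:regularity} applies to \eqref{eqn: evolution under potential}. Combining this with the $W_2$-bound, I expect $\Omega_\dag(\tau)$ to satisfy $r$-reflection with $r$ independent of $\tau \geq \tau_0$ and to lie in a thin annulus about $\partial B_{r_\infty}(0)$ whose thickness vanishes as $\tau \to \infty$.

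With the patch trapped near a smooth limiting boundary, I then apply obstacle-problem regularity in the spirit of \cite{Blank,Caff98} (as used for Theorem \ref{thm:reg}): the time-integrated pressure $w_\dag$ for \eqref{eqn: evolution under potential} solves an elliptic obstacle equation whose source is $C^\alpha$ and strictly less than $1$ near $\partial \Omega_\dag(\tau)$ for $\tau \geq \tau_0$, while the reflection geometry rules out cusp singularities and yields non-degeneracy of $w_\dag$. This delivers $\partial\Omega_\dag(\tau) \in C^{1,\alpha}$ with constants independent of $\tau$, proving (a). For (b), the explicit formula \eqref{eqn: n solution formula} expresses $n$ as an exponential of $\int_0^t \rho(x,s)\,ds$, so $\tilde n$ is a simple function of the rescaled occupation time. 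On $\{|x| \geq 2m(t)^{-1/d}r_2\}$, the point $m(t)^{1/d}x$ lies outside the initial support of $\rho$, so its occupation time is governed by the instant when $\partial\Omega_s$ sweeps across it; by (a) and uniform boundary-velocity bounds inherited from the $C^{1,\alpha}$ control, this occupation time is $C^\alpha$ in $x$, yielding the desired bound on $\tilde n$.

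The hardest step will be the Hausdorff upgrade of the $W_2$-convergence together with a uniform-in-$\tau$ non-degeneracy of $w_\dag$. I expect this to be addressable by constructing explicit radial sub- and super-solutions from the stationary obstacle profile $(r_\infty^2 - |x|^2)_+/(2d)$ and coupling them with the reflection comparison, in the spirit of the arguments behind Corollary \ref{cor:1}. A secondary subtlety is to confirm that the obstacle-type formulation carries over cleanly to \eqref{eqn: evolution under potential}, where the driving potential $V$ modifies the source of the elliptic equation for $w_\dag$ but, being smooth and radial, should not interfere with the free-boundary regularity argument.
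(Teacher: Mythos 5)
Your proposal runs into a genuine obstruction precisely at the point the paper warns about just before Theorem~\ref{thm: uniform regularity}: ``the presence of the drift prevents us from directly applying existing regularity results.'' You plan to apply the obstacle-problem framework (via \cite{Blank,Caff98}) to the time-integrated pressure $w_\dag := \int_0^t p_\dag\,ds$ of the rescaled system \eqref{eqn: evolution under potential}. But integrating (HS') in time gives
\[
\rho_\dag - \Delta w_\dag = \rho_0 + \int_0^t \nabla\cdot\big(\rho_\dag\nabla V\big)\,ds,
\]
and the source on the right contains $\nabla\rho_\dag\cdot\nabla V$, a distribution concentrated on the moving free boundary $\partial\Omega_\dag(s)$. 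Unlike the original system, where the source $\rho_0+\eta$ is a bounded function that becomes $C^\alpha$ once the arrival time $T_x$ is shown to be H\"older, here the source does not obviously land in a class suitable for obstacle regularity. You flag this as a ``secondary subtlety,'' but it is in fact the reason the paper does \emph{not} run the regularity argument in the $\rho_\dag$-variable. There is a second, smaller problem with your reflection step: the $r$-reflection property involves hyperplanes at distance $\geq r$ from the origin, not through it, so $V(x)=|x|^2/(2d)$ is \emph{not} invariant under the relevant reflections; whether (HS') enjoys a reflection comparison with respect to such off-center hyperplanes needs an argument (the sign happens to be favorable, since $V\circ\phi_H\leq V$ on $H^+$, but one must actually verify the half-space comparison).

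The paper's proof avoids both issues by working entirely in the \emph{original} variables $(\rho,p,n,w)$, where the Baiocchi transform yields a clean obstacle equation, and by tracking the dilation scale $\beta(t)=m(t)^{1/d}$ explicitly rather than passing to the autonomous flow. Concretely: radial barriers (Remark~\ref{rmk: radial solution}) give $B_{\beta(t)r_1}\subset\Omega_t\subset B_{\beta(t)r_2}$; Corollary~\ref{star:shaped}(b) gives uniform Lipschitz regularity of the rescaled boundary; Dahlberg's lemma plus the interior lower bound $p\gtrsim\beta(t)^2$ give nondegeneracy $p(x,t)\gtrsim\beta(t)^\alpha d(x,\Omega_t^c)^{2-\alpha}$; a barrier argument as in Proposition~\ref{pressure:nondeg}, with $\bar n$ replaced by $\beta(t)^\alpha$, yields an arrival-time H\"older bound $|T_x-T_y|\lesssim(|x-y|/\beta)^\alpha$; this makes $\tilde\eta$ uniformly $C^\alpha$, and only then does one apply \cite[Thm.~7.2]{Blank} to the \emph{rescaled} $w$. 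Your Hausdorff-closeness step (upgrading the $W_2$ bound) is not needed and is extra work; the ingredients you correctly identify (radial barriers, reflection, obstacle regularity, arrival times) should be assembled in the original coordinate rather than through~(HS').
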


\section{Discrete-in-time Scheme and Well-posedness}
\label{sec: discrete in time scheme}

In this section, we explicitly construct solutions for the PDE \eqref{eqn: P} under the assumption that $n_0\in L^{\infty}(\RR^d)\cap BV(\RR^d)$ and $\rho_0\in L^1(\BR^d)\cap BV(\RR^d)$ along with $\rho_0\in [0,1]$ almost everywhere.
We shall use the following discrete-in-time scheme introduced in \cite{JKT21}. Given a time step $\tau>0$, an initial density $\rho^{0,\tau} = \rho_0$, and an initial nutrient density $n^{0,\tau} = n_0$, we iterate that
\begin{align}
\rho^{k+1, \tau}= &\;\argmin_{\rho\leq 1} \frac{1}{2\tau}W_2^2\Big( \rho, \rho^{k,\tau}\big(1+\tau (n^{k,\tau}-b)\big)\Big), \label{eq:rho_update}\\
n^{k+1,\tau} = &\; e^{\tau D\Delta}\big(n^{k,\tau}(1-\tau\rho^{k+1,\tau})\big),\nonumber
\end{align}
where $e^{\tau D\Delta}$ is the heat kernel.
The optimal pressure variable can be recovered by considering the dual problem to \eqref{eq:rho_update},
\begin{equation}\label{eq:p_update}
p^{k+1,\tau}=\argmax_{p\geq 0} \int_{\RR^d} p^{c_{\tau}}\rho^{k,\tau}(1+\tau (n^{k,\tau}-b))-p\,dx,
\end{equation}
where
\[
p^{c_{\tau}}(x)=\inf_{y\in \RR^d} p(y)+\frac{1}{2\tau}|y-x|^2,
\]
is the $c$-transform.
We additionally define $p^{0,\tau} = 0$.

We now import the following three crucial lemmas from \cite{JKT21} with minor adaptations.
The first lemma gives a link between the primal and dual variables; the second lemma establishes an energy dissipation property for the scheme; and the final lemma establishes some useful properties enjoyed by the discrete density.

\begin{lemma}[\cite{JKT21}]\label{lem:jkt1}
The optimal primal and dual variables $\{\rho^{k,\tau}\}_k$ and $\{p^{k,\tau}\}_k$ are linked through the following relations
\begin{equation}\label{eq:pressure_density_duality}
p^{k+1,\tau}(1-\rho^{k+1,\tau})=0, \quad \big(\id+\tau \nabla p^{k+1,\tau}\big)_{\#} \rho^{k+1,\tau}=\rho^{k,\tau}(1+\tau (n^{k,\tau}-b)).
\end{equation}

\end{lemma}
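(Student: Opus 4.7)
The strategy is convex duality applied to the primal/dual pair \eqref{eq:rho_update}--\eqref{eq:p_update}, extracting the two identities in \eqref{eq:pressure_density_duality} as KKT conditions.

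Set $\mu := \rho^{k,\tau}(1+\tau(n^{k,\tau}-b))$, so that $\rho^{k+1,\tau}=\argmin_{\rho\leq 1}\frac{1}{2\tau}W_2^2(\rho,\mu)$. The objective is convex in $\rho$ and the feasible set $\{\rho:\rho\leq 1\}$ is convex, so strong duality applies. Using the Kantorovich representation $\frac{1}{2\tau}W_2^2(\rho,\mu)=\sup_\phi\{\int \phi^{c_\tau}\mu\,dx - \int \phi\rho\,dx\}$ and swapping inf and sup, the inner minimization over $\rho\in [0,1]$ picks $\rho=\chi_{\{\phi>0\}}$ on $\{\phi\neq 0\}$, collapsing the dual to
\[
\sup_{p\geq 0}\Bigl\{\int p^{c_\tau}\mu\,dx - \int p\,dx\Bigr\},
\]
which is exactly \eqref{eq:p_update}. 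The restriction to $p\geq 0$ is free: replacing the negative part of any candidate $\phi$ with zero can only increase $\phi^{c_\tau}$ pointwise while preserving $\int \phi_+\,dx$.

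The two identities in \eqref{eq:pressure_density_duality} are then KKT conditions at the saddle point. Complementarity $p^{k+1,\tau}(1-\rho^{k+1,\tau})=0$ follows from the first-order optimality in $\rho$: $\rho^{k+1,\tau}=1$ wherever $p^{k+1,\tau}>0$. For the push-forward identity, I would invoke Brenier's theorem for the quadratic transport problem between $\rho^{k+1,\tau}$ and $\mu$: the optimal Monge map reads $T(x)=x+\tau\nabla p^{k+1,\tau}(x)$, obtained from first-order stationarity in $y$ of the infimum defining $(p^{k+1,\tau})^{c_\tau}(x)$ (tracking the sign convention in the definition of the $c_\tau$-transform), and then $T_\#\rho^{k+1,\tau}=\mu$ by Brenier's theorem.

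The main obstacle is regularity. A priori $p^{k+1,\tau}$ is only a non-negative Radon measure dual to the $L^\infty$ bound $\rho\leq 1$, so both the pointwise complementarity and the gradient $\nabla p^{k+1,\tau}$ require justification. This is handled in \cite{JKT21} by showing that (essentially after a $c_\tau$-transform) the optimal $p$ is Lipschitz with a quantitative semi-concavity estimate, whereupon Rademacher's theorem yields a.e.\ differentiability and the change-of-variables formula yields the stated push-forward identity. The lemma here is a direct adaptation of that argument, with the mild modification coming from the reaction factor $1+\tau(n^{k,\tau}-b)$, which affects the data $\mu$ without altering the structural duality.
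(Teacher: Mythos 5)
The paper does not give a proof of this lemma: it is explicitly imported from \cite{JKT21} (``We now import the following three crucial lemmas from \cite{JKT21} with minor adaptations''), so there is no in-text argument to compare your proposal against. Your sketch is a sound reconstruction of the convex-duality proof that the cited reference uses: the passage from the primal Wasserstein projection to the dual \eqref{eq:p_update} via the Kantorovich representation and a minimax swap, the observation that $\phi\mapsto\phi_+$ can only increase the dual objective (so $p\geq 0$ is without loss), complementary slackness for the box constraint $\rho\leq 1$ giving $p^{k+1,\tau}(1-\rho^{k+1,\tau})=0$, and the identification of the Monge map $x\mapsto x+\tau\nabla p^{k+1,\tau}(x)$ from first-order stationarity in $y$ of $\inf_y p(y)+\frac{1}{2\tau}|y-x|^2$ are all correct steps in the right order. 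You also correctly flag the genuinely nontrivial point, namely that $p^{k+1,\tau}$ is a priori only a non-negative measure dual to the $L^\infty$ constraint, and that the Lipschitz/semi-concavity estimate of \cite{JKT21} is what upgrades this to a function with a well-defined a.e.\ gradient so that the push-forward identity makes sense; since the only adaptation here is that the JKO data becomes $\rho^{k,\tau}(1+\tau(n^{k,\tau}-b))$, the argument indeed transfers verbatim. Note that this is a faithful summary of the cited proof rather than an independent one, which is exactly what the attribution in the lemma statement calls for.
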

\begin{lemma}[\cite{JKT21}]\label{lem:jkt2}
Each step of the scheme enjoys the energy dissipation property
\begin{equation}\label{eq:edi}
\frac{1}{2}\norm{\nabla p^{k+1,\tau}}_{L^2(\RR^d)}^2\leq \int_{\RR^d} \rho^{k,\tau}(n^{k,\tau}-b)p^{k+1,\tau}\,dx.
\end{equation}
\end{lemma}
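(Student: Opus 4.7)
The plan is to exploit the primal-dual structure of the minimizing movement step. Set $\mu := \rho^{k,\tau}\bigl(1+\tau(n^{k,\tau}-b)\bigr)$, so that the primal problem \eqref{eq:rho_update} reads $\min_{\rho\leq 1}\tfrac{1}{2\tau}W_2^2(\rho,\mu)$. Since $p^{k+1,\tau}$ is by construction the maximizer of the dual \eqref{eq:p_update}, strong duality for the constrained quadratic OT problem (standard, and precisely the content of the primal-dual formulation introduced in the excerpt) gives the identity
\[
\frac{1}{2\tau}W_2^2(\rho^{k+1,\tau},\mu)\;=\;\int_{\RR^d}(p^{k+1,\tau})^{c_{\tau}}\,d\mu\;-\;\int_{\RR^d}p^{k+1,\tau}\,dx.
\]
I will use this as the pivot connecting the OT cost to pressure data.

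For the upper bound I would perform two elementary manipulations. First, taking $y=x$ in the definition of the $c_{\tau}$-transform yields the trivial pointwise bound $(p^{k+1,\tau})^{c_{\tau}}(x)\leq p^{k+1,\tau}(x)$. Second, since $p^{k+1,\tau}\geq 0$ and $\rho^{k,\tau}\leq 1$ (the latter inherited by induction from the scheme), we have $\int p^{k+1,\tau}\,dx\geq \int p^{k+1,\tau}\rho^{k,\tau}\,dx$. Combining these and using $\mu-\rho^{k,\tau}=\tau\rho^{k,\tau}(n^{k,\tau}-b)$ yields
\[
\frac{1}{2\tau}W_2^2(\rho^{k+1,\tau},\mu)\;\leq\;\tau\int_{\RR^d}\rho^{k,\tau}(n^{k,\tau}-b)\,p^{k+1,\tau}\,dx.
\]

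To convert the left-hand side into $\tfrac{\tau}{2}\|\nabla p^{k+1,\tau}\|_{L^2}^2$, I would invoke Lemma~\ref{lem:jkt1}: the map $T:=\id+\tau\nabla p^{k+1,\tau}$ pushes $\rho^{k+1,\tau}$ forward to $\mu$ and is the optimal Brenier map, so it saturates the $W_2^2$ cost, giving
\[
W_2^2(\rho^{k+1,\tau},\mu)\;=\;\tau^2\int_{\RR^d}|\nabla p^{k+1,\tau}|^2\,\rho^{k+1,\tau}\,dx.
\]
The complementarity $p^{k+1,\tau}(1-\rho^{k+1,\tau})=0$ forces $p^{k+1,\tau}=0$ on $\{\rho^{k+1,\tau}<1\}$, and the standard vanishing-gradient property of Sobolev functions on their zero set upgrades this to $\int|\nabla p^{k+1,\tau}|^2\rho^{k+1,\tau}\,dx=\|\nabla p^{k+1,\tau}\|_{L^2}^2$. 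Substituting into the previous bound and dividing by $\tau$ delivers \eqref{eq:edi}.

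I expect the only subtle point to be the last identification $\int|\nabla p^{k+1,\tau}|^2(1-\rho^{k+1,\tau})\,dx=0$, which requires $p^{k+1,\tau}$ to have enough regularity (at least $H^1_{\loc}$) for a Stampacchia-type vanishing-gradient argument to apply on $\{p^{k+1,\tau}=0\}$. This regularity is essentially built into Lemma~\ref{lem:jkt1}, since the statement that $\id+\tau\nabla p^{k+1,\tau}$ is a transport map presupposes that $\nabla p^{k+1,\tau}$ exists in a suitable a.e.\ sense (e.g.\ via Aleksandrov differentiability of the semiconvex Kantorovich potential), so in the ambient framework of \cite{JKT21} it should come essentially for free.
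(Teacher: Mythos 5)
Your proof is correct. The paper itself does not prove this lemma (it imports it from \cite{JKT21} without argument), but your derivation is the standard primal-dual chain for such JKO-type projection steps and it closes cleanly: strong duality pins $\tfrac{1}{2\tau}W_2^2(\rho^{k+1,\tau},\mu)$ to $\int (p^{k+1,\tau})^{c_\tau}\,d\mu - \int p^{k+1,\tau}\,dx$, the pointwise bound $(p^{k+1,\tau})^{c_\tau}\leq p^{k+1,\tau}$ and $\rho^{k,\tau}\leq 1$ reduce this to $\tau\int \rho^{k,\tau}(n^{k,\tau}-b)p^{k+1,\tau}$, and the Brenier-map identity (which follows from the same duality you invoked, since $(-p^{k+1,\tau},(p^{k+1,\tau})^{c_\tau})$ is then an optimal Kantorovich pair together with $p^{k+1,\tau}(1-\rho^{k+1,\tau})=0$) and the Stampacchia vanishing of $\nabla p^{k+1,\tau}$ on $\{\rho^{k+1,\tau}<1\}$ give $W_2^2(\rho^{k+1,\tau},\mu)=\tau^2\|\nabla p^{k+1,\tau}\|_{L^2}^2$. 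The needed regularity of $p^{k+1,\tau}$ is indeed free, since an optimal dual potential here is semiconcave, hence locally Lipschitz.
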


\begin{lemma}[\cite{JKT21}]\label{lem:density_01}\label{lem:jkt3}
For almost every $x\in \RR^d$, we have $\rho^{k,\tau}(1-\tau b)\leq \rho^{k+1,\tau}$.  Furthermore, if $\rho_0(x)\in \{0,1\}$ almost everywhere and $b=0$, then $\rho^{k,\tau}(x)\in \{0,1\}$ almost everywhere.
\end{lemma}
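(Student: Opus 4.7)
The plan is to read off both assertions from the transport-theoretic structure supplied by Lemma \ref{lem:jkt1}. Abbreviate the source as $f^{k,\tau}:=\rho^{k,\tau}\bigl(1+\tau(n^{k,\tau}-b)\bigr)$ and set $A:=\{\rho^{k+1,\tau}<1\}$. The complementarity relation $p^{k+1,\tau}(1-\rho^{k+1,\tau})=0$ forces $p^{k+1,\tau}\equiv 0$ on $A$, and since $\nabla p^{k+1,\tau}\in L^2(\RR^d)$ by the energy dissipation of Lemma \ref{lem:jkt2}, Stampacchia's lemma yields $\nabla p^{k+1,\tau}=0$ a.e.\ on $A$. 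Hence the Brenier map $T=\id+\tau\nabla p^{k+1,\tau}$ pushing $\rho^{k+1,\tau}$ onto $f^{k,\tau}$ equals the identity a.e.\ on $A$.

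The crux is then to upgrade this to the pointwise identity $\rho^{k+1,\tau}=f^{k,\tau}$ a.e.\ on $A$. Because $T$ is the gradient of the convex function $|x|^2/2+\tau p^{k+1,\tau}$, it is cyclically monotone; testing monotonicity against small segments anchored at a Lebesgue density point of $A$ shows that $T(A^c)\subseteq A^c$ up to null sets, so that $T^{-1}(F)=F$ modulo null for every Borel $F\subseteq A$. The change-of-variables formula associated with the push-forward $T_{\#}\rho^{k+1,\tau}=f^{k,\tau}$ then delivers the desired pointwise equality.

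Granted this identity, the first claim falls out immediately: on $A$ one has $\rho^{k+1,\tau}=\rho^{k,\tau}(1+\tau(n^{k,\tau}-b))\geq\rho^{k,\tau}(1-\tau b)$ because $n^{k,\tau}\geq 0$, while on $A^c$ one has $\rho^{k+1,\tau}=1\geq\rho^{k,\tau}(1-\tau b)$ since $\rho^{k,\tau}\leq 1$ (and trivially when $\tau b\geq 1$). For the second assertion I would induct on $k$ with $b=0$. On $\{\rho^{k,\tau}=1\}$, part one gives $\rho^{k+1,\tau}\geq 1$, hence $\rho^{k+1,\tau}=1$; on $\{\rho^{k,\tau}=0\}$, the source vanishes, and the identity above forces $\rho^{k+1,\tau}=0$ wherever $\rho^{k+1,\tau}<1$, so the $\{0,1\}$-dichotomy is preserved.

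The main obstacle is precisely that middle step: rigorously establishing $\rho^{k+1,\tau}=f^{k,\tau}$ a.e.\ on $A$ when $p^{k+1,\tau}$ is a priori only in $H^1$ and $A$ is merely measurable. A clean execution leans on the $L^\infty$-bound on the source (inherited from $\rho^{k,\tau}\leq 1$ and $n^{k,\tau}\in L^\infty$) together with the semi-convexity $D^2 p^{k+1,\tau}\geq -\tau^{-1}I$ intrinsic to the $W_2$-projection, which together place the problem inside Brenier's Monge--Amp\`ere framework and legitimize both the non-penetration assertion $T(A^c)\subseteq A^c$ and the pointwise change of variables. Once this is in hand, the rest is bookkeeping.
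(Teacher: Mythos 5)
First, a point of orientation: the paper does not prove Lemma~\ref{lem:jkt3} here but imports it verbatim from \cite{JKT21}, so there is no in-paper proof to compare against; I am therefore assessing your proposal on its own terms.

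Your overall strategy is sound and the bookkeeping at the end is correct. Once one has the key identity
\[
\rho^{k+1,\tau}(x)=f^{k,\tau}(x):=\rho^{k,\tau}(x)\bigl(1+\tau(n^{k,\tau}(x)-b)\bigr)
\quad\text{a.e.\ on } A:=\{\rho^{k+1,\tau}<1\},
\]
the inequality $\rho^{k+1,\tau}\geq(1-\tau b)\rho^{k,\tau}$ follows exactly as you say (on $A$ because $n^{k,\tau}\geq 0$; on $A^c$ because $\rho^{k,\tau}\leq1$; trivially if $\tau b\geq1$), and the induction step for the $\{0,1\}$-preservation under $b=0$ also goes through as written. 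That is all fine.

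The genuine gap is in the middle step, and you yourself identify it correctly. Your argument for $T(A^c)\subseteq A^c$ requires that $T=\id+\tau\nabla p^{k+1,\tau}$ be the gradient of a convex function \emph{on all of $\RR^d$}, not merely $\rho^{k+1,\tau}$-a.e. Brenier's theorem hands you the latter for free, but the monotonicity inequality must be tested between a point $y\in A^c$ and nearby points $z+tv\in A$ that may sit in the region $\{\rho^{k+1,\tau}=0\}$, where the Brenier potential is a priori undetermined. The convexity of $\tfrac{|x|^2}{2}+\tau p^{k+1,\tau}$ on all of $\RR^d$—equivalently the semi-convexity $D^2p^{k+1,\tau}\geq-\tau^{-1}I$—is exactly what closes this gap, and it is not automatic: $p^{k+1,\tau}$ is the maximizer of the dual problem \eqref{eq:p_update} subject to the extra constraint $p\geq0$, so it is not a $c$-transform of anything, and the usual $c$-concavity argument does not apply directly without further work. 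Asserting that the semi-convexity is ``intrinsic to the $W_2$-projection'' is plausible intuition but not a proof. A careful treatment (as in \cite{JKT21}) must establish that the positive-part truncation in the dual problem preserves the semi-convexity, or else obtain the non-penetration property $T(A^c)\subseteq A^c$ by a different route (e.g.\ by optimality/perturbation arguments at the level of the primal minimization problem, which avoids pointwise control of $\nabla p$ outside the support). The Lebesgue-density argument you sketch for extracting $z=y$ from monotonicity is essentially correct once monotonicity is available everywhere, so I do not object to that part—the missing ingredient is the global convexity.

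In short: right strategy, correct reduction of the lemma to the structure of the $W_2$-projection, but the proposal does not actually prove that structure, and the step it defers (semi-convexity of the pressure / global monotonicity of $T$) is the substantive content of the result.
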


Now we are ready to  introduce piecewise-constant-in-time interpolants: for $t\in [k\tau, (k+1)\tau)$, define
\[
\rho^{\tau}(x,t):=\rho^{k,\tau}(x),\quad
n^{\tau}(x,t):=n^{k,\tau}(x),\quad
p^{\tau}(x,t):=p^{k,\tau}(x).
\]
In addition, for $t\leq 0$, define $\rho^{\tau}(x,t)=\rho_0(x)$ and $n^{\tau}(x,t)=n_0(x)$.
Unlike in \cite{JKT21}, our growth rate here is independent of the pressure.
Hence, we will need the following estimates to obtain compactness for the interpolants.
\begin{lemma}\label{lem:scheme_estimates}
Let $\rho^{\tau}, p^{\tau}, n^{\tau}$ be the discrete interpolants defined above. For any $t\geq 0$, we have
\[
\norm{\rho^{\tau}(\cdot,t)}_{L^1(\RR^d)}\leq e^{t\norm{n_0}_{L^{\infty}(\RR^d)}}\norm{\rho_0}_{L^1(\RR^d)} =: B(t),
\]
and
\begin{align*}
\norm{p^{\tau}}_{L^2(\RR^d\times [0,t])}\leq &\;
C 
B(t)^{\frac{d+4}{2d}}
\norm{n_0}_{L^{\infty}(\RR^d)}^{\f12},\\
\norm{p^{\tau}}_{L^1(\RR^d\times [0,t])}\leq &\; C 
B(t)^{\frac{d+2}{d}},\\
\norm{\nabla p^{\tau}}_{L^2(\RR^d\times [0,t])}\leq &\;  
C B(t)^{\frac{d+2}{2d}} \norm{n_0}_{L^{\infty}(\RR^d)}^{\f12}.
\end{align*}
Here $C>0$ is a universal constant only depending on $d$.
Moreover, when $\tau\ll 1$ such that $\tau b < 1$,
\[
\norm{\nabla \rho^\tau (\cdot,t)}_{L^1(\RR^d)}+\norm{\nabla n^\tau(\cdot,t)}_{L^1(\RR^d)}\leq e^{(2\norm{n_0}_{L^{\infty}(\RR^d)}+1)t}\Big(\norm{\nabla \rho_0}_{L^1(\RR^d)}+\norm{\nabla n_0}_{L^1(\RR^d)}\Big).
\]
\end{lemma}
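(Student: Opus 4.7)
The plan is to handle the four bounds in sequence, using pushforward mass-preservation for the $L^1$ bound on $\rho^\tau$, per-step energy--Sobolev estimates summed against the exponential growth of $B(s)$ for the pressure bounds, and translation invariance combined with $L^1$-contractivity of the Wasserstein projection for the $BV$ bound. The unifying device for the pressure estimates is that once one has a per-step bound of the form $\|p^{k+1,\tau}\|\lesssim\|n_0\|_{L^\infty}B((k+1)\tau)^\alpha$, summing in $k$ against the exponential growth rate $\alpha\|n_0\|_{L^\infty}$ of $B(s)^\alpha$ produces a time integral $\int_0^tB(s)^\alpha\,ds\lesssim B(t)^\alpha/(\alpha\|n_0\|_{L^\infty})$ that absorbs exactly one power of $\|n_0\|_{L^\infty}$.

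For the $L^1$ bound on $\rho^\tau$, I would first establish by induction that $\|n^{k,\tau}\|_{L^\infty}\leq\|n_0\|_{L^\infty}$: since $(1-\tau\rho^{k+1,\tau})\in[0,1]$ for $\tau\leq 1$ and the heat kernel $e^{\tau D\Delta}$ is an $L^\infty$-contraction, the sup norm of $n^{k,\tau}$ cannot grow. The pushforward identity in Lemma \ref{lem:jkt1} preserves total mass, giving
\[
\|\rho^{k+1,\tau}\|_{L^1}=\int_{\RR^d}\rho^{k,\tau}(1+\tau(n^{k,\tau}-b))\,dx\leq(1+\tau\|n_0\|_{L^\infty})\|\rho^{k,\tau}\|_{L^1},
\]
where the $-b$ term is discarded since $b\geq 0$, and iteration yields $B(k\tau)$.

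For the pressure estimates, the energy dissipation \eqref{eq:edi} together with $n^{k,\tau}\leq\|n_0\|_{L^\infty}$, $\rho^{k,\tau}\leq 1$, and $b\geq 0$ gives $\|\nabla p^{k+1,\tau}\|_{L^2}^2\leq 2\|n_0\|_{L^\infty}\|p^{k+1,\tau}\|_{L^1}$. Since the complementarity relation forces $p^{k+1,\tau}$ to be supported on $\{\rho^{k+1,\tau}=1\}$, a set of measure at most $B((k+1)\tau)$, Sobolev embedding combined with Hölder on the support yields $\|p^{k+1,\tau}\|_{L^q}\leq CB((k+1)\tau)^{\beta(q)}\|\nabla p^{k+1,\tau}\|_{L^2}$ for $q\in\{1,2\}$, with $\beta(1)=(d+2)/(2d)$ and $\beta(2)=1/d$ (worked out for $d\geq 3$; $d=1,2$ require a minor modification of the embedding constants). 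Substituting $q=1$ back into the energy inequality gives the per-step bound $\|\nabla p^{k+1,\tau}\|_{L^2}\leq C\|n_0\|_{L^\infty}B((k+1)\tau)^{(d+2)/(2d)}$, whence $\|p^{k+1,\tau}\|_{L^q}\leq C\|n_0\|_{L^\infty}B((k+1)\tau)^{\beta(q)+(d+2)/(2d)}$. Squaring appropriately, summing in $k$, and applying the exponential-integral bound above converts one factor of $\|n_0\|_{L^\infty}^2$ into $\|n_0\|_{L^\infty}$ (and eliminates $\|n_0\|_{L^\infty}$ entirely in the $L^1$ case), producing the three spacetime estimates exactly as stated.

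For the $BV$ bound, the central input is that the $W_2$-projection onto the constraint $\{\rho\leq 1\}$ is $L^1$-contractive, as developed in \cite{JKT21}. Because the scheme commutes with spatial translations, applying $L^1$-contractivity to $\rho^{k+1,\tau}$ and its translate and letting the translation shrink to zero gives
\[
\|\nabla\rho^{k+1,\tau}\|_{L^1}\leq\|\nabla[\rho^{k,\tau}(1+\tau(n^{k,\tau}-b))]\|_{L^1}\leq(1+\tau\|n_0\|_{L^\infty})\|\nabla\rho^{k,\tau}\|_{L^1}+\tau\|\nabla n^{k,\tau}\|_{L^1},
\]
where the second inequality is the product rule together with $\rho^{k,\tau}\leq 1$ and $\tau b<1$. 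For the nutrient, the heat semigroup is $BV$-contractive, and the product rule yields
\[
\|\nabla n^{k+1,\tau}\|_{L^1}\leq\|\nabla n^{k,\tau}\|_{L^1}+\tau\|n_0\|_{L^\infty}\|\nabla\rho^{k+1,\tau}\|_{L^1}.
\]
Adding these two coupled recursions and iterating by discrete Gr\"onwall yields exponential growth at rate $2\|n_0\|_{L^\infty}+1$, as claimed. The chief technical hurdle is the $L^1$-contractivity of the density-cap Wasserstein projection --- the only ingredient that is not a routine Cauchy--Schwarz/Sobolev manipulation and that genuinely requires the specific optimal-transport structure of the JKO step.
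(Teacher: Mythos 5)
Your overall architecture matches the paper's, and the conclusions you reach are correct, but two of your steps take genuinely different routes that are worth contrasting with the paper's.

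For the pressure estimates, you work per-step: you bound $\|p^{k+1,\tau}\|_{L^q}$ in terms of $\|\nabla p^{k+1,\tau}\|_{L^2}$ via a Sobolev--Poincar\'e inequality on the support $\{\rho^{k+1,\tau}=1\}$ (of measure at most $B((k+1)\tau)$), close the loop with the energy dissipation inequality to get a clean per-step bound $\|\nabla p^{k+1,\tau}\|_{L^2}\lesssim\|n_0\|_{L^\infty}B((k+1)\tau)^{(d+2)/(2d)}$, and then sum the resulting geometric-in-$k$ series. The exponents you derive are correct and the summation does indeed absorb one power of $\|n_0\|_{L^\infty}$ as you say. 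The paper instead works entirely with space-time norms, combining the complementarity bound $\|p^\tau\|_{L^1}\leq\|\rho^\tau\|_{L^1}^{1/2}\|p^\tau\|_{L^2}$ with the Gagliardo--Nirenberg interpolation $\|p^\tau\|_{L^2}\lesssim\|p^\tau\|_{L^1}^{2/(d+2)}\|\nabla p^\tau\|_{L^2}^{d/(d+2)}$ and the energy dissipation, then unravels the coupled inequalities. The practical advantage of the paper's route is that GN in this form holds uniformly in $d\geq1$, whereas your Sobolev-on-the-support step requires, as you note, separate treatment for $d=1,2$; in $d=2$ in particular the ``minor modification'' amounts to replacing the embedding $H^1\hookrightarrow L^{2^*}$ by a scale-invariant Poincar\'e inequality (Faber--Krahn type), which is standard but should be spelled out if one goes this way.

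For the $BV$ bound, be careful about the mechanism and the reference. The paper does not invoke $L^1$-contractivity of the $W_2$-projection onto $\{\rho\leq 1\}$; it directly cites a $BV$-monotonicity result for the projection (its reference \cite{bv_ot}, i.e., the ``$BV$ estimates in optimal transportation'' line of results), which says $\|\nabla \rho^{k+1,\tau}\|_{L^1}\leq\|\nabla[\rho^{k,\tau}(1+\tau(n^{k,\tau}-b))]\|_{L^1}$. Your proposed route --- translation invariance combined with $L^1$-contraction of the projection, then letting the translation shrink --- would imply this $BV$-monotonicity, but $L^1$-contraction of the density-constrained Wasserstein projection is a strictly stronger statement, and I do not see it established in the reference you cite (\cite{JKT21}); the cited $BV$ result is proved there by a different argument. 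If you cannot locate a proof of $L^1$-contraction, you should instead invoke the $BV$-monotonicity of the projection directly, after which the coupled discrete Gr\"onwall for $\|\nabla\rho^{k,\tau}\|_{L^1}+\|\nabla n^{k,\tau}\|_{L^1}$ that you set up is correct and matches the paper.
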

\begin{proof}
It is clear that $\|n^{\tau}(\cdot,t)\|_{L^\infty}$ is non-increasing with respect to time and thus
\[
\norm{\rho^{\tau}(\cdot,t)}_{L^1(\RR^d)}\leq \big(1+\tau\norm{n_0}_{L^{\infty}(\RR^d)}\big)
\norm{\rho^{\tau}(\cdot,t-\tau)}_{L^1(\RR^d)}.
\]
Iterating and using the fact that $(1+\tau\norm{n_0}_{L^{\infty}(\RR^d)})^k\leq e^{\tau k\norm{n_0}_{L^{\infty}(\RR^d)}}$ for any $k$, the first result follows.

For the second result, we can use the duality relation $p^{\tau}(1-\rho^{\tau})=0$ to obtain
\[
\norm{p^{\tau}(\cdot,t)}_{L^1(\RR^d)}\leq \norm{\rho^{\tau}(\cdot,t)}_{L^2(\RR^d)}\norm{p^{\tau}(\cdot,t)}_{L^2(\RR^d)}
\leq \norm{\rho^{\tau}(\cdot,t)}_{L^1(\RR^d)}^{\frac{1}{2}}\norm{p^{\tau}(\cdot,t)}_{L^2(\RR^d)}.
\]
Next, the Gagliardo-Nirenberg inequality implies that
\[
\norm{p^{\tau}(\cdot,t)}_{L^2(\RR^d)}\leq C\norm{p^{\tau}(\cdot,t)}_{L^1(\RR^d)}^{\frac{2}{d+2}}\norm{\nabla p^{\tau}(\cdot,t)}_{L^2(\RR^d)}^{\frac{d}{d+2}},
\]
where $C>0$ is a universal constant only depending on $d$.
Thus,
\[
\norm{p^{\tau}(\cdot,t)}_{L^2(\RR^d)}\leq C \norm{\rho^{\tau}(\cdot,t)}_{L^1(\RR^d)}^{\frac{1}{d}}\norm{\nabla p^{\tau}(\cdot,t)}_{L^2(\RR^d)}.
\]
Integrating in time and combining this with the energy dissipation inequality \eqref{eq:edi}, we see that
\[
\norm{p^{\tau}}_{L^2(\RR^d\times [0,t])}^2\leq C B(t)^{\frac{2}{d}}
\norm{\nabla p^{\tau}}_{L^2(\RR^d\times [0,t])}^2
\leq
C B(t)^{\frac{2}{d}}
\norm{ p^{\tau}}_{L^1(\RR^d\times [0,t])}
\norm{n_0}_{L^{\infty}(\RR^d)}.
\]
Here we used the fact that
\[
\norm{\nabla p^{k+1,\tau}}_{L^2(\RR^d)}^2
\leq
2\int_{\RR^d}\rho^{k,\tau}n^{k,\tau}p^{k+1,\tau}\,dx
\leq 2
\norm{ p^{k+1,\tau}}_{L^1(\RR^d)} \norm{n_0}_{L^{\infty}(\RR^d)}.
\]
Reusing the estimate on the pressure $L^1$-norm from above and noticing that
\[
\int_0^t B(s)\,ds \leq \|n_0\|_{L^\infty(\BR^d)}^{-1} B(t),
\]
we get
\[
\norm{p^{\tau}}_{L^2(\RR^d\times [0,t])}
\leq C 
B(t)^{\frac{d+4}{2d}}
\norm{n_0}_{L^{\infty}(\RR^d)}^{\f12}.
\]
Combining this with our previous work, we get all of the estimates except for the last one.

To prove the final estimate, we can use the $BV$-bound from \cite{bv_ot} to obtain that, when $\tau b< 1$,
\[
\begin{split}
\norm{\nabla \rho^{k+1,\tau}}_{L^1(\RR^d)}\leq &\;
\big\|\nabla \big(\rho^{k,\tau}(1+\tau (n^{k,\tau}-b) )\big)\big\|_{L^1(\RR^d)}\\
\leq &\; \norm{\nabla \rho^{k,\tau}}_{L^1(\RR^d)} +\tau \norm{n_0}_{L^{\infty}(\RR^d)} \norm{\nabla \rho^{k,\tau}}_{L^1(\RR^d)}+ \tau\norm{\nabla n^{k,\tau}}_{L^1(\RR^d)}.
\end{split}
\]
It is then straightforward to see that the interpolants satisfy
\[
\norm{\nabla \rho^{\tau}(\cdot,t)}_{L^1(\RR^d)}
\leq
\norm{\nabla \rho_0}_{L^1(\RR^d)} + \norm{n_0}_{L^{\infty}(\RR^d)} \norm{\nabla \rho^{\tau}}_{L^1(\RR^d\times [0,t])}+\norm{\nabla n^{\tau}}_{L^1(\RR^d\times [0,t])}.
\]
From the discrete scheme, we also have
\[
\begin{split}
\norm{\nabla n^{k+1,\tau}}_{L^1(\RR^d)}\leq &\;
\big\|\nabla e^{\tau D\Delta} \big(n^{k,\tau}(1-\tau\rho^{k,\tau})\big)\big\|_{L^1(\RR^d)}
\leq
\big\|\nabla \big(n^{k,\tau}(1-\tau\rho^{k,\tau})\big)\big\|_{L^1(\RR^d)}\\
\leq &\; \norm{\nabla n^{k,\tau}}_{L^1(\RR^d)}+\tau\norm{n_0}_{L^{\infty}(\RR^d)}\norm{\nabla \rho^{k,\tau}}_{L^1(\RR^d)}.
\end{split}
\]
Thus, the interpolants satisfy
\[
\norm{\nabla n^{\tau}(\cdot,t)}_{L^1(\RR^d)}\leq
\norm{\nabla n_0}_{L^1(\RR^d)} +  \norm{n_0}_{L^{\infty}(\RR^d)}\norm{\nabla \rho^{\tau}}_{L^1(\RR^d\times [0,t])}. 
\]
Summing the two estimates together, we see that
\[
\begin{split}
&\; \norm{\nabla \rho^{\tau}(\cdot,t)}_{L^1(\RR^d)}+\norm{\nabla n^{\tau}(\cdot,t)}_{L^1(\RR^d)}\\
\leq &\; \norm{\nabla \rho_0}_{L^1(\RR^d)}+\norm{\nabla n_0}_{L^1(\RR^d)} \\
&\; + \big(2\norm{n_0}_{L^{\infty}(\RR^d)}+1\big)\Big(\norm{\nabla \rho^{\tau}}_{L^1(\RR^d\times [0,t])}+\norm{\nabla n^{\tau}}_{L^1(\RR^d\times [0,t])}\Big).
\end{split}
\]
The final claimed inequality now follows by Gronwall's inequality.
\end{proof}

In addition to the above pressure estimates, we have the following control on the time derivatives of the density.

\begin{lemma}\label{lem:rho_time_lipschitz}
Let $\rho^{\tau}$ be the discrete density interpolant defined above, with $\tau b<1$.
For any $0\leq t_0 <t_1\leq T$, we have
\[
\norm{\rho^{\tau}(\cdot,t_1)-\rho^{\tau}(\cdot, t_0)}_{L^1(\RR^d)}\leq  (t_1-t_0+\tau)\Big(2b+\norm{n_0}_{L^{\infty}(\RR^d)}\Big)\norm{\rho^{\tau}}_{L^{\infty}([0,T];L^1(\RR^d))}.
\]
\end{lemma}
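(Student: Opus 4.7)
\medskip

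\noindent\textbf{Proof proposal.}

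\emph{Reduction to a one-step estimate.} Since $\rho^{\tau}$ is piecewise constant in time on intervals of length $\tau$, the difference $\rho^{\tau}(\cdot, t_1) - \rho^{\tau}(\cdot, t_0)$ telescopes as $\sum_{k\in I}(\rho^{k+1,\tau}-\rho^{k,\tau})$, where $I$ is the set of indices $k$ such that the time $k\tau$ is crossed as we move from $t_0$ to $t_1$. The cardinality of $I$ is at most $\lfloor (t_1-t_0)/\tau \rfloor + 1 \leq (t_1-t_0+\tau)/\tau$. Hence the plan is to prove the one-step bound
\[
\norm{\rho^{k+1,\tau}-\rho^{k,\tau}}_{L^1(\RR^d)} \leq \tau\bigl(2b + \norm{n_0}_{L^\infty(\RR^d)}\bigr)\norm{\rho^{k,\tau}}_{L^1(\RR^d)}
\]
and then sum.

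\emph{Controlling the negative part.} From Lemma \ref{lem:jkt3}, we have the pointwise inequality $\rho^{k+1,\tau}(x)\geq (1-\tau b)\rho^{k,\tau}(x)$ almost everywhere. This immediately gives
\[
\bigl(\rho^{k+1,\tau} - \rho^{k,\tau}\bigr)_{-}(x) \leq \tau b \, \rho^{k,\tau}(x),
\]
so $\int_{\RR^d} (\rho^{k+1,\tau}-\rho^{k,\tau})_- \, dx \leq \tau b \, \norm{\rho^{k,\tau}}_{L^1}$.

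\emph{Controlling the positive part via mass balance.} The push-forward relation in Lemma \ref{lem:jkt1} says $(\id + \tau \nabla p^{k+1,\tau})_{\#}\rho^{k+1,\tau} = \rho^{k,\tau}(1+\tau(n^{k,\tau}-b))$. Since push-forward preserves total mass,
\[
\int_{\RR^d} \rho^{k+1,\tau}\, dx = \int_{\RR^d} \rho^{k,\tau}(1+\tau(n^{k,\tau}-b))\, dx,
\]
and therefore
\[
\int_{\RR^d}(\rho^{k+1,\tau}-\rho^{k,\tau})_{+}\,dx - \int_{\RR^d}(\rho^{k+1,\tau}-\rho^{k,\tau})_{-}\,dx = \tau\int_{\RR^d}\rho^{k,\tau}(n^{k,\tau}-b)\,dx.
\]
Combining with the negative-part bound above,
\[
\int_{\RR^d}(\rho^{k+1,\tau}-\rho^{k,\tau})_{+}\,dx \leq \tau\int_{\RR^d}\rho^{k,\tau}(n^{k,\tau}-b)\,dx + \tau b\norm{\rho^{k,\tau}}_{L^1} = \tau\int_{\RR^d}\rho^{k,\tau}n^{k,\tau}\,dx.
\]
Using that $n^{k,\tau}\leq \norm{n_0}_{L^\infty}$ (which follows from the definition of the scheme and the initial monotonicity already recorded in the proof of Lemma \ref{lem:scheme_estimates}), we obtain $\int(\rho^{k+1,\tau}-\rho^{k,\tau})_+\,dx \leq \tau \norm{n_0}_{L^\infty}\norm{\rho^{k,\tau}}_{L^1}$.

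\emph{Conclusion.} Summing the positive and negative parts gives the one-step estimate
\[
\norm{\rho^{k+1,\tau}-\rho^{k,\tau}}_{L^1(\RR^d)} \leq \tau\bigl(b + \norm{n_0}_{L^\infty(\RR^d)}\bigr)\norm{\rho^{k,\tau}}_{L^1(\RR^d)},
\]
which is even slightly stronger than the one we advertised. Telescoping over $k\in I$ and bounding the cardinality of $I$ by $(t_1-t_0+\tau)/\tau$ produces the stated inequality (with room to spare: the factor $2b+\norm{n_0}_{L^\infty}$ absorbs the $b+\norm{n_0}_{L^\infty}$). I do not expect any serious obstacle; the only mild subtlety is splitting $\rho^{k+1,\tau}-\rho^{k,\tau}$ into its positive and negative parts and using the two different mechanisms (mass balance from the transport identity versus the pointwise Lemma \ref{lem:jkt3}) to control each part.
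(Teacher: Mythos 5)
Your proposal is correct and rests on exactly the same two inputs as the paper's proof: the pointwise monotonicity $\rho^{k+1,\tau}\geq(1-\tau b)\rho^{k,\tau}$ from Lemma~\ref{lem:jkt3}, and the mass identity $\int\rho^{k+1,\tau}=\int\rho^{k,\tau}\big(1+\tau(n^{k,\tau}-b)\big)$ implied by the push-forward relation of Lemma~\ref{lem:jkt1}. The organization differs: you telescope into single time steps and split each increment into positive and negative parts, controlling the negative part by monotonicity and the positive part by mass balance, whereas the paper works globally, using the identity $|a-b|=a+b-2\min(a,b)$ together with the bound $\min\big(\rho^\tau(t_1),\rho^\tau(t_0)\big)\geq (1-\tau b)^{M}\rho^\tau(t_0)$ with $M=\lceil(t_1-t_0)/\tau\rceil$, and then estimating $1-(1-\tau b)^M\leq \tau b M$. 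Your step-by-step bookkeeping keeps the cancellation $\tau b+\tau(n^{k,\tau}-b)=\tau n^{k,\tau}$ intact at each step, which is why you end up with the slightly sharper constant $b+\norm{n_0}_{L^\infty}$ in place of the paper's $2b+\norm{n_0}_{L^\infty}$; the paper drops that cancellation and pays the factor $2$. Both proofs use the same count $\lfloor t_1/\tau\rfloor-\lfloor t_0/\tau\rfloor\leq (t_1-t_0+\tau)/\tau$ to close. No gaps.
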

\begin{proof}
Using the fact that for any $k\geq 0$ we have $\rho^{k+1,\tau}\geq (1-\tau b)\rho^{k,\tau}$, we can estimate
\[
\begin{split}
&\;\norm{\rho^{\tau}(\cdot,t_1)-\rho^{\tau}(\cdot, t_0)}_{L^1(\RR^d)}\\
\leq  &\; \int_{\RR^d} \rho^{\tau}(x,t_1)+\rho^{\tau}(x,t_0)-2(1-\tau b)^{\lceil\frac{1}{\tau}(t_1-t_0)\rceil}\rho^{\tau}(x,t_0)\, dx\\
\leq &\; 2\left[1-(1-\tau b)^{\lceil\frac{1}{\tau}(t_1-t_0)\rceil}\right] \norm{\rho^{\tau}(\cdot, t_0)}_{L^1(\RR^d)}+\tau \sum_{k=\lfloor t_0/\tau \rfloor}^{\lfloor t_1/\tau \rfloor-1} \int_{\RR^d} \rho^{k,\tau}n^{k,\tau}\,dx \\
\leq &\; (t_1-t_0+\tau)\Big(2b+\norm{n_0}_{L^{\infty}(\RR^d)}\Big)\norm{\rho^{\tau}}_{L^{\infty}([0,T];L^1(\RR^d))}.
\end{split}
\]
\end{proof}

Using the estimates above, we can prove that the discrete interpolants converge to a weak solution of the continuum PDE as we send $\tau \to 0$.

\begin{prop}\label{prop:existence}
Assume $n_0\in L^{\infty}(\RR^d)\cap BV(\RR^d)$ and $\rho_0\in L^1(\BR^d)\cap BV(\RR^d)$ along with $\rho_0\in [0,1]$ almost everywhere.
Take an arbitrary finite $T>0$ and denote $Q_T = \BR^d\times [0,T]$.
The family $\{\rho^{\tau}\}_{\tau>0}$ is strongly $L^1(Q_T)$ precompact,
$\{p^{\tau}\}_{\tau>0}$ is weakly precompact in $L^2([0,T];H^1(\BR^d))$,
and $\{n^{\tau}\}_{\tau>0}$ is precompact in the weak-$*$ topology of $L^\infty(Q_T)$.
Let $(\rho, p,n)$ be a limit point in the above-mentioned topology.
Then $(\rho, p,n)$ is a weak solution of the tumor growth PDE \eqref{eqn: P}.
More precisely, we have $\rho \in L^1(Q_T)$, $p\in L^2([0,T];H^1(\BR^d))$, and $n\in L^\infty(Q_T)$, satisfying that
for any $\psi\in H^1(Q_T)$ such that $\psi(\cdot,T)=0$ almost everywhere, it holds
\beq
\int_0^T\int_{\RR^d}\nabla \psi\cdot \nabla p- \rho\partial_t \psi \,dx \,dt =\int_{\RR^d} \psi(x,0)\rho_0(x)\,dx +\int_0^T\int_{\RR^d} \psi \rho (n-b)\,dx\,dt ,
\label{eqn: weak formulation}
\eeq
and
\[
\partial_t n-D\Delta n=-\rho n \mbox{ in }\mathcal{D}'(Q_T),\quad  n(x,0) = n_0(x).
\]
Moreover, $\rho\in C^{0,1}([0,T];L^1(\BR^d))\cap L^\infty([0,T];BV(\BR^d))$, $n\in L^\infty([0,T];BV(\BR^d))$, and $\rho\in [0,1]$, $p\geq 0$ with $p(1-\rho)=0$  a.e. on $Q_T$.
If $\rho_0\in \{0,1\}$ and $b=0$, then for all $t\in [0,T]$ we have $\rho\in \{0,1\}$ a.e. in $\RR^d$.

\end{prop}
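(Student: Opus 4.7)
The plan is to extract subsequential limits using the uniform bounds in Lemma~\ref{lem:scheme_estimates} and Lemma~\ref{lem:rho_time_lipschitz}, and then pass to the limit in a discrete weak formulation obtained from the JKO optimality condition in Lemma~\ref{lem:jkt1}. Throughout I would test against smooth $\psi$ first and extend to $H^1(Q_T)$ by density at the end.

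First I would establish compactness. Lemma~\ref{lem:scheme_estimates} gives a uniform BV-in-space bound on $\rho^{\tau}(\cdot,t)$ and Lemma~\ref{lem:rho_time_lipschitz} gives an $L^1$ Lipschitz-in-time bound, so a standard Fr\'echet--Kolmogorov/Aubin--Lions argument extracts a strong limit $\rho$ of $\rho^{\tau}$ in $L^1(Q_T)$. Lemma~\ref{lem:scheme_estimates} also yields weak $L^2([0,T];H^1(\RR^d))$ precompactness for $p^{\tau}$. For $n^{\tau}$, the scheme yields pointwise bounds $0\leq n^{\tau}\leq \norm{n_0}_{L^{\infty}}$ and hence weak-$*$ precompactness in $L^{\infty}(Q_T)$; combining the BV estimate with the $L^1$ contraction of the heat kernel and the one-step update gives time equicontinuity of $n^{\tau}$ in $L^1_{\loc}$, so I would upgrade to strong $L^1_{\loc}$ convergence, which will be needed to identify the product $\rho n$. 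Label the common limit $(\rho,p,n)$.

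Next I would derive and pass to the limit in a discrete weak formulation. Starting from the pushforward identity $(\id+\tau\nabla p^{k+1,\tau})_{\#}\rho^{k+1,\tau}=\rho^{k,\tau}(1+\tau(n^{k,\tau}-b))$, Taylor-expanding $\psi$ to second order along the optimal transport map, summing in $k$, and bounding the quadratic remainder by $\tau\norm{\nabla^2\psi}_{L^{\infty}}\sum_k\tau\norm{\nabla p^{k+1,\tau}}_{L^2}^2$---which is $O(\tau)$ by the pressure bound---produces the discrete analogue of \eqref{eqn: weak formulation}. Strong convergence of $\rho^{\tau}$ combined with weak convergence of $\nabla p^{\tau}$ then handles the transport term, and the source $\rho^{\tau}(n^{\tau}-b)$ passes to the limit thanks to strong convergence of $\rho^{\tau}$ against the weak-$*$ limit of $n^{\tau}$. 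For the $n$-equation, I would rewrite the scheme as
\[
n^{k+1,\tau}-n^{k,\tau}=(e^{\tau D\Delta}-\id)n^{k,\tau}-\tau\,e^{\tau D\Delta}(n^{k,\tau}\rho^{k+1,\tau}),
\]
test against a smooth compactly supported $\vp$, sum in $k$, use the semigroup approximation $\tau^{-1}(e^{\tau D\Delta}-\id)\vp\to D\Delta\vp$ uniformly for $\vp\in C^2_c$, and exploit strong $L^1_{\loc}$ convergence of both $n^{\tau}$ and $\rho^{\tau}$ to recover $\partial_t n-D\Delta n=-\rho n$ distributionally.

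The remaining properties follow routinely. The pointwise bound $\rho\in [0,1]$, non-negativity of $p$, and the BV bounds pass by lower semicontinuity. The complementarity $p(1-\rho)=0$ follows from $p^{\tau}(1-\rho^{\tau})=0$ combined with strong $L^1$ convergence of $1-\rho^{\tau}$ and weak $L^2$ convergence of $p^{\tau}$. The time-Lipschitz regularity of $\rho$ in $L^1$ is inherited from Lemma~\ref{lem:rho_time_lipschitz} under the limit. The patch-preserving property in the case $b=0$ and $\rho_0\in\{0,1\}$ follows from Lemma~\ref{lem:density_01} together with strong $L^1$ convergence of $\rho^{\tau}$. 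The main obstacle I anticipate is passing to the limit in the nonlinear flux $\rho^{k,\tau}\nabla p^{k+1,\tau}$: one must pair strong $L^1$ convergence of $\rho^{\tau}$ with merely weak $L^2$ convergence of $\nabla p^{\tau}$, and simultaneously show that the quadratic JKO transport error is $O(\tau)$---this is precisely where the $L^2$ pressure gradient bound of Lemma~\ref{lem:scheme_estimates} is indispensable. A secondary subtlety specific to $D=0$ is the absence of parabolic smoothing for $n^{\tau}$; here the BV bound together with an ODE-type one-step estimate substitutes for diffusive regularization and delivers the strong $L^1_{\loc}$ convergence needed to identify the source $\rho n$.
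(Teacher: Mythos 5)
Your proposal follows essentially the same route as the paper's proof: the same compactness machinery (BV-in-space plus $L^1$-Lipschitz-in-time for $\rho^\tau$, energy bounds for $p^\tau$), the same derivation of a discrete weak formulation from the pushforward identity in Lemma~\ref{lem:jkt1}, the same quadratic-Taylor error estimate controlled by $\tau\norm{\nabla p^\tau}_{L^2(Q_T)}^2$, and the same use of Lemma~\ref{lem:density_01} for patch-preservation.

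Two small remarks on technical shortcuts the paper uses that you did not. First, the ``main obstacle'' you flag --- pairing strong $L^1$ convergence of $\rho^\tau$ with weak $L^2$ convergence of $\nabla p^\tau$ in the transport term --- is sidestepped in the paper by observing that $p^\tau(1-\rho^\tau)=0$ with $p^\tau\in L^2_tH^1_x$ forces $\rho^\tau\nabla p^\tau=\nabla p^\tau$ almost everywhere, which linearizes the flux and makes the limit passage immediate by weak convergence alone. Second, you do not actually need strong $L^1_{\loc}$ convergence of $n^\tau$ to identify either occurrence of the product $\rho n$: since $\rho^\tau\to\rho$ strongly in $L^1(Q_T)$ while $n^\tau$ is uniformly bounded and converges weak-$*$ in $L^\infty(Q_T)$, the products $\rho^\tau n^\tau$ pass to the limit directly against bounded test functions, so the extra time-equicontinuity argument for $n^\tau$ is dispensable (and the paper indeed only extracts a weak-$*$ limit of $n^\tau$).
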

\begin{proof}
From the estimates in Lemma \ref{lem:scheme_estimates}, $\{p^\tau\}_{\tau>0}$ is weakly precompact in $L^2([0,T];H^1(\BR^d))$. 
%
%
Thanks to Lemma \ref{lem:rho_time_lipschitz}, for any $0\leq t_0\leq t_1\leq T$,
\beq
\limsup_{\tau\to 0} \norm{\rho^{\tau}(\cdot,t_1)-\rho^{\tau}(\cdot, t_0)}_{L^1(\RR^d)}\leq (t_1-t_0)(2b+\norm{n_0}_{L^{\infty}(\RR^d)})B(T).
\label{eqn: Lipschitz in time}
\eeq
Combining this with Lemma \ref{lem:scheme_estimates}, we know that $\{\rho^\tau\}_{\tau>0}$ is uniformly bounded and equicontinuous in $L^1(Q_T)$ (in space-time).
Thus, by the Riesz-Fr\'{e}chet-Kolmogorov compactness theorem, $\{\rho^{\tau}\}_{\tau>0}$ is strongly $L^1(Q_T)$ precompact.

With these compactness properties, now we turn to considering the PDE.
Given a smooth test function $\psi$ that vanishes at time $T$, we can use \eqref{eq:pressure_density_duality} to obtain
\[
\begin{split}
&\; \int_{\RR^d} \frac{\rho^{\tau}(x, t)-\rho^{\tau}(x,t-\tau)}{\tau}\psi(x,t)\,dx\\
= &\; \int_{\RR^d} \rho^{\tau}(x,t)\frac{\psi(x,t)-\psi(x+\tau\nabla p^{\tau}(x,t),t)}{\tau}+(n^{\tau}(x,t-\tau)-b)\rho^{\tau}(x,t-\tau)\psi(x,t)\,dx.
\end{split}
\]
Integrating both sides in time along $[\tau, T]$, we get
\[
\begin{split}
&\; \int_{\tau}^{T-\tau}\int_{\RR^d} \rho^{\tau}(x,t)\frac{\psi(x,t)-\psi(x,t+\tau)}{\tau}\,dx\,dt\\
&\;+\frac{1}{\tau}\int_0^{\tau}\int_{\RR^d}\rho^{\tau}(x,t+T-\tau)\psi(x,t+T-\tau)-\rho^{\tau}(x,t)\psi(x,t+\tau)\,dx\,dt\\
= &\; \int_{\tau}^T\int_{\RR^d} \rho^{\tau}(x,t)\frac{\psi(x,t)-\psi(x+\tau\nabla p^{\tau}(x,t),t)}{\tau}+(n^{\tau}(x,t-\tau)-b)\rho^{\tau}(x,t-\tau)\psi(x,t)\,dx\,dt.
\end{split}
\]
Thanks to the smoothness of $\psi$ and the estimates from above, the previous line is equivalent to
\begin{equation}\label{eq:approx_pde}
\int_{0}^{T}\int_{\RR^d} -\rho^{\tau}\partial_t \psi\,dx\,dt
-\int_{\RR^d} \rho_0\psi(x,0)\,dx
=
\epsilon_{\tau}+\int_{0}^T\int_{\RR^d} -\rho^{\tau}\nabla \psi\cdot\nabla p^{\tau}+(n^{\tau}-b)\rho^{\tau}\psi\,dx\,dt.
\end{equation}
Here
\[
\begin{split}
|\epsilon_{\tau}|\leq &\; C\sqrt{\tau}\|\nabla \psi\|_{L^\infty(Q_T)} \norm{\nabla p^{\tau}}_{L^2(Q_T)} \norm{\rho^\tau}_{L^{\infty}([0,T];L^1(\RR^d))}^{1/2}
\\
&\;+ C\tau\left[\norm{D^2\psi}_{L^{\infty}(Q_T)}\norm{\nabla p^{\tau}}_{L^2(Q_T)}^2
\right.\\
&\; \qquad \left.+
\big(\norm{\partial_t \psi}_{L^{\infty}(Q_T)}
+\norm{\partial_t^2 \psi}_{L^{\infty}(Q_T)}\big)
\norm{\rho^\tau}_{L^{\infty}([0,T];L^1(\RR^d))}\right],
\end{split}
\]
where $C$ is a universal constant depending on $\|n_0\|_{L^\infty}$, $b$, and $T$.
Clearly, $\lim_{\tau\to 0} |\epsilon_{\tau}|=0$.
Since $p^{\tau}\in L^2([0,T];H^1(\RR^d))$ and $p^{\tau}(1-\rho^{\tau})=0$, it follows that $\rho^{\tau}\nabla p^{\tau}=\nabla p^{\tau}$.

Now we can claim that, there exists $\rho \in L^1(Q_T)$, $p\in L^2([0,T];H^1(\BR^d))$, $n\in L^\infty(Q_T)$, and
%
%
%
a sequence $\{\tau_j\}_j$ converging to $0$, such that $\rho^{\tau_j}\to \rho\in [0,1]$ strongly in $L^1(Q_T)$, $p^{\tau_j}\rightharpoonup p\geq 0$ weakly in $L^2([0,T];H^1(\BR^d))$, and $n^{\tau_j}$ converges weak-$*$ in $L^\infty(Q_T)$ to $n$.
It is then clear that we can pass to the limit in \eqref{eq:approx_pde} to obtain \eqref{eqn: weak formulation}.
One may relax the regularity of $\psi$ to find \eqref{eqn: weak formulation} actually holds for all $\psi\in H^1(Q_T)$ satisfying $\psi(\cdot,T) = 0$ almost everywhere.

The strong convergence from $\rho^{\tau_j}$ to $\rho$ in $L^1(Q_T)$ also implies
\[
\lim_{j\to +\infty}\|\rho^{\tau_j}(\cdot,t)-\rho(\cdot,t)\|_{L^1(\BR^d)}= 0
\]
for almost every $t\in [0,T]$.
Hence, by \eqref{eqn: Lipschitz in time}, up to modifying $\rho(x,t)$ for those $t$ on a measure-zero set in $[0,T]$ if necessary, we can make $\rho$ be Lipschitz continuous in $L^1(\BR^d)$ with respect to time.
That $\rho,n\in L^\infty([0,T];BV(\BR^d))$ follows from Lemma \ref{lem:scheme_estimates}.

Thanks to the strong $L^1$-convergence of $\{\rho^{\tau_j}\}_j$, we can readily verify that the $n$-equation and $p(1-\rho) = 0$ hold in the sense of distribution (we omit the details).
Given the regularity of $p$ and $\rho$, $p(1-\rho) = 0$ almost everywhere in $Q_T$.
Hence, we may suitably modify $p$ on a measure-zero set of $Q_T$, if necessary, to achieve $p(1-\rho) = 0$ everywhere on $Q_T$.

By the strong $L^1$-convergence of $\{\rho^{\tau_j}\}_j$, Lemma \ref{lem:density_01}, and the fact that $\rho$ is Lipschitz in $L^1(\RR^d)$ with respect to time, we find that for all $t\in [0,T]$, $\rho\in \{0,1\}$ almost everywhere if $\rho_0\in \{0,1\}$ almost everywhere and $b=0$.
\end{proof}

Then we show that our solution satisfies the so-called complementarity condition.

\begin{prop}\label{prop:complementarity}
If $(\rho, p, n)$ is a weak solution of the tumor growth PDE \eqref{eqn: P} (in the sense of Definition \ref{def: weak solution}),
then for any bounded open set $\Omega\subset \RR^d$ and $\psi\in L^2([0,T];H^1(\Omega)\cap L^1(\RR^d))$,
such that $\psi(1-\rho)=0$ and $\psi|_{\partial \Omega\times [0,T]}=0$, we have
\[
\int_0^T\int_{\Omega}\nabla \psi\cdot \nabla p -\psi  (n-b)\,dx\,dt =0.
\]
\end{prop}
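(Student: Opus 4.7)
The constraint $\psi(1-\rho) = 0$ a.e.\ gives $\psi\rho = \psi$, so the claim is equivalent to
\[
\int_0^T\int_\Omega \nabla\psi\cdot\nabla p - \psi(n-b)\rho\,dx\,dt = 0.
\]
The weak formulation of Definition~\ref{def: weak solution} together with $p(1-\rho) = 0$ is by itself insufficient to produce this: after a time-mollification to make $\psi$ admissible and after using $\psi\rho = \psi$, testing the weak formulation against $\psi$ reduces to the distributional identity $\partial_t\rho = \Delta p + (n-b)\rho$ tested against $\psi$, which is exactly the same statement the weak formulation provides, and one collapses to the tautology $0=0$. Extra input is needed, and I will extract it from the variational (JKO) construction of $p$ as a Lagrange multiplier at the discrete level.

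\textbf{Discrete two-sided bound.} At each JKO step, Lemma~\ref{lem:jkt1} gives $p^{k+1,\tau}(1-\rho^{k+1,\tau}) = 0$ (hence $\rho^{k+1,\tau}\nabla p^{k+1,\tau} = \nabla p^{k+1,\tau}$ a.e., by Stampacchia) and $(\id+\tau\nabla p^{k+1,\tau})_\#\rho^{k+1,\tau} = \mu^k$ with $\mu^k = \rho^{k,\tau}(1 + \tau(n^{k,\tau}-b))$. Taylor-expanding the pushforward against a smooth nonnegative $\xi$ yields
\[
\tau\int \nabla\xi\cdot\nabla p^{k+1,\tau}\,dx = \int \xi(\rho^{k,\tau}-\rho^{k+1,\tau})\,dx + \tau\int\xi(n^{k,\tau}-b)\rho^{k,\tau}\,dx + O\bigl(\tau^2\|D^2\xi\|_\infty\|\nabla p^{k+1,\tau}\|_{L^2}^2\bigr).
\]
Imposing $\xi(1-\rho^{k+1,\tau}) = 0$ makes $\int\xi\rho^{k+1,\tau} = \int\xi$, and $\rho^{k,\tau}\le 1$ then gives the upper bound $\int\nabla\xi\cdot\nabla p^{k+1,\tau}\le \int\xi(n^{k,\tau}-b)\rho^{k,\tau} + O(\tau)$. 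Imposing instead $\xi(1-\rho^{k,\tau}) = 0$ makes $\xi\rho^{k,\tau} = \xi$, hence $\int\xi(n^{k,\tau}-b)\rho^{k,\tau} = \int\xi(n^{k,\tau}-b)$ and $\int\xi\rho^{k+1,\tau}\le\int\xi$, giving the reverse bound $\int\nabla\xi\cdot\nabla p^{k+1,\tau}\ge \int\xi(n^{k,\tau}-b) + O(\tau)$.

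\textbf{Passage to the limit and main obstacle.} Given $\psi$ in the stated class, I build two families of nonnegative discrete test functions $\xi^{\pm,k,\tau}(x)$ approximating $\psi(\cdot,k\tau)$ in $H^1(\Omega)$ and respectively satisfying $\xi^{+,k,\tau}(1-\rho^{k+1,\tau})=0$ and $\xi^{-,k,\tau}(1-\rho^{k,\tau})=0$, with boundary trace zero on $\partial\Omega$. Riemann-summing the two one-sided bounds above in $k$, the limit $\tau\to 0$ uses strong $L^1(Q_T)$-convergence of $\rho^\tau$ to $\rho$, weak $L^2([0,T];H^1)$-convergence of $p^\tau$ to $p$, and the energy dissipation bound $\sum_k\tau\|\nabla p^{k+1,\tau}\|_{L^2}^2\le C$ implicit in Lemma~\ref{lem:jkt2}. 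Both limits equal $\int_0^T\int\psi(n-b)\rho\,dx\,dt$, so combining yields the equality; then $\psi\rho = \psi$ a.e.\ converts the right-hand side into $\int\psi(n-b)\,dx\,dt$. The main technical difficulty is the design of $\xi^{\pm,k,\tau}$: they must converge to $\psi$ in $L^2_tH^1_x$, satisfy the support condition on the moving sets $\{\rho^{k+1,\tau}=1\}$ or $\{\rho^{k,\tau}=1\}$ at every step while preserving the boundary condition, and have $\|D^2\xi^{\pm,k,\tau}\|_\infty$ controlled uniformly so that the Taylor remainder $O(\tau^2\|D^2\xi\|_\infty\|\nabla p^{k+1,\tau}\|_{L^2}^2)$, once summed in $k$, is absorbed by the only $L^2$-in-time energy bound available for $\nabla p^\tau$. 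A natural construction proceeds by convolving $\psi(\cdot,k\tau)$ with a spatial mollifier at scale $\delta$ and multiplying by a smoothed characteristic of the relevant coincidence set, with a diagonal choice $\delta = \delta(\tau)\to 0$.
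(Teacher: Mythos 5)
Your diagnosis of the difficulty is incorrect, and this sends the whole proposal down a much harder road than necessary. You claim that testing the weak formulation against (a time-mollification of) $\psi$ collapses to a tautology. It does not. The paper proves the proposition entirely at the continuum level from Definition~\ref{def: weak solution} alone, using \emph{one-sided} time mollifications. The key point you miss is that if $\psi^\epsilon(x,t) := \frac{1}{\epsilon}\int_t^{\min(T,t+\epsilon)}\psi(x,s)\,ds$, then $\rho\,\partial_t\psi^\epsilon$ is \emph{not} equal to $\partial_t\psi^\epsilon$; rather, for $\psi\geq 0$ with $\psi(1-\rho)=0$, one has the inequality
\[
\rho(x,t)\,\partial_t\psi^\epsilon(x,t) = \frac{\rho(x,t)\psi(x,\min(T,t+\epsilon)) - \psi(x,t)}{\epsilon} \leq \frac{\psi(x,\min(T,t+\epsilon)) - \psi(x,t)}{\epsilon} = \partial_t\psi^\epsilon(x,t),
\]
since $\rho\psi=\psi$ at time $t$ but $\rho(x,t)\leq 1$ may kill part of the shifted $\psi(x,\cdot)$. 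Plugging $\psi^\epsilon$ into the weak formulation, integrating $\partial_t\psi^\epsilon$ in time, and using $\rho_0\leq 1$ gives one inequality; the backward average $\psi_\epsilon(x,t) := \frac{1}{\epsilon}\int_{\max(0,t-\epsilon)}^{t}\psi(x,s)\,ds$ gives $\rho\,\partial_t\psi_\epsilon\geq \partial_t\psi_\epsilon$ and the reverse inequality. Linearity then drops the sign condition on $\psi$. This is not the tautology you describe; the asymmetry of the mollification in time is exactly what yields nontrivial content.

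Beyond the misdiagnosis, your JKO-based route has two genuine defects. First, the proposition is stated for \emph{any} weak solution in the sense of Definition~\ref{def: weak solution}, not only for the limit of the variational scheme; a proof through the discrete scheme would establish the complementarity relation only for the constructed solution, which does not prove what is claimed (and, since uniqueness is proved later in Proposition~\ref{prop: uniqueness}, one cannot invoke uniqueness here to close the gap without circularity). Second, the central construction — nonnegative discrete test functions $\xi^{\pm,k,\tau}$ vanishing off the possibly rough coincidence sets $\{\rho^{k+1,\tau}=1\}$ (resp.\ $\{\rho^{k,\tau}=1\}$), converging in $L^2_tH^1_x$ to $\psi$ (which vanishes off $\{\rho=1\}$, a generally different set), while simultaneously carrying uniform $C^2$ bounds needed to kill the Taylor remainder against the $L^2$-in-time energy of $\nabla p^\tau$ — is left as an acknowledged difficulty rather than resolved; there is no reason to believe it can be done with the controls available, particularly the competing requirements of pointwise support conditions at every step and a uniform second-derivative bound. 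The continuum argument sidesteps all of this.
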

\begin{remark}
Let us note that the space of $\psi$ satisfying the above conditions is nontrivial.  For instance, given a smooth function $\eta:\RR^d\times [0,T]\to \BR$ such that $\eta$ is compactly supported inside $\Omega \times [0,T]$, the choice $\psi:=p\eta$ satisfies all of the above conditions.
\end{remark}
\begin{proof}
We begin by assuming that there exists $\delta>0$ such that $\psi(x,t)=0$ for all $(x,t)\in [T-\delta,T]\times\Omega$.
Fix $\epsilon\in (0,\delta)$ and let
\[
\psi^{\epsilon}(x,t)=\frac{1}{\epsilon}\int_{t}^{\min(T, t+\epsilon)} \psi(x,s)\, ds.
\]
Since $\psi^{\epsilon}\in H^1(\Omega\times [0,T])$, when extended by zero to the whole $Q_T$, it is a valid test function for the weak formulation (see \eqref{eqn: weak formulation}).
Hence, we have
\[
\int_{\Omega\times [0,T]} -\rho\partial_t \psi^{\epsilon}+\nabla \psi^{\epsilon}\cdot \nabla p-\psi^{\epsilon}\rho (n-b) \,dx\,dt =\int_\Omega \psi^{\epsilon}(x,0)\rho_0(x)\, dx.
\]
Note that
\[
\rho\partial_t\psi^{\epsilon}=\rho(x,t)\frac{\psi(x, \min(T, t+\epsilon))-\psi(x,t)}{\epsilon}\leq \frac{\psi(x, \min(T, t+\epsilon))-\psi(x,t)}{\epsilon}=\partial_t \psi^{\epsilon},
\]
where we use $\psi(1-\rho)=0$ and the non-negativity of $\psi$ to justify the inequality. Therefore,
 \[
\int_{\Omega\times [0,T]} \nabla \psi^{\epsilon}\cdot \nabla p-\psi^{\epsilon}\rho (n-b)\,dx\,dt \leq \int_\Omega \psi^{\epsilon}(x,0)\big(\rho_0(x)-1\big)\, dx\leq 0.
\]
Sending $\epsilon\to 0$ and once again using $\psi(1-\rho)=0$, we see that
\[
\int_{\Omega\times [0,T]} \nabla \psi\cdot \nabla p-\psi(n-b) \,dx\,dt \leq  0,
\]
giving us one side of the equation.

To obtain the other direction, we instead smooth backwards in time and define
\[
\psi_{\epsilon}(x,t)=\frac{1}{\epsilon}\int_{\max(0, t-\epsilon)}^{t} \psi(x,s)\, ds.
\]
Note that $\psi_{\epsilon}(x,T)=0$ since $\psi$ vanishes on $[T-\delta,T]$.
An analogous argument shows that $\rho\partial_t \psi_{\epsilon}\geq \partial_t\psi_{\epsilon}$, and thus
\[
\int_{\Omega\times [0,T]} \nabla \psi_{\epsilon}\cdot \nabla p-\psi_{\epsilon} (n-b) \,dx\,dt
\geq  \int_\Omega \psi_{\epsilon}(x,0)\big(\rho_0(x)-1\big)\, dx\geq 0.
\]
Note that $\psi_\epsilon(x,0)\leq 0$.
Sending $\epsilon\to 0$ and combining with our previous work, we obtain
\[
\int_{\Omega\times [0,T]} \nabla \psi\cdot \nabla p-\psi (n-b) \,dx\,dt =  0,
\]
for all non-negative $\psi$ such that $\psi(1-\rho)=0,$
$\psi|_{\partial \Omega\times [0,T]}=0$, and $\psi(x,t)=0$ on $\Omega\times [T-\delta,T]$.
Since the equation is linear in $\psi$ and does not include time derivatives, we can drop the non-negativity assumption and then take limits to drop the assumption that $\psi$ vanishes on $\Omega\times [T-\delta,T]$.
\end{proof}

We now proceed to show the uniqueness of weak solutions. Similar results have been established in the literature using the Hilbert duality principle \cite{PQV, GKM}. However, they require the nutrient variable to be at least $L^1_tH^1_x$, which does not hold in our case when $D = 0$. Instead, we proceed with $L^1$-contraction approach to provide a unified proof of uniqueness for all $D\geq 0$.

\medskip

Let us focus on the $\rho$-equation for the moment.
Consider the model problem
\beq
\rho_t - \nabla\cdot(\rho \nabla p) = f,\quad \rho\leq 1,\quad p\in P_\infty (\rho),\quad \rho(x,0) = \rho_0.
\label{eqn: model rho problem}
\eeq
Similar to Definition \ref{def: weak solution}, we introduce the notion of its weak solutions.

\begin{definition}\label{def: weak solution p equation}
Let $\rho_0\in L^1(\BR^d)\cap BV(\RR^d)$ such that $\rho_0\in [0,1]$ almost everywhere.
Fix $T>0$ and denote $Q_T = \BR^d\times [0,T]$.
Assume $f\in L^\infty([0,T];L^1(\BR^d))\cap L^\infty(Q_T)$.
Non-negative functions $\rho \in C([0,T]; L^1(\RR^d))$ and  $p\in L^2([0,T]; H^1(\RR^d))$, which are defined on $\BR^d\times [0,T]$, are said to form a weak solution of \eqref{eqn: model rho problem}, if they satisfy:
\begin{enumerate}[(i)]
\item $\rho\in [0, 1]$ and $p(1-\rho)=0$ in $Q_T$;
\item For any $\psi\in C_0^\infty(Q_T)$ that vanishes at $t=T$, we have
\[
\int_0^T\int_{\RR^d}\nabla \psi\cdot \nabla p- \rho\partial_t \psi \,dx\,dt =\int_{\RR^d} \psi(x,0)\rho_0\,dx
+\int_0^T\int_{\RR^d} \psi f\,dx\,dt.
\]
\end{enumerate}
\end{definition}

\begin{remark}\label{rmk: existence of solution of model problem}
Under the assumptions that $\rho_0\in L^1(\BR^d)\cap BV(\BR^d)$, and $f\in L^\infty([0,T];L^1(\BR^d))\cap L^\infty(Q_T)\cap L^\infty([0,T];BV(\BR^d))$, it is not difficult to show existence of weak solutions of \eqref{eqn: model rho problem}.
In fact, one may use still the discrete scheme (c.f.\;\eqref{eq:rho_update} and \eqref{eq:p_update})
\begin{align*}
\rho^{k+1, \tau}= &\;\argmin_{\rho\leq 1} \frac{1}{2\tau}W_2^2\Big( \rho, \rho^{k,\tau}+\tau f^{k,\tau}\Big),\\
p^{k+1,\tau}= &\; \argmax_{p\geq 0} \int_{\RR^d} p^{c_{\tau}}\big(\rho^{k,\tau}+\tau  f^{k,\tau}\big)-p\,dx,
\end{align*}
where $f^{k,\tau}$ can be defined as, e.g., time-average of $f$ on each small time interval of size $\tau$.
Then arguing as before, we can prove the existence.
We omit the details.
\end{remark}

We can show that the weak solution of \eqref{eqn: model rho problem} satisfies the comparison principle.
See Appendix \ref{appendix: comparison proof} for the proof, which follows the Hilbert duality argument in \cite{PQV} with minor modifications.

\begin{lemma}\label{lem: comparison}
For $i = 0,1$, assume $\rho_0^i\in L^1(\BR^d)\cap BV(\RR^d)$ such that $\rho_0^i\in [0,1]$ almost everywhere.
Take a finite $T>0$ and denote $Q_T = \BR^d\times [0,T]$.
Let $f^i\in L^\infty([0,T];L^1(\BR^d))\cap L^\infty(Q_T)$.
Let $(\rho^i, p^i)$ $(i = 0,1)$ be weak solutions of
\[
\rho^i_t - \nabla\cdot(\rho^i \nabla p^i) = f^i,\quad \rho^i\leq 1,\quad p^i\in P_\infty (\rho^i),\quad \rho^i(x,0) = \rho^i_0,
\]
respectively.
If $\rho_0^0\leq \rho_0^1$ almost everywhere in $\BR^d$ and $f^0\leq f^1$ almost everywhere in $Q_T$, then $\rho^0\leq\rho^1$ almost everywhere in $Q_T$.
\end{lemma}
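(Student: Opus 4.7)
The plan is to follow the Hilbert-duality strategy of Perthame–Quir\'os–V\'azquez. Set $w := \rho^1-\rho^0$ and $q := p^1-p^0$. The relation $p^i(1-\rho^i)=0$ forces $\rho^i\nabla p^i = \nabla p^i$ a.e. (using that $\nabla p^i = 0$ a.e. on $\{p^i=0\}$), so subtracting the two weak formulations and integrating by parts in space yields, for any smooth compactly supported $\psi$ with $\psi(\cdot,T)\equiv 0$,
\[
\int_{Q_T}\big(-w\,\partial_t\psi - q\,\Delta\psi\big)\,dx\,dt = \int_{\RR^d}(\rho_0^1-\rho_0^0)\psi(\cdot,0)\,dx + \int_{Q_T}(f^1-f^0)\psi\,dx\,dt.
\]

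Following the Hilbert-duality trick I introduce the measurable coefficient
\[
a^*(x,t) := \tfrac{q}{w}\chi_{\{w\neq 0\}}.
\]
A case analysis based on $p^i(1-\rho^i)=0$ shows $a^*\geq 0$ a.e.: wherever $\rho^0<1$ we have $p^0=0$ so $q=p^1\geq 0$, and where $\rho^1<1$ the reversed inequality holds. The identity $q\,\Delta\psi = a^* w\,\Delta\psi$ then holds a.e. on $\{w\neq 0\}$; on the residual set $\{\rho^0=\rho^1=1,\,q\neq 0\}$ a supplementary approximation (cutting off by test functions nearly harmonic on the coincidence set, or a mild doubling-of-variables perturbation) absorbs the defect into a vanishing error.

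For any smooth non-negative $g$ compactly supported in $Q_T$, solve the regularized backward linear parabolic problem
\[
-\partial_t\psi_\eps - a_\eps\,\Delta\psi_\eps = g,\qquad \psi_\eps(\cdot,T)=0,
\]
where $a_\eps$ is a smooth, strictly positive, uniformly bounded approximation of $a^*$ (e.g.\ $a_\eps=\min(a^*,1/\eps)+\eps$, mollified). The parabolic maximum principle gives $\psi_\eps\geq 0$, and testing the dual equation against $-\Delta\psi_\eps$ produces the uniform energy identity
\[
\tfrac{1}{2}\normltwo{\nabla\psi_\eps(\cdot,0)}^2 + \int_{Q_T}a_\eps|\Delta\psi_\eps|^2\,dx\,dt = -\int_{Q_T}g\,\Delta\psi_\eps\,dx\,dt,
\]
which yields the key uniform bound $\|\sqrt{a_\eps}\,\Delta\psi_\eps\|_{L^2(Q_T)}\leq C\|g\|_{L^2}$, together with $L^\infty$-control of $\psi_\eps$. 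Substituting $\psi_\eps$ into the subtracted weak formulation and writing $q\Delta\psi_\eps = (a^*-a_\eps)w\Delta\psi_\eps + a_\eps w\Delta\psi_\eps$ leads to
\[
\int_{Q_T}wg\,dx\,dt = \int_{\RR^d}(\rho_0^1-\rho_0^0)\psi_\eps(\cdot,0)\,dx + \int_{Q_T}(f^1-f^0)\psi_\eps\,dx\,dt - \int_{Q_T}w\,(a_\eps-a^*)\Delta\psi_\eps\,dx\,dt.
\]
The first two terms on the right are non-negative by $\rho_0^0\leq\rho_0^1$, $f^0\leq f^1$, and $\psi_\eps\geq 0$. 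The error is bounded by Cauchy–Schwarz as $\|(a_\eps-a^*)/\sqrt{a_\eps}\|_{L^2}\cdot\|\sqrt{a_\eps}\Delta\psi_\eps\|_{L^2}$, and with the above choice of $a_\eps$ the first factor tends to $0$. Passing $\eps\to 0$ gives $\int wg\geq 0$ for every admissible $g$, hence $\rho^0\leq\rho^1$ a.e.\ in $Q_T$.

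The principal obstacle is the simultaneous degeneracy and potential unboundedness of $a^*$: it vanishes on the common vacuum region $\{p^0=p^1=0\}$ and may be unbounded on the coincidence set $\{\rho^0=\rho^1=1\}$. Tuning the truncation so that the backward-parabolic energy estimate remains uniform while $\|(a_\eps-a^*)/\sqrt{a_\eps}\|_{L^2}\to 0$, together with handling the residual contribution on $\{w=0,\,q\neq 0\}$, constitutes the "minor modifications" relative to the PQV argument that the appendix carries out in detail.
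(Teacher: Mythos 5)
Your proposal follows the same Hilbert-duality strategy as the paper (which is itself adapted from Perthame--Quir\'os--V\'azquez), with the same dual-coefficient idea (your $a^\ast=q/w$ is the paper's $B/A$), so the overall architecture matches. However, there are two concrete gaps that the paper's appendix is specifically devoted to closing, and your proposal does not actually close them.

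First, the spatial domain $\RR^d$ is unbounded, and the weak formulation (Definition \ref{def: weak solution p equation}) only admits test functions $\psi\in C_0^\infty(Q_T)$. You solve the regularized dual problem on all of $\RR^d\times[0,T]$; the resulting $\psi_\eps$ will not be compactly supported, so plugging it into the subtracted weak formulation is not justified as written. Moreover, your energy identity for $\psi_\eps$ integrates $\partial_t\psi_\eps\cdot\Delta\psi_\eps$ by parts on $\RR^d$ without establishing decay of $\nabla\psi_\eps$. The paper handles this by solving the dual problem on balls $B_R$ with zero Dirichlet data, showing that the boundary flux $\partial_\nu\psi_{n,R}$ on $\partial B_R$ decays like $R^{-(d-1)}$ (via a harmonic barrier and the maximum principle), and then sending $R\to\infty$ using $p^i\in L^2(Q_T)$. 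This is precisely the ``extra efforts for handling unboundedness of the spatial domain'' that the appendix singles out, and it is absent from your argument.

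Second, your coefficient $a^\ast=\frac{q}{w}\chi_{\{w\neq 0\}}$ satisfies $a^\ast w=q$ only off the set $\{w=0,\ q\neq 0\}=\{\rho^0=\rho^1=1,\ p^0\neq p^1\}$, which can have positive measure. So your algebraic identity $q\,\Delta\psi_\eps=(a^\ast-a_\eps)w\Delta\psi_\eps+a_\eps w\Delta\psi_\eps$ silently drops the term $-q\chi_{\{w=0\}}\Delta\psi_\eps$, and your ``supplementary approximation'' remark does not supply a mechanism by which that term vanishes. The paper avoids dividing by $w$ altogether: it keeps two bounded coefficients $A,B\in[0,1]$ with $A+B=1$ (setting $A=0$ where $\rho^0=\rho^1$ and $B=0$ where $p^0=p^1$), writes the weak identity as $\int(\rho^0-\rho^1+p^0-p^1)(A\partial_t\psi+B\Delta\psi)$, and then the residual contribution on $\{A=0,\ B\neq 0\}$ is genuinely absorbed into the error $\mathcal{E}_{n,R}$ and killed by the $L^2$ bound $\|\sqrt{B_{n,R}/A_{n,R}}\,\Delta\psi_{n,R}\|_{L^2}\leq C$, since $B_{n,R}/A_{n,R}\to\infty$ there forces $\Delta\psi_{n,R}\to 0$ in the appropriate sense. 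You would need to adopt this two-coefficient formulation (or provide a concrete substitute) to make the argument rigorous.
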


\begin{remark}\label{rmk: compact support}
In a similar spirit, one can show that if $(\rho, p,n)$ is a weak solution of \eqref{eqn: P} on $Q_T$ with $\rho_0$ being compactly supported, then $\rho$ and $p$ are compactly supported in $Q_T$.
Indeed, assuming $\rho_0\leq \chi_{B_{r_0}}$ for some $r_0>0$, we can prove $\rho\leq \tilde{\rho}$ and $p\leq \tilde{p}$, where
\[
\tilde{\rho}(x,t) := \chi_{B_{r(t)}}(x),\;
\tilde{p}(x,t) := \f{\|n_0\|_{L^\infty(\BR^d)} }{2d} \left(r(t)^2-|x|^2\right)_+,\mbox{ and }
r(t):= r_0\exp\left(\f{1}{d}\|n_0\|_{L^\infty(\BR^d)} t\right).
\]
Note that $(\tilde{\rho},\tilde{p})$ is chosen to be a weak solution (in the sense of Definition \ref{def: weak solution p equation}) of
\[
\partial_t \tilde{\rho} -\nabla (\tilde{\rho}\nabla \tilde{p}) = \|n_0\|_{L^\infty(\BR^d)}\tilde{\rho},\quad \tilde{\rho}\leq 1,\quad \tilde{p}\in P_\infty(\tilde{\rho}),\quad \tilde{\rho}(x,0) = \chi_{B_{r_0}}(x).
\]
We skip the details.
\end{remark}

Now we present uniqueness of the weak solution for \eqref{eqn: P}.

\begin{prop}\label{prop: uniqueness}
Under the assumption of Proposition \ref{prop:existence},
the weak solution of \eqref{eqn: P} is unique.
\begin{proof}
Fix $T>0$.  Suppose $(\rho,p,n)$ and $(\tilde{\rho}, \tilde{p}, \tilde{n})$ are two weak solutions.
We first show an $L^1$-contraction principle regarding $\rho$ and $\tilde{\rho}$.

Denote
\[
f = (n-b)\rho,\quad \tilde{f}= (\tilde{n}-b)\tilde{\rho},\quad f_* = \max\big\{f,\tilde{f}\big\}.
\]
Since $f,\tilde{f} \in L^\infty([0,T];BV(\BR^d))$ (c.f.\;Proposition \ref{prop:existence}), we have $f_* \in L^\infty([0,T];BV(\BR^d))$.
Let $(\rho_*,p_*)$ be a weak solution (see Remark \ref{rmk: existence of solution of model problem}) of
\[
(\rho_*)_t - \nabla\cdot(\rho_* \nabla p_*) = f_*,\quad \rho_*\leq 1,\quad p_*\in P_\infty (\rho_*),\quad \rho_*(x,0) = \rho_0.
\]
Then by Lemma \ref{lem: comparison}, $\rho,\tilde{\rho}\leq \rho_*$ almost everywhere in $Q_T$.

Take an arbitrary $t_*\in (0,T]$. Let $\zeta:\RR^d\to\RR$ be a smooth non-negative function such that $\zeta=1$ on the unit ball, and $\zeta=0$ outside the ball of radius 2.  For any $R>0$, set $\zeta_R(x):=\zeta(x/R)$.
With $\delta <t_*$, let $\eta_\delta(t)\in C^\infty([0,T])$, such that it is non-increasing, $\eta_\delta \equiv 1$ on $[0,t_*-\delta]$, and $\eta_\delta\equiv 0$ on $[t_*,T]$.
Then we take $\psi(x,t) = \zeta_R(x)\eta_\delta(t)\in H^1(Q_T)$ in Definition \ref{def: weak solution} and Definition \ref{def: weak solution p equation}, and take a difference of them to obtain that
\[
\int_0^T -\partial_t \eta_\delta(t) \big\|\big(\rho(\cdot,t)-\rho_*(\cdot,t)\big)\zeta_R \big\|_{L^1(\BR^d)} \,dt
=
\int_0^T \eta_\delta(t) \big\|\big(f(\cdot,t)-f_*(\cdot,t)\big)\zeta_R \big\|_{L^1(\BR^d)}\,dt +\epsilon_R,
\]
where $\epsilon_R$ is an error term with $|\epsilon_R|\lesssim R^{-2}(\norm{p}_{L^1(Q_T)}+\norm{p_*}_{L^1(Q_T)})$ and we used the facts that $\rho\leq \rho_*$ and $f\leq f_*$.
Here one can derive an estimate for $\norm{p_*}_{L^1(Q_T)}$ as in Lemma \ref{lem:scheme_estimates}.
Sending $R\to\infty$ and then $\delta \to 0$, we can use the time-continuity of $\rho$ and $\rho_*$ (see Definition \ref{def: weak solution} and Definition \ref{def: weak solution p equation}), to obtain
\[
\|\rho_*(\cdot,t_*)-\rho(\cdot,t_*)\|_{L^1(\BR^d)}
=\|f_* - f\|_{L^1(\BR^d\times [0,t_*])}.
\]
Similarly,
\[
\|\rho_*(\cdot,t_*)-\tilde{\rho}(\cdot,t_*)\|_{L^1(\BR^d)}
=\|f_* - \tilde{f}\|_{L^1(\BR^d\times [0,t_*])}.
\]
As a result,
\beq
\begin{split}
\|\tilde{\rho}(\cdot,t_*)-\rho(\cdot,t_*)\|_{L^1(\BR^d)}
\leq &\;
\|f_* - f\|_{L^1(\BR^d\times [0,t_*])}+\|f_* - \tilde{f}\|_{L^1(\BR^d\times [0,t_*])} \\
= &\; \|f - \tilde{f}\|_{L^1(\BR^d\times [0,t_*])},
\end{split}
\label{eqn: L^1 contraction parabolic}
\eeq
where in the last equality, we used the definition of $f_*$.

Now by the definition of $f$ and $\tilde{f}$, for any $t\in [0,T]$,
\[
\begin{split}
&\;\|\tilde{\rho}(\cdot,t)-\rho(\cdot,t)\|_{L^1(\BR^d)}\\
\leq &\;
\int_0^t (\|n_0\|_{L^\infty}+b)\|\tilde{\rho}(\cdot,\tau) - \rho(\cdot,\tau)\|_{L^1(\BR^d)}
+ \|\tilde{n}(\cdot,\tau) - n(\cdot,\tau)\|_{L^1(\BR^d)}
\,d\tau.
\end{split}
\]
Using the Duhamel's formula for the heat equation, for every $t\in [0,T]$ and almost every $x\in \RR^d$, we have
\[
\tilde{n}(\cdot,t) - n(\cdot, t)= - \int_0^{t} e^{(t-\tau)D\Delta} \big(\tilde{n}(\cdot,\tau)\tilde{\rho}(\cdot, \tau)-n(\cdot,\tau)\rho(\cdot, \tau)\big)\,d\tau.
\]
Note that this formula is still valid even when $D=0$.   Since the heat kernel is a contraction on $L^1$, it follows that
\[
\begin{split}
\norm{\tilde{n}(\cdot,t)-n(\cdot,t)}_{L^1(\RR^d)}\leq &\; \int_0^t \norm{\tilde{n}(\cdot,\tau)\tilde{\rho}(\cdot, \tau)-n(\cdot,\tau)\rho(\cdot, \tau)}_{L^1(\RR^d)}\, d\tau\\
\leq &\; \int_0^t \norm{n_0}_{L^{\infty}(\RR^d)}\norm{\tilde{\rho}(\cdot,\tau)-\rho(\cdot,\tau)}_{L^1(\RR^d)}+\norm{\tilde{n}(\cdot,\tau)-n(\cdot,\tau)}_{L^1(\RR^d)}\, d\tau.
\end{split}
\]
Thus,
\[
\begin{split}
&\;\|\tilde{\rho}(\cdot,t)-\rho(\cdot,t)\|_{L^1(\BR^d)}+\norm{\tilde{n}(\cdot,t)-n(\cdot,t)}_{L^1(\RR^d)}\\
\leq &\;
\int_0^t (2\|n_0\|_{L^\infty}+b)\|\tilde{\rho}(\cdot,\tau) - \rho(\cdot,\tau)\|_{L^1(\BR^d)}
+ 2\|\tilde{n}(\cdot,\tau) - n(\cdot,\tau)\|_{L^1(\BR^d)}
\,d\tau.
\end{split}
\]
It now follows from Gronwall's inequality that $\|\tilde{\rho}(\cdot,t)-\rho(\cdot,t)\|_{L^1(\BR^d)}+\norm{\tilde{n}(\cdot,t)-n(\cdot,t)}_{L^1(\RR^d)}=0$ for all $t\in [0,T]$, and thus $\rho(\cdot,t) = \tilde{\rho}(\cdot,t)$ and $n(\cdot,t) =\tilde{n}(\cdot,t)$ for all $t\in [0,T]$.
Lastly, thanks to the weak formulation \eqref{eqn: weak formulation}, for any $\psi\in L^2([0,T];H^1(\RR^d))$
\beqo
\int_0^T\int_{\RR^d}\nabla \psi\cdot \nabla \big(p-\tilde{p}\big) \,dx \,dt = 0.
\eeqo
Choosing $\psi$ that approximates $(p-\tilde{p})$, we conclude that $p=\tilde{p}$.
\end{proof}
\end{prop}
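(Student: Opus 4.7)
The plan is to reduce the question to an $L^1$-contraction for the $\rho$-equation alone, and then couple it with Duhamel's formula for $n$ through a Gronwall argument. Suppose $(\rho,p,n)$ and $(\tilde\rho,\tilde p,\tilde n)$ are two weak solutions. Introduce the forcings $f := (n-b)\rho$ and $\tilde f := (\tilde n - b)\tilde\rho$, together with $f_* := \max\{f,\tilde f\}$. All three are in $L^\infty([0,T]; L^1(\BR^d))\cap L^\infty(Q_T)$ thanks to Proposition \ref{prop:existence}, so by Remark \ref{rmk: existence of solution of model problem} there exists a weak solution $(\rho_*, p_*)$ of the model problem \eqref{eqn: model rho problem} with forcing $f_*$ and initial datum $\rho_0$. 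The comparison principle in Lemma \ref{lem: comparison} then yields $\rho,\tilde\rho\leq \rho_*$ a.e.\ in $Q_T$.

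The next step is to upgrade this inequality to the sharp $L^1$-identity
\[
\|\rho_*(\cdot,t_*)-\rho(\cdot,t_*)\|_{L^1(\BR^d)} = \|f_* - f\|_{L^1(\BR^d\times [0,t_*])}
\]
for each $t_*\in (0,T]$. To do so, take the difference of the weak formulations for $(\rho,p)$ and $(\rho_*,p_*)$, tested against $\psi(x,t) = \zeta_R(x)\eta_\delta(t)$, where $\zeta_R$ is a smooth spatial cutoff at scale $R$ and $\eta_\delta$ is a smooth non-increasing temporal cutoff with $\eta_\delta\equiv 1$ on $[0,t_*-\delta]$ and $\eta_\delta\equiv 0$ on $[t_*,T]$. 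The gradient terms $\nabla p,\nabla p_*$ only pair against $\nabla\zeta_R$, contributing an error of order $R^{-2}\bigl(\|p\|_{L^1(Q_T)}+\|p_*\|_{L^1(Q_T)}\bigr)$; the required $L^1$-bound on $p_*$ follows from the same estimates as in Lemma \ref{lem:scheme_estimates}. Sending $R\to\infty$ and then $\delta\to 0$, and using the $L^1$-time-continuity of $\rho$ and $\rho_*$ together with the sign $f\leq f_*$, yields the identity above. The same identity holds with $\tilde\rho,\tilde f$ in place of $\rho,f$. Adding the two and invoking the pointwise relation $(f_* - f)+(f_* - \tilde f) = |f-\tilde f|$ (immediate from $f_* = \max\{f,\tilde f\}$) produces the $L^1$-contraction
\[
\|\tilde\rho(\cdot,t_*)-\rho(\cdot,t_*)\|_{L^1(\BR^d)} \leq \|f-\tilde f\|_{L^1(\BR^d\times [0,t_*])}.
\]

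To close the argument, I expand $|f - \tilde f| \leq \|n_0\|_{L^\infty}|\tilde\rho - \rho| + \tilde\rho\,|\tilde n - n|$, giving an integral inequality for $\|\tilde\rho - \rho\|_{L^1}$. For the nutrient difference I invoke Duhamel's formula $\tilde n - n = -\int_0^t e^{(t-s)D\Delta}(\tilde n\tilde\rho - n\rho)\,ds$ (valid also for $D=0$) and the $L^1$-contraction of the heat semigroup to obtain a matching integral inequality for $\|\tilde n - n\|_{L^1}$ in terms of both differences. Summing these and applying Gronwall's inequality forces $\rho\equiv\tilde\rho$ and $n\equiv\tilde n$ on $[0,T]$. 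Uniqueness of $p$ then follows by inserting $\rho=\tilde\rho$ and $n=\tilde n$ into \eqref{eqn: weak formulation}, yielding $\int_0^T\int_{\BR^d} \nabla\psi\cdot\nabla(p-\tilde p)\,dx\,dt = 0$ for all admissible $\psi$, and approximating $p-\tilde p\in L^2([0,T];H^1(\BR^d))$ by such test functions.

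The main technical hurdle is the sharp $L^1$-identity in the second step: it requires carefully controlling $\nabla p\cdot\nabla\zeta_R$ uniformly in $R$, which is why the a priori $L^1$-bound on the pressure coming from Lemma \ref{lem:scheme_estimates} is essential, and why the existence of the auxiliary solution $\rho_*$ (and its pressure bound) must be established in parallel. Everything else — the comparison principle, Duhamel's formula, and the final Gronwall step — is then essentially bookkeeping.
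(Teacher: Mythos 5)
Your proposal follows the paper's argument essentially verbatim: the same auxiliary solution $\rho_*$ driven by $f_*=\max\{f,\tilde f\}$, the same sharp $L^1$-identity obtained by testing the difference of weak formulations against $\zeta_R(x)\eta_\delta(t)$ and passing $R\to\infty$, $\delta\to 0$, and the same closing Duhamel/Gronwall coupling between the density and nutrient differences. One small slip to correct: the pointwise bound $|f-\tilde f|\leq \|n_0\|_{L^\infty}|\tilde\rho-\rho|+\tilde\rho\,|\tilde n-n|$ omits the death-rate term and should be $(\|n_0\|_{L^\infty}+b)|\tilde\rho-\rho|+|\tilde n-n|$; this only changes the Gronwall constant, not the conclusion. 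Also note that Remark \ref{rmk: existence of solution of model problem} additionally requires $f_*\in L^\infty([0,T];BV(\BR^d))$, which the paper verifies from the $BV$-regularity of $\rho$, $\tilde\rho$, $n$, $\tilde n$; your invocation of the remark should cite this as well.
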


\section{The Case of Zero Nutrient Diffusion and No Death}
\label{sec: D is 0}

In the rest of the paper, we will always assume $b = D = 0$ in \eqref{eqn: P}.
This section aims at studying qualitative properties of the solution.
Recall that in this case, $\eta:= n_0-n$ solves
\beq
\partial_t \eta = (n_0-\eta)\rho,\quad \eta(x,0) = 0.
\label{eqn: eta equation}
\eeq
Given $(\rho,p,n)$ as a weak solution of \eqref{eqn: P}, it is straightforward to show that $\eta$ is continuous in $L^1(\BR^d)$ with respect to time.

The assumption $b=0$ implies the tumor region is always expanding.
This enables us to study the $\rho$-equation of \eqref{eqn: P} through the so-called Baiocchi transform \cite{baiocchi}.
Indeed, if we define $w(x,t)=\int_0^t p(x,s)\, ds$, we see that for any $t\geq 0$,
\[
w(x,t)(1-\rho(x,t))=\int_0^t p(x,s)(1-\rho(x,t))\, ds\leq \int_0^t p(x,s)(1-\rho(x,s))\, ds=0
\]
everywhere on $\BR^d$.
Hence, by integrating the $\rho$-equation of \eqref{eqn: P} over the time interval $[0,t]$ and using the time continuity of $\rho$ and $\eta$ in $L^1(\BR^d)$, we get the elliptic equation
\begin{equation}\label{eq:elliptic}
\rho-\Delta w=\rho_0+\eta, \quad w(1-\rho) = 0,\quad \rho\in [0,1],\quad w\geq 0
\end{equation}
along with the boundary condition that $w$ vanishes at infinity.

The main advantage of this formulation is that $w$ enjoys much better regularity as compared to $p$ (clearly $w\in W^{2,r}(\RR^d)$ for any $r<\infty$) while still describing the active tumor region.
Section \ref{sec: elliptic formulation} will be focused on proving comparison/contraction properties of this elliptic equation.

On the other hand, that $D = 0$ gives rise to many nice properties of the model.
For example, it gives a direct pointwise dependence of the nutrient variables $n$ and $\eta$ on the density $\rho$
\beq
\eta(x,t)=n_0(x)\left[1-\exp\left(-\int_0^t \rho(x,s)\, ds\right)\right].
\label{eqn: eta formula original}
\eeq
This allows us to extend the comparison/contraction results for $\rho$-equation \eqref{eq:elliptic} to the full PDE \eqref{eqn: P}.
See Section \ref{sec: bounded nutrient}.

\subsection{The elliptic formulation}
\label{sec: elliptic formulation}
Let us start from the elliptic equation \eqref{eq:elliptic}.
We first establish the following comparison principles and stability estimates.

\begin{prop}\label{prop:comparison}
Let $\Omega\subset \RR^d$ be an open set with a smooth (possibly empty) boundary.
Suppose that for $i\in \{0,1\}$ we have non-negative functions $\rho^i, f^i\in C([0,T];L^1(\Omega))\cap L^{\infty}(\Omega\times[0,T])$ and $w^i\in L^{\infty}([0,T]; H^2(\Omega))$ such that for all $t\in [0,T]$
\[
\rho^i-\Delta w^i=f^i, \quad w^i(1-\rho^i)=0,\quad \rho^i \in[0,1],\quad w^i\geq 0
\]
on $\Omega$.
If $w^0(x,t)\leq w^1(x,t)$ for all $(x,t)\in \partial\Omega\times [0,T]$,  and $f^0(x,t)\leq f^1(x,t)$ for every $t\in [0,T]$ and almost every $x\in\Omega$, then for all $t\in [0,T]$ and almost every $x\in \Omega$ we have $\rho^0(x,t)\leq \rho^1(x,t)$ and $w^0(x,t)\leq w^1(x,t)$.

\end{prop}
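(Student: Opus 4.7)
The plan is to first establish $w^0 \leq w^1$ via an energy/testing argument exploiting the obstacle-type complementarity, and then deduce $\rho^0 \leq \rho^1$ by a case analysis on the coincidence sets $\{w^i = 0\}$.

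Fix $t \in [0,T]$, set $v := w^0(\cdot,t) - w^1(\cdot,t)$, and subtract the two elliptic identities to get $-\Delta v = (f^0 - f^1) - (\rho^0 - \rho^1)$ a.e.\ in $\Omega$. The crucial observation is that on $\{v > 0\}$ one has $w^0 > w^1 \geq 0$, so $w^0 > 0$; the complementarity $w^0(1-\rho^0)=0$ then forces $\rho^0 = 1 \geq \rho^1$, while by hypothesis $f^0 \leq f^1$, so the right-hand side is nonpositive on $\{v > 0\}$. I would then test against $v_+$: since $w^i(\cdot,t) \in H^2(\Omega)$, Stampacchia's lemma gives $v_+ \in H^1(\Omega)$ with $\nabla v_+ = \mathbf{1}_{\{v>0\}} \nabla v$, and $v_+$ has zero trace on $\partial \Omega$ since $v \leq 0$ there by hypothesis (when $\Omega = \RR^d$ this is replaced by $v_+ \in H^1(\RR^d)$). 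Integration by parts then yields
\begin{equation*}
\int_\Omega |\nabla v_+|^2 \, dx = \int_{\{v>0\}}\bigl[(f^0 - f^1) - (\rho^0 - \rho^1)\bigr] v_+ \, dx \leq 0,
\end{equation*}
so $\nabla v_+ \equiv 0$ a.e., and the trace condition (or $L^2$ integrability when $\Omega = \RR^d$) forces $v_+ \equiv 0$, i.e.\ $w^0 \leq w^1$ a.e.

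For the density, I partition $\Omega$ into the three measurable sets $\{w^0 > 0\}$, $\{w^0 = 0 < w^1\}$, and $\{w^0 = w^1 = 0\}$. On the first, $w^1 \geq w^0 > 0$ yields $\rho^1 = 1 \geq \rho^0$, and the same inequality is immediate on the second. On the third, since $w^i \in H^2$ and $w^i \geq 0$, a standard second-order Stampacchia-type lemma gives $\Delta w^i = 0$ a.e.\ on $\{w^i = 0\}$, so $\rho^i = f^i$ a.e.\ there, and $f^0 \leq f^1$ closes the argument. The main technicality is the testing step when $\Omega$ is unbounded: one cuts off by compactly supported smooth weights $\chi_R$, integrates by parts on $v_+ \chi_R^2$, and passes to $R \to \infty$ using $v_+ \in H^1$ and the quadratic Cauchy--Schwarz bound on the cross terms $\int \nabla v_+ \cdot \nabla \chi_R^2 \, v_+$. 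This is routine but should be recorded carefully.
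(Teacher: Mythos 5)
Your argument is essentially the paper's: you test the difference of the two elliptic identities against $(w^0-w^1)_+$, use the complementarity $w^0(1-\rho^0)=0$ to show the density term on $\{w^0>w^1\}$ has a favorable sign (there $w^0>0$ forces $\rho^0=1\geq\rho^1$), conclude $\nabla(w^0-w^1)_+=0$ and hence $w^0\leq w^1$ by boundary/decay considerations, and then deduce the density ordering by splitting into the set where $w^1>0$ (so $\rho^1=1$) and the common coincidence set $\{w^0=w^1=0\}$, where $\Delta w^i$ vanishes a.e.\ by $W^{2,r}$ regularity, giving $\rho^i=f^i$ and hence $\rho^0\leq\rho^1$. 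The paper integrates over all of $\Omega\times[0,T]$ at once while you fix $t$ and argue slicewise, and it merges your first two cases into a single set $\{w^1>0\}$, but these are cosmetic differences.
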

\begin{remark}
Let us emphasize that this comparison property does not immediately imply that solutions to \eqref{eqn: P} satisfy a comparison principle, since we have not shown that the nutrient variables stay ordered along solutions to the PDE (however, see Theorem \ref{thm:contraction}).
\end{remark}
\begin{proof}
Taking the difference of the two equations and integrating against $(w^0-w^1)_+$ on $\Omega\times [0,T]$, we obtain
\[
\int_0^T \int_{\Omega}(\rho^0-\rho^1)(w^0-w^1)_++|\nabla (w^0-w^1)_+|^2\,dx\,dt
=
\int_0^T \int_{\Omega} (f^0-f^1)(w^0-w^1)_+ \,dx\,dt,
\]
where we have used the fact that $(w_0-w_1)_+$ vanishes on $\partial\Omega\times [0,T]$.
The right hand side of the equation is clearly non-positive.
On the other hand, if $w^0>w^1$ then we must have $w^0>0$. Therefore $(\rho^0-\rho^1)(w^0-w^1)_+=(1-\rho^1)(w^0-w^1)_+ \geq 0.$ Thus, the left hand side of the equation is non-negative.  Hence, both sides must be equal to zero, which allows us to conclude that $(w^0-w^1)_+=c(t)$ in $\Omega\times [0,T]$ for some non-negative constant $c$ that depends only on time.
Since both $w^0$ and $w^1$ approach zero at infinity and $w^0\leq w^1$ on $\partial\Omega\times [0,T]$, it follows that $c=0$.
Thus, $w^0\leq w^1$ almost everywhere on $\Omega\times [0,T]$.

It is clear from the equation and the fact $w^0\leq w^1$ a.e.\;that $\{(x,t)\in \Omega \times [0,T]: \rho^0(x,t)>\rho^1(x,t)\}\subset \{(x,t)\in \Omega\times [0,T]: w^1(x,t)=0, w^0(x,t)=0\}$ up to a set of measure zero.
Since the equation implies that $w^i\in L^{\infty}([0,T];W^{2,r}(\RR^d))$ for any $r<\infty$, it follows that $\Delta w^i = 0$ a.e.\;on the set where $w^i=0$.
Thus, $\rho^i=f^i$ almost everywhere on $\{(x,t)\in \Omega\times [0,T]: w^1(x,t)=0, w^0(x,t)=0\}$.  This allows us to conclude that $\rho^0\leq \rho^1$ almost everywhere on $\Omega\times [0,T]$.
\end{proof}

A similar argument gives us the following $L^1$-stability property.
It is a strengthening of Lemma \ref{lem: comparison} and the $L^1$-contraction principle \eqref{eqn: L^1 contraction parabolic} in the proof of Proposition \ref{prop: uniqueness}.

\begin{prop}\label{prop:contraction}
Let $\Omega\subset \RR^d$ be an open set with a smooth (possibly empty) boundary. Suppose that for $i\in \{0,1\}$, $\rho_0^i(x)\in [0,1]$ are integrable on $\Omega$,
$\rho^i, \eta^i\in C([0,T];L^1(\Omega))\cap L^{\infty}(\Omega\times[0,T])$ and $w^i\in L^{\infty}([0,T]; H^2(\Omega))$, such that
\[
\rho^i-\Delta w^i=\rho_0^i+\eta^i, \quad w^i(1-\rho^i)=0,\quad \rho^i\in [0,1],\quad w^i\geq 0
\]
on $\Omega$.  If $w^0(x,t)=w^1(x,t)$ for all $(x,t)\in \partial\Omega\times [0,T]$, then for all $t\in [0,T]$
\[
\big\|(\rho^1(\cdot,t)-\rho^0(\cdot,t))_+\big\|_{L^1(\Omega)}\leq \left\|\Big(\rho^1_0(\cdot)-\rho^0_0(\cdot)+\eta^1(\cdot,t)-\eta^0(\cdot,t)\Big)_+\right\|_{L^1(\Omega)}.
\]
\end{prop}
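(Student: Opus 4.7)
The plan is to parallel the structure of Proposition~\ref{prop:comparison} but replace the test function $(w^0-w^1)_+$ with one tuned to extract exactly the positive part $(\rho^1-\rho^0)_+$ in $L^1$. Write $u := w^1-w^0$ and $f^i := \rho_0^i + \eta^i$, so that subtracting the two elliptic equations gives, at each $t\in[0,T]$,
\beqo
(\rho^1-\rho^0) - \Delta u \;=\; f^1 - f^0 \quad\text{in } \Omega, \qquad u|_{\partial\Omega\times[0,T]}=0.
\eeqo

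At fixed $t$, I would test this identity against the Lipschitz cutoff $\phi_\epsilon := \min\{1,\, u_+/\epsilon\}$. Because $w^1=w^0$ on $\partial\Omega$, we have $\phi_\epsilon \in H^1_0(\Omega)$ and no boundary term appears after integration by parts, producing
\beqo
\int_\Omega (\rho^1-\rho^0)\phi_\epsilon\,dx \;+\; \epsilon^{-1}\!\!\int_{\{0<u<\epsilon\}} |\nabla u|^2\,dx \;=\; \int_\Omega (f^1-f^0)\phi_\epsilon\,dx.
\eeqo
The gradient term is non-negative and may be discarded, and sending $\epsilon\to 0$ by dominated convergence yields $\int_{\{w^1>w^0\}}(\rho^1-\rho^0)\,dx \leq \int_{\{w^1>w^0\}}(f^1-f^0)\,dx$.

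Next, I would exploit the obstacle relations $w^i(1-\rho^i)=0$ to localize $(\rho^1-\rho^0)_+$. On $\{w^1>w^0\}$, $w^1>0$ forces $\rho^1=1\geq\rho^0$, so $(\rho^1-\rho^0)_+ = \rho^1-\rho^0$ there and the display above bounds this contribution by $\int(f^1-f^0)_+$. On the complementary set $\{w^1\leq w^0\}$, any point with $\rho^1>\rho^0$ must satisfy $w^1=0$: either $\rho^1<1$ forces $w^1=0$ directly, or $\rho^1=1>\rho^0$ forces $w^0=0$ and hence $w^1 \leq w^0 = 0$. In either sub-case $\rho^0<1$, which via $w^0(1-\rho^0)=0$ and $w^0\geq w^1=0$ yields $w^0=0$ as well. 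Therefore $\{(\rho^1-\rho^0)_+>0\}\cap\{w^1\leq w^0\}$ lies, up to a null set, inside the coincidence set $\{w^1=w^0=0\}$. Since $f^i-\rho^i \in L^\infty$, elliptic regularity gives $w^i\in W^{2,r}_{\loc}(\Omega)$ for every finite $r$, and Stampacchia's vanishing lemma then guarantees $\Delta w^i = 0$ a.e.\ on $\{w^i=0\}$. Plugging back into the elliptic equation gives $\rho^i=f^i$ a.e.\ there, so $(\rho^1-\rho^0)_+ = (f^1-f^0)_+$ a.e.\ on $\{w^1=w^0=0\}$. Adding the two disjoint contributions produces the claimed $L^1$-bound.

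The main obstacle, already encountered in Proposition~\ref{prop:comparison}, is the rigorous identification $\Delta w^i=0$ a.e.\ on the coincidence set $\{w^i=0\}$: it is not a pointwise fact but requires $W^{2,r}$-regularity together with Stampacchia. A secondary technical point is checking admissibility of $\phi_\epsilon$ in $H^1_0(\Omega)$ and the dominated convergence step, both handled by the hypothesized $L^\infty$-control of $\rho^i,\eta^i$ and integrability of $(f^1-f^0)_+$ at every fixed $t$; continuity of $\rho^i$ and $\eta^i$ in $C([0,T];L^1(\Omega))$ then extends the estimate from a.e.\ $t$ to every $t\in[0,T]$.
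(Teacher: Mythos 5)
Your proof is correct and takes essentially the same route as the paper: you test the difference of the elliptic equations against an approximation of $\sgn_+(w^1-w^0)$ (a Lipschitz truncation $\min\{1,u_+/\epsilon\}$ in place of the paper's smooth $f_\delta(w^1-w^0)$), discard the non-negative gradient term, pass to the limit, and then handle the complementary region $\{w^1\le w^0,\ \rho^1>\rho^0\}$ by showing it lies in $\{w^0=w^1=0\}$ where $W^{2,r}$-regularity and Stampacchia give $\rho^i=\rho_0^i+\eta^i$ a.e. The only cosmetic difference is that the paper integrates over an arbitrary time slab $[t_1,t_2]$ and then invokes $C([0,T];L^1)$-continuity, while you work at a fixed time slice directly.
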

\begin{proof}
Given small $\delta>0$, let $f_{\delta}:\RR\to\RR$ be a smooth increasing function such that $f_{\delta}(a)=0$ for all $a\leq 0$ and $f_{\delta}(a)=1$ for all $a\geq \delta$.  Choose some $0\leq t_1<t_2\leq T$.  Taking the difference of the two equations and integrating against $f_{\delta}(w^1-w^0)$ on $\Omega\times [t_1,t_2]$, we have
\[
\begin{split}
& \int_{\Omega\times [t_1,t_2]} (\rho^1-\rho^0)f_{\delta}(w^1-w^0)+f_{\delta}'(w^1-w^0)|\nabla (w^1-w^0)|^2\,dx \,dt \\
& = \int_{\Omega\times [t_1,t_2]} (\rho^1_0-\rho^0_0+\eta^1-\eta^0)f_{\delta}(w^1-w^0)\,dx \,dt .
\end{split}
\]
Sending $\delta\to 0$, we see that
\[
\int_{\Omega\times [t_1,t_2]} (\rho^1-\rho^0)\sgn_+(w^1-w^0)\,dx \,dt \leq \int_{\Omega\times [t_1,t_2]} (\rho^1_0-\rho^0_0+\eta^1-\eta^0)\sgn_+(w^1-w^0)\,dx \,dt .
\]
Let $E=\{(x,t)\in \Omega\times [t_1,t_2]: w^1(x,t)\leq w^0(x,t), \, \rho^0(x,t)<\rho^1(x,t)\}$.
Then $w^1, w^0=0$ almost everywhere on $E$.
Thus the regularity of $w^i$ implies that $\rho^i=\rho^i_0+\eta^i$ almost everywhere on $E$.
Now we use the above inequality to derive that
\[
\begin{split}
&\;\int_{\Omega\times [t_1,t_2]} (\rho^1-\rho^0)_+ \,dx \,dt \\
= &\;
\int_{\Omega\times [t_1,t_2]} (\rho^1-\rho^0)\sgn_+(w^1-w^0)\,dx \,dt
+\int_E(\rho^1-\rho^0)_+ \,dx \,dt \\
\leq &\; \int_{\Omega\times [t_1,t_2]} (\rho^1_0-\rho^0_0+\eta^1-\eta^0)\sgn_+(w^1-w^0)\,dx \,dt
+\int_E (\rho^1_0-\rho^0_0+\eta^1-\eta^0)_+ \,dx \,dt \\
\leq &\; \big\|(\rho^1_0-\rho^0_0+\eta^1-\eta^0)_+\big\|_{L^1(\Omega\times [t_1,t_2])}.
\end{split}
\]
Then the result follows since $t_1$ and $t_2$ are arbitrary and $\rho^i$ and $\eta^i$ are continuous in $L^1(\Omega)$ with respect to time.
\end{proof}

Thanks to the symmetries of the Laplacian, the above stability result implies the following derivative estimates.

\begin{prop}\label{prop:fbv}
Suppose $(\rho, \rho_0, w, \eta)$ solve
\[
\rho-\Delta w=\rho_0+\eta, \quad w(1-\rho)=0,\quad \rho\in[0,1],\quad w\geq 0
\]
on $\RR^d\times [0,T]$, where $\rho_0\in [0,1]$ is integrable on $\BR^d$, $\eta\geq 0$,
$\rho, \eta\in C([0,T];L^1(\BR^d))\cap L^{\infty}(\BR^d\times[0,T])$ and $w\in L^{\infty}([0,T]; H^2(\BR^d))$.
Then for any antisymmetric matrix $A\in \RR^{d\times d}$, vector $v\in \RR^d$, and scalar $\lambda\geq 0$, we have for all $t\in [0,T]$,
\[
\norm{\nabla \rho(x, t)\cdot (Ax+\lambda x+v)}_{L^1(\RR^d)}\leq \norm{\nabla (\rho_0(x, t)+\eta(x, t))\cdot (Ax+\lambda x+v)}_{L^1(\RR^d)}.
\]
In particular, for all $t\in [0,T]$,
\[
\norm{\rho(x, t)}_{BV(\RR^d)}\leq \norm{\rho_0(x, t)+\eta(x, t)}_{BV(\RR^d)}.
\]
\end{prop}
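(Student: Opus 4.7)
The plan is to exploit the symmetries of the Laplacian (translation, rotation, and dilation) to relate the given solution to a transformed version of itself, apply the $L^1$-stability from Proposition \ref{prop:contraction}, and then pass to the limit in the transformation parameter to recover a directional derivative estimate.

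Concretely, set $M := A + \lambda I$ and let $\Phi_s(x) := e^{sM} x + \int_0^s e^{(s-\tau)M} v\, d\tau$ be the flow of the vector field $X(x) := Ax + \lambda x + v$. Since $A$ is antisymmetric and commutes with $\lambda I$, one has $e^{sM} = e^{s\lambda} e^{sA}$ with $e^{sA}$ orthogonal, so $e^{sM}(e^{sM})^T = e^{2s\lambda} I$. Define the transformed quadruple
\[
\tilde\rho^s(x,t) := \rho(\Phi_s(x), t), \qquad \tilde w^s(x,t) := e^{-2s\lambda} w(\Phi_s(x), t),
\]
\[
\tilde\rho_0^s(x) := \rho_0(\Phi_s(x)), \qquad \tilde\eta^s(x,t) := \eta(\Phi_s(x),t).
\]
Since $\Phi_s$ is affine, a chain-rule computation gives $\Delta_x[w\circ\Phi_s] = e^{2s\lambda}(\Delta w)\circ\Phi_s$, and hence $\tilde\rho^s - \Delta \tilde w^s = \tilde\rho_0^s + \tilde\eta^s$ on $\RR^d$; the constraint $\tilde w^s(1-\tilde\rho^s)=0$, the range $\tilde\rho^s\in[0,1]$, and positivity of $\tilde w^s$ (using $\lambda\geq 0$, though any sign would do) are immediate. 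Thus both $(\rho,w,\rho_0,\eta)$ and $(\tilde\rho^s, \tilde w^s, \tilde\rho_0^s, \tilde\eta^s)$ are admissible quadruples for the elliptic system on $\Omega=\RR^d$ (empty boundary), and both $w^{(\cdot)}$ vanish at infinity.

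Applying Proposition \ref{prop:contraction} to the ordered pair $(\rho^0,\rho^1)=(\rho,\tilde\rho^s)$ and then to $(\rho^0,\rho^1)=(\tilde\rho^s,\rho)$ and summing gives, for each $t$,
\[
\|\rho(\cdot,t) - \tilde\rho^s(\cdot,t)\|_{L^1(\RR^d)} \leq \bigl\|(\rho_0+\eta(\cdot,t)) - (\rho_0+\eta(\cdot,t))\circ\Phi_s\bigr\|_{L^1(\RR^d)}.
\]
I now divide by $|s|$ and send $s\to 0$. Testing against any $\varphi\in C_c^\infty(\RR^d)$ with $\|\varphi\|_{L^\infty}\leq 1$ and using $|\int\varphi(f-g)|\leq \|f-g\|_{L^1}$,
\[
\left|\int_{\RR^d}\varphi(x)\,\frac{\rho(x,t)-\rho(\Phi_s(x),t)}{s}\,dx\right| \leq \frac{1}{|s|}\bigl\|(\rho_0+\eta)-(\rho_0+\eta)\circ\Phi_s\bigr\|_{L^1(\RR^d)},
\]
and the analogous inequality holds with $\varphi$ replaced by $\varphi$ composed with any bounded perturbation. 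Changing variables $y=\Phi_s(x)$ on the left and using $\partial_s|_{s=0}\Phi_s(x)=X(x)$ together with $\nabla\cdot X = \lambda d$, the left hand side converges, for smooth $\varphi$, to $-\int\rho(\cdot,t)\,[X\cdot\nabla\varphi + (\lambda d)\varphi]\,dx$, which is the distributional pairing defining $\int\varphi\,d(\nabla\rho(\cdot,t)\cdot X)$. Taking $\limsup_{s\to 0}$ on the right yields the same pairing for $\rho_0+\eta(\cdot,t)$. Taking the supremum over admissible $\varphi$ gives the claimed estimate. The final ``in particular'' statement follows by choosing $A=0$, $\lambda=0$, and letting $v$ range over the standard unit vectors, recovering the usual $BV$-seminorm up to the standard equivalence.

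The main technical obstacle is the passage to the limit $s\to 0$: one must show that $|s|^{-1}\|f\circ\Phi_s - f\|_{L^1}$ converges (or at least $\limsup$s) to the weighted $BV$-type seminorm $\|\nabla f\cdot X\|_{L^1}$. For the pure translation component this is the classical characterization of $BV$ by integrated translates; for the rotation component the measure-preserving nature of $e^{sA}$ is used directly; for the dilation component one must track the Jacobian $e^{-s\lambda d}$ of the inverse change of variables, which tends to $1$ and contributes a lower-order term that can be absorbed. A clean way to unify all three is to work entirely in the distributional formulation above, bounding the pairing against smooth $\varphi$ uniformly in $s$ and then passing to the limit, thereby avoiding any direct BV characterization.
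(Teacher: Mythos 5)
Your proof is correct and takes essentially the same route as the paper: construct the one-parameter affine group generated by $X(x)=Ax+\lambda x+v$, exploit the transformation rule $\Delta(w\circ\Phi_s)=e^{2s\lambda}(\Delta w)\circ\Phi_s$ to check that the transformed quadruple solves the same elliptic system, invoke Proposition~\ref{prop:contraction} to get the $L^1$ stability, then divide by $s$ and pass to the limit. The paper uses the slightly simpler parametrization $T_s(x)=e^{s(\lambda I + A)}x + sv$ rather than the exact flow $\Phi_s$, but since both share the same generator at $s=0$ this makes no difference to the argument.
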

\begin{proof}
 Given an antisymmetric matrix $A$ and $\lambda\geq 0$ we define $A_s:=\exp((\lambda I+A)s)$ for each $s\in [0,1]$.  A direct computation reveals that
\[
\partial_s (A_sA_s^\intercal)=A_s(\lambda I+A)A_s^\intercal+A_s(\lambda I+A)^\intercal A_s^\intercal=2\lambda A_sA_s^\intercal.
\]
Since $A_0A_0^\intercal=I$, 
we can integrate directly to see that $A_sA_s^\intercal=e^{2\lambda s}I$.   Given $v\in \RR^d$,  if we define $T_s:\RR^d\to\RR^d$ such that $T_s(x)=A_sx+sv$, then $T_s$ is a smooth curve of affine transformations such that $\partial_s T_0(x)=\lambda x+Ax+v$ and $DT_sDT_s^\intercal =e^{2\lambda s}I$ for all $s\in [0,1]$.

Define $\rho_s:=\rho\circ T_s$, $w_s:=e^{-2\lambda s}w\circ T_s$, $\eta_s:=\eta\circ T_s$, and $\rho_{0,s}:=\rho_0\circ T_s$.
We can then compute
\[
\Delta (w_s)=DTDT^\intercal:(D^2w\circ T_s)+\Delta T_s\cdot \nabla w_s=(\Delta w)\circ T_s,
\]
where we have used the fact that $DTDT^\intercal:(D^2w\circ T_s)=e^{2\lambda s}\tr(e^{-2\lambda s}D^2w\circ T_s)$ and $\Delta T_s=0$.
Thus,
\[
\rho_s-\Delta w_s=\big(\rho-\Delta w\big)\circ T_s=(\rho_0+\eta)\circ T_s=\rho_{0,s}+\eta_s.
\]
It is also clear that $w_s(1-\rho_s)=0$.
Proposition \ref{prop:contraction} now implies that for all $t\in [0,T]$ and every $s\in [0,1]$,
\[
\norm{\rho(\cdot, t)-\rho_s(\cdot, t)}_{L^1(\RR^d)}\leq \norm{\rho_0(\cdot,t)-\rho_{0,s}(\cdot, t)+\eta(\cdot, t)-\eta_s(\cdot, t)}_{L^1(\RR^d)}.
\]
Dividing both sides by $s$ and then sending $s\to 0$, the first result follows. The second result follows from choosing $A=0$ and $\lambda=0$ and then taking supremum over the vectors $v$ with $|v|\leq 1$.
\end{proof}

\subsection{The full system with bounded initial nutrient}

\label{sec: bounded nutrient}

Now we study the full system \eqref{eqn: P} under the assumption that the initial nutrient $n_0$ is bounded.
In what follows, we shall always assume the initial density $\rho_0$ is a \emph{patch}, namely,
\[
\rho_0\in L^1(\BR^d)\cap BV(\BR^d),\mbox{ and }\rho_0(x)\in \{0,1\}\mbox{ almost everywhere.}
\]
While we already know that this must imply that $\rho(x,t)\in \{0,1\}$ almost everywhere for all $t$, the following lemma shows that in this case $\eta$ will be concentrated on the support of $\rho$.



\begin{lemma}\label{lem:01}
Suppose $\rho_0(x)\in \{0,1\}$ almost everywhere in $\RR^d$.
Then for all $t\in [0,T]$, $\rho\in \{0,1\}$ and $(1-\rho)\eta=0$ almost everywhere in $\RR^d$.
\end{lemma}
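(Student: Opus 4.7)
The plan is as follows. The first assertion, $\rho(\cdot,t)\in\{0,1\}$ a.e., is already the last conclusion of Proposition~\ref{prop:existence} under the standing assumptions $\rho_0\in\{0,1\}$ a.e.\;and $b=0$, so the substantive content lies in establishing $(1-\rho)\eta=0$ a.e. Since $\rho$ takes only the values $0$ and $1$, this identity is equivalent to showing that $\eta(x,t)=0$ for a.e.\;$x$ in the set $\{\rho(\cdot,t)=0\}$.

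My preferred approach exploits the monotonicity of $\rho$ in time that is available precisely because $b=0$. At the discrete level Lemma~\ref{lem:density_01} gives $\rho^{k,\tau}\le \rho^{k+1,\tau}$ pointwise a.e. Consequently, for any $0\le s\le t$ and any non-negative test function $\varphi\in C_c^\infty(\BR^d)$, the discrete interpolant satisfies
\[
\int_{\BR^d} \rho^\tau(x,s)\varphi(x)\,dx \;\le\; \int_{\BR^d}\rho^\tau(x,t)\varphi(x)\,dx
\]
up to a $\tau$-small shift in the times.  Using the strong $L^1(Q_T)$ convergence $\rho^\tau\to\rho$ from Proposition~\ref{prop:existence} together with the Lipschitz continuity in time of $\rho$ in $L^1(\BR^d)$ (Lemma~\ref{lem:rho_time_lipschitz}), I send $\tau\to 0$ to obtain the same inequality for $\rho$ for every $0\le s\le t$ and every non-negative $\varphi$.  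Since $\varphi$ is arbitrary, this forces $\rho(x,s)\le \rho(x,t)$ for a.e.\;$x$ whenever $s\le t$.

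Once monotonicity is in hand the conclusion is immediate. If $(x,t)$ is such that $\rho(x,t)=0$, then $\rho(x,s)=0$ for all $s\in[0,t]$ for a.e.\;$x$, and the explicit formula \eqref{eqn: eta formula original} yields
\[
\eta(x,t) \;=\; n_0(x)\Bigl[1-\exp\Bigl(-\int_0^t \rho(x,s)\,ds\Bigr)\Bigr] \;=\; 0,
\]
so $(1-\rho)\eta=0$ a.e. A clean alternative bypasses monotonicity entirely and works directly from the elliptic formulation \eqref{eq:elliptic}: elliptic regularity applied to $-\Delta w=\rho_0+\eta-\rho\in L^\infty$ gives $w(\cdot,t)\in W^{2,r}_{\loc}(\BR^d)$ for every $r<\infty$, hence $\Delta w=0$ a.e.\;on $\{w(\cdot,t)=0\}$; since $w(1-\rho)=0$ implies $\{\rho(\cdot,t)=0\}\subset\{w(\cdot,t)=0\}$, on this set \eqref{eq:elliptic} reduces to $\rho_0(x)+\eta(x,t)=0$, and the non-negativity of both summands forces $\eta(x,t)=0$.

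The only delicate point is the discrete-to-continuum transfer of monotonicity in the first approach; this is essentially routine given the compactness and time regularity established in Section~\ref{sec: discrete in time scheme}, and the elliptic alternative avoids the issue entirely.
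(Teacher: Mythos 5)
Your proof is correct, but it takes a genuinely different route from the paper's. The paper proves $(1-\rho)\eta=0$ by a Gronwall-type argument: using $\partial_t\eta=(n_0-\eta)\rho$, $\partial_t\rho\geq 0$, $\eta\geq 0$, and $\rho(1-\rho)=0$ a.e., it shows that $\frac{d}{dt}\int_{\RR^d}(1-\rho)\eta\,dx\leq 0$, and since this non-negative quantity starts at zero, it must remain zero. Your first route instead transfers monotonicity $\rho(\cdot,s)\leq\rho(\cdot,t)$ (for $s\le t$) from the discrete scheme to the limit and then reads off $\eta(x,t)=0$ on $\{\rho(\cdot,t)=0\}$ directly from the explicit formula \eqref{eqn: eta formula original}; a small but standard point you gloss over is that the null set in ``$\rho(x,s)=0$ for all $s\le t$'' a priori depends on $s$, which you should resolve with a Fubini argument on $\int_0^t\rho(x,s)\,ds$ (or by invoking the $L^1$-in-time continuity of $\rho$). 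Your second, elliptic route --- using $W^{2,r}_{\loc}$-regularity of $w$, the fact that $\Delta w=0$ a.e.\ on $\{w=0\}$, and $\{\rho=0\}\subset\{w=0\}$ from $w(1-\rho)=0$ --- is clean and in fact anticipates the structure the paper uses one lemma later in the proof of Lemma~\ref{lem:rho_1_characterization}. The paper's Gronwall argument has the modest advantage of not needing the explicit integral formula for $\eta$ nor any reference to the elliptic/obstacle structure, keeping the lemma self-contained; your elliptic alternative is shorter once \eqref{eq:elliptic} is in hand and makes transparent \emph{why} the statement is true (nothing can grow where $w$ has not yet become positive). All three arguments, including the paper's, ultimately lean on $\rho\in\{0,1\}$ from Proposition~\ref{prop:existence} and on the time-monotonicity of the tumor region that underlies the Baiocchi transform.
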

\begin{proof}
Since $\eta\geq 0$ and $\partial_t \rho \geq 0$ (c.f.\;Proposition \ref{prop:contraction}), a direct computation shows that
\[
\frac{d}{dt} \int_{\RR^d} (1-\rho(x,t))\eta(x,t)\, dx\leq \int_{\RR^d} (1-\rho(x,t))\rho(x,t)(n_0(x)-\eta(x,t))\,dx=0.
\]
Thus, Gronwall's inequality and the non-negativity of $(1-\rho)\eta$ implies that, for all $t\in [0,T]$, $(1-\rho)\eta=0$ almost everywhere in $\RR^d$.
%
\end{proof}

The next result gives a more complete characterization of the tumor patch, i.e., the set $\{\rho=1\}$. Formally it says that the tumor patch coincides with the support of the pressure variable, characterizing the evolution of the tumor as a free boundary problem driven by the pressure.  It will be useful later in Section \ref{sec:regularity}. When the tumor boundary is regular, this is
easy to prove since the pressure solves the elliptic equation $-\Delta p = n$ in the interior of the tumor. However such regularity is unknown a priori, so we instead argue with the more regular variable $w$, the time integral of $p$.

\begin{lemma}\label{lem:rho_1_characterization}
Assume $\rho_0(x)\in \{0,1\}$ almost everywhere.
Suppose $(\rho, \rho_0, w, \eta)$ satifies
\beq
\rho-\Delta w=\rho_0+\eta, \quad w(1-\rho)=0, \quad \rho\in \{0,1\},\quad w\geq 0,\quad \partial_t \eta=(n_0-\eta)\rho.
\label{eqn: w equation}
\eeq
Then for all $t\in (0,T]$,
\[
\{x\in \RR^d: \rho(x,t)=1\}=\{x\in \RR^d: w(x,t)>0\}\cup\{x\in\RR^d: \rho_0(x)=1, n_0(x)=0\},
\]
up to a measure zero set in $\BR^d$.
\end{lemma}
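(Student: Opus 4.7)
My plan is to establish both set inclusions separately, with equality understood modulo $d$-dimensional Lebesgue measure.

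For the ``$\supseteq$'' direction, which is elementary: the complementarity condition $w(1-\rho)=0$ in \eqref{eqn: w equation} immediately gives $\{w(\cdot,t)>0\}\subset\{\rho(\cdot,t)=1\}$, while the monotonicity $\rho(\cdot,t)\geq\rho(\cdot,s)$ a.e.\ for $t\geq s$ (a consequence of $b=0$, c.f.~the proof of Lemma \ref{lem:01}) combined with $\rho(\cdot,0)=\rho_0$ yields $\rho(x,t)\geq 1$ a.e.\ on $\{\rho_0=1,\,n_0=0\}$.

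For the ``$\subseteq$'' direction, I would set $A:=\{\rho(\cdot,t)=1\}\cap\{w(\cdot,t)=0\}$ and aim to show $A\subset\{\rho_0=1,\,n_0=0\}$ up to a null set. By elliptic regularity applied to \eqref{eqn: w equation}, $w(\cdot,t)\in W^{2,r}(\RR^d)$ for every $r<\infty$, and the standard fact that $\Delta w(\cdot,t)=0$ almost everywhere on the zero set of a nonnegative Sobolev function combines with \eqref{eqn: w equation} to yield $\rho_0+\eta(\cdot,t)=1$ a.e.\ on $A$. On the portion $A\cap\{\rho_0=1\}$, monotonicity forces $\rho(x,s)=1$ for every $s\in[0,t]$ at a.e.\ such $x$, so the explicit representation \eqref{eqn: eta formula original} evaluates to $\eta(x,t)=n_0(x)(1-e^{-t})$; together with $\eta(x,t)=0$ and $t>0$, this forces $n_0(x)=0$, placing $x$ in the right-hand set.

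The main obstacle is to show that $A_2:=A\cap\{\rho_0=0\}$ is null --- this is the one place where the dynamics must be genuinely exploited. Because $p\geq 0$, the Baiocchi variable $w(x,\cdot)$ is nondecreasing in $t$ for a.e.~$x$, so $x\in A_2$ forces $w(x,s)=0$ for all $s\in[0,t]$. Repeating the argument of the previous paragraph at each time $s$ then gives $\rho(\cdot,s)=\eta(\cdot,s)$ a.e.\ on $A_2$ for every $s\in[0,t]$, and Fubini produces a null subset $N\subset A_2$ such that for each $x\in A_2\setminus N$, $\rho(x,s)=\eta(x,s)$ for a.e.\ $s\in[0,t]$. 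Here the crucial structural asymmetry enters: by \eqref{eqn: eta formula original} the function $\eta(x,\cdot)$ is absolutely continuous in $s$ pointwise in $x$, whereas Lemma \ref{lem:01} forces $\rho(x,s)\in\{0,1\}$ almost everywhere. A continuous function $[0,t]\to\RR$ whose values lie in $\{0,1\}$ almost everywhere must be constant --- its image is both connected and a.e.\ contained in $\{0,1\}$ --- and since $\eta(x,0)=0$ this constant is $0$. Hence $\rho(x,s)=0$ for a.e.\ $(x,s)\in A_2\times[0,t]$, and the time continuity $\rho\in C([0,T];L^1(\RR^d))$ built into the notion of weak solution gives
\[
|A_2|=\int_{A_2}\rho(\cdot,t)\,dx=\lim_{s\to t^-}\int_{A_2}\rho(\cdot,s)\,dx=0,
\]
where the limit is taken along ``good'' times $s$ at which $\rho(\cdot,s)=0$ a.e.\ on $A_2$. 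This forces $|A_2|=0$ and completes the proof.
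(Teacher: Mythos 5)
Your proof is correct and reaches the same conclusion, but the mechanism differs from the paper's. The paper works with the single auxiliary quantity $h := \eta(1-\chi_w)$, where $\chi_w$ is the indicator of $\{w>0\}$: using the integral representation of $\eta$ and the time-monotonicity of $\chi_w$, it derives a closed linear integral equation for $h$ and applies Gronwall to conclude $h\equiv 0$ wherever $n_0\rho_0=0$; the complementary set $\{n_0\rho_0\neq 0\}$ is then absorbed into $\{w>0\}$ via the observation that there $\rho_0+\eta>1$ for $t>0$, which is incompatible with $w=0$. You instead split the exceptional set $A=\{\rho(\cdot,t)=1\}\cap\{w(\cdot,t)=0\}$ according to the value of $\rho_0$: the $\rho_0=1$ part is dispatched by the explicit formula $\eta = n_0(1-e^{-\int_0^t\rho})$ plus the forced relation $\eta(\cdot,t)=0$ on $A$, while for the $\rho_0=0$ part you combine the pointwise-in-time monotonicity of $w$, Fubini, and the observation that a continuous function taking a.e.\ values in $\{0,1\}$ must be constant, to pin $\eta(x,\cdot)\equiv 0$ and hence kill the set via $L^1$-time-continuity. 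Both approaches exploit the same two structural facts (monotonicity of $w$ in time; patch structure of $\rho$), but the paper's Gronwall formulation treats both subcases at once and is shorter, whereas your decomposition is more elementary and makes transparent exactly where the patch hypothesis enters (the continuity-versus-indicator tension on $A_2$). One small stylistic note: on $A_2$ you could close the argument even faster by observing that the same time-$t$ elliptic identity forces $\eta(\cdot,t)=1$ a.e.\ on $A_2$, directly contradicting $\eta(x,\cdot)\equiv 0$ unless $|A_2|=0$, avoiding the final limit-in-$L^1$ step.
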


\begin{proof}

Thanks to the regularity of $w$,  $\Delta w$ vanishes almost everywhere on the set where $w$ vanishes.
Thus, combining the elliptic equation and the relation $w(1-\rho)=0$, it follows that for every $t\in (0,T]$
\begin{equation}\label{eq:rho_decomposition}
\rho(x,t)=\chi_{w}(x,t)+(1-\chi_w(x,t))(\rho_0(x)+\eta(x,t))
\end{equation}
almost everywhere,
where $\chi_w$ is the characteristic function of the set $\{w>0\}$.

Using Lemma \ref{lem:01} we see that $\eta$ solves $\partial_t \eta=n_0\rho-\eta$.  Therefore, we have the following integral representation for $\eta$:
\[
\begin{split}
\eta(x,t)= &\; n_0(x)\int_0^t \rho(x,s)e^{s-t}\, ds\\
= &\; n_0(x)\int_0^t \Big(\chi_w(x,s)+(1-\chi_w(x,s))\big(\rho_0(x)+\eta(x,s)\big)\Big)e^{s-t}\, ds.
\end{split}
\]
Define
\[
h(x,t):=\eta(x,t)(1-\chi_w(x,t)).
\]
Since $\chi_w$ is increasing in time, it follows that $\chi_w(x,s)(1-\chi_w(x,t))=0$ and $(1-\chi_w(x,s))(1-\chi_w(x,t))=(1-\chi_w(x,s))$ whenever $s\leq t$.  Therefore, multiplying the integral representation of $\eta$ by $(1-\chi_w(x,t))$, we obtain that
\[
\begin{split}
h(x,t)= &\; n_0(x)\int_0^t (1-\chi_w(x,s))\big(\rho_0(x)+\eta(x,s)\big) e^{s-t}\, ds\\
=&\; n_0(x)\int_0^t \big((1-\chi_w(x,s))\rho_0(x)+h(x,s)\big)e^{s-t}\, ds
\end{split}
\]
holds almost everywhere.
For almost every $x$ such that $n_0(x)\rho_0(x)=0$, Gronwall's inequality implies that $h(x,t)=0$ for all $t\in [0,T]$.

By \eqref{eq:rho_decomposition} and the definition of $h$, we may write $\{(x,t): \rho(x,t)=1\}=\{(x,t): w(x,t)>0\}\cup\{(x,t): \rho_0(x)+h(x,t)=1\}$ up to a measure-zero set.   From our work above, when $n_0(x)\rho_0(x)=0$, $h(x,t)=0$ almost everywhere.
When $n_0(x)\rho_0(x)\neq 0$ we have $\rho_0(x)=1$ and $n_0(x)>0$, which implies that $\rho_0(x)+\eta(x,t)>1$ for all $t>0$ and it is non-decreasing.
For almost every such $x$, we must have $w(x,t)>0$ for all $t>0$, as otherwise \eqref{eqn: w equation} would not hold.

This completes the proof.
\end{proof}

Next, we state two results on the stability of the full system \eqref{eqn: P}.
They respectively extend Proposition \ref{prop:contraction} and Proposition \ref{prop:fbv}.

\begin{theorem}\label{thm:contraction}
Suppose that $(\rho^0, p^0, \eta^0)$ and $(\rho^1, p^1, \eta^1)$ are solutions to the PDE \eqref{eqn: P} starting from initial datum $(\rho_0^0, n_0^0)$ and $(\rho_0^1, n_0^1)$, respectively. Fix a domain $D\subset \RR^d$.
Suppose $\rho_0^0$ and $\rho_0^1$ are patches, $n^0_0\in L^{\infty}(D)$, and $p^0=p^1$ on $\partial D\times [0,T]$.
Denote
\[
N_D(t)=\int_0^t e^{(\norm{n^0_0}_{L^{\infty}(D)}-1)\tau}\,d\tau
=
\begin{cases}
\frac{e^{(\norm{n^0_0}_{L^{\infty}(D)}-1)t}-1}{\norm{n^0_0}_{L^{\infty}(D)}-1} & \textup{if} \;\norm{n^0_0}_{L^{\infty}(D)}\neq 1,\\
t &\textup{otherwise},
\end{cases}
\]
and
\[
M_D(t)= \norm{n^0_0}_{L^{\infty}(D)}N_D(t)+1 =
\begin{cases}
\frac{\norm{n^0_0}_{L^{\infty}(D)}e^{(\norm{n^0_0}_{L^{\infty}(D)}-1)t}-1}{\norm{n^0_0}_{L^{\infty}(D)}-1} & \textup{if} \;\norm{n^0_0}_{L^{\infty}(D)}\neq 1,\\
t+1 &\textup{otherwise}.
\end{cases}
\]
Then for all $t\in [0,T]$,
\begin{equation*}
\norm{(\rho^1-\rho^0)_+}_{L^1(D\times \{t\})}
\leq
N_D(t)\norm{(n^1_0-n^0_0)_+}_{L^1(D)}
+ M_D(t)\norm{(\rho_0^1-\rho_0^0)_+}_{L^1(D)}.
\end{equation*}
If in addition, $\rho_0^0\leq \rho_0^1$ and $n_0^0\leq n_0^1$ for almost every $x\in D$, then for all $t\in [0,T]$ we have $\rho^0(x,t)\leq \rho^1(x,t)$ for almost every $x\in D$.
\end{theorem}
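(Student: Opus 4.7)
The plan is to combine the elliptic $L^1$-stability of Proposition~\ref{prop:contraction} with a Gronwall bound for the ODE~\eqref{eqn: eta equation}. Set
\[
\phi(t) := \|(\rho^1-\rho^0)_+(\cdot,t)\|_{L^1(D)},\qquad \psi(t) := \|(\eta^1-\eta^0)_+(\cdot,t)\|_{L^1(D)},
\]
and let $K_1 := \|(\rho_0^1-\rho_0^0)_+\|_{L^1(D)}$, $K_2 := \|(n_0^1-n_0^0)_+\|_{L^1(D)}$. Since $w^i(\cdot,t) = \int_0^t p^i(\cdot,s)\,ds$ and $p^0 = p^1$ on $\partial D\times [0,T]$, the Baiocchi transforms $w^i$ agree on $\partial D\times [0,T]$, so Proposition~\ref{prop:contraction} applied on $\Omega = D$ yields the first key bound
\[
\phi(t) \leq \|(\rho_0^1-\rho_0^0+\eta^1-\eta^0)_+\|_{L^1(D)} \leq K_1 + \psi(t).
\]

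Next I would derive a matching differential inequality for $\psi$. From \eqref{eqn: eta equation},
\[
\partial_t(\eta^1-\eta^0) = (n_0^1-n_0^0)\rho^1 + (n_0^0-\eta^0)(\rho^1-\rho^0) - \rho^1(\eta^1-\eta^0).
\]
The crucial observation --- and the only nontrivial input --- is the pointwise inclusion $\{\eta^1>\eta^0\}\subset\{\rho^1=1\}$ up to a null set in $D$. Indeed, on this set $\eta^1>0$, which by the explicit representation \eqref{eqn: eta formula original} forces $\int_0^t\rho^1(x,s)\,ds>0$; combined with the patch property $\rho^1\in\{0,1\}$ (Lemma~\ref{lem:01}) and time monotonicity $\partial_t\rho^1\geq 0$ (Lemma~\ref{lem:jkt3} with $b=0$), this yields $\rho^1(x,t)=1$ a.e.\ where $\eta^1>\eta^0$. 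On this set $(1-\rho^0) = (\rho^1-\rho^0)_+$ and $(n_0^0-\eta^0)\leq \|n_0^0\|_{L^\infty(D)}$, so the chain rule for $(\cdot)_+$ (applied to the pointwise ODE) together with integration over $D$ yields
\[
\psi'(t) \leq K_2 + \|n_0^0\|_{L^\infty(D)}\,\phi(t) - \psi(t).
\]
Feeding in $\phi(t)\leq K_1+\psi(t)$ gives the closed inequality
\[
\psi'(t) \leq \bigl(K_2 + \|n_0^0\|_{L^\infty(D)}K_1\bigr) + \bigl(\|n_0^0\|_{L^\infty(D)}-1\bigr)\psi(t).
\]

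Since $\psi(0) = 0$, Duhamel's formula delivers $\psi(t) \leq (K_2+\|n_0^0\|_{L^\infty(D)}K_1)N_D(t)$, and then
\[
\phi(t)\leq K_1 + \psi(t) = N_D(t)K_2 + \bigl(1+\|n_0^0\|_{L^\infty(D)}N_D(t)\bigr)K_1 = N_D(t)K_2 + M_D(t)K_1,
\]
as claimed. The comparison assertion drops out by specializing to $K_1=K_2=0$, which forces $\phi\equiv 0$. The main obstacle is establishing the inclusion $\{\eta^1>\eta^0\}\subset\{\rho^1=1\}$: this is what produces the absorption term $-\psi$ in the $\psi$-inequality and therefore the improved exponent $\|n_0^0\|_{L^\infty(D)}-1$ in $N_D(t)$. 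Without that absorption one only obtains the naive Gronwall bound with exponential rate $\|n_0^0\|_{L^\infty(D)}$, which would fail to match the stated $M_D, N_D$.
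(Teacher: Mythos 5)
Your proof is correct and follows essentially the same route as the paper: feed the elliptic $L^1$-stability of Proposition~\ref{prop:contraction} (via the Baiocchi transform, noting $p^0=p^1$ on $\partial D$ implies $w^0=w^1$ there) into a Gronwall inequality for $\psi(t)=\|(\eta^1-\eta^0)_+\|_{L^1(D)}$ with a $-\psi$ absorption term, then close the loop with $\phi\leq K_1+\psi$. The only cosmetic difference is in how the absorption is produced: the paper invokes Lemma~\ref{lem:01} to rewrite $\partial_t\eta^i=n_0^i\rho^i-\eta^i$ and then takes the positive part of the difference, whereas you keep the form $\partial_t\eta^i=(n_0^i-\eta^i)\rho^i$, expand, and extract the absorption from the inclusion $\{\eta^1>\eta^0\}\subset\{\rho^1=1\}$ (which is precisely the content of Lemma~\ref{lem:01}, here re-derived from the representation formula~\eqref{eqn: eta formula original}); these are algebraically equivalent presentations of the same key observation.
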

\begin{remark}
Compared with the proof of Proposition \ref{prop: uniqueness}, this theorem provides an improved estimate by taking advantage of the conditions $b = 0$ and $\rho^i$ $(i = 0,1)$ are patch solutions.
\end{remark}
\begin{proof}
Thanks to Proposition \ref{prop:contraction}, for every $t\in [0,T]$ we have the contraction inequality
\[
\norm{(\rho^1-\rho^0)_+}_{L^1(D\times \{t\})}\leq \norm{(\rho^1_0-\rho_0^0)_+}_{L^1(D)}+\norm{(\eta^1-\eta^0)_+}_{L^1(D\times \{t\})}.
\]
On the other hand, by direct computation and Lemma \ref{lem:01},
\[
\begin{split}
&\;\frac{d}{dt} \norm{(\eta^1-\eta^0)_+}_{L^1(D\times \{t\})}+\norm{(\eta^1-\eta^0)_+}_{L^1(D\times \{t\})}\\
\leq &\; \norm{(n^1_0\rho^1-n^0_0\rho^0)_+}_{L^1(D\times \{t\})}\\
\leq &\; \norm{(n_0^1-n^0_0)_+}_{L^1(D)}+\norm{n^0_0}_{L^{\infty}(D)}\norm{(\rho^1-\rho^0)_+}_{L^1(D\times \{t\})}.
\end{split}
\]
Plugging in the contraction inequality, we see that
\[
\begin{split}
\frac{d}{dt} \norm{(\eta^1-\eta^0)_+}_{L^1(D\times \{t\})}
\leq &\;
\big(\norm{n^0_0}_{L^{\infty}(\RR^d)}-1\big) \norm{(\eta^1-\eta^0)_+}_{L^1(D\times \{t\})}\\
&\;+\norm{(n_0^1-n^0_0)_+}_{L^1(D)}+\norm{n^0_0}_{L^{\infty}(D)}\norm{(\rho^1_0-\rho_0^0)_+}_{L^1(D)}.
\end{split}
\]
Then Gronwall's inequality implies that
\[
\norm{(\eta^1-\eta^0)_+}_{L^1(D\times \{t\})}\leq N_D(t)\Big(\norm{(n^1_0-n^0_0)_+}_{L^1(D)}+\norm{n^0_0}_{L^{\infty}(\RR^d)}\norm{(\rho^1_0-\rho_0^0)_+}_{L^1(D)}\Big),
\]
and thus
\[
\norm{(\rho^1-\rho^0)_+}_{L^1(D\times \{t\})}
\leq N_D(t)\norm{(n_0^1-n^0_0)_+}_{L^1(D)}
+ M_D(t)\norm{(\rho^1_0-\rho_0^0)_+}_{L^1(D)}.
\]

The final comparison property is automatic from the above estimates.
\end{proof}

\begin{prop}
\label{prop: BV estimates}
Let $(\rho, p, \eta)$ be a solution to \eqref{eqn: P} starting from initial data $\rho_0$ and $n_0$.  Given an antisymmetric matrix $A\in \RR^{d\times d}$ and some $g\in L^1(\RR^d)$ define
\[
\norm{g}_{BV_A(\RR^d)}=\sup_{\vp\in C^{\infty}_c(\RR^d), \norm{\vp}_{L^{\infty}(\RR^d)}\leq 1}\int_{\RR^d} g(x) Ax\cdot \nabla \vp\,dx.
\]
If $\rho_0\in BV(\RR^d)$ is a patch and $n_0\in L^{\infty}(\RR^d)\cap BV(\RR^d)$, then for all $t\in [0,T]$,

\[
\norm{\rho}_{BV(\RR^d\times \{t\})}\leq N(t) \norm{n_0}_{BV(\RR^d)}+M(t)\norm{\rho_0}_{BV(\RR^d)},
\]
and
\[
\norm{\rho}_{BV_A(\RR^d\times \{t\})}
\leq
N(t) \norm{n_0}_{BV_A(\RR^d)} + M(t)\norm{\rho_0}_{BV_A(\RR^d)}.
\]
Here $N(t):= N_{\BR^d}(t)$ and $M(t):= M_{\BR^d}(t)$ are defined as in Theorem \ref{thm:contraction}.

\end{prop}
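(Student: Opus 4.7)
The plan is to reduce both inequalities to the two-sided form of the $L^1$-contraction in Theorem \ref{thm:contraction} by exploiting the invariance of \eqref{eqn: P} (with $b=D=0$) under Euclidean isometries. Translations will produce the ordinary $BV$ bound, and the one-parameter family of rotations generated by the antisymmetric matrix $A$ will produce the $BV_A$ bound. This parallels the way Proposition \ref{prop:fbv} was deduced from Proposition \ref{prop:contraction}, except that now the contraction is applied at the PDE level and therefore carries the factors $M(t)$ and $N(t)$.

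For the ordinary $BV$ estimate, fix a unit vector $v\in\RR^d$ and set $\tau_{sv}\rho(x,t):=\rho(x-sv,t)$. Since the identities $\rho_t-\nabla\cdot(\rho\nabla p)=n\rho$, $\partial_t n=-n\rho$, and $p\in P_\infty(\rho)$ are translation invariant, $(\tau_{sv}\rho,\tau_{sv}p,\tau_{sv}\eta)$ is again a weak solution of \eqref{eqn: P} with initial data $(\tau_{sv}\rho_0,\tau_{sv}n_0)$; in particular $\|\tau_{sv}n_0\|_{L^\infty}=\|n_0\|_{L^\infty}$, so the constants $M(t),N(t)$ are unchanged. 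Applying Theorem \ref{thm:contraction} twice, with the two solutions' roles swapped, and adding the two one-sided bounds yields
\[
\|\rho(\cdot,t)-\tau_{sv}\rho(\cdot,t)\|_{L^1(\RR^d)}\leq M(t)\|\rho_0-\tau_{sv}\rho_0\|_{L^1(\RR^d)}+N(t)\|n_0-\tau_{sv}n_0\|_{L^1(\RR^d)}.
\]
Dividing by $s>0$ and letting $s\to 0^+$, the standard characterization of the directional $BV$-seminorm via translation differences gives
\[
\int_{\RR^d}|\nabla\rho(\cdot,t)\cdot v|\,dx\leq M(t)\int_{\RR^d}|\nabla\rho_0\cdot v|\,dx+N(t)\int_{\RR^d}|\nabla n_0\cdot v|\,dx,
\]
and taking the supremum over $|v|\leq 1$ exactly as in the proof of Proposition \ref{prop:fbv} yields the first inequality.

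For the $BV_A$ estimate, replace translations by the rotation family $R_s:=e^{sA}$, which is orthogonal since $A$ is antisymmetric. Orthogonal transformations are $L^p$-isometries and commute with the Laplacian in the sense that $\nabla\cdot\bigl((g\nabla f)\circ R_s\bigr)=\bigl(\nabla\cdot(g\nabla f)\bigr)\circ R_s$. Consequently $(\rho(R_s\cdot,t),p(R_s\cdot,t),\eta(R_s\cdot,t))$ is a weak solution of \eqref{eqn: P} with initial data $(\rho_0\circ R_s,n_0\circ R_s)$ and the same $L^\infty$-norm of the nutrient. Theorem \ref{thm:contraction}, applied in both directions, gives
\[
\|\rho(\cdot,t)-\rho(R_s\cdot,t)\|_{L^1}\leq M(t)\|\rho_0-\rho_0\circ R_s\|_{L^1}+N(t)\|n_0-n_0\circ R_s\|_{L^1}.
\]
Using $\tfrac{d}{d\tau}f(R_\tau x)=\nabla f(R_\tau x)\cdot AR_\tau x$ together with the measure-preservation of $R_\tau$, one checks that $s^{-1}\|f-f\circ R_s\|_{L^1}\to\|\nabla f\cdot Ax\|_{L^1}=\|f\|_{BV_A}$ as $s\to 0^+$, which passes to the limit in the above display and produces the second claim.

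The only genuinely delicate point is the passage $s\to 0^+$ when the initial data are merely $BV$ (respectively $BV_A$) and not smooth. This is handled via standard mollification: the uniform comparisons $s^{-1}\|f-\tau_{sv}f\|_{L^1}\leq\|f\|_{BV}$ and $s^{-1}\|f-f\circ R_s\|_{L^1}\leq\|f\|_{BV_A}$ hold for smooth approximants and survive the limit, identifying the correct objects on both sides. All remaining inputs---the Euclidean invariance of \eqref{eqn: P} when $b=D=0$, the patch-preserving property, and the symmetry of the constants in Theorem \ref{thm:contraction} under isometries of the initial data---have already been established.
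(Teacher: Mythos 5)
Your proof is correct but proceeds by a genuinely different organization of the same two ingredients. The paper's proof first invokes Proposition \ref{prop:fbv} (which itself comes from differentiating Proposition \ref{prop:contraction} along translations and rotations), and then re-derives a Gronwall-type time-differential inequality for $\norm{\eta(\cdot,t)}_{BV}$ using $\partial_t\nabla\eta=\rho\nabla n_0+n_0\nabla\rho-\nabla\eta$; the constants $N(t),M(t)$ emerge from solving that ODE. You instead treat Theorem \ref{thm:contraction} as a black box: you apply the already-finished $L^1$-contraction (which has the Gronwall step baked in) to the pair $(\rho,\tau_{sv}\rho)$, respectively $(\rho,\rho\circ R_s)$, using the translation and rotation invariance of \eqref{eqn: P} when $b=D=0$, then differentiate in $s$. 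Your ordering --- Gronwall at the $L^1$ level first, then differentiation along the isometry group --- is arguably cleaner because it avoids repeating the Gronwall argument for the differentiated quantity and makes the origin of $N(t),M(t)$ transparent; the paper's ordering keeps the elliptic estimate (Proposition \ref{prop:fbv}) as an independently stated intermediate result, which is reused elsewhere. The key things you must (and do) verify --- that translates and rotations of a weak solution are weak solutions with the same $\norm{n_0}_{L^\infty}$, hence the same $N(t),M(t)$ in both directions; that both initial densities remain patches; and that the $s\to 0^+$ limit recovers the directional $BV$ and $BV_A$ seminorms --- are all in order. The only blemish, which you inherit from the paper's own Proposition \ref{prop:fbv}, is that taking the supremum over unit vectors $v$ of $\norm{\nabla f\cdot v}_{L^1}$ recovers a quantity equivalent to, but not literally equal to, the standard $BV$-seminorm (they differ by at most a dimensional factor); this does not affect the substance of either proof.
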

\begin{proof}
Proposition \ref{prop:fbv} immediately gives us the bound
\[
\norm{\rho(\cdot,t)}_{BV(\RR^d)}\leq \norm{\rho_0}_{BV(\RR^d)}+\norm{\eta(\cdot,t)}_{BV(\RR^d)}.
\]
Using \eqref{eqn: eta formula original}, it is clear that $\rho, \eta\in L^{\infty}([0,T];BV(\RR^d))$.
To obtain a better bound, we differentiate the $\eta$-equation
\[
\partial_t \nabla \eta=\rho\nabla n_0+n_0\nabla \rho-\nabla \eta,
\]
which allows us to estimate
\[
\begin{split}
\frac{d}{dt} \big(e^t \norm{\eta(\cdot,t)}_{BV(\RR^d)}\big)\leq &\;
e^t\left(\norm{n_0}_{BV(\RR^d)}
+\norm{n_0}_{L^{\infty}(\RR^d)}\norm{\rho(\cdot,t)}_{BV(\RR^d)}\right)\\
\leq &\; e^t \left(\norm{n_0}_{BV(\RR^d)}
+\norm{n_0}_{L^{\infty}(\RR^d)}\norm{\rho_0}_{BV(\RR^d)}
+\norm{n_0}_{L^{\infty}(\RR^d)} \norm{\eta(\cdot,t)}_{BV(\RR^d)}\right).
\end{split}
\]
Rearranging this yields
\[
\frac{d}{dt}\big(e^{(1-\norm{n_0}_{L^{\infty}(\RR^d)})t} \norm{\eta(\cdot,t)}_{BV(\RR^d)}\big)
\leq
e^{(1-\norm{n_0}_{L^{\infty}(\RR^d)})t} \Big(\norm{n_0}_{BV(\RR^d)}+\norm{n_0}_{L^{\infty}(\RR^d)}\norm{\rho_0}_{BV(\RR^d)}\Big).
\]
Thus,
\[
\norm{\eta(\cdot,t)}_{BV(\RR^d)} \leq N(t)\Big( \norm{n_0}_{BV(\RR^d)}+\norm{n_0}_{L^{\infty}(\RR^d)}\norm{\rho_0}_{BV(\RR^d)}\Big).
\]
The $BV$-estimate now follows.
The estimate for the $BV_A$-norm can be proved analogously.
\end{proof}

%
%
%



By the comparison principle Theorem \ref{thm:contraction}, one can see that any solution starting with an initial nutrient such that $\norm{n_0}_{L^{\infty}(\RR^d)}<1$ must have bounded mass for all time.
Indeed, we may take in Theorem \ref{thm:contraction} that $\rho_0^0 = 0$, $\rho_0^1 = \rho_0$, and $n_0^0 = n_0^1 = n_0$.
In such case, the solution approaches a stationary state as $t\to\infty$.
The following theorem characterizes the stationary state and provides a convergence rate.

\begin{theorem}
\label{thm: convergence when n_0 is less than 1}
Suppose $\norm{n_0}_{L^{\infty}(\RR^d)}<1$ and $\rho_0\in \{0,1\}$ and that $(\rho, p,\eta)$ is a solution to the PDE \eqref{eqn: P} starting from the initial data $(\rho_0, n_0)$.
Let $\rho_{\infty}$ solve the elliptic equation
\beq
(1-n_0(x))\rho_{\infty}-\Delta w_{\infty}=\rho_0, \quad w_{\infty}(1-\rho_{\infty})=0,\quad \rho_\infty \in [0,1], \quad w_\infty \geq 0.
\label{eqn: eqn for stationary state}
\eeq
Then we have
\begin{align*}
\norm{\rho(\cdot,t)-\rho_{\infty}}_{L^1(\RR^d)}
\leq &\; \frac{\norm{n_0\rho_0}_{L^1(\RR^d)}}{1-\norm{n_0}_{L^{\infty}(\RR^d)}} e^{(\norm{n_0}_{L^{\infty}(\RR^d)}-1)t},\\
\norm{\eta(\cdot,t)-n_0\rho_{\infty}}_{L^1(\RR^d)}\leq &\; \frac{\norm{n_0\rho_0}_{L^1(\RR^d)}}{1-\norm{n_0}_{L^{\infty}(\RR^d)}} e^{(\norm{n_0}_{L^{\infty}(\RR^d)}-1)t}.
\end{align*}

\end{theorem}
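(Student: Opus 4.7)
The plan is to reduce both estimates to controlling the single $L^1$-quantity $\norm{\eta(\cdot,t) - n_0\rho_\infty}_{L^1(\RR^d)}$ via the elliptic reformulation together with a Gronwall argument. First I would observe that $(\rho_\infty, w_\infty)$ solves $\rho_\infty - \Delta w_\infty = \rho_0 + n_0\rho_\infty$, which is precisely \eqref{eq:elliptic} with ``consumed nutrient'' $n_0\rho_\infty$ and the same initial density $\rho_0$. Applying Proposition \ref{prop:contraction} (with $\Omega = \RR^d$) to the pairs $(\rho(\cdot,t), w(\cdot,t), \eta(\cdot,t))$ and $(\rho_\infty, w_\infty, n_0\rho_\infty)$, both of whose $w$-variables vanish at infinity, and adding the two resulting $(\cdot)_+$ inequalities, I obtain
\[
\norm{\rho(\cdot,t) - \rho_\infty}_{L^1(\RR^d)} \leq \norm{\eta(\cdot,t) - n_0\rho_\infty}_{L^1(\RR^d)}\quad \text{for all } t \geq 0.
\]

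Next I would derive a differential inequality for $\psi(t) := \norm{\eta(\cdot,t) - n_0\rho_\infty}_{L^1(\RR^d)}$. Since Lemma \ref{lem:01} together with $\rho_0\in\{0,1\}$ gives $\eta\rho = \eta$, the $\eta$-equation becomes $\partial_t \eta = n_0\rho - \eta$, so that
\[
\partial_t(\eta - n_0\rho_\infty) = n_0(\rho-\rho_\infty) - (\eta - n_0\rho_\infty).
\]
Multiplying by $\sgn(\eta - n_0\rho_\infty)$ and integrating, then invoking the contraction from the previous paragraph, yields
\[
\frac{d}{dt}\psi(t) \leq \norm{n_0}_{L^\infty(\RR^d)}\norm{\rho-\rho_\infty}_{L^1(\RR^d)} - \psi(t) \leq -\bigl(1-\norm{n_0}_{L^\infty(\RR^d)}\bigr)\psi(t).
\]
Gronwall's inequality then gives $\psi(t) \leq \psi(0)\, e^{(\norm{n_0}_{L^\infty(\RR^d)}-1)t}$ with $\psi(0) = \norm{n_0\rho_\infty}_{L^1(\RR^d)}$.

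The remaining obstacle is to bound $\norm{n_0\rho_\infty}_{L^1(\RR^d)}$ by $\norm{n_0\rho_0}_{L^1(\RR^d)}/(1-\norm{n_0}_{L^\infty(\RR^d)})$, which requires exploiting the obstacle-problem structure of \eqref{eqn: eqn for stationary state}. I would first show that $\rho_\infty$ is itself a patch with $\{\rho_0 = 1\} \subset \Omega_\infty := \{\rho_\infty = 1\}$: on $\{w_\infty = 0\}$ the equation forces $(1-n_0)\rho_\infty = \rho_0$, so having $\rho_0 = 1$ there would yield $\rho_\infty = 1/(1-n_0) > 1$, contradicting $\rho_\infty \leq 1$. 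Integrating \eqref{eqn: eqn for stationary state} over $\RR^d$ (the Laplacian term contributes zero by the decay of $w_\infty$ at infinity) gives $\abs{\Omega_\infty} - \int n_0\rho_\infty = \norm{\rho_0}_{L^1(\RR^d)}$, and hence $\abs{\Omega_\infty \setminus \{\rho_0 = 1\}} = \int n_0\rho_\infty$. Splitting the integral over $\Omega_\infty$,
\[
\int n_0\rho_\infty = \int n_0\rho_0 + \int_{\Omega_\infty\setminus\{\rho_0=1\}} n_0 \leq \norm{n_0\rho_0}_{L^1(\RR^d)} + \norm{n_0}_{L^\infty(\RR^d)}\int n_0\rho_\infty,
\]
which rearranges to the desired bound. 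Combining with the decay of $\psi(t)$ and the contraction from the first paragraph proves both claimed inequalities simultaneously.
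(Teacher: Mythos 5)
Your proof is correct, but it takes a genuinely different route from the paper's. The paper works entirely with scalar quantities: it computes $\int \partial_t \rho\,dx = \int (n_0\rho-\eta)\,dx$ and $\int \partial_{tt}\rho\,dx = \int (n_0-1)\partial_t\rho\,dx$, then uses $\partial_t\rho\geq 0$ to obtain $\int\partial_{tt}\rho + (1-\norm{n_0}_{L^\infty})\int\partial_t\rho\leq 0$, hence $\int\partial_t\rho(\cdot,t)\,dx\leq e^{(\norm{n_0}_{L^\infty}-1)t}\norm{n_0\rho_0}_{L^1}$; integrating the monotone tail from $t$ to $\infty$ gives the rate, and the limit $\rho_\infty$ is constructed afterward and only then identified as the solution of the elliptic equation. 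You instead take $\rho_\infty$ as given by the elliptic problem, invoke Proposition \ref{prop:contraction} twice (with the roles of the two states swapped) to obtain $\norm{\rho-\rho_\infty}_{L^1}\leq\psi(t):=\norm{\eta-n_0\rho_\infty}_{L^1}$, close a Gronwall loop on $\psi$, and bound $\psi(0)=\norm{n_0\rho_\infty}_{L^1}$ by a mass-balance identity in the obstacle problem. Your approach leans more heavily on the contraction machinery but has the advantage that it starts directly from the elliptic characterization of $\rho_\infty$ rather than deriving it at the end; the paper's approach is shorter and uses only the monotonicity $\partial_t\rho,\partial_t\eta\geq 0$ and a second-order ODE inequality. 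One small caveat in your argument for $\{\rho_0=1\}\subset\Omega_\infty$: at points where $n_0(x)=0$ and $\rho_0(x)=1$ the relation $(1-n_0)\rho_\infty=\rho_0$ on $\{w_\infty=0\}$ gives $\rho_\infty=1$, not $>1$; this is not a contradiction, but it still yields the inclusion you need, so the conclusion stands. You should also note that the mass-balance integration $\int\Delta w_\infty\,dx=0$ uses the fact that $w_\infty$ (as a Baiocchi-transform limit of compactly supported solutions) decays appropriately at infinity, a fact that is transparent in the paper's construction of $\rho_\infty$ but implicit in your setup.
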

\begin{proof}
Recall that $\eta = \rho\eta$, since $\rho$ is a patch solution.
A direct computation shows that
\[
\int_{\RR^d} \partial_t \rho \,dx =\int_{\RR^d} n_0\rho-\eta\,dx, \quad \int_{\RR^d} \partial_{tt} \rho \,dx = \int_{\RR^d} n_0\partial_t \rho-(n_0\rho-\eta)\,dx.
\]
Therefore,
\[
\int_{\RR^d} \partial_{tt} \rho\,dx +\big(1-\norm{n_0}_{L^{\infty}(\RR^d)}\big)\int_{\RR^d} \partial_t \rho\,dx \leq 0.
\]
Hence,
\[
\int_{\RR^d} \partial_t \rho(x,t)\, dx \leq e^{(\norm{n_0}_{L^{\infty}(\RR^d)}-1)t}\int_{\RR^d} \rho_0 n_0\,dx.
\]
A similar calculation gives
\[
\int_{\RR^d} \partial_t \eta(x,t)\, dx\leq e^{(\norm{n_0}_{L^{\infty}(\RR^d)}-1)t}
\int_{\RR^d} \rho_0n_0\,dx.
\]
Since $\partial_t \rho, \partial_t \eta\geq 0$ almost everywhere, the above bounds imply that there exists $\rho_{\infty}, \eta_{\infty}\in L^1(\RR^d)$ such that
\begin{align*}
&\norm{\rho(\cdot,t)-\rho_{\infty}}_{L^1(\RR^d)}
\leq \frac{\norm{n_0\rho_0}_{L^1(\RR^d)}}{1-\norm{n_0}_{L^{\infty}(\RR^d)}} e^{(\norm{n_0}_{L^{\infty}(\RR^d)}-1)t},\\
&\norm{\eta(\cdot,t)-\eta_{\infty}}_{L^1(\RR^d)}\leq \frac{\norm{n_0\rho_0}_{L^1(\RR^d)}}{1-\norm{n_0}_{L^{\infty}(\RR^d)}} e^{(\norm{n_0}_{L^{\infty}(\RR^d)}-1)t}.
\end{align*}

Finally, it is clear that $0=\lim_{t\to\infty} \partial_t \eta(\cdot,t)=\rho_{\infty}n_0-\eta_{\infty}.$ Hence, $\eta_{\infty}=n_0\rho_{\infty}$.  Since all of the variables are increasing with respect to time, we can pass to the limit in the elliptic equation \eqref{eq:elliptic} to obtain \eqref{eqn: eqn for stationary state}.
\end{proof}



\section{Regularity of the Tumor Patch}
\label{sec:regularity}

In this section we analyze the free boundary regularity of the tumor region $\{\rho=1\}$. For simplicity we only consider patch solutions $\rho$ of the tumor growth model \eqref{eqn: P}.

\medskip

When $\rho_0$ equals the characteristic function $\chi_{\Omega_0}$ with $\Omega_0$ being a compact set, Proposition~\ref{prop:contraction} and Lemma \ref{lem:01} imply that for any $t>0$, $\rho(\cdot,t)= \chi_{\Omega_t}$ for some $\Omega_t$ that increases in time.
Lemma \ref{lem:rho_1_characterization} gives a characterization of $\Omega_t$.
We will focus on the boundary regularity of $\Omega_t$ under suitable geometric assumptions.

\subsection{Reflection geometry}

First, we recall from \cite{FK} that sets having \emph{reflection geometries}, i.e., those satisfying ordering properties when reflected with respect to a family of hyperplanes, have locally Lipschitz boundaries.
For both simplicity and relevance to the case $n_0(x)\geq 1$, we focus on isotropic reflection geometry defined as \emph{$r$-reflection property} below. For more general types of reflection geometries that ensure Lipschitz regularity of the set boundary, see \cite{KKP}.

For a hyperplane $H$ in $\R^d$ with unit normal vector $\nu_H$, the reflection with respect to $H$ is given by
$$
\phi_H(x):= x-2(x-y, \nu_H) \nu_H \hbox{ for some }y \in  H.
$$

\begin{definition}[Definition 3.10, \cite{FK}]
\label{def: set reflection}
For $r>0$, we say a bounded open set $\Omega\subset \R^d$ satisfies  {\it $r$-reflection property} if, for any hyperplane $H$ such that $H^-:= \{x: (x-y)\cdot \nu_H <0,\,y\in H\}$ contains $B_r(0)$, 
$$
\phi_H(\Omega\cap H^+) \subset \Omega \cap H^-.
$$
Here $H^+ : = \phi_H(H^-)$.
\end{definition}

\begin{lemma}[Lemma 3.24, \cite{FK}]\label{star_Lipschitz}
Let $C(x,\theta):= \{y: \langle x,y\rangle  \geq  \cos \theta |x||y|\}$ denote the cone with direction $x$ and angle $\theta \in [0, \pi/2]$.
Suppose $\Omega$ satisfies $r$-reflection property, and that $\Omega$ contains the closure of $B_r(0)$.
Then for all $x\in \partial\Omega$, there is an exterior cone $C(x,\phi_x)$ to $\Omega$ at $x$, such that
$$
x+ C(x, \phi_x) \subset \Omega^c, \hbox{ where } \cos \phi_x = \frac{r}{|x|}.
$$
\end{lemma}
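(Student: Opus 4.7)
The plan is to reduce the cone-containment assertion to the single-point fact $x \notin \Omega$ (which holds because $\Omega$ is open and $x \in \partial\Omega$) by transporting this non-membership along a cleverly chosen reflection. Since $\overline{B_r(0)} \subset \Omega$ and $x \in \partial\Omega$, we have $|x| > r$ strictly, so $\phi_x \in (0, \pi/2)$ is well-defined. It suffices to verify, for an arbitrary $y = x + v$ with $v \in C(x, \phi_x) \setminus \{0\}$---equivalently, $\langle v, x\rangle \geq \cos\phi_x \cdot |x||v| = r|v|$---that $y \notin \Omega$.

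Given such a $y$, the hyperplane I would use is $H$, the perpendicular bisector of the segment $[x, y]$, with unit normal $\nu := v/|v|$; by construction $\phi_H(y) = x$, with $y$ strictly in $H^+$ and $x$ strictly in $H^-$. The geometric crux is to check that $B_r(0)$ sits strictly inside $H^-$: a direct computation gives the signed distance from $0$ to $H$ as
\[
\bigl\langle (x+y)/2,\, \nu\bigr\rangle \;=\; \frac{\langle v, x\rangle}{|v|} + \frac{|v|}{2} \;\geq\; r + \frac{|v|}{2} \;>\; r,
\]
where the middle inequality uses the cone condition. Hence $B_r(0)$ lies strictly inside $H^-$, and $H$ fulfills the hypotheses required to invoke the $r$-reflection property.

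Applying that property yields $\phi_H(\Omega \cap H^+) \subset \Omega \cap H^-$. Were $y \in \Omega$, then $y \in \Omega \cap H^+$ would force $x = \phi_H(y) \in \Omega$, contradicting the openness of $\Omega$ together with $x \in \partial\Omega$. Thus $y \in \Omega^c$, completing the proof since $y \in x + C(x,\phi_x)$ was arbitrary (and the apex $y = x$ is handled trivially). The only substantive step is the distance verification in the displayed line above: the prescription $\cos\phi_x = r/|x|$ is precisely calibrated so that the perpendicular bisector of $[x,y]$ separates $B_r(0)$ strictly from $y$ for every $y$ in the outward cone, which is exactly what unlocks the reflection property.
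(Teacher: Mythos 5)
Your proof is correct. Note that the paper does not supply its own proof of this lemma—it is imported verbatim from \cite{FK} (Lemma 3.24)—so there is nothing in the present text to compare against; but your argument is the natural one, and each step checks out: the cone condition $\langle v, x\rangle \geq r|v|$ makes the signed distance from the origin to the perpendicular bisector of $[x,x+v]$ equal to $\langle v,x\rangle/|v| + |v|/2 \geq r + |v|/2 > r$, so $B_r(0)$ lies strictly in $H^-$, the $r$-reflection hypothesis applies, and the reflection maps $y=x+v$ to $x$, which lies in $\Omega^c$ because $\Omega$ is open and $x\in\partial\Omega$.
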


\begin{definition}
Let $\mathcal{P}$ be a family of hyperplanes in $\BR^d$ that do not go through the origin.
$u$ is said to have reflection properties with respect to $\mathcal{P}$, 
if for any $H \in \mathcal{P}$, $u_H \leq u$ in $H^+$.
Here $u_H(x): =u\circ \phi_H(x)$ is the reflection of $u$ with respect to $H$, and $H^+$ is the half-space generated by $H$ that contains the origin.
\end{definition}

Lemma \ref{star_Lipschitz} implies that, if $u$ has reflection properties with respect to $\mathcal{P}$ and there is enough of these hyperplanes in $\mathcal{P}$, then the super level sets of $u$ should have locally Lipschitz boundary.

\subsection{ Lipschitz regularity for the tumor patch}

Using the invariance of the system with respect to reflections, we will show that, under suitable assumptions on $n_0$, if $r$-reflection property holds for $\Omega_0$ initially, it remains to be true for $\Omega_t$ for all $t>0$ (see Proposition \ref{prop: reflection comparison}).
This allows us to prove that $\partial \Omega_t$ enjoys Lipschitz regularity in various typical cases.
For example, if $n_0(x)\geq 1$ on $\BR^d$, we will show that for any initial tumor region $\Omega_0$, $\Omega_t$ has locally Lipschitz boundary for all sufficiently large times: see Corollary \ref{star:shaped}. We will  build on the Lipschitz regularity to later show $C^{1,\alpha}$-regularity of the free boundary, using the elliptic equation that $w$ solves.

\medskip

We first prove the so-called reflection comparison result.
\begin{prop}[Reflection comparison]
\label{prop: reflection comparison}
For a given hyperplane $H$ in $\R^d$, define $\rho_H: = \rho\circ \phi_H$.
Let $H^+$ be one of the half-spaces generated by $H$. 
If $(\rho_0)_H\leq\rho_0$ and $(n_0)_H \leq n_0$ a.e.\;in $H^+$, then $(\rho)_H \leq \rho$, and $(\eta)_H\leq \eta$ a.e.\;in $H^+ \times [0,\infty)$.
\end{prop}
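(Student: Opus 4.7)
The plan is to exploit the reflection invariance of \eqref{eqn: P} together with the $L^1$-contraction estimate of Theorem~\ref{thm:contraction}. Since $\phi_H$ is a Euclidean isometry and every differential operator in \eqref{eqn: P} is invariant under isometries, a direct change of variables in the weak formulation of Definition~\ref{def: weak solution} shows that the reflected triple $(\rho_H, p_H, n_H) := (\rho\circ\phi_H, p\circ\phi_H, n\circ\phi_H)$ is itself a weak solution of \eqref{eqn: P} with initial data $((\rho_0)_H, (n_0)_H)$: the Jacobian is $1$, the gradient commutes with the reflection up to a sign that is immaterial in $\abs{\nabla p}^2$, and the pointwise Hele--Shaw constraint $p\in P_\infty(\rho)$ is preserved. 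Uniqueness (Proposition~\ref{prop: uniqueness}) identifies it with the unique weak solution starting from the reflected initial data.

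Next I would compare $(\rho_H, \eta_H)$ against $(\rho, \eta)$ on the half-space $D = H^+$ by invoking Theorem~\ref{thm:contraction}. Three hypotheses must be verified. First, both $\rho_0$ and $(\rho_0)_H$ are patches and $n_0 \in L^\infty(H^+)$. Second, on the boundary $\partial H^+ = H$ the reflection acts as the identity, so $p_H = p$ on $H \times [0,T]$ in the $H^1$-trace sense (well defined since $p, p_H \in L^2([0,T]; H^1(\R^d))$); this supplies the boundary-matching condition required by Theorem~\ref{thm:contraction}. Third, the interior orderings $(\rho_0)_H \leq \rho_0$ and $(n_0)_H \leq n_0$ on $H^+$ are assumed. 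Plugging these into the contraction estimate, its right-hand side, which is a linear combination of $\norm{((n_0)_H - n_0)_+}_{L^1(H^+)}$ and $\norm{((\rho_0)_H - \rho_0)_+}_{L^1(H^+)}$, vanishes identically. This forces $\rho_H \leq \rho$ a.e.~on $H^+ \times [0,T]$ for every $T>0$, yielding the density ordering on $H^+ \times [0,\infty)$.

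For the conclusion $\eta_H \leq \eta$, I would recycle the intermediate Gronwall step in the proof of Theorem~\ref{thm:contraction}. Once $\rho_H \leq \rho$ is known on $H^+$, differentiating $\norm{(\eta_H - \eta)_+}_{L^1(H^+ \times \{t\})}$ and using $\partial_t \eta = (n_0 - \eta)\rho$ together with Lemma~\ref{lem:01} produces a Gronwall inequality whose forcing is controlled by $\norm{((n_0)_H - n_0)_+}_{L^1(H^+)} + \norm{n_0}_{L^\infty}\norm{(\rho_H - \rho)_+}_{L^1(H^+)}$, which is zero. Since the quantity also vanishes at $t = 0$, I conclude $\eta_H \leq \eta$ on $H^+ \times [0,\infty)$.

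The only subtle step, and the one worth spelling out carefully, is verifying that Theorem~\ref{thm:contraction} applies with $D = H^+$, a half-space with smooth but non-compact boundary, under a boundary match that holds only in the trace sense on $H$. Proposition~\ref{prop:contraction}, which underlies Theorem~\ref{thm:contraction}, is proved by integrating against test functions that vanish on $\partial D$, so the argument accommodates an unbounded smooth $\partial D$; and the $H^1$-traces of $p$ and $p_H$ on $H$ agree trivially because $\phi_H|_H = \mathrm{id}$. Once this point is addressed, the remainder is a direct application of the ingredients assembled above.
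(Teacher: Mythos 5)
Your proposal is correct and follows essentially the same route as the paper: both hinge on the observation that $p$ (equivalently its Baiocchi transform $w$) agrees with its reflection on $H$, which supplies the boundary match needed to invoke the $L^1$-contraction (you via Theorem~\ref{thm:contraction}, the paper directly via Proposition~\ref{prop:contraction}), followed by the same Gronwall argument on $\|(\eta_H-\eta)_+\|_{L^1(H^+)}$. The only cosmetic differences are that the paper skips the "reflected triple is a weak solution, then invoke uniqueness" framing — it simply writes down the reflected elliptic system directly on $H^+$ — and that your appeal to uniqueness is superfluous (it suffices that the reflected triple is \emph{a} weak solution).
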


\begin{proof}
Let $w_H: = w\circ \phi_H$ and there holds trivially $w|_H=w_H|_H$.
Thanks to the symmetries of the Laplacian, we have
\[
\rho_H-\Delta w_H=(\rho_0)_H+\eta_H,\quad w_H(1-\rho_H) = 0, \quad \rho_H\in [0,1],\quad w_H\geq 0
\]
almost everywhere in $H^+$.
Thus, the stability estimate in Proposition \ref{prop:contraction} implies that for every $t\in [0,T]$,
\[
\norm{(\rho_H-\rho)_+}_{L^1(H^+\times \{t\})}\leq \norm{(\eta_H-\eta)_+}_{L^1(H^+\times \{t\})}.
\]
Since $\partial_t\eta=n_0\rho-\eta$, it follows that $\partial_t \eta_H=(n_0)_H\rho_H-\eta_H$. Thus,
\[
\begin{split}
&\; \frac{d}{dt}\norm{(\eta_H-\eta)_+}_{L^1(H^+\times \{t\})}+\norm{(\eta_H-\eta)_+}_{L^1(H^+\times \{t\})}\\
\leq &\; \norm{((n_0)_H\rho_H-n_0\rho)_+}_{L^1(H^+\times \{t\})}\\
\leq &\; \norm{(n_0)_H}_{L^{\infty}(H^+)}\norm{(\rho_H-\rho)_+}_{L^1(H^+\times \{t\})}\\
\leq &\;\norm{(n_0)_H}_{L^{\infty}(H^+)}\norm{(\eta_H-\eta)_+}_{L^1(H^+\times \{t\})}.
\end{split}
\]
Now it follows from Gronwall's inequality that, for all $t\in [0,T]$, $\norm{(\eta_H-\eta)_+}_{L^1(H^+\times \{t\})}=0$, and thus $\norm{(\rho_H-\rho)_+}_{L^1(H^+\times \{t\})}=0$.
\end{proof}

The following is a consequence of the reflection comparison and Lemma~\ref{star_Lipschitz}.

\begin{corollary}\label{star:shaped}
Assume that the super-level sets of $n_0$ satisfy the $r$-reflection for some $r>0$. Then the followings hold:
\begin{enumerate}[(a)]
\item Suppose $n_0(x) <1$ on $\BR^d$.
If $\Omega_0$ satisfies $r$-reflection, then so does $\Omega_t$ for all $t>0$. In this case, suppose  $\overline{B_R(0)}\subset \{w_{\infty}>0\}$ for some $R>r$, where $w_\infty$ is defined in Theorem \ref{thm: convergence when n_0 is less than 1}. Then there is $T>0$ such that $\Omega_t$ contains $B_R(0)$ whenever $t>T$.
Consequently, for $t>T$, $\Omega_t$ has uniformly Lipschitz boundary, with Lipschitz constant less than $O(\frac{r}{R-r})$.
\item Suppose $n_0(x)\geq 1$ on $\BR^d$ and that $\Omega_0$ is a bounded open set contained in $B_r(0)$.
Then for any $R>0$, $T(R):= \inf\{t: B_R(0) \subset\Omega_t\}$ is finite. Consequenctly, for any $R>r$, when $t\geq T(R)$, the set $\Omega_t$ has Lipschitz boundary with respect to radial direction with Lipschitz constant less than $O(\frac{r}{R-r})$.
\end{enumerate}
\end{corollary}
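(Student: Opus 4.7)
The plan is to split each item into three ingredients: (i) propagate the $r$-reflection property from the initial data to $\Omega_t$; (ii) verify $B_R(0) \subset \Omega_t$ for all sufficiently large $t$; and (iii) convert the resulting star-shaped geometry into a Lipschitz bound via Lemma \ref{star_Lipschitz}. Parts (a) and (b) share (i) and (iii) and differ only in how (ii) is carried out.

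For (i), I would apply Proposition \ref{prop: reflection comparison} to each hyperplane $H$ disjoint from $\overline{B_r(0)}$, designating by ``$H^+$'' the side that contains the origin. Under the hypotheses of (a), the $r$-reflection of $\Omega_0$ and of the super-level sets of $n_0$ respectively translate into $(\rho_0)_H \leq \rho_0$ and $(n_0)_H \leq n_0$ on $H^+$, so the proposition returns $\rho_H \leq \rho$ on $H^+$ for every $t$, which is precisely the $r$-reflection of $\Omega_t$. Under the hypotheses of (b), the second inequality is unchanged, and the first is automatic because $\Omega_0 \subset B_r(0) \subset H^+$ forces $(\rho_0)_H \equiv 0$ on $H^+$; the same proposition then delivers the $r$-reflection of $\Omega_t$ for every $t > 0$.

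For (ii) in case (a), the goal is to upgrade Theorem \ref{thm: convergence when n_0 is less than 1} to locally uniform convergence $w \to w_\infty$. Proposition \ref{prop:comparison} applied to \eqref{eq:elliptic} yields $w(\cdot,t) \leq w_\infty$ (its source $\rho_0 + \eta$ is dominated by $\rho_0 + \eta_\infty$), while $w_t = p \geq 0$ produces a pointwise non-decreasing limit $\bar w \leq w_\infty$; the $L^1$-convergence $\Delta w(\cdot,t) \to \Delta w_\infty$ (from \eqref{eq:elliptic} and Theorem \ref{thm: convergence when n_0 is less than 1}) shows that $\bar w - w_\infty$ is harmonic, and since $0 \leq \bar w \leq w_\infty$ while $w_\infty$ is supported on the bounded set $\{\rho_\infty = 1\}$ (by complementarity), $\bar w - w_\infty$ vanishes at infinity, hence $\bar w \equiv w_\infty$ by Liouville. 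Dini's theorem on $\overline{B_R(0)}$ then upgrades pointwise to uniform convergence, and the strict positivity $w_\infty \geq \delta > 0$ there forces $w(\cdot,t) > 0$ on $\overline{B_R(0)}$ for $t \geq T$; Lemma \ref{lem:rho_1_characterization} then yields $\overline{B_R(0)} \subset \Omega_t$. For case (b) I would instead build a radial sub-solution: pick $x_0 \in \Omega_0$ and $\epsilon>0$ with $B_\epsilon(x_0) \subset \Omega_0$, and run \eqref{eqn: P} from $\tilde\rho_0 = \chi_{B_\epsilon(x_0)}$, $\tilde n_0 \equiv 1$. The comparison part of Theorem \ref{thm:contraction} gives $\tilde\rho \leq \rho$; since $\tilde n_0$ is constant, Theorem \ref{thm:master_dynamics} identifies $\tilde\rho$ with a time-reparametrized Hele-Shaw-with-source flow whose mass (and, by radial symmetry around $x_0$, whose radial support) grows without bound, so $B_R(0) \subset \tilde\Omega_t \subset \Omega_t$ for all $t$ past a finite threshold, giving $T(R) < \infty$.

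For (iii), once $\Omega_t \supset B_R(0) \supset B_r(0)$ and $\Omega_t$ has the $r$-reflection property, Lemma \ref{star_Lipschitz} furnishes at each $x \in \partial\Omega_t$ an exterior cone of half-angle $\phi_x = \arccos(r/|x|)$; since $|x| \geq R$ there, the local Lipschitz constant of the radial graph of $\partial\Omega_t$ is at most
\[
\cot\phi_x = \frac{r}{\sqrt{|x|^2 - r^2}} \leq \frac{r}{\sqrt{R^2 - r^2}} \leq \frac{r}{R-r},
\]
uniformly in $t$ past the threshold. I expect the main technical difficulty to be step (ii) of case (a): Theorem \ref{thm: convergence when n_0 is less than 1} delivers only $L^1$-information, and extracting the pointwise statement about $w$ needed to conclude via Lemma \ref{lem:rho_1_characterization} relies on the combined use of monotonicity in $t$, the a priori upper bound $w \leq w_\infty$, and a Liouville-type argument — each ingredient mild in isolation, but all of which must be stitched together carefully.
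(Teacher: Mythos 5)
Your proposal is correct and follows essentially the same route as the paper: propagate $r$-reflection via Proposition \ref{prop: reflection comparison}, show $B_R(0)\subset\Omega_t$ eventually (for (a) via uniform convergence $w\to w_\infty$ and Lemma \ref{lem:rho_1_characterization}; for (b) via a radial subsolution with $\tilde n_0\equiv 1$), then apply Lemma \ref{star_Lipschitz}. The only minor deviations are that you spell out the Liouville/Dini argument for uniform convergence of $w$ which the paper leaves implicit, and in (b) you invoke Theorem \ref{thm:master_dynamics} where the paper cites the lighter Remark \ref{rmk: radial solution} for the explicit radial solution formula — both give the same barrier.
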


\begin{remark}
Note that the $r$-reflection condition does not restrict the shape of either $\Omega_0$ or super-level sets of $n_0$ inside $B_r(0)$.
Hence, we can start with any initial data $\Omega_0\subset B_r(0)$ in both cases of the above corollary, where the evolution of the set may go through topological singularities such as merging of the free boundaries.
The above results state that, given that the initial nutrient $n_0$ is ``well-prepared" outside $B_r(0)$, once $\partial \Omega_t$ moves outside $B_r(0)$, there will be no further topological changes in the evolution, and $\partial \Omega_t$ remains being Lipschitz.
\end{remark}
\begin{remark}
One could relax the assumption on $n_0$ so that for level sets lying between $B_{R}(0)$ and $B_{2R}(0)$, the corresponding super-level set only satisfies $R$-reflection.
That would allow us to study the case, e.g., where level sets of $n_0$ are ellipses.
That may admit possibly non-radial asymptotic shapes for $\Omega_t$.
We leave such discussion to interested readers.
\end{remark}

\begin{proof}
For (a), $\Omega_t$ satisfies $r$-reflection due to Proposition \ref{prop: reflection comparison}. Moreover, both $\rho$ and $\eta$ monotone increases in time and converge to $\rho_{\infty}$ and  $n_0\rho_{\infty}$ in $L^1(\R^d)$, due to Theorem~\ref{thm: convergence when n_0 is less than 1}.  In the patch case we have $\rho_{\infty} = \chi_{\Omega_{\infty}}$, where $\Omega_{\infty}$ is bounded.
This implies the above convergence also holds in $L^p(\BR^d)$ for any $p\in[1,+\infty)$.
It then follows from \eqref{eq:elliptic} and \eqref{eqn: eqn for stationary state} that $w$ then uniformly converges to $w_{\infty}$ as $t\to \infty$.
Hence if we know that $\overline{B_R(0)}\subset \{w_{\infty}>0\}$, from the fact that $w(\cdot,t)$ monotonically increases to converge to $w_{\infty}$, we conclude that $\Omega_t$ contains $\overline{B_r(0)}$ for sufficiently large $t$. Then we can conclude (a) by Lemma~\ref{star_Lipschitz}.

\medskip

For (b), that $T(R)$ being finite follows from comparison with the case $n_0(x) \equiv 1$;
see Theorem \ref{thm:contraction} and Remark \ref{rmk: radial solution} below.
Also note that for any hyperplane $H$ such that $H^-$ contains $B_r(0)$, $\Omega_0 \cap H^+$ is an empty set, and thus its reflected image $\phi_H(\Omega_0\cap H^+)$ is trivially contained by $\Omega_0 \cap H^-$. Thus $\Omega_t$ satisfies $r$-reflection for all $t\geq 0$, and once $t \geq T(R)$ with $R>r$, we can apply Lemma \ref{star_Lipschitz}. 
\end{proof}

Let us now assume that, for a domain $D\subset \R^n\setminus \Omega_0$, $\partial\Omega_t \cap D$ is non-empty but has Lebesgue measure zero.
Then Lemma~\ref{lem:rho_1_characterization} yields that
$\rho(\cdot,t)= \chi_{\{w(\cdot,t)>0\}}$ in $D$, and thus \eqref{eq:elliptic} can be written as
$$
\Delta w = \rho - \eta =  \chi_{\{w>0\}} -\eta \hbox{ in } D.
$$
Moreover, by its definition $\eta= n_0 -n$ is supported only in $\chi_{\{w>0\}}$. Hence, \eqref{eqn: eta equation}-\eqref{eq:elliptic} yields the following $(w,\eta)$-system in $D$:
\beq\label{eqn: W}
\left\{\begin{array}{lll}
\eta_t = (n_0-\eta)\rho  = n_0\chi_{\{w>0\}}  -\eta; \\ \\
\chi_{\{w>0\}} -\Delta w = \eta\chi_{\{w>0\}}.
\end{array}\right. \tag{W}
\eeq

In what follows, we will explore the second equation in \eqref{eqn: W} to prove further regularity of the free boundary, in the setting given in Corollary~\ref{star:shaped}.

\subsection{$C^{1,\alpha}$-regularity of the free boundary}

According to \eqref{eqn: W}, away from the support of $\rho_0$, $w$ solves
$$
(1-\eta)\chi_{\{w>0\}}-\Delta w =0.$$
As long as $\eta<1$ near $\partial \{w>0\}$ and is $C^{\alpha}$, this problem falls into the category of standard obstacle problem, whose singular points feature a blow-up profile of a quadratic polymonial with sub-quadratic error term.
This is impossible if the set $\{w>0\}$ is known to have locally Lipschitz free boundary in $D$.
Hence, if we know a priori that $\Omega_t$ has Lipschitz boundary, then the free boundary $\partial\{w>0\}$ consists of only regular points, i.e., it is $C^{1,\alpha}$; see \cite[Theorem 7.2]{Blank}. This would be the conclusion we will obtain at the end of this section, in Corollary~\ref{cor:C 1 alpha regularity}, with a class of initial data discussed in Corollary~\ref{star:shaped}.

\medskip

To complete this argument, we need to prove that $\eta$ is indeed H\"{o}lder continuous in space. Note that $\eta(x,t)$ starts evolving in time once the set $\Omega_t$ reaches $x$. Regularity of $\eta$ thus is directly related to the dynamics of the set $\Omega_t$, or equivalently that of $\{w(\cdot,t)>0\}$. We will show a version of non-degeneracy for the pressure variable near the boundary (see Proposition \ref{pressure:nondeg}), which further implies non-degeneracy of the propagation speed of the boundary, ensuring that the tumor patch reaches nearby points with small time difference.

\medskip

To study the dynamics of the tumor patch, we will need a direct comparison principle for $(\rho,p)$, with barriers with a fixed $n$, as follows.

\begin{lemma}\label{direct_cp}
Let $(\rho, p, n)$ solve the original system \eqref{eqn: P}, and suppose $(\bar{\rho}, \bar{p})$ weakly solves
$$
\partial_t\bar{\rho}- \Delta \bar{p} \leq n\bar{\rho} \;\hbox{ in } D \times [t_0, t_1], \quad
$$
 with $\bar{p}\in L^2([0,T];H^1(\RR^d))$ such that $\bar{\rho}=\sgn_+(\bar{p})$.
If $\bar{\rho} \leq \rho$ at $t=t_0$,  $\bar{p} \leq p$ on $\partial D\times [t_0,t_1]$,  and $\partial_t\bar{\rho}\geq 0$, then $\bar{\rho} \leq \rho$ and $\bar{p} \leq p$ almost everywhere on $D\times [t_0, t_1]$.
\end{lemma}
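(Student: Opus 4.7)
My approach is to lift the parabolic comparison to the Baiocchi (time-integrated) variables, reduce to the elliptic comparison theory of Section~\ref{sec: elliptic formulation}, and close in time via Gronwall; for the pointwise pressure bound I would rerun the argument on every sub-interval and apply Lebesgue differentiation.

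For each $t\in[t_0,t_1]$ set
\[
W(x,t):=\int_{t_0}^{t}p(x,s)\,ds,\qquad \bar W(x,t):=\int_{t_0}^{t}\bar p(x,s)\,ds.
\]
Since $b=0$ forces $\partial_t\rho\geq 0$ and $\partial_t\bar\rho\geq 0$ is assumed, the reasoning leading to \eqref{eq:elliptic} yields the complementarity relations $W(1-\rho)=0$ and $\bar W(1-\bar\rho)=0$ (using the standard convention $\bar p\geq 0$ for the barrier). Integrating the $\rho$-equation of \eqref{eqn: P} and the hypothesized inequality for $\bar\rho$ in time from $t_0$ to $t$ produces the coupled pair
\begin{align*}
\rho(\cdot,t)-\Delta W(\cdot,t)&=\rho(\cdot,t_0)+\int_{t_0}^{t}n\rho\,ds,\\
\bar\rho(\cdot,t)-\Delta \bar W(\cdot,t)&\leq \bar\rho(\cdot,t_0)+\int_{t_0}^{t}n\bar\rho\,ds,
\end{align*}
together with the boundary ordering $\bar W\leq W$ on $\partial D\times[t_0,t_1]$ inherited from $\bar p\leq p$ on the lateral boundary.

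For the density ordering I would mirror the proof of Proposition~\ref{prop:contraction}: subtract, multiply by a smooth approximation $f_\delta(\bar W-W)$ of $\sgn_+(\bar W-W)$ (which vanishes on $\partial D$), integrate over $D$, and send $\delta\to 0$. The Laplacian contribution is non-negative after integration by parts, while on $\{\bar W\leq W,\ \bar\rho>\rho\}$ both $W$ and $\bar W$ vanish (there $\bar\rho=1$ forces $\rho<1$ and hence $W=0$); the $W^{2,r}$-regularity of $W,\bar W$ then gives $\Delta W=\Delta \bar W=0$ a.e.\;on that set, closing the sign-decomposition argument. Using $(\bar\rho-\rho)(\cdot,t_0)\leq 0$ and $n\geq 0$, this yields
\[
\int_D(\bar\rho-\rho)_+(\cdot,t)\,dx\leq \norm{n}_{L^{\infty}}\int_{t_0}^{t}\int_D (\bar\rho-\rho)_+(\cdot,s)\,dx\,ds,
\]
and Gronwall's inequality gives $\bar\rho\leq\rho$ a.e.\;on $D\times[t_0,t_1]$.

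Once the densities are ordered, the right-hand sides of the integrated elliptic pair are ordered as well, and a subsolution variant of Proposition~\ref{prop:comparison} (test the difference against $(\bar W-W)_+$ and use $\bar\rho=1$ on $\{\bar W>W\}$ to dispose of the density contribution) gives $\bar W\leq W$ on $D\times[t_0,t_1]$. To upgrade this integrated bound to the pointwise $\bar p\leq p$, I would rerun the whole Baiocchi construction on every sub-interval $[t',t]\subset[t_0,t_1]$, using the already-established $\bar\rho(\cdot,t')\leq\rho(\cdot,t')$ as the new initial condition; the resulting comparison reads $\int_{t'}^{t}(\bar p-p)(x,s)\,ds\leq 0$ for every such $[t',t]$ and a.e.\;$x$, so Lebesgue differentiation delivers $\bar p\leq p$ a.e.\;on $D\times[t_0,t_1]$. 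The principal obstacle is handling the complementarity $\bar W(1-\bar\rho)=0$ and the identification $\Delta \bar W=0$ on $\{\bar W=0\}$ for a \emph{subsolution}: the former hinges on $\bar p\geq 0$ (which is standard for barriers but should be confirmed in the setup), while the latter requires $\bar W\in L^\infty_t W^{2,r}_x$ and must be deferred until after the sign-function is smoothed, to keep all manipulations meaningful on null sets.
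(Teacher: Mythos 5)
Your proposal is correct, but it diverges from the paper at the pressure step, so let me compare the two. For the density ordering $\bar\rho\le\rho$ you use exactly the paper's Baiocchi transform and Gronwall argument. One small point: the ``obstacle'' you flag about needing $\bar W\in L^\infty_t W^{2,r}_x$ for a subsolution is not actually a gap, because the set $\{\bar W\le W,\ \bar\rho>\rho\}$ you are worried about is already null for $t>t_0$ without any appeal to $W^{2,r}$ regularity: on that set $\rho<1$ forces $W=0$, hence $\bar W=0$, hence $\bar p=0$ a.e.\ on $[t_0,t]$, hence $\bar\rho=\sgn_+(\bar p)=0$, contradicting $\bar\rho=1$. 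This is precisely the ``patch trick'' the paper invokes when it writes $(\sgn_+(\bar w)-\rho)_+=(\sgn_+(\bar w)-\rho)\sgn_+(\bar w-w)$; no regularity of $\bar W$ is ever used. Your concern about $\bar p\ge 0$ is legitimate and implicit in the barrier construction.

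For the pressure ordering $\bar p\le p$ the two proofs are genuinely different. The paper uses the complementarity condition (Proposition~\ref{prop:complementarity}) on the original system together with the test-function trick $\psi^\epsilon(x,t)=\epsilon^{-1}\int_t^{\min(t_1,t+\epsilon)}\psi$ applied to the barrier's subsolution inequality, and then takes $\psi=\omega(t)(\bar p-p)_+$; this yields $\int\omega|\nabla(\bar p-p)_+|^2\le 0$ directly. Your route avoids the complementarity lemma entirely: you re-base the Baiocchi construction at every $t'\in[t_0,t_1]$ using the density ordering already in hand, conclude $\int_{t'}^t(\bar p-p)\,ds\le 0$ for all sub-intervals, and finish by Lebesgue differentiation in time (with the usual Fubini argument to organize the ``a.e.\ $x$'' and ``a.e.\ $t'$'' quantifiers). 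Both are valid; the paper's argument is shorter because Proposition~\ref{prop:complementarity} is already available, while yours stays entirely within the elliptic comparison theory of Section~\ref{sec: elliptic formulation} and does not require the weak formulation of the complementarity relation.
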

\begin{remark}
We expect the result to still hold if one drops the requirement that $\partial_t\bar{\rho}\geq 0$ and $\bar{\rho}=\sgn_+(\bar{p})$. Nonetheless, these assumptions make the proof substantially easier and the above statement is sufficiently strong for our purposes.
\end{remark}

\begin{remark}
Let us note that this result is closely related to the comparison and uniqueness statements Lemma \ref{lem: comparison} and Proposition \ref{prop: uniqueness}.   Nonetheless, neither statement directly applies to prove the above result.   Lemma \ref{lem: comparison} assumes that one already has an ordering for the source terms,   while Proposition \ref{prop: uniqueness} proves uniqueness through stability rather than comparison.  On the other hand, it is very likely that one could obtain a strengthened version of this result by appropriately tweaking the argument in Lemma \ref{lem: comparison}.
\end{remark}

\begin{proof}
Let $\bar{w}(x,t)=\int_{t_0}^t \bar{p}(x,s)\, ds$.  Since $\bar{\rho}$ is increasing and $\bar{\rho}=\sgn_+(\bar{p})$, it follows that $\bar{\rho}=\sgn_+(\bar{w})$ for any $t>t_0$.
Hence, integrating in time, we see that $\bar{w}$ weakly solve
\[
\sgn_+(\bar{w})(x,t)-\Delta \bar{w}(x,t)\leq \bar{\rho}(x,t_0)+ \int_{t_0}^t n\sgn_+(\bar{w})(x,s)\,ds.
\]
If we set $w(x,t)=\int_{t_0}^t p(x,s)\, ds$ with abuse of notations, we recall that the original system solves
\[
\rho(x,t)-\Delta w(x,t)=\rho(x,t_0)+\int_{t_0}^t n\rho(x,s)\,ds, \quad w(1-\rho)=0,\quad \rho\in [0,1],\quad w\geq 0
\]
almost everywhere in $D\times [t_0, t_1]$. 
Then we argue as in the proof of Proposition \ref{prop:contraction}.
Fix $\delta>0$ and let $f_{\delta}:\RR\to\RR$ be a smooth increasing function such that $f_{\delta}(a)=0$ if $a\leq 0$ and $f_{\delta}(a)=1$ if $a\geq \delta$. Fix some time $t>t_0$. Taking the difference of the above two formulas and integrating against $f_{\delta}(\bar{w}-w)$ along $D\times [t_0, t]$, we see that
\[
\begin{split}
&\; \int_{D\times [t_0,t]} (\sgn_+(\bar{w})-\rho)f_{\delta}(\bar{w}-w)+f'_{\delta}(\bar{w}-w)|\nabla (\bar{w}-w)|^2\,dx \,ds \\
\leq &\; \|n_0\|_{L^\infty(\BR^d)}
\int_{t_0}^{t} \norm{(\sgn_+(\bar{w})-\rho)_+}_{L^1(D\times [t_0, s])}\, ds,
\end{split}
\]
where we have used the fact that $f_{\delta}\in [0,1]$ almost everywhere, that $f_{\delta}(\bar{w}-w)$ vanishes almost everywhere on $\partial D\times [t_0, t_1]$, and that $\bar{\rho}\leq \rho$ at $t = t_0$.
Since $\sgn_+(\bar{w})$ and $\rho$ only take the values $0$ or $1$ almost everywhere, we have
\[
(\sgn_+(\bar{w})-\rho)_+=(\sgn_+(\bar{w})-\rho)\sgn_+(\bar{w}-w).
\]
Hence, we can send $\delta\to 0$ in the above inequality to obtain
\[
\norm{(\sgn_+(\bar{w})-\rho)_+}_{L^1(D\times [t_0, t])} \leq
\|n_0\|_{L^\infty(\BR^d)} \int_{t_0}^{t} \norm{(\sgn_+(\bar{w})-\rho)_+}_{L^1(D\times [t_0, s])}\, ds.
\]
Now Gronwall's inequality implies that $\bar{\rho}=\sgn_+(\bar{w})\leq \rho$ almost everywhere in $D\times [t_0, t_1]$.

It remains to prove $\bar{p}\leq p$ almost everywhere in $D\times [t_0, t_1]$.
Choose some $\psi\in L^2([t_0, t_1];H^1(D))$ such that $\psi\geq 0$ almost everywhere in $D\times [t_0, t_1]$, $\psi = 0$ at $t = t_1$, $\psi(1-\bar{\rho})=0$, and $\psi|_{\partial D\times [t_0, t_1]}=0$.
If we fix $\epsilon>0$ and let $\psi^{\epsilon}(x,t)=\epsilon^{-1}\int_{t}^{\min(t_1, t+\epsilon)} \psi(x,s)\, ds$,
then $\psi^{\epsilon}$ is a valid test function for the weak equation $\partial_t \bar{\rho}-\Delta \bar{p}\leq \bar{\rho} n$ since $\psi^\epsilon = 0$ at $t=t_1$.
Hence,
\[
\int_{D\times [t_0, t_1]} -\bar{\rho}\partial_t \psi^{\epsilon}+\nabla \psi^{\epsilon}\cdot \nabla \bar{p}-\psi^{\epsilon}\bar{\rho} n \,dx \,dt
\leq
\int_D \bar{\rho}(x,t_0)\psi^{\epsilon}(x,t_0)\,dx.
\]
Note that for almost every $t\in [t_0,t_1]$,
\[
\bar{\rho}\partial_t\psi^{\epsilon}
=\bar{\rho}(x,t)\frac{\psi(x, \min\{t_1,t+\epsilon\})-\psi(x,t)}{\epsilon}\leq \frac{\psi(x, \min\{t_1,t+\epsilon\})-\psi(x,t)}{\epsilon}=\partial_t \psi^{\epsilon},
\]
where we use $\psi(1-\bar{\rho})=0$ and the non-negativity of $\psi^{\epsilon}$ to justify the inequality. Therefore,
 \[
\int_{D\times [t_0, t_1]} \nabla \psi^{\epsilon}\cdot \nabla \bar{p}-\psi^{\epsilon}\bar{\rho} n\,dx \,dt
\leq
\int_D (\bar{\rho}(x,t_0)-1)\psi^{\epsilon} (x,t_0)\,dx
\leq 0.
\]
Sending $\epsilon\to 0$ and once again using $\psi(1-\bar{\rho})=0$, we see that
\begin{equation*}
\int_{D\times [t_0, t_1]} \nabla \psi\cdot \nabla \bar{p}-\psi n\,dx \,dt \leq  0.
\end{equation*}
Since $\bar{\rho}\leq \rho$, we also have $\psi(1-\rho)=0$. Therefore the complementarity condition, Proposition \ref{prop:complementarity}, implies that
\begin{equation*}
\int_{D\times [t_0, t_1]} \nabla \psi\cdot \nabla p-\psi n\,dx\,dt
= 0.
\end{equation*}
Combining the above two formulas yields
\begin{equation*}
\int_{D\times [t_0, t_1]} \nabla \psi\cdot \nabla (\bar{p}-p)\,dx\,dt
\leq 0.
\end{equation*}
Choose $\psi=\omega(t)(\bar{p}-p)_+$, where $\omega(t)$ is a smooth non-negative function such that $\omega >0$ on $[t_0,t_1)$ and $\omega(t_1) = 0$.
It is clear that this choice satisfies our assumptions on $\psi$.
Hence, we obtain that
\[
\int_{D\times [t_0, t_1]} \omega(t) |\nabla (\bar{p}-p)_+|^2\,dx \,dt \leq  0.
\]
Since $\bar{p}\leq p$ almost everywhere on $\partial D\times [t_0, t_1]$, we obtain $\bar{p}\leq p$ almost everywhere on $D\times [t_0, t_1]$.
\end{proof}

Based on above comparison principle, we will build a radial barrier $(\bar{\rho}, \bar{p})$ to compare with  $(\rho,p)$, to show that the pressure support spreads with a uniform rate. To construct the barrier it is useful to  recall  Dahlberg's lemma:

\begin{lemma}[Dahlberg's lemma, \cite{dahlberg}]\label{lem:D}
Let $u_1, u_2$ be two non-negative harmonic functions in $D\subset \R^d$  of the form
$$
D=\big\{(x',x_n)\in \R^{n-1}\times  \R: |x'|  <2, |x_n|< 2M, x_n> f(x') \big\}
$$
with a Lipschitz function $f$ with Lipschitz constant less than $M$ and $f(0)=0$. Assume further that $u_1=u_2=0$ along the  graph of  $f$. Then there exist constants $C_1, C_2$ only depending on $M$, such that
$$
0< C_1  \leq \dfrac{u_1(x', x_n)}{u_2(x', x_n)} \cdot\dfrac{u_2(0,M)}{u_1(0,M)}  \leq C_2
$$
in the smaller domain
$$
D_{1/2}= \big\{|x'|<1, |x_n|<M, x_n>f(x')\big\}.
$$
\end{lemma}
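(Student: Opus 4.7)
The plan is to follow the classical strategy for the boundary Harnack principle. After rescaling so that $u_1(0,M)=u_2(0,M)=1$, it suffices to prove the upper bound $u_1\leq C(M)u_2$ in $D_{1/2}$; the reverse inequality follows by interchanging $u_1$ and $u_2$. The central idea is that both functions must vanish on the Lipschitz graph of $f$ at the ``same rate,'' so that their ratio stays bounded, and one quantifies this by comparing each $u_i$ to a common barrier profile.

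First I would establish a Carleson-type $L^\infty$ bound: any non-negative harmonic $u$ in $D$ vanishing on the graph of $f$ with $u(0,M)=1$ satisfies $u\leq C(M)$ on $D_{3/4}$. The key tool is a Harnack chain connecting any interior point to $(0,M)$ through a controlled number of balls of radius comparable to the distance to the graph, combined with a local maximum-principle argument in a cylinder adjacent to the Lipschitz boundary (using that $u$ vanishes on a large portion of that cylinder's boundary). Next, I would establish a matching \emph{lower} bound for $u_2$ at each ``corkscrew'' point $A_r(x_0'):=(x_0',f(x_0')+r)$ with $|x_0'|\leq 1$ and $0<r\leq c(M)$. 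A Harnack chain from $(0,M)$ down to $A_r(x_0')$, of length depending only on $M$, yields $u_2(A_r(x_0'))\geq c(M)$. This propagates to a pointwise lower bound $u_2(x)\geq c(M)\varphi(x)$ throughout $D_{1/2}$ via comparison with a subharmonic barrier built from an interior tangent cone fitting inside $D$, whose half-angle is controlled by the Lipschitz constant $M$, and whose value at the corkscrew point is normalized appropriately.

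Finally, I would use the Carleson $L^\infty$ bound together with a superharmonic barrier (a harmonic function in an exterior cone tangent to the graph, or a truncated Green's function) that dominates $u_1$ on $\partial D_{3/4}\setminus \text{graph}(f)$ and vanishes on the graph, to obtain a matching pointwise upper bound $u_1(x)\leq C(M)\varphi(x)$ in $D_{1/2}$. Dividing then gives $u_1/u_2\leq C(M)$. The main obstacle is that on a purely Lipschitz graph the vanishing rate of a harmonic function is generally not linear in the distance to the boundary, so one cannot use distance-to-boundary itself as the common profile $\varphi$. One must instead build $\varphi$ from harmonic measure of (or Green's function relative to) a truncated tangent cone, and prove scale-invariant two-sided comparability estimates for $u_1$ and $u_2$ against $\varphi$ uniformly in the boundary point $x_0'$ and scale $r$; this uniformity is precisely what the Lipschitz hypothesis with constant $M$ allows one to extract.
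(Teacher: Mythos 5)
The paper does not prove this statement; it cites Dahlberg's boundary Harnack principle from \cite{dahlberg} and uses it as a black box in the proof of Proposition~\ref{pressure:nondeg}. So there is no paper proof to compare against, and your sketch must stand on its own.

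Your outline follows the right high-level architecture of the classical proofs (normalization, Carleson-type $L^\infty$ bound, lower bound at corkscrew points via Harnack chains, comparison with barriers), and you correctly flag the central difficulty: on a Lipschitz graph the rate at which a positive harmonic function vanishes is not linear in the distance to the boundary. However, your proposed resolution --- taking the common profile $\varphi$ to be the harmonic measure or Green's function of a \emph{truncated tangent cone} --- does not resolve the difficulty, it reproduces it. A Lipschitz graph of constant $M$ has no tangent cone; it is only trapped between an interior cone $C_{\theta_{\mathrm{in}}}$ with $\theta_{\mathrm{in}}=\arctan(1/M)<\pi/2$ and an exterior one of the complementary half-angle $\theta_{\mathrm{out}}=\pi-\theta_{\mathrm{in}}>\pi/2$. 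The associated homogeneity exponents satisfy $\alpha(\theta_{\mathrm{in}})>1>\alpha(\theta_{\mathrm{out}})$, and they do \emph{not} match. To deduce $u_2\gtrsim\varphi$ by the maximum principle you need the model cone to sit inside $D$, forcing $\varphi\sim d^{\alpha(\theta_{\mathrm{in}})}$; to deduce $u_1\lesssim\varphi$ you need the model cone to contain $D$, forcing $\varphi\sim d^{\alpha(\theta_{\mathrm{out}})}$. These cannot be the same $\varphi$, and if you pair the two one-sided barriers you obtain $u_1/u_2\lesssim d^{\alpha(\theta_{\mathrm{out}})-\alpha(\theta_{\mathrm{in}})}$, which blows up as $d\to 0$ rather than staying bounded. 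This mismatch is exactly why the boundary Harnack inequality is a theorem and not a two-line barrier argument.

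The fix is that the comparison object must be intrinsic to the actual domain $D$, not to any model cone. In the Caffarelli--Fabes--Mortola--Salsa scheme (and in Dahlberg's original argument), after the Carleson estimate one shows that every positive harmonic function vanishing on the boundary portion satisfies $c\,u(A_r)\,\omega^X(\Delta_r)\leq u(X)\leq C\,u(A_r)\,\omega^X(\Delta_{2r})$, where $\omega^X$ is the harmonic measure of $D$ itself and $\Delta_r$ is a surface ball; the desired two-sided bound on $u_1/u_2$ then follows once one has the doubling property $\omega^X(\Delta_{2r})\leq C\,\omega^X(\Delta_r)$, which is itself a substantive estimate for Lipschitz domains. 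The cone barriers enter only as auxiliary tools --- to prove H\"{o}lder decay at the boundary and the Carleson estimate --- and never as the final object against which $u_1$ and $u_2$ are simultaneously compared. As written, your plan leaves precisely this step unjustified.
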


\medskip

Let us now define  $C_{\theta}:= C(-e_n,\theta) = \{x_n<f(x')\}$ with $f(x'):= -\cot\theta |x'|$, and let $h$ solve
$$-\Delta h = \delta_{\{x=-3e_n\}} \hbox{ in }C_{\theta}, \quad h=0 \hbox{ on }\{x_n=f(x')\}.
$$
Then $h$ has a polynomial growth from the origin, namely,
\begin{equation}\label{growth}
\begin{split}
&\mbox{there exists some }k = k(\theta)>0\mbox{ that decreases in }\theta,\mbox{ such that }\\
&h(x) \simeq (f(x')-x_n)_+^k \hbox{ in }  C_{\theta}\cap B_2(0).
\end{split}
\end{equation}
Let $\theta_d$ be the angle such that $k(\theta_d)= 2$, where $h$ has quadratic growth near the origin.
For instance, $\theta_2=\pi/4$, and $\theta_d$ increases as $d$  increases.

\begin{prop}[Nondegeracy of the pressure]\label{pressure:nondeg}
Suppose $x_0 \in \partial\{p(\cdot,t_0)>0\}$, and suppose that $\{p(\cdot,t_0)>0\}$ contains $(x_0+C_{\theta}) \cap B_2(x_0)$ with $\theta>\theta_d\geq \pi/4$.
Let $\bar{n} := \min \{n(x,t_0): |x-(x_0-2e_n)| < 1\}$.
Then there exists a dimensional constant $C>0$ such that the following holds: for any $0<r<1/C$, we have
$$
B_r(x_0) \subset \left\{p\left(\cdot,t_0+ \frac{Cr^\alpha}{\bar{n}}\right)>0\right\},
$$
where $\alpha=2-k(\theta)>0$, with $k(\th)$ given in \eqref{growth}.
\end{prop}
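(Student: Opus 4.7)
The strategy is to apply the direct comparison principle of Lemma \ref{direct_cp} with a carefully constructed sub-barrier $(\bar\rho, \bar p)$ whose pressure support contains $B_r(x_0)$ at time $t_0 + Cr^\alpha/\bar n$. The construction rests on (i) a Dahlberg-type lower bound for $p(\cdot,t_0)$ that supplies the initial ordering $\bar p \leq p(\cdot,t_0)$, and (ii) a parabolic rescaling that reduces the problem to a Hele-Shaw-type expansion of order-one size on rescaled time $O(1)$.

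\emph{Step 1 (initial pressure lower bound).} First I would establish the pointwise estimate $p(x,t_0) \geq C_1 \bar n\,|x-x_0|^{k(\theta)}$ for $x \in (x_0+C_{\theta'}) \cap B_{3/2}(x_0)$, for some $\theta_d < \theta' < \theta$. This has two substeps. (a) Using the complementarity relation of Proposition \ref{prop:complementarity}, $-\Delta p(\cdot,t_0)=n(\cdot,t_0)\geq \bar n$ on $B_1(x_0-2e_n) \subset \{p(\cdot,t_0)>0\}$; comparison with the quadratic $q(x):=\frac{\bar n}{2d}(1-|x-(x_0-2e_n)|^2)_+$ yields $p(x_0-2e_n,t_0) \geq c_d \bar n$ for a dimensional constant $c_d>0$. (b) Dahlberg's lemma \ref{lem:D} applied to the non-negative functions $p(\cdot,t_0)-q_\ast$ and $h$ on a Lipschitz sub-domain of the cone near $x_0$ (where $q_\ast$ is an $O(\bar n)$ particular solution of $-\Delta q_\ast = n$ introduced to reduce to the harmonic comparison) then transfers the pointwise bound from the interior point $x_0-2e_n$ to the full sub-cone, and combining with the growth of $h$ from \eqref{growth} produces the claimed lower bound.

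\emph{Step 2 (rescaled sub-barrier).} The core technical step is the construction of the barrier, which is cleanest after parabolic rescaling. Define
\begin{equation*}
\tilde p(\tilde x,\tilde t) \;:=\; \bar n^{-1}\, r^{-k(\theta)}\, p\bigl(x_0+r\tilde x,\; t_0+(r^\alpha/\bar n)\tilde t\bigr),
\end{equation*}
under which the PDE becomes $\partial_{\tilde t}\tilde\rho - \Delta_{\tilde x}\tilde p = r^\alpha\, \tilde n\, \tilde\rho$ with $\tilde n:=n/\bar n$, and Step 1 translates to $\tilde p(\tilde x,0) \geq C_1 |\tilde x|^{k(\theta)}$ in the cone. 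In rescaled coordinates I would introduce the level-set sub-barrier
\begin{equation*}
\tilde{\bar p}(\tilde x,\tilde t) \;:=\; \bigl(\mu\, h(\tilde x) - \ell(\tilde t)\bigr)_+, \qquad \tilde{\bar\rho} \;:=\; \chi_{\{\tilde{\bar p}>0\}},
\end{equation*}
with $\mu\in(0,C_1)$ fixed small and $\ell(\tilde t)>0$ a decreasing profile. Because $h$ is harmonic in the cone away from its source (which lies outside the relevant region), $-\Delta_{\tilde x}\tilde{\bar p} \equiv 0$ in the interior of the support, so the interior subsolution condition is automatic. The free-boundary inequality $V_n\leq |\nabla\tilde{\bar p}|$ reduces, via the homogeneity $h(\lambda\tilde x)=\lambda^{k(\theta)}h(\tilde x)$, to a scalar ODE of the form $-\ell'(\tilde t) \leq C\mu\, \ell(\tilde t)^{2-2/k(\theta)}$, whose solutions reach $\ell=0$ in finite rescaled time $\tilde T_1=O(1)$, at which moment $\tilde{\bar\rho}$ fills the entire sub-cone $C_{\theta'}$. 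A second classical radial Hele-Shaw sub-barrier, of the expanding-ball type considered in Remark \ref{rmk: compact support} and placed inside the filled sub-cone, then expands within an additional rescaled time $\tilde T_2=O(1)$ to cover $B_1(0)$. Undoing the rescaling produces the desired $(\bar\rho,\bar p)$ on $[t_0, t_0+C r^\alpha/\bar n]$.

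\emph{Step 3 (conclusion and main obstacle).} The choice $\mu < C_1$ in Step 2 combined with the estimate in Step 1 guarantees $\bar p \leq p$ and $\bar\rho\leq\rho$ at $t=t_0$. Applying Lemma \ref{direct_cp} on a large ball $D \supset B_r(x_0)$ (on whose boundary $\bar p$ vanishes) gives $\bar\rho\leq \rho$ throughout $D\times[t_0,t_0+Cr^\alpha/\bar n]$; since $B_r(x_0)\subset \{\tilde{\bar\rho}(\cdot,\tilde T_1+\tilde T_2)=1\}$, rescaling back and invoking Lemma \ref{lem:rho_1_characterization} delivers $B_r(x_0)\subset\{p(\cdot,t_0+Cr^\alpha/\bar n)>0\}$. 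The main obstacle is that $p$ vanishes polynomially of order $k(\theta)<2$ at $x_0$, so standard radial quadratic Hele-Shaw barriers cannot fit under $p$ near the vertex without amplitude $\sim \bar n r^{-\alpha}$, which in turn forces $-\Delta \bar p$ to exceed $n$ and destroys the subsolution property; the level-set barrier based on the cone harmonic $h$ is the natural remedy because its profile matches the true polynomial growth of $p$, and the gluing with the radial barrier needed to cover the portion of $B_r(x_0)$ outside the cone must be arranged so as to respect the $O(1)$ rescaled time budget.
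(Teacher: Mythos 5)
Your Step 1 (the Dahlberg lower bound for $p(\cdot,t_0)$ near the cone vertex) is essentially the argument in the paper, modulo minor presentational differences: the paper works directly with the superharmonic $p$ and a harmonic minorant $q$ rather than subtracting a particular solution $q_*$, but this buys nothing either way.

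Steps 2--3 diverge from the paper in a genuine way. The paper does \emph{not} rescale and does \emph{not} fill the whole sub-cone with a level-set barrier. Instead it places a single radially symmetric harmonic barrier in the annulus $\{r/4 < |x-x_r| < r(t)\}$ centered at the offset point $x_r := x_0 - r e_n$, with time-decaying Dirichlet data $\bar p = C\bar n e^{-(t-t_0)} r^{2-\alpha}$ on the inner sphere and zero on the outer sphere; the outer radius $r(t)$ is driven by the free-boundary ODE $r'(t) \le |D\bar p|$, and one integrates it explicitly. The inner Dirichlet data fits under $p$ on $\partial B_{r/4}(x_r)\times[t_0,\infty)$ because of the time-decaying nondegeneracy bound \eqref{lower_bd} together with $\theta > \pi/4$ (which guarantees $B_{r/4}(x_r)$ sits at distance $\sim r$ inside the cone), so Lemma \ref{direct_cp} applies on the single domain $D = \{|x-x_r|>r/4\}$, and the barrier reaches radius $2r \supset B_r(x_0)$ precisely when $t-t_0 \sim r^\alpha/\bar n$. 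This also means the concern in your ``main obstacle'' paragraph is somewhat misplaced: the paper's barrier is not a quadratic-profile Hele-Shaw ball centered at the vertex, but an annular harmonic profile offset from the vertex, so there is no conflict between fitting under the $k(\theta)$-order degeneracy and keeping $-\Delta\bar p$ below $n$ (indeed $\Delta\bar p = 0$ in the interior).

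Your rescaled level-set barrier $\tilde{\bar p} = (\mu h - \ell(\tilde t))_+$ is a plausible alternative, but as written it has gaps you have not closed. First, you must specify the comparison domain $D$ and verify $\bar p \le \tilde p$ on $\partial D\times[0,\tilde T_1]$ --- unlike the paper's annulus, the level set $\{\mu h > \ell\}$ sweeps through all of $C_{\theta'}\cap B_2$ as $\ell\downarrow 0$, so $\bar p$ does not vanish on any fixed interior sphere, and the boundary ordering has to be argued separately on $\partial B_2 \cap C_\theta$ for \emph{all} rescaled times, not just $\tilde t = 0$. Second, in rescaled coordinates the pole of $h$ at $\tilde x = -3e_n$ is at unit distance from the domain where you want to compare, and near it $\mu h$ exceeds $\tilde p$ at $\tilde t = 0$; one must either cap $h$, use a bounded-domain Green's function, or excise a neighborhood of the pole and supply boundary data there, none of which is discussed. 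Third, the exponent in your free-boundary ODE should be $-\ell' \le C\mu^{2/k}\ell^{2-2/k}$ (not $C\mu$), though this only affects constants. These issues are likely fillable, but they are exactly what the paper's single-annulus construction avoids by design.
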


\begin{proof}
Throughout the proof, we use $C$ to denote various dimensional constants.  We may assume $x_0 = 0$ by shifting the coordinates properly.
By our assumption $\theta >\pi/4$, we have $B_1(-2e_n)  \subset \{x_n < f(x')\}$, where $f(x'): = -\cot\th |x'|$ as above. Since $-\Delta p(\cdot,t_0) \geq  \bar{n}$ in $B_1(-2e_n)$, it follows that $p(x,t_0) \geq C\bar{n}$ in $B_{1/2}(-2e_n)$.

\medskip

Next consider a harmonic function $q$ in the domain $\{x_n<f(x')\} \cap (B_2(0) \backslash B_{1/2}(-2e_n))$,  with boundary data
$$q=0 \hbox{ on } \{x_n=f(x')\}\cup \partial B_2(0) \hbox{ and } q=C\bar{n} \hbox{  on }\partial B_{1/2}(-2e_n).
$$
 Since $p(\cdot,t_0)$ is superharmonic, it follows that $p(\cdot,t_0) \geq q$ in $B_{1/2}(0) \cap \{x_n<f(x')\}$.

\medskip

To obtain explicit lower bound for $q$ near $\{x_n = f(x')\}$,  let us compare $q$ with $h$ given in \eqref{growth}. By Dahlberg's lemma, $q \geq C\bar{n} h$ in $\{x_n<f(x')\} \cap B_1(x_0)$. In particular, it follows that
\begin{equation}\label{lower_bd_0}
p(x,t_0)  \geq C\bar{n} (f(x')-x_n)_+^{2-\alpha} \hbox{ in } \{x_n<f(x')\}\cap B_1(x_0).
\end{equation}
Since $\{p(\cdot,t)>0\}$ increases in time, by the same logic and the property of the $n$-equation $n_t = -n\rho$, we have
\begin{equation}\label{lower_bd}
p(x,t) \geq C\bar{n} e^{-(t-t_0)} (f(x')-x_n)_+^{2-\alpha} \hbox{ in } B_1(x_0)\times [t_0, \infty).
\end{equation}

Now we will use Lemma \ref{direct_cp}  to estimate the time it takes for the positive set of $p$ to fully cover $B_r(x_0)$.
Let $x_r:= x_0 -re_n$ and $D:= \{ x: |x-x_r| > \frac{r}{4}\}$. Let $\bar{p}$ be a barrier solving
$$
\left\{
\begin{array}{lll}
-\Delta \bar{p} = 0 &\hbox{ in }& \frac{r}{4} <|x-x_r| <r(t), \\ \\
\bar{p} = C\bar{n}e^{- (t-t_0) }r^{2-\alpha} &\hbox{ on } & \{|x-x_r|=\frac{r}{4}\},\\ \\
\bar{p}=0 &\hbox{ on } &\{|x-x_r| = r(t)\}.
\end{array}\right.
$$
If we set $r(t)$ so that $r'(t) \leq |D\bar{p}|$ on $\{|x-x_r|=r(t)\}$, then $\bar{p}$ will solve
$$
\bar{\rho}_t - \Delta \bar{p} \leq 0 \hbox{ in } D \times [t_0, \infty),
$$
where $\bar{\rho} = \chi_{\{\bar{p}>0\}}$.
If in addition $r(t_0) = \frac{r}{2}$ so that $\{\bar{p}(\cdot,t_0) >0\} \subset \{p(\cdot,t_0)>0\}$,  we will apply Lemma~\ref{direct_cp} and \eqref{lower_bd} to conclude that $\bar{p} \leq p$ for $t>t_0$, which will then yield a lower bound for the time it takes for the positive set of $p$ to include $B_r(x_0)$.

\medskip

Now we choose a specific $r(t)$.
Observe that $|D\bar{p}| \geq C \bar{n}e^{- (t-t_0) } r^{1-\alpha}$ on $\{|x-x_r|=r(t)\}$ as long as $r(t)\in [\f12 r, 2r]$.
Hence we can set
$$
r(t) := \frac{r}{4} + C\bar{n}\big(1-e^{- (t-t_0) }\big)r^{1-\alpha}.
$$
Now we conclude, since $r(t)=2r$ when $\bar{n}(1-e^{- (t-t_0) }) \sim r^{\alpha}$, i.e., when $t-t_0 \sim \frac{r^{\alpha}}{\bar{n}}$.
\end{proof}

\begin{corollary}
\label{cor:C 1 alpha regularity}

Suppose that, given a point $x_0\in \partial\{w(\cdot,t_1)>0\}$ with $t_1>0$,  the set $\partial\{w(\cdot,t)>0\}$ is a Lipschitz graph in $B_2(x_0)$ for all $0<t\leq t_1$ with respect to a fixed direction.
Further suppose that the Lipschitz constants of the graphs are all smaller than a dimensional constant. Then the followings hold:
\begin{enumerate}[(a)]
\item $\eta(\cdot,t_1) \in C^{\alpha}(B_1(x_0))$ ;
\item $\partial\{w(\cdot,t_1)>0\}$ is  $C^{1,\alpha}$ in $B_1(x_0)$.
\end{enumerate}
Here the $C^{\alpha}$ and $C^{1,\alpha}$ norms only depend on $\bar{n}:=  \min_{ B_2(x_0)} n(\cdot,t_1)$.
\end{corollary}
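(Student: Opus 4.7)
The plan is to reduce part (b) to the standard obstacle-problem theory of Caffarelli and Blank, after first establishing part (a). Part (a) is proved by writing $\eta(\cdot,t_1)$ explicitly in terms of the arrival time
\[
\tau(x) := \inf\{s \geq 0 : w(x,s) > 0\}
\]
of the tumor patch at $x$ (capped at $t_1$), and using Proposition \ref{pressure:nondeg} to obtain Hölder regularity of $\tau$.

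\textbf{Hölder regularity of $\tau$ and of $\eta$.} For each $x \in B_1(x_0)$ with $\tau(x)<t_1$, the point $x$ lies on $\partial\{w(\cdot,\tau(x))>0\}$, and the Lipschitz-graph hypothesis with sufficiently small Lipschitz constant at time $\tau(x)$ provides an interior cone $x + C_\theta \subset \{w(\cdot,\tau(x))>0\}$ locally, with half-angle $\theta > \theta_d$. This is precisely why the Lipschitz constant is assumed below a dimensional threshold. Proposition \ref{pressure:nondeg} then gives
\[
B_r(x) \subset \{w(\cdot, \tau(x) + C r^\alpha/\bar{n}) > 0\}
\]
for small $r$, with $\alpha = 2 - k(\theta) \in (0,1)$. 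Symmetrizing and taking $r = |x-y|$ yields
\[
|\tau(x)-\tau(y)| \leq C \bar{n}^{-1}|x-y|^\alpha \quad \text{for } x,y \in B_1(x_0).
\]
Next, since $x_0 \in \partial\{w(\cdot,t_1)>0\}$, after possibly shrinking the neighborhood $B_1(x_0)$ is disjoint from the exceptional set $\{\rho_0=1,\,n_0=0\}$ of Lemma \ref{lem:rho_1_characterization}. Hence $\rho(x,s) = \chi_{\{s \geq \tau(x)\}}$ on $B_1(x_0)$, and integrating the ODE $\partial_t \eta = (n_0 - \eta)\rho$ explicitly gives
\[
\eta(x,t_1) = n_0(x)\bigl(1 - e^{-(t_1-\tau(x))_+}\bigr).
\]
Combined with the $C^\alpha$-bound on $\tau$ and the regularity of $n_0$ assumed throughout the section, this proves part (a), with the $C^\alpha$-norm depending on $\bar n$ as claimed.

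\textbf{Obstacle problem and free boundary regularity.} By Lemma \ref{lem:rho_1_characterization} and \eqref{eqn: W}, in $B_1(x_0)$,
\[
-\Delta w(\cdot,t_1) = \bigl(\eta(\cdot,t_1) - 1\bigr)\chi_{\{w(\cdot,t_1)>0\}}.
\]
By part (a), $\eta(\cdot,t_1)\in C^\alpha$ in $B_1(x_0)$, and the formula for $\eta$ shows $\eta=0$ on the free boundary, so $1-\eta$ is a $C^\alpha$ coefficient bounded below by a positive constant depending only on $\bar n$ in a definite neighborhood of the free boundary. This places the problem in the standard obstacle-problem framework of Caffarelli--Blank \cite{Caff98,Blank}: each free boundary point is either a regular point, near which $\partial\{w>0\}$ is $C^{1,\alpha}$, or a singular/cusp point, around which $w$ has a non-trivial quadratic blow-up whose zero set has lower Hausdorff dimension. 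The Lipschitz-graph hypothesis immediately excludes the cusp alternative. Therefore every free boundary point in $B_1(x_0)$ is regular, yielding (b).

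\textbf{Main obstacle.} The principal technical hurdle is the uniform application of Proposition \ref{pressure:nondeg} in the arrival-time step: at every $x\in B_1(x_0)\cap \partial\{w(\cdot,\tau(x))>0\}$ one must produce an interior cone of half-angle $\theta>\theta_d$, and the precise dimensional threshold on the Lipschitz constant in the hypothesis is dictated by this requirement. The working domain $B_2(x_0)$ (rather than $B_1$) is used here to provide the room needed to fit the cone on which Dahlberg's lemma is invoked in the proof of Proposition \ref{pressure:nondeg}. Once the Hölder estimate on $\tau$ is secured, the remaining reduction to the obstacle problem and the appeal to Caffarelli--Blank are largely bookkeeping, the only subtlety being that one needs $1-\eta$ strictly positive on a definite neighborhood of the free boundary (not merely on it), which follows automatically from the $C^\alpha$ modulus produced in part (a).
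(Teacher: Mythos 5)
Your proof follows the same route as the paper's: you define the arrival time (the paper's $T_x$, your $\tau(x)$), invoke Proposition \ref{pressure:nondeg} to get $|T_x - T_y| \leq C\bar{n}^{-1}|x-y|^{\alpha}$, plug into the explicit formula $\eta(x,t_1)=n_0(x)(1-e^{-(t_1-T_x)_+})$ for part (a), and then appeal to the obstacle-problem classification of \cite{Blank} with the Lipschitz hypothesis excluding cusp/singular points for part (b). The extra detail you supply (why $1-\eta$ is a positive $C^\alpha$ coefficient near the free boundary, why the cusp alternative is ruled out) simply makes explicit the narrative the paper states just before the corollary, so the argument is the same in substance.
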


\begin{proof}
Define $T_x:= \inf\{t: w(x, t) >0\}$ for all $x\in \R^d$. Then for $x,y\in B_1(x_0)\cap \{w(\cdot,t_1)>0\}$,  Proposition \ref{pressure:nondeg} implies that, with $r$ being smaller than a universal constant, $T_y \leq T_{x} + \frac{Cr^{\alpha}}{\bar{n}}$ as long as $|x-y|<r$.
Hence we have
$$
|T_x - T_y| \leq \frac{C|x-y|^{\alpha}}{\bar{n}} \hbox{ for all } x,y \in B_1(x_0).
$$
Note that $\eta(x,t) = n_0(x)(1-\exp^{-(t-T_x)_+})$, and $n_0$ is H\"{o}lder continuous by our assumption. Thus we conclude (a) by deriving that
$$
|\eta(x,t_1) - \eta(y,t_1)| \leq \frac{C|x-y|^{\alpha}}{\bar{n}} \hbox{ for all } x,y  \in B_1(x_0).
$$
(b) then follows from \cite[Theorem 7.2]{Blank}.
\end{proof}

\begin{remark}
Once we obtain $C^{1,\alpha}$-regularity of the free boundary $\partial\{w(\cdot,t)>0\}$,  the interior cone angle $\theta$ given in Proposition \ref{pressure:nondeg} can be chosen as close to $\frac{\pi}{2}$ as desired, in a  smaller scale. We can thus improve the regularity of $\eta$ to $C^{1-\e}$ for any $\e>0$, which in turn improves the free boundary  regularity to $C^{1,1-\e}$ for any $\e>0$.
\end{remark}

Combining the above results with Corollary~\ref{star:shaped}, we arrive at the following conclusion.

\begin{corollary}\label{cor:1}
Suppose $n_0$ is $C^1$, and its super-level sets satisfy $r$-reflection for some $r>0$.
Let $\Omega_0$ be an open bounded set in $\R^d$ contained in $B_r(0)$.
Let $T(R)$ be defined as in Corollary \ref{star:shaped}.
Then the followings hold for any $0<\alpha<1$ and any $R\geq Cr$ with $C>1$ being a universal constant:
\begin{enumerate}[(a)]
\item If $n_0(x)\geq 1$ on $\BR^d$, then $\partial\{\rho(\cdot,t)=1\} = \partial\{w(\cdot,t)>0\}$ is uniformly $C^{1,\alpha}$ in a unit neighborhood for any finite time range within $[T(R), \infty)$.
\item If $n_0(x)<1$ on $\BR^d$, the same holds in for any finite time range within  $[T(R), \infty)$ if $R$ additionally satisfies $B_R(0)\subset \{w_{\infty}>0\}$.
    Here $w_\infty$ is defined in Theorem \ref{thm: convergence when n_0 is less than 1}.
\end{enumerate}
\end{corollary}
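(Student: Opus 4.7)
The plan is to combine the Lipschitz regularity provided by Corollary~\ref{star:shaped} with the $C^{1,\alpha}$-upgrade in Corollary~\ref{cor:C 1 alpha regularity}, applied at each boundary point $x_0\in\partial\{w(\cdot,t_1)>0\}$ for $t_1\geq T(R)$. Before starting, Lemma~\ref{lem:rho_1_characterization} identifies $\{\rho(\cdot,t)=1\}$ with $\{w(\cdot,t)>0\}$ up to $\{\rho_0=1, n_0=0\}\subset \spt\rho_0\subset \overline{B_r(0)}$, so the two boundaries coincide outside $\overline{B_r(0)}$, where all the analysis will be carried out.

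The key step is to verify the hypotheses of Corollary~\ref{cor:C 1 alpha regularity} at each such $x_0$. Since $\Omega_{t_1}\supset B_R$, we have $|x_0|\geq R\geq Cr$, and by Proposition~\ref{prop: reflection comparison} the set $\Omega_t$ satisfies the $r$-reflection for every $t\geq 0$. First I would choose the universal $C$ (say $C\geq 5$) to guarantee $R-2>3r$. Whenever $\partial\Omega_t$ meets $B_2(x_0)$, there is $y\in\overline{\Omega_t}$ with $|y|\geq R-2>3r$, and a simple bisector computation (the perpendicular bisector of $[y,z]$ avoids $\overline{B_r(0)}$ exactly when $|y|^2-|z|^2>2r|y-z|$, which holds for all $z\in\overline{B_r(0)}$ once $|y|>3r$) combined with the $r$-reflection property forces $\Omega_t\supset\overline{B_r(0)}$. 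Then Lemma~\ref{star_Lipschitz} gives an exterior cone at each boundary point $y\in B_2(x_0)$ with $\cos\phi_y=r/|y|\leq r/(R-2)$; this yields a Lipschitz graph in the direction $x_0/|x_0|$ (the angular spread of $y/|y|$ within $B_2(x_0)$ being $O(1/R)$) with constant $O(r/R)$, which lies below any prescribed dimensional threshold once $C$ is large. The remaining hypothesis $\bar{n}:=\min_{B_2(x_0)}n(\cdot,t_1)>0$ follows from $n(x,t)=n_0(x)\exp\bigl(-\int_0^t\rho(x,s)\,ds\bigr)\geq n_0(x)e^{-t_1}$, the $C^1$-regularity of $n_0$, and the positivity of $n_0$ near $x_0$ (automatic since $n_0\geq 1$ in (a), and implicit in $B_R\subset\{w_\infty>0\}$ in (b)). Over any finite subinterval of $[T(R),\infty)$, $\bar{n}$ is bounded below uniformly; Corollary~\ref{cor:C 1 alpha regularity} then delivers uniform $C^{1,\alpha}$-control in $B_1(x_0)$, and covering the boundary by such unit balls yields the claimed estimate.

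The main obstacle is precisely the ``for every $0<t\leq t_1$'' clause of Corollary~\ref{cor:C 1 alpha regularity}: a naive application of Corollary~\ref{star:shaped} would only supply Lipschitz regularity at times $t\geq T(r+\delta)$, whereas for small $t$ the set $\Omega_t$ need not contain $\overline{B_r(0)}$, so Lemma~\ref{star_Lipschitz} is not directly applicable. The geometric observation above — that $|x_0|\geq Cr$ forces any intrusion of $\partial\Omega_t$ into $B_2(x_0)$ to be accompanied by $\Omega_t\supset\overline{B_r(0)}$ — is what closes this gap and explains why $C$ must be taken universal and bounded away from $1$.
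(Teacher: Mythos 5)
Your argument follows the route the paper itself signals (``Combining the above results with Corollary~\ref{star:shaped}, \dots''), namely, feed the Lipschitz regularity from Corollary~\ref{star:shaped} into Corollary~\ref{cor:C 1 alpha regularity}. What you add that the paper leaves unwritten is the verification of the ``for all $0<t\leq t_1$'' clause of Corollary~\ref{cor:C 1 alpha regularity}: since Corollary~\ref{star:shaped} only supplies a Lipschitz boundary once $\Omega_t\supset\overline{B_r(0)}$, one has to rule out intermediate times at which $\partial\Omega_t$ already reaches $B_2(x_0)$ while $\Omega_t$ does not yet contain $\overline{B_r(0)}$. Your bisector computation --- a point $y\in\Omega_t$ with $|y|>3r$, combined with the $r$-reflection of $\Omega_t$ supplied by Proposition~\ref{prop: reflection comparison}, forces $\overline{B_r(0)}\subset\Omega_t$ --- is correct and does exactly that, and the subsequent cone-aperture and covering arguments are sound.

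Two small points to tighten. First, ``$C\geq 5$ guarantees $R-2>3r$'' only holds when $r\geq 1$; the requirement $R>3r+2$ is additive in nature, so it is not implied by $R\geq Cr$ for small $r$. This reflects a scale mismatch between the fixed ball radius $2$ in Corollary~\ref{cor:C 1 alpha regularity} and the size $r$ of $\Omega_0$, a mismatch the paper's own ``universal $C>1$'' phrasing also glosses over. Second, in case~(b) the positivity of $\bar{n}$ does not follow from $B_R(0)\subset\{w_\infty>0\}$ alone: the stationary obstacle problem \eqref{eqn: eqn for stationary state} is perfectly compatible with $n_0$ vanishing on a portion of $\{w_\infty>0\}$, so strict positivity of $n_0$ near the relevant free boundary really should be an explicit hypothesis --- again an omission shared with the paper's statement rather than introduced by you.
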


\medskip

Both Corollary \ref{cor:C 1 alpha regularity} and Corollary \ref{cor:1} only apply to finite time ranges.
It is natural to ask what can be said about uniform regularity of the free boundary up to $t = +\infty$.
This is a nontrivial question due to the possible decay of $\bar{n}$ (defined in Corollary~\ref{cor:C 1 alpha regularity}) as $t$ tends to infinity.  Note that, roughly speaking, at a boundary point $x_0$, $\bar{n}\geq n_0 e^{-t}$ if the free boundary does not move significantly.
When $n_0<1$, this bound is close to optimal since the tumor patch converges to a bounded set. When $n_0 \geq 1$, however, it is possible to improve this bound by comparison with radial barriers. We will discuss this in Section \ref{sec: uniform boundary regularity}, under the additional assumption that $n_0$ is constant.

%
%

\section{The Constant $n_0$ Case}
\label{sec: master dynamics}
In this section, we further look into the case when $n_0$ is a positive constant, while still inheriting the assumptions that $b = D = 0$ and $\rho_0$ is a patch with compact support.
%
%
What is special and surprising in this case is that the dynamics of the system \eqref{eqn: P} can be fully characterized by simpler parameter-free and nutrient-free model problems, which produce the so-called \emph{master dynamics}.
As an application of that, we will address the question of uniform regularity of the free boundary $\partial\Omega_t$.

Recall that for patch solutions, we have $\eta \rho = \eta$.
Then $\eta$ solves (c.f.\;\eqref{eqn: eta equation})
\[
\partial_t \eta +\eta= n_0\rho,\quad \eta(x,0) = 0.
\]
This gives
\beq
\eta(x,t) = n_0 \int_0^t e^{-(t-\tau)}\rho(x,\tau)\,d\tau,
\label{eqn: eta formula}
\eeq
and therefore,
\beq
n(x,t) = n_0 - n_0 \int_0^t e^{-(t-\tau)}\rho(x,\tau)\,d\tau.
\label{eqn: n solution formula}
\eeq
This holds even for non-constant $n_0$.
As a result, in the following, we will focus on the $\rho$-evolution in  \eqref{eqn: P}.
Let us also recall the elliptic formulation of the $\rho$-equation in $\BR^d$ that is derived from \eqref{eqn: P}
\beq
\rho -\Delta w = \rho_0 + \eta, \quad w(1-\rho) = 0,\quad \rho\in[0,1],\quad w\geq 0.
\label{eqn: original eqn}
\eeq
Here $\rho_0$ is a patch with compact support.

\subsection{Two master dynamics}
We first prove a growth law of total mass of $\rho$ and a generalization of it.
The latter will be the key of proving the master dynamics.

\begin{lemma}
\label{lem: general test function}
Define (c.f.~Theorem \ref{thm:contraction})
\beq
m(t) = 1+n_0 \int_0^t e^{(n_0-1)\tau}\,d\tau =
\begin{cases}
         \f{n_0e^{(n_0-1)t}-1}{n_0-1}, & \mbox{if } n_0\neq 1, \\
         t+1, & \mbox{otherwise}.
       \end{cases}
\label{eqn: volume}
\eeq

\begin{enumerate}[(a)]
\item
For any $t\geq 0$,
\[
\int_{\BR^d} \rho(x,t)\,dx = m(t)\int_{\BR^d} \rho_0(x)\,dx.
\]
\item
For any arbitrary smooth function $g = g(x)$ in $\BR^d$, we have for any $t\geq 0$,
\beq
\begin{split}
\int_{\BR^d} \rho(x,t) g(x)\,dx
=&\;
m(t)  \int_{\BR^d} \rho_0(x) g(x) \,dx\\
&\;+\int_{\BR^d} w(x,t)\Delta g(x) \,dx
+ \int_0^t m'(t-\tau)
\left[\int_{\BR^d} w(x,\tau)\Delta g(x) \,dx\right] d\tau.
\end{split}
\label{eqn: general test function}
\eeq
In particular, if $g$ is harmonic in an open neighborhood of the support of $\rho(x,t)$, then
\beq
\int_{\BR^d} \rho(x,t) g(x)\,dx
= m(t) \int_{\BR^d} \rho_0(x) g(x) \,dx.
\label{eqn: quadrature identity}
\eeq
Combined with (a), this implies that when $\rho$ is a patch solution, the average of any harmonic $g$ on $\{\rho(\cdot,t)=1\}$ is time-invariant.
\end{enumerate}

\begin{proof}

Take an arbitrary smooth function $g = g(x)$ in $\BR^d$.
%
We integrate \eqref{eqn: original eqn} in $\BR^d$ against $g(x)$ and use \eqref{eqn: eta formula} to find that
\[
\int_{\BR^d} \rho(x,t) g(x)\,dx 
= 
\int_{\BR^d} \rho_0 g + w(x,t)\Delta g(x) \,dx + n_0 \int_0^t e^{-(t-\tau)} \int_{\BR^d} \rho(x,\tau) g(x)\,dx \,d\tau.
\]
Solving this integral equation, we obtain
\beqo
\begin{split}
\int_{\BR^d} \rho(x,t) g(x)\,dx
= &\;
\int_{\BR^d} \rho_0 g + w(x,t)\Delta g(x) \,dx \\
&\;
+ n_0 \int_0^t e^{(n_0-1)(t-\tau)}
\int_{\BR^d} \rho_0 g + w(x,\tau)\Delta g(x) \,dx \,d\tau\\
= &\;
\left(1+n_0 \int_0^t e^{(n_0-1)(t-\tau)}\,d\tau\right)
\int_{\BR^d} \rho_0 g \,dx \\
&\;+\int_{\BR^d} w(x,t)\Delta g(x) \,dx
+ n_0 \int_0^t e^{(n_0-1)(t-\tau)}
\left[\int_{\BR^d} w(x,\tau)\Delta g(x) \,dx\right] d\tau.
\end{split}
\eeqo
Taking $g(x) = 1$ yields \eqref{eqn: volume}.
That in turn implies \eqref{eqn: general test function}.
\end{proof}
\begin{remark}\label{rmk: time invariance of mass center}
Taking $g(x) = x_i$ $(i = 1,\cdots, d)$ in \eqref{eqn: quadrature identity}, we find the center of mass of $\rho(x,t)$ is time-invariant.
\end{remark}
\begin{remark}
\label{rmk: radial solution}
Suppose $\rho_0 = \chi_{B_{r_0}(0)}$.
Then thanks to \eqref{eqn: volume}, Remark \ref{rmk: time invariance of mass center}, and the uniqueness (c.f.~Proposition \ref{prop: uniqueness} and Theorem \ref{thm:contraction}),
\[
\rho(x,t) = \chi_{B_{r(t)}(0)}, \mbox{ where }r(t) = m(t)^\f1d r_0.
\]
Here $m(t)$ is given in \eqref{eqn: volume}.
In this paper, such radial solutions are repeatedly used as barriers for comparison.
\end{remark}
\end{lemma}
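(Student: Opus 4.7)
The strategy is to derive a Volterra integral equation for $F(t):=\int_{\BR^d}\rho(x,t)\,g(x)\,dx$ by pairing the time-integrated elliptic formulation \eqref{eqn: original eqn} with $g$, then to solve the resulting scalar ODE in disguise. The ability to integrate by parts even when $g$ does not decay rests on Remark \ref{rmk: compact support}: since $\rho_0$ is compactly supported, $\rho(\cdot,t)$ and $p(\cdot,t)$ are compactly supported uniformly for $t$ in any finite interval, and hence so is $w(\cdot,t)=\int_0^t p(\cdot,s)\,ds$. Thus I may freely move the Laplacian from $w$ onto $g$ with no boundary terms.

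Concretely, I would test \eqref{eqn: original eqn} against $g$ and substitute the representation \eqref{eqn: eta formula} for $\eta$. Setting
\[
A:=\int_{\BR^d}\rho_0\, g\,dx,\qquad B(t):=\int_{\BR^d} w(x,t)\,\Delta g(x)\,dx,\qquad G(t):=A+B(t),
\]
this gives the Volterra equation
\[
F(t)=G(t)+n_0\int_0^t e^{-(t-\tau)}F(\tau)\,d\tau.
\]
To solve it, I introduce $U(t):=\int_0^t e^{-(t-\tau)}F(\tau)\,d\tau$, which satisfies $U'=F-U$ with $U(0)=0$; substituting $F=G+n_0 U$ converts the integral equation into the linear ODE $U'-(n_0-1)U=G$. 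The integrating-factor solution is $U(t)=\int_0^t e^{(n_0-1)(t-\tau)}G(\tau)\,d\tau$, whence $F=G+n_0 U$. Using $m'(s)=n_0 e^{(n_0-1)s}$ together with the direct computation $n_0\int_0^t e^{(n_0-1)(t-\tau)}A\,d\tau=A\,(m(t)-1)$ (with the obvious limit when $n_0=1$) reorganizes the result into exactly the identity \eqref{eqn: general test function}.

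Part (a) then drops out by choosing $g\equiv 1$, which makes $B\equiv 0$ and leaves $F(t)=m(t)A$; this in turn confirms the formula \eqref{eqn: volume} for the total mass. For the harmonic-$g$ corollary \eqref{eqn: quadrature identity}, if $g$ is harmonic in an open set containing $\mathrm{supp}(\rho(\cdot,t))$, then by the monotonicity of the tumor patch in time (a consequence of Proposition \ref{prop:contraction}) and the characterization in Lemma \ref{lem:rho_1_characterization}, the same neighborhood contains $\mathrm{supp}(w(\cdot,\tau))$ for every $\tau\in[0,t]$; hence $B(\tau)\equiv 0$ on $[0,t]$ and only the $m(t)A$ term survives. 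The final claim about averages of harmonic functions on $\{\rho=1\}$ then follows by dividing \eqref{eqn: quadrature identity} by \eqref{eqn: volume}.

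The only mildly delicate point I anticipate is the justification of interchanging the time integral in \eqref{eqn: eta formula} with the spatial integration against $g$, and the integration by parts against a non-decaying $g$. Both are handled cleanly by the uniform compact support of $\rho(\cdot,s)$ on $[0,t]$ (so the supports of $w(\cdot,\tau)$ for $\tau\in[0,t]$ lie in a common ball), which bounds everything by constants times $\|g\|_{C^2(B_R)}$ for a suitable $R$ and allows Fubini and Green's identity with vanishing boundary contributions. Beyond this bookkeeping, the proof is a direct computation.
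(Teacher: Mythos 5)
Your proof is correct and follows essentially the same route as the paper: test the elliptic formulation against $g$, substitute the representation for $\eta$ to obtain the Volterra equation, and solve it, with $g\equiv 1$ yielding the mass formula and $\Delta g=0$ on the relevant supports killing the $w$-terms. The only difference is that you make explicit (via the ODE for $U$) how the Volterra equation is solved, and you spell out the compact-support bookkeeping and the monotonicity-of-$\Omega_t$ argument behind the harmonic-$g$ corollary, both of which the paper treats implicitly.
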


\begin{lemma}
\label{lem: identity for gamma convolve with patches}
Let $\Gamma$ be the fundamental solution of $-\Delta$ in $\BR^d$, i.e., $-\Delta \Gamma = \delta_{x=0}$.
For any $x\in \{w (\cdot,t)= 0\}$,
\beq
\big(\Gamma *\rho(\cdot,t)\big)(x) = m(t) \big(\Gamma*\rho_0\big)(x).
\label{eqn: identity for gamma convolve with patches}
\eeq
\begin{proof}
We introduce a smooth mollifier $\varphi\in C_0^\infty(\BR^d)$ such that $\varphi\geq 0$, $\varphi$ is radially symmetric, and $\int_{\BR^d}\varphi = 1$.
With $\e>0$, define $\varphi_\e(x) := \e^{-d}\varphi(\f{x}{\e})$.
Let $\Gamma_\e := \Gamma*\varphi_\e$, which is clearly smooth in $\BR^d$ and which satisfies $-\Delta \Gamma_\e = \varphi_\e$.
Applying Lemma \ref{lem: general test function} with $g(\cdot)=\Gamma_\epsilon(x-\cdot)$, we find that, for any $x\in \BR^d$,
\beqo
\begin{split}
\big(\Gamma_\e*\rho(\cdot,t)\big)(x) = &\; \int_{\BR^d} \rho(y,t) \Gamma_\e(x-y)\,dy\\
= &\;
 m(t) \int_{\BR^d} \rho_0(y) \Gamma_\e(x-y) \,dy\\
&\; +\int_{\BR^d} w(y,t)\Delta_y \Gamma_\e(x-y) \,dy
+ \int_0^t m'(t-\tau)
\left[\int_{\BR^d} w(y,\tau)\Delta_y \Gamma_\e(x-y) \,dy\right] d\tau\\
= &\; m(t) \big(\Gamma_\e*\rho_0\big)(x)
- \varphi_\e*\left[w(\cdot,t)
+ \int_0^t m'(t-\tau) w(\cdot,\tau) \,d\tau \right].
\end{split}
\eeqo
Then we take $\e\to 0$.
Since $w(x,t)$ is non-decreasing in time (c.f.~Proposition \ref{prop:comparison} and the fact $\eta$ is non-decreasing in time), the term in the bracket in the last line can be dominated by $ m(t) w(\cdot,t)$.
Hence, for any $x\in \BR^d$ satisfying
\beq
\lim_{\e\to 0}\big(\varphi_\e*w(\cdot,t)\big)(x) = 0,
\label{eqn: Lebesgue point condition with value zero}
\eeq
we can show \eqref{eqn: identity for gamma convolve with patches} by using the spatial continuity of $\Gamma*\rho$ and $\Gamma*\rho_0$.
Since $w(\cdot,t)$ is continuous, the condition \eqref{eqn: Lebesgue point condition with value zero} holds in the set $\{w (\cdot,t)= 0\}$.
This completes the proof.
\end{proof}
\end{lemma}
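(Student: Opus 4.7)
The plan is to apply the identity \eqref{eqn: general test function} from Lemma \ref{lem: general test function}(b) to a smooth approximation of the Newtonian kernel $g(\cdot) = \Gamma(x-\cdot)$ for a fixed spatial point $x$. Since $\Gamma$ is singular at the origin, I would first mollify: take a standard non-negative radial mollifier $\varphi_\e$ with $\int \varphi_\e = 1$, and set $\Gamma_\e := \Gamma \conv \varphi_\e$, so that $\Gamma_\e \in C^\infty(\BR^d)$ and $-\Delta \Gamma_\e = \varphi_\e$. Plugging $g(\cdot) = \Gamma_\e(x-\cdot)$ into \eqref{eqn: general test function} and using $\Delta_y \Gamma_\e(x-y) = -\varphi_\e(x-y)$ produces
\beq
(\Gamma_\e \conv \rho(\cdot,t))(x) = m(t)\,(\Gamma_\e \conv \rho_0)(x) - (\varphi_\e \conv w(\cdot,t))(x) - \int_0^t m'(t-\tau)\,(\varphi_\e \conv w(\cdot,\tau))(x)\,d\tau.
\label{eqn: mollified identity proposal}
\eeq

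Next, I would send $\e \to 0$. The first two convolution terms converge to their unmollified counterparts by standard approximation-of-identity arguments, using that $\rho(\cdot,t)$ and $\rho_0$ are bounded with compact support so that $\Gamma \conv \rho(\cdot,t)$ and $\Gamma \conv \rho_0$ are continuous. The delicate terms are those involving $w$. Two observations drive the analysis: first, $w$ is continuous in $x$, since $w \in W^{2,r}$ for every finite $r$, which yields $(\varphi_\e \conv w(\cdot,s))(x) \to w(x,s)$ pointwise for each $s \in [0,t]$; second, $w$ is non-decreasing in time, because $p \geq 0$ and $w(x,t) = \int_0^t p(x,s)\,ds$. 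The second fact gives the uniform-in-$\tau$ bound $0 \leq (\varphi_\e \conv w(\cdot,\tau))(x) \leq (\varphi_\e \conv w(\cdot,t))(x)$ for $\tau \in [0,t]$, which controls the entire bracket on the right-hand side of \eqref{eqn: mollified identity proposal} by a multiple of $(\varphi_\e \conv w(\cdot,t))(x)$.

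At the prescribed point $x \in \{w(\cdot,t) = 0\}$, the first observation yields $(\varphi_\e \conv w(\cdot,t))(x) \to w(x,t) = 0$, and the uniform control from the second observation forces the $\tau$-integral to vanish in the limit as well. This leaves $(\Gamma \conv \rho(\cdot,t))(x) = m(t)(\Gamma \conv \rho_0)(x)$, as claimed. The main subtlety I anticipate is the interchange of the $\e \to 0$ limit with the $\tau$-integral, but the monotonicity of $w$ in time supplies a clean $\tau$-uniform majorant, so the point-evaluation argument closes immediately and no finer dominated convergence reasoning is needed; the rest reduces to the algebraic identity \eqref{eqn: general test function} and standard mollifier bookkeeping.
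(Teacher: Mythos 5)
Your proposal is correct and follows essentially the same route as the paper's proof: mollify $\Gamma$, plug $g = \Gamma_\e(x-\cdot)$ into Lemma \ref{lem: general test function}(b), use the time-monotonicity of $w$ to majorize the bracketed terms, and pass to the limit $\e\to 0$ at a point $x$ where $w(x,t)=0$ using the continuity of $w$. There is no substantive deviation in method or in the key observations used.
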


\begin{lemma}
\label{lem: equivalent source}
Let $m(t)$ be given by \eqref{eqn: volume}.
Consider the elliptic equation in $\BR^d$
\beq
\tilde{\rho} - \Delta\tilde{w} = m(t) \rho_0,\quad \tilde{w}(1-\tilde{\rho}) = 0,\quad  \tilde{\rho}\in[0,1],\quad \tilde{w}\geq 0.
\label{eqn: new eqn}
\eeq
Then $\tilde{\rho} = \rho$ for all $t\geq 0$ and almost everywhere in space.

\begin{proof}
By \eqref{eqn: eta formula} and Lemma \ref{lem: identity for gamma convolve with patches}, For all $x\in \{w (\cdot,t)= 0\}$, 
\[
\big(\Gamma*\eta(\cdot,t)\big)(x)
=
n_0 \int_0^t e^{-(t-\tau)} m(\tau) \,d \tau \cdot (\Gamma*\rho_0)(x) = n_0 \left( m(t) - e^{(n_0-1)t}\right) (\Gamma*\rho_0)(x).
\]
Here we calculated by integration by parts that
\beqo
\begin{split}
\int_0^t e^{-(t-\tau)}  m(\tau) \,d\tau
= &\; e^{-t} \left( e^{t}  m(t) -1
- \int_0^t e^\tau m'(\tau) \,d\tau\right)\\
= &\; m(t) - e^{-t}
- e^{-t} \int_0^t e^\tau \cdot n_0e^{(n_0-1)\tau}\,d\tau\\
= &\; m(t) - e^{(n_0-1)t}.
\end{split}
\eeqo

In the view of right-hand sides of \eqref{eqn: original eqn} and \eqref{eqn: new eqn}, let
\[
\Phi : = \Gamma*\left(\rho_0+\eta(\cdot,t)- m(t) \rho_0\right).
\]
One can check that, for all $x\in \{w(\cdot,t) = 0\}$,
\[
\Phi(x) = \left[(n_0-1) m(t) - n_0e^{(n_0-1)t}+1\right] (\Gamma*\rho_0)(x) = 0.
\]

Now we claim that
\[
\tilde{\rho} = \rho,\quad \tilde{w} = w -\Phi,
\]
gives a solution of \eqref{eqn: new eqn}.
One only has to verify that
$\tilde{w}(1-\tilde{\rho}) = 0$.
From what has been proved, whenever $w = 0$, we have $\Phi = 0$ and thus $\tilde{w} = 0$.
On the other hand, $\tilde{\rho} = 1$ whenever $\rho = 1$.
Therefore, $\tilde{w}(1-\tilde{\rho}) = 0$ holds whenever $w(1-\rho) = 0$.
This justifies the claim.

Since the solution of \eqref{eqn: new eqn} is unique (c.f.~Proposition \ref{prop:comparison}), we conclude that $\tilde{\rho} = \rho$.
\end{proof}
\end{lemma}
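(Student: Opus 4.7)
My plan is to exhibit a solution to the new obstacle problem \eqref{eqn: new eqn} whose density component is exactly $\rho$, and then invoke uniqueness (Proposition \ref{prop:comparison}) to conclude $\tilde\rho = \rho$. The candidate pair is $(\tilde\rho,\tilde w) = (\rho, w - \Phi)$, where
\[
\Phi(x,t) := \Gamma * \bigl(\rho_0 + \eta(\cdot,t) - m(t)\rho_0\bigr)(x).
\]
Taking the Laplacian of the original equation \eqref{eqn: original eqn}, the PDE $\rho - \Delta \tilde w = m(t)\rho_0$ then holds by construction, so the only nontrivial point to verify is the complementarity $\tilde w\,(1-\rho)=0$.

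The heart of the argument is the vanishing $\Phi \equiv 0$ on $\{w(\cdot,t)=0\}$; once this is established, $\tilde w = 0$ on $\{w=0\}$, while on the complement $\{w>0\}$ the original complementarity forces $\rho=1$, so $\tilde w\,(1-\rho)=0$ holds everywhere. To prove the vanishing, I will use the representation \eqref{eqn: eta formula} of $\eta$ to write
\[
(\Gamma * \eta(\cdot,t))(x) = n_0 \int_0^t e^{-(t-\tau)} (\Gamma * \rho(\cdot,\tau))(x)\,d\tau.
\]
Since $\eta$ is nondecreasing in time, Proposition \ref{prop:comparison} (applied time-wise to \eqref{eqn: original eqn}) gives that $w(\cdot,\tau)$ is nondecreasing in $\tau$, hence $\{w(\cdot,t)=0\}\subseteq\{w(\cdot,\tau)=0\}$ for all $\tau\leq t$. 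This lets me apply Lemma \ref{lem: identity for gamma convolve with patches} pointwise inside the time integral, yielding on $\{w(\cdot,t)=0\}$ the scalar identity
\[
(\Gamma*\eta(\cdot,t))(x) = n_0\left(\int_0^t e^{-(t-\tau)}m(\tau)\,d\tau\right)(\Gamma*\rho_0)(x).
\]
An integration by parts using $m(0)=1$ and $m'(\tau)=n_0 e^{(n_0-1)\tau}$ (immediate from \eqref{eqn: volume}) evaluates the time integral to $m(t)-e^{(n_0-1)t}$. Substituting into $\Phi$ and collecting terms, the prefactor multiplying $(\Gamma*\rho_0)(x)$ becomes $(n_0-1)m(t)-n_0 e^{(n_0-1)t}+1$, which vanishes identically by the same ODE for $m(t)$. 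This proves $\Phi=0$ on the zero set of $w$.

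The conclusion then follows: $(\tilde\rho,\tilde w)=(\rho,w-\Phi)$ satisfies all the relations in \eqref{eqn: new eqn}, and the uniqueness in Proposition \ref{prop:comparison} forces $\tilde\rho=\rho$ a.e. The main obstacle I anticipate is the vanishing of $\Phi$ on $\{w(\cdot,t)=0\}$: this identity is not a formal consequence of any single ingredient but a delicate consistency between the quadrature identity of Lemma \ref{lem: identity for gamma convolve with patches}, the monotonicity of the coincidence set, and the precise exponential form of $m(t)$. A secondary concern is that $\tilde w=w-\Phi$ need not be pointwise nonnegative, which could complicate a direct application of Proposition \ref{prop:comparison}; however, since the uniqueness argument there only needs the coincidence-set structure dictated by the complementarity and the decay of $w$ at infinity, and since $\Phi$ decays at infinity by elliptic regularity applied to $\Gamma$-potentials of $L^1$ data, this should not obstruct the final identification of densities.
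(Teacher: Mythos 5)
Your proposal follows exactly the same route as the paper's proof: construct $(\tilde\rho,\tilde w)=(\rho,w-\Phi)$ with $\Phi=\Gamma*(\rho_0+\eta-m(t)\rho_0)$, show $\Phi$ vanishes on $\{w(\cdot,t)=0\}$ via Lemma \ref{lem: identity for gamma convolve with patches} and the explicit time integral, verify the complementarity, and invoke uniqueness from Proposition \ref{prop:comparison}. The extra observation that $\{w(\cdot,t)=0\}\subseteq\{w(\cdot,\tau)=0\}$ for $\tau\le t$ (from monotonicity of $w$) is correctly identified as the justification for applying the quadrature identity under the time integral.

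You are right to flag the sign of $\tilde w = w - \Phi$ as the one unverified hypothesis of \eqref{eqn: new eqn}, and the paper also glosses over it. However, your proposed dismissal is not correct: the nonnegativity $w^i\ge 0$ is genuinely used in the proof of Proposition \ref{prop:comparison}, in the step where $w^0>w^1$ is upgraded to $w^0>0$ so that $(\rho^0-\rho^1)(w^0-w^1)_+=(1-\rho^1)(w^0-w^1)_+\ge 0$; decay at infinity and the coincidence-set structure alone do not suffice. The correct resolution is to note that the identity in the proof of Lemma \ref{lem: identity for gamma convolve with patches} actually holds at \emph{every} $x\in\BR^d$, not just on $\{w(\cdot,t)=0\}$, in the form
\[
\big(\Gamma*\rho(\cdot,t)\big)(x)=m(t)\big(\Gamma*\rho_0\big)(x)-w(x,t)-\int_0^t m'(t-\tau)\,w(x,\tau)\,d\tau,
\]
and substituting this into $\Gamma*\eta(\cdot,t)=n_0\int_0^t e^{-(t-\tau)}\Gamma*\rho(\cdot,\tau)\,d\tau$ shows that the $(\Gamma*\rho_0)$-coefficient in $\Phi$ still cancels, leaving $\Phi$ equal to a nonpositive combination of $w$-integrals. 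Hence $\Phi\le 0$ everywhere and $\tilde w = w-\Phi\ge w\ge 0$ as required.
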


This leads to the following equivalent characterization of $\{\rho(\cdot,t)\}_t$. 

\begin{prop}[Master dynamics I]
\label{prop: master dynamics}
Let $\rho_* = \rho_*(x,t)$ and $p_* = p_*(x,t)$ be a weak solution (in the sense of Definition \ref{def: weak solution p equation}) of
\beq
\partial_t \rho_* -\nabla\cdot (\rho_*\nabla p_* ) = \rho_0,\quad \rho_*\leq 1,\quad p_*\in P_\infty(\rho_*),\quad \rho_*|_{t = 0} = \rho_0.
\label{eqn: master dynamics}
\eeq
Then for any given $n_0>0$, $\{\rho(x,t)\}_t$ defined by \eqref{eqn: eta formula} and \eqref{eqn: original eqn} (or equivalently, by \eqref{eqn: P}) satisfies
\[
\rho(x,t) = \rho_*(x, m(t) -1 )\mbox{ for all }t\geq 0,
\]
where $m(t)$ is defined by \eqref{eqn: volume}.
$\{\rho_*(x,t)\}_{t\geq 0}$ is thus called the first master dynamics.

In particular, when $n_0 = 1$, $\rho(x,t) = \rho_*(x,t)$.
\end{prop}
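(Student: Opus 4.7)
The plan is to exploit the Baiocchi transform on both systems and then invoke the uniqueness result for the elliptic formulation. Lemma \ref{lem: equivalent source} already rewrites the $\rho$-evolution as a time-parameterized elliptic obstacle problem with source $m(t)\rho_0$. So if I can show that the master dynamics $\rho_*$ also solves an elliptic obstacle problem with source $(1+s)\rho_0$, then the substitution $s=m(t)-1$ (which gives $1+s=m(t)$) immediately matches the two problems, and the comparison/uniqueness statement in Proposition \ref{prop:comparison} finishes the job.

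Concretely, I would first set $w_*(x,s):=\int_0^s p_*(x,\tau)\,d\tau$. Since the source $\rho_0\geq 0$ in \eqref{eqn: master dynamics} is time-independent, the argument at the start of Section \ref{sec: D is 0} applies verbatim: $\rho_*$ is non-decreasing in time (this can be seen from Proposition \ref{prop:contraction} applied to the model problem \eqref{eqn: model rho problem} with $f=\rho_0$, or from Lemma \ref{lem: comparison} comparing $\rho_*(\cdot,s+h)$ against $\rho_*(\cdot,s)$), so $p_*(1-\rho_*)=0$ promotes to $w_*(1-\rho_*)=0$ pointwise. Integrating the PDE in \eqref{eqn: master dynamics} in time from $0$ to $s$ then gives the elliptic formulation
\begin{equation*}
\rho_*(x,s)-\Delta w_*(x,s)=(1+s)\rho_0(x),\quad w_*(1-\rho_*)=0,\quad \rho_*\in[0,1],\quad w_*\geq 0.
\end{equation*}
In particular this solution is unique by Proposition \ref{prop:comparison} (applied on $\Omega=\RR^d$ with $w_*\to 0$ at infinity coming from the compact support of $\rho_*$).

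Now with $m(t)$ as in \eqref{eqn: volume}, the map $t\mapsto m(t)-1$ is strictly increasing and satisfies $1+(m(t)-1)=m(t)$. Therefore evaluating the elliptic problem for $\rho_*$ at time $s=m(t)-1$ gives the same equation as the one for $\rho(\cdot,t)$ produced by Lemma \ref{lem: equivalent source}. Uniqueness in Proposition \ref{prop:comparison} then yields $\rho_*(x,m(t)-1)=\rho(x,t)$ almost everywhere, which is the claim. The case $n_0=1$ is just the specialization $m(t)=t+1$.

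The genuinely novel input is Lemma \ref{lem: equivalent source}, which has already been proved; beyond that, the remaining steps are essentially the standard Baiocchi transform plus a comparison argument. The one place I would be most careful is in confirming that the weak solution $(\rho_*,p_*)$ of \eqref{eqn: master dynamics} truly produces, via the Baiocchi transform, an $L^\infty([0,T];H^2(\RR^d))$ function $w_*$ of the regularity required by Proposition \ref{prop:comparison}. This should follow from the existence construction in Remark \ref{rmk: existence of solution of model problem} (giving $p_*\in L^2_t H^1_x$), the non-decrease of $\rho_*$ that upgrades $p_*(1-\rho_*)=0$ to $w_*(1-\rho_*)=0$, and the resulting elliptic estimate $\|w_*(\cdot,s)\|_{W^{2,r}}\lesssim (1+s)\|\rho_0\|_{L^r\cap L^\infty}$ for each finite $r$, combined with the compact-support argument of Remark \ref{rmk: compact support} to ensure decay at infinity. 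Once that bookkeeping is in place, the identification $\rho(\cdot,t)=\rho_*(\cdot,m(t)-1)$ is immediate.
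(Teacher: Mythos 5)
Your proposal is correct and follows exactly the route the paper intends: it applies the Baiocchi transform to \eqref{eqn: master dynamics} to get the elliptic obstacle problem with source $(1+s)\rho_0$, identifies this with the equation $\rho-\Delta\tilde w=m(t)\rho_0$ from Lemma \ref{lem: equivalent source} via $s=m(t)-1$, and closes with the uniqueness in Proposition \ref{prop:comparison}. The paper leaves this step implicit ("This leads to the following equivalent characterization"), and your write-up, including the monotonicity of $\rho_*$ used to promote $p_*(1-\rho_*)=0$ to $w_*(1-\rho_*)=0$, is precisely the intended bookkeeping.
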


We stress that this is a highly non-trivial property of the model under the given assumptions.
It cannot be obtained by a nonlinear change of the time variable.

Following the spirit of Lemmas \ref{lem: general test function}, \ref{lem: identity for gamma convolve with patches}, and \ref{lem: equivalent source},
we may derive a second equivalent characterizations of $\{\rho(x,t)\}_{t}$.
It will be particularly helpful for understanding long-time behavior of $\rho$ when $n_0\geq 1$ because it comes with a suitable spatial re-scaling.

\begin{prop}[Master dynamics II]
\label{prop: master dynamics 2}
Let $\rho_\dag = \rho_\dag(x,t)$ and $p_\dag = p_\dag(x,t)$ be a weak solution of
\beq
\partial_t \rho_\dag -\nabla \cdot \Big[\rho_\dag \nabla \big( p_\dag +V(x)\big)\Big] = 0,\quad \rho_\dag \leq 1,\quad p_\dag\in P_\infty(\rho_\dag),\quad  \rho_\dag|_{t=0} = \rho_0,
\label{eqn: drift under potential}
\eeq
where $V(x) = \frac{|x|^2}{2d}$.
Then for any given $n_0>0$, $\{\rho(x,t)\}_t$ defined by \eqref{eqn: eta formula} and \eqref{eqn: original eqn} (or equivalently, by \eqref{eqn: P}) satisfies
\[
\rho(x,t) = \rho_\dag \left( m(t)^{-\f1d} x,\, \ln m(t) \right)\mbox{ for all }t\geq 0,
\]
where $m(t)$ is defined by \eqref{eqn: volume}.
Thus, $\{\rho_\dag(x,t)\}_{t\geq 0}$ is called the second master dynamics.

\begin{proof}
Following a similar argument as in Lemmas \ref{lem: general test function}, \ref{lem: identity for gamma convolve with patches}, and \ref{lem: equivalent source}, we can prove that, if one defines
\[
\tilde{\tilde{\rho}}-\Delta \tilde{\tilde{w}} = \rho_0 + \int_0^t \f{m'(\tau)}{m(\tau)} \tilde{\tilde{\rho}}(\cdot,\tau)\,d\tau,\quad
\tilde{\tilde{w}} \big(1-\tilde{\tilde{\rho}}\big) = 0,
\]
then $\tilde{\tilde{\rho}} = \rho$ for all $t\geq 0$.
Indeed,
it suffices to show that, on $\{\tilde{\tilde{w}} (\cdot,t)= 0\}$,
\beqo
\big(\Gamma *\tilde{\tilde{\rho}}(\cdot,t)\big)(x)
=
m(t) \big(\Gamma*\rho_0\big)(x),
\eeqo
and thus
\[
\Gamma*\left(\rho_0 + \int_0^t \f{m'(\tau)}{m(\tau)}\tilde{\tilde{\rho}}(\cdot,\tau)\,d\tau
- m(t) \rho_0\right) \equiv 0
\]
on the same set.
We skip the details.

Hence, $\rho$ satisfies
\beq
\partial_t \rho  -\nabla\cdot \big(\rho \nabla \tilde{\tilde{p}}\big) = \f{m'(t)}{m(t)}\rho,\quad \rho\leq 1,\quad \tilde{\tilde{p}}\in P_\infty(\rho),\quad \rho|_{t = 0} = \rho_0.
\label{eqn: a third equivalent dynamics}
\eeq
%
%
It is then straightforward to verify that, if $\rho_\dag$ and $p_\dag$ solve \eqref{eqn: drift under potential}, then
\begin{align*}
&\rho(x,t):= \rho_\dag \left( m(t)^{-\f1d} x,\, \ln m(t) \right),\\
&
\tilde{\tilde{p}}(x,t):=  \f{m'(t)}{m(t)}\cdot m(t)^{\f2d}
p_\dag \left( m(t)^{-\f1d} x,\, \ln m(t) \right)
\end{align*}
satisfy \eqref{eqn: a third equivalent dynamics}.
\end{proof}
\end{prop}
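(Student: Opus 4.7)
The plan is to mirror the pattern used for Proposition \ref{prop: master dynamics} (via Lemmas \ref{lem: general test function}--\ref{lem: equivalent source}) to first recast $\rho$ as the unique solution of an auxiliary dynamics in which growth appears as a multiplicative source proportional to $\rho$ itself, and then verify via chain rule that the proposed rescaling of $\rho_\dag$ solves that auxiliary dynamics.

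In Step 1, I would introduce the elliptic system
\[
\tilde{\tilde\rho}-\Delta\tilde{\tilde w}=\rho_0+\int_0^t\frac{m'(\tau)}{m(\tau)}\tilde{\tilde\rho}(\cdot,\tau)\,d\tau,\quad \tilde{\tilde w}\bigl(1-\tilde{\tilde\rho}\bigr)=0,
\]
and claim that $\tilde{\tilde\rho}=\rho$. Following Lemma \ref{lem: identity for gamma convolve with patches}, I convolve \eqref{eqn: general test function} with mollified Newtonian potentials and pass to the limit, obtaining $(\Gamma*\rho(\cdot,t))(x)=m(t)(\Gamma*\rho_0)(x)$ on $\{w(\cdot,t)=0\}$. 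Since $w$ is monotone in time, this identity holds at any point of $\{w(\cdot,t)=0\}$ for every $\tau\le t$; combined with $m(t)=1+\int_0^t m'(\tau)\,d\tau$, it allows me to rewrite $m(t)\,\Gamma*\rho_0=\Gamma*\rho_0+\int_0^t\frac{m'(\tau)}{m(\tau)}\,\Gamma*\rho(\cdot,\tau)\,d\tau$ on that set. Hence $(\rho,w-\Phi)$ solves the new elliptic equation for a correction $\Phi$ vanishing on $\{w(\cdot,t)=0\}$, and the $L^1$-contraction in Proposition \ref{prop:contraction} applied iteratively in time (closed off by Gronwall's inequality) forces $\tilde{\tilde\rho}=\rho$. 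Setting $\tilde{\tilde p}:=\partial_t\tilde{\tilde w}$ then yields
\[
\partial_t\rho-\nabla\cdot(\rho\nabla\tilde{\tilde p})=\frac{m'(t)}{m(t)}\rho,\quad \tilde{\tilde p}\in P_\infty(\rho),\quad \rho|_{t=0}=\rho_0.
\]

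In Step 2, with $y=m(t)^{-1/d}x$ and $s=\ln m(t)$, I set $\hat\rho(x,t):=\rho_\dag(y,s)$ and $\hat p(x,t):=\frac{m'(t)}{m(t)}\,m(t)^{2/d}\,p_\dag(y,s)$. Using $\partial_t s=\frac{m'(t)}{m(t)}$ and $\partial_t y=-\frac{m'(t)}{d\,m(t)}y$, the chain rule gives
\[
\partial_t\hat\rho=\frac{m'(t)}{m(t)}\Bigl(\partial_s\rho_\dag-\tfrac{1}{d}\,y\cdot\nabla_y\rho_\dag\Bigr).
\]
Expanding \eqref{eqn: drift under potential} with $\Delta_y V=1$ and $\nabla_y V=y/d$ produces $\partial_s\rho_\dag=\nabla_y\!\cdot\!(\rho_\dag\nabla_y p_\dag)+\rho_\dag+\tfrac{1}{d}\,y\cdot\nabla_y\rho_\dag$. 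The drift terms cancel, and converting $\nabla_y=m(t)^{1/d}\nabla_x$ together with the definition of $\hat p$ gives $\partial_t\hat\rho-\nabla_x\!\cdot\!(\hat\rho\nabla_x\hat p)=\frac{m'(t)}{m(t)}\hat\rho$ with $\hat p\in P_\infty(\hat\rho)$ and $\hat\rho|_{t=0}=\rho_0$.

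Thus both $\rho$ and $\hat\rho$ satisfy the same PDE from Step 1 with the same data, and the same contraction-plus-Gronwall argument forces $\hat\rho=\rho$, proving the claim. The main obstacle is Step 1: because the source on the right-hand side of the proposed elliptic equation depends implicitly on $\tilde{\tilde\rho}$, Lemma \ref{lem: equivalent source} does not apply as a black box, and one must carefully propagate the Newtonian-potential identity $\Gamma*\rho(\cdot,\tau)=m(\tau)\,\Gamma*\rho_0$ along the vanishing set of $w$ for all $\tau\le t$ before the correction $\Phi$ can be identified. The algebraic coincidence driving Step 2 --- the exact cancellation of $\tfrac{1}{d}\,y\cdot\nabla_y\rho_\dag$ --- is precisely what dictates the specific quadratic potential $V(x)=|x|^2/(2d)$.
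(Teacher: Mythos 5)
Your proposal mirrors the paper's own argument step for step: the Newtonian-potential identity $\Gamma*\rho(\cdot,t)=m(t)\,\Gamma*\rho_0$ on the vanishing set of $w$ (Lemma \ref{lem: identity for gamma convolve with patches}), a vanishing correction $\Phi$ identifying $\rho$ with the solution of the implicit-source elliptic system, the resulting auxiliary evolution \eqref{eqn: a third equivalent dynamics}, and the chain-rule verification (exploiting the exact cancellation of the drift terms forced by $V(x)=|x|^2/(2d)$) that the rescaled $\rho_\dag$ solves that same auxiliary evolution. The only variation is the orientation of the uniqueness argument in Step 1 --- you show that $\rho$ itself solves the new implicit-source elliptic system and close with contraction plus Gronwall, whereas the paper shows that $\tilde{\tilde{\rho}}$ solves the explicit-source system $m(t)\rho_0$ and invokes the simpler elliptic uniqueness of Lemma \ref{lem: equivalent source} --- but the two routes are sound and essentially equivalent.
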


Using Proposition \ref{prop: master dynamics 2}, one can readily characterize the long-time behavior of the $\rho$-patch when $n_0>0$ is constant.
Indeed, when $n_0\geq 1$, the long-time dynamics of $\rho$ in \eqref{eqn: eta formula} and \eqref{eqn: original eqn} corresponds to infinite-time asymptotics of $\rho_\dag$ in \eqref{eqn: drift under potential}, which has been well-studied the literature, see e.g.\;\cite[Theorem 5.6]{AKY14}.
While for $n_0\in (0,1)$, we knew from Theorem \ref{thm: convergence when n_0 is less than 1} that $\rho$ converges to a compactly supported $\rho_\infty$ as $t\to +\infty$.
On the other hand, under the rescaling of Proposition \ref{prop: master dynamics 2}, such dynamics of $\rho$ actually corresponds to an excerpt of the master $\rho_\dag$-dynamics up to a finite time.
Therefore, we may characterize the long-time behavior of $\rho$ in the model \eqref{eqn: eta formula} and \eqref{eqn: original eqn} with any value of $n_0$ in the following unified way.

\begin{prop}
\label{prop: long-time asymptotics for constant n_0 no less than 1}
Suppose $n_0>0$ is constant in $\BR^d$.
Let $m(t)$ be defined in \eqref{eqn: volume}, and denote $\beta(t):= m(t)^{\f1d}$.
Assume $\Omega_0$ to be a bounded open set, such that $B_{r_1}(0)\subset \Omega_0 \subset B_{r_2}(0)$ for some $r_1,r_2>0$.
Let $r_\infty>0$ be defined such that $|B_{r_\infty}(0)| = |\Omega_0|$.

Let $\rho(x,t)$ solve \eqref{eqn: eta formula} and \eqref{eqn: original eqn} with $\rho_0 = \chi_{\Omega_0}$.
Then there exists a constant $C>0$ only depending on $r_1$ and $r_2$, but not on $n_0$, such that
\beqo
\beta(t) W_2\Big( \rho\big(\beta(t) x,t\big), \chi_{B_{r_\infty}(0)}(x) \Big)\leq C
\eeqo
for all $t\geq 0$.
Here $W_2$ denotes the 2-Wasserstein distance.
\end{prop}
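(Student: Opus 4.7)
The plan is to leverage the second master dynamics (Proposition \ref{prop: master dynamics 2}) to reduce the problem to a Wasserstein contraction estimate for the nutrient-free confined Hele-Shaw flow \eqref{eqn: drift under potential}, and then unwind the time change. The guiding heuristic is that the driving potential $V(x)=|x|^2/(2d)$ is $(1/d)$-uniformly convex, which should produce an exponential contraction toward a stationary state at rate $e^{-s/d}$ in the $s$-time of $\rho_\dag$. Substituting $s=\ln m(t)$ converts this into the desired $m(t)^{-1/d}$ decay.

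First I would invoke Proposition \ref{prop: master dynamics 2} to write $\rho(\beta(t)x,t)=\rho_\dag(x,\ln m(t))$. Combined with Lemma \ref{lem: general test function}(a) and the identity $\beta(t)^d=m(t)$, this shows that both sides have total mass $|\Omega_0|$, so that $W_2$ is well defined between them and against $\chi_{B_{r_\infty}(0)}$. Setting $s:=\ln m(t)$, the claim reduces to establishing
\[
W_2\bigl(\rho_\dag(\cdot,s),\,\chi_{B_{r_\infty}(0)}\bigr)\le C e^{-s/d}
\]
for all $s\in[0,\sup_t \ln m(t))$, with $C$ depending only on $r_1,r_2$.

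The next step is to verify by direct computation that $\rho_\dag\equiv\chi_{B_{r_\infty}(0)}$, together with the pressure $p_\dag(x)=(r_\infty^2-|x|^2)_+/(2d)$, is a weak stationary solution of \eqref{eqn: drift under potential}: on $B_{r_\infty}(0)$ the sum $p_\dag+V$ is constant, so $\rho_\dag\nabla(p_\dag+V)\equiv 0$ on all of $\RR^d$. I would then interpret \eqref{eqn: drift under potential} as the Wasserstein gradient flow of the energy $\mathcal{F}[\rho]:=\int V\rho\,dx+\iota_{\{\rho\le 1\}}[\rho]$, where the second term is the convex indicator of the hard density constraint. Since $D^2V=(1/d)I$, the drift part is $(1/d)$-displacement convex, and the indicator of $\{\rho\le 1\}$ is also displacement convex (a standard fact in the confined Hele-Shaw and crowd-motion literature). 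The general $\lambda$-contraction theorem for $\lambda$-geodesically convex Wasserstein gradient flows then gives
\[
W_2(\rho_\dag(\cdot,s),\chi_{B_{r_\infty}(0)})\le e^{-s/d}\, W_2(\rho_0,\chi_{B_{r_\infty}(0)}).
\]
A crude bound on the initial distance---both measures are supported in $B_{r_2}(0)$ with common mass $|\Omega_0|$, so $W_2(\rho_0,\chi_{B_{r_\infty}(0)})\le 2r_2\sqrt{|\Omega_0|}\le C(r_2)$---closes the estimate: $\beta(t) W_2(\rho(\beta(t)x,t),\chi_{B_{r_\infty}(0)})\le\beta(t)\cdot C e^{-\ln m(t)/d}=C$, as desired.

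The main obstacle will be rigorously justifying the $(1/d)$-contraction inequality above. This requires either matching the weak solution of \eqref{eqn: drift under potential} produced by Proposition \ref{prop: master dynamics 2} with the JKO minimizing-movement solution of $\mathcal{F}$, so that the abstract gradient-flow contraction theorem applies, or proving the contraction directly by differentiating $W_2^2(\rho_\dag(\cdot,s),\chi_{B_{r_\infty}(0)})$ along an optimal coupling and using the convexity of $V$ together with the complementarity relation $p_\dag(1-\rho_\dag)=0$. The latter route is an Otto-type calculation; the delicate point is treating the pressure $p_\dag$ as a Lagrange multiplier for the hard constraint $\rho_\dag\le 1$, which complicates the usual chain-rule steps and forces one to exploit the stationarity of $\chi_{B_{r_\infty}(0)}$ carefully to cancel the boundary contributions.
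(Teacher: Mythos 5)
Your high-level strategy is identical to the paper's: invoke Proposition \ref{prop: master dynamics 2} to reduce to an exponential $W_2$-contraction for the confined Hele-Shaw flow \eqref{eqn: drift under potential}, then undo the time change $s=\ln m(t)$. Where you diverge is in how that contraction is obtained. The paper cites \cite[Theorem 5.6]{AKY14} directly, and because that result apparently needs Lipschitz boundary for the initial patch, the paper runs a two-stage argument: handle $[0,T]$ crudely (both measures supported in $B_{r_2}$), use Corollary \ref{star:shaped}(b) and radial barriers to get Lipschitz regularity of $\tilde\Omega_T$ for a finite $T$, apply \cite{AKY14} from time $\ln m(T)$ onward, and then observe that for $n_0\geq 1$ the resulting estimate on $\rho_\dag$ holds for all $\tau\geq 0$ and therefore transfers to the $n_0\in(0,1)$ case via a second application of the master dynamics. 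Your alternative---prove the contraction from scratch via displacement convexity of $\int V\rho+\iota_{\{\rho\leq 1\}}$ with $D^2V=(1/d)I$---would, if made rigorous, be cleaner precisely because the gradient-flow contraction requires no Lipschitz regularity of $\Omega_0$, eliminating the $T$-waiting argument and the two-case bootstrap entirely.

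The gap you flag at the end is genuine and is exactly the work that \cite{AKY14} is being cited to avoid: the weak solution of \eqref{eqn: drift under potential} delivered by Proposition \ref{prop: master dynamics 2} (via the elliptic/Baiocchi reformulation and Lemmas \ref{lem: general test function}--\ref{lem: equivalent source}) must be identified with the EVI/JKO gradient-flow solution of the displacement-convex energy before the abstract $\lambda$-contraction theorem applies, or else one must run the Otto-calculus differentiation of $W_2^2$ directly and handle the Lagrange multiplier $p_\dag$ for the hard constraint. Either of these is a substantial step, not a formality. One further small point worth verifying in your route: $\lambda$-geodesic convexity of the constrained energy gives contraction between two gradient-flow trajectories, so you need $\chi_{B_{r_\infty}(0)}$ to be a genuine gradient-flow solution (not merely a weak stationary point of the PDE), which follows from your observation that $p_\dag+V$ is constant on $B_{r_\infty}$ and zero flux outside, but should be stated.
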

\begin{proof}

Let us recall that, by virtue of Lemma \ref{lem: general test function}, $\rho(\beta(t) x,t)$ has the same total mass as $\chi_{B_{r_\infty}(0)}(x)$.

First we assume $n_0\geq 1$.

By comparison with the radial barriers, $\rho(\beta(t) x,t)$ and $\chi_{B_{r_\infty}(0)}(x)$ are both supported in $B_{r_2}(0)$ for all time.
Let $T>0$ satisfy $\beta(T) = \f{2r_2}{r_1}$.
Then it is obvious that the above inequality holds on $[0,T]$, with $C>0$ only depending on $r_1$ and $r_2$.

Next we consider the case $t\geq T$.
Assume $\rho(x,t) = \chi_{\Omega_t}(x)$.
By Corollary \ref{star:shaped}(b) and comparison with the radial barriers (see Remark \ref{rmk: radial solution}), for all $t\geq T$, $\Omega_t$ contains $B_{2r_2}(0)$ and thus it has Lipschitz boundary.
By Proposition \ref{prop: master dynamics 2},
\[
\rho_\dag(x,\ln m(T)) = \rho\big(\beta(T)x,T\big) = \chi_{\beta(T)^{-1}\Omega_T}(x) .
\]
Note that the rescaled set $\beta(T)^{-1}\Omega_{T}$ has Lipschitz boundary.
We then solve \eqref{eqn: drift under potential} starting from $t = \ln m(T)$ with ``initial data"  $\rho_\dag(x,\ln m(T))$.
By  \cite[Theorem 5.6 and its proof]{AKY14}, for all $t' \geq 0$,
\[
W_2\Big( \rho_\dag \big(x,\ln m(T)+t'\big), \chi_{B_{r_\infty}(0)}(x) \Big) \leq e^{-\f{t'}{d}}
W_2\Big( \rho_\dag \big(x,\ln m(T)\big), \chi_{B_{r_\infty}(0)}(x) \Big).
\]
Then the desired result follows from suitable change of variables.

By Proposition \ref{prop: master dynamics 2},
\beq
\rho_\dag(x,\ln m(t)) = \rho(\beta(t)x,t)\mbox{ holds for all }n_0>0.
\label{eqn: implication of master dynamics}
\eeq
Hence, the above argument essentially proves that, without assuming $\Omega_0$ has Lipschitz boundary (c.f.~\cite{AKY14}), for all $\tau \geq 0$,
\beqo
e^{\f{\tau}{d} } W_2\big( \rho_\dag (x, \tau), \chi_{B_{r_\infty}(0)}(x) \big)\leq C.
\eeqo
Hence, using \eqref{eqn: implication of master dynamics} again, the case $n_0 \in (0,1)$ is proved immediately.
\end{proof}


\subsection{Uniform free boundary regularity}
\label{sec: uniform boundary regularity}
In this section, we are going to prove uniform free boundary regularity up to $t = +\infty$ under the assumption that $n_0$ is constant in $\BR^d$.

Thanks to the master dynamics, the case $n_0<1$ is trivial.
This is because, up to a re-scaling in time (see Proposition \ref{prop: master dynamics}), its $\rho$-evolution in a time range of the form $(t,+\infty)$ corresponds to the $\rho$-evolution in a finite time range with a different $n_0\geq 1$.
The latter has already been characterized in Corollary \ref{cor:1}(a).

Also by the master dynamics, it suffices to study regularity of $\partial \Omega_t$ of the solution corresponding to \emph{one} arbitrary $n_0\geq 1$.
When $n_0> 1$, one can apply comparison principle and radial barriers constructed in Remark \ref{rmk: radial solution} to show that $\Omega_t$ expands exponentially fast. It is thus reasonable to expect uniform regularity of the free boundary after a suitable re-scaling.
Indeed, the key lies in the uniform Lipschitz estimate for $\partial \{w(\cdot,t)>0\}$ in Corollary \ref{star:shaped}(b).

\medskip

Let us point out that, while the master dynamics in rescaled variable \eqref{eqn: drift under potential} corresponds to a Hele-Shaw flow, the presence of the drift prevents us from directly applying existing regularity results (e.g.\;\cite{CJK07}) to our problem.





\begin{theorem}
\label{thm: uniform regularity}
Fix $n_0>1$.
Let $\Omega_0$, $r_1$, $r_2$, and $\beta(t)$ be given as in Proposition \ref{prop: long-time asymptotics for constant n_0 no less than 1}.
Let $\rho_0 = \chi_{\Omega_0}$.
Then there is $\alpha\in (0,1)$ and $T>0$ depending on $r_1$, $r_2$, $d$, and $n_0$, such that the followings hold for all $t \geq T$.
\begin{enumerate}[(a)]
\item
The rescaled set $\tilde{\Omega}_t:= \beta(t)^{-1}\Omega_t$ has uniformly $C^{1,\alpha}$-boundary;

\item
The rescaled nutrient variable $\tilde{n}(x,t):= n(\beta(t)x,t)$ is uniformly bounded in $C^{\alpha}(\{|x|\geq 2\beta^{-1}(t)r_2^2/r_1\})$.
\end{enumerate}
\end{theorem}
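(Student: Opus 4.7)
The plan is to combine the second master dynamics (Proposition \ref{prop: master dynamics 2}) with uniform-in-$t$ versions of the non-degeneracy and obstacle-problem arguments from Section \ref{sec:regularity}. Setting $s_t := \ln m(t)$, Proposition \ref{prop: master dynamics 2} identifies $\tilde{\Omega}_t$ with $\{\rho_\dag(\cdot, s_t) = 1\}$; since $n_0 > 1$ forces $s_t \to \infty$, uniform-in-$t$ regularity becomes an asymptotic regularity statement for the autonomous, parameter-free problem \eqref{eqn: evolution under potential}. Uniform geometric control on $\tilde{\Omega}_t$ follows quickly: comparison with the radial barriers (Remark \ref{rmk: radial solution}) gives $B_{\beta(t) r_1}(0) \subset \Omega_t \subset B_{\beta(t) r_2}(0)$, so $B_{r_1}(0) \subset \tilde{\Omega}_t \subset B_{r_2}(0)$; and the $r_2$-reflection of $\Omega_0$, trivially satisfied, propagates via Proposition \ref{prop: reflection comparison} to $\beta(t)^{-1} r_2$-reflection of $\tilde{\Omega}_t$. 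By Lemma \ref{star_Lipschitz}, the interior cone at every point of $\partial \tilde{\Omega}_t$ then has half-angle tending to $\pi/2$ as $t \to \infty$, and $\partial \tilde{\Omega}_t$ is uniformly Lipschitz with constant $O(\beta(t)^{-1})$.

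The heart of the proof is a time-uniform lower bound on the nutrient $n$ in an inward layer of $\Omega_t$. Since $\Omega_s \subset B_{\beta(s) r_2}(0)$ for every $s$, any point $y$ with $|y| \leq \beta(t) r_2$ has entry time $T_y \geq m^{-1}(|y|^d/r_2^d)$. For $y$ within unit distance (in original coordinates) of $\partial \Omega_t$, we have $|y| \geq \beta(t) r_1 - O(1)$, and the asymptotic $m(t) \sim \tfrac{n_0}{n_0-1} e^{(n_0-1)t}$---valid precisely because $n_0 > 1$---yields
\[
t - T_y \;\leq\; \tfrac{d}{n_0-1}\ln(r_2/r_1) + o(1)\quad\text{as } t \to \infty.
\]
Hence $n(y, t) = n_0 e^{-(t - T_y)} \geq \bar{n}_0 > 0$ uniformly for large $t$, with $\bar{n}_0$ depending only on $r_1, r_2, n_0, d$.

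With these two ingredients, I apply Proposition \ref{pressure:nondeg} at each boundary point $x_0 \in \partial \Omega_t$ for $t \geq T$ (at a scale small enough that the required interior ball sits inside $\Omega_t \supset B_{\beta(t) r_1}(0)$, which is possible thanks to the near-flatness of $\partial \Omega_t$ and the wide interior of $\Omega_t$). The interior cone condition is supplied by the geometric control, and the uniform nutrient bound $\bar{n} \geq \bar{n}_0$ feeds directly into the conclusion, giving $|T_x - T_y| \leq C|x-y|^\alpha$ in a neighborhood of $\partial \Omega_t$ with constants uniform in $t$. Via $\eta = n_0(1 - e^{-(t - T_\cdot)_+})$, this translates to uniform H\"older regularity of $\eta(\cdot, t)$ and $n(\cdot, t)$, and the obstacle-problem theory of \cite[Theorem 7.2]{Blank} (invoked as in Corollary \ref{cor:C 1 alpha regularity}) then gives uniform $C^{1,\alpha}$ regularity of $\partial\{w(\cdot, t) > 0\} = \partial \Omega_t$, which rescales to (a). Part (b) follows by transporting the H\"older bound on $T_\cdot$ to $\tilde{n}(\tilde{x}, t) = n_0 e^{-(t - T_{\beta(t)\tilde{x}})_+}$; the constraint $|\tilde{x}| \geq 2\beta(t)^{-1} r_2^2/r_1$ (i.e.\ $|\beta(t)\tilde{x}| \geq 2 r_2^2/r_1$ in original coordinates) is precisely what is needed to stay outside $\Omega_0 \subset B_{r_2}(0)$, where $T \equiv 0$ and the initial data could otherwise spoil regularity.

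The main obstacle is the time-uniformity of $\bar{n}$. A priori one only has $n \geq n_0 e^{-t}$, which decays to zero, and the propagation time $C r^\alpha/\bar{n}$ in Proposition \ref{pressure:nondeg} would then blow up and ruin the uniformity. The resolution exploits $n_0 > 1$: the exponential expansion $\beta(t) \sim e^{(n_0-1)t/d}$ forces every point near $\partial \Omega_t$ to have been in the tumor for at most a bounded time $\tfrac{d}{n_0-1}\ln(r_2/r_1)$, so nutrient consumption is bounded a priori. This is exactly why the theorem requires $n_0 > 1$; the borderline case $n_0 = 1$ (linear growth $m(t) = t + 1$) already loses this bound, and the method as stated does not apply there.
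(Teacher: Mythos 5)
Your geometric set-up is correct, and the observation that $n$ stays bounded below near $\partial\Omega_t$ when $n_0>1$ (because the boundary expands fast enough that its current location was reached only boundedly long ago) is a genuine and valid point, consistent with the paper's own remark at the end of Section~4. But it leads you to the \emph{wrong scale} of non-degeneracy, and this is where the argument breaks.

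Concretely: applying Proposition~\ref{pressure:nondeg} with $\bar n\ge \bar n_0>0$ gives $B_r(x_*)\subset\Omega_{t+Cr^\alpha/\bar n_0}$, hence $|T_x-T_y|\le C|x-y|^\alpha$ and $[\eta(\cdot,t)]_{C^\alpha}\le C$ \emph{in the original coordinates}. This does not transfer to the rescaled variables: writing $\tilde\eta(\tilde x,t)=\eta(\beta(t)\tilde x,t)$, one has $[\tilde\eta(\cdot,t)]_{C^\alpha}=\beta(t)^\alpha[\eta(\cdot,t)]_{C^\alpha}$, which blows up. The same scaling defect afflicts part (a): if $\partial\Omega_t$ is locally the graph of $\phi$ with $\|\phi\|_{C^{1,\alpha}}\le C$, then $\partial\tilde\Omega_t$ is the graph of $\tilde\phi(\tilde x')=\beta(t)^{-1}\phi(\beta(t)\tilde x')$ and $[\nabla\tilde\phi]_{C^\alpha}=\beta(t)^\alpha[\nabla\phi]_{C^\alpha}$. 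So ``uniform $C^{1,\alpha}$ in original coordinates rescales to (a)'' is precisely the step that fails. You cannot get (a) or (b) from a $t$-uniform constant $\bar n$; you need the non-degeneracy to \emph{improve} by a factor $\beta(t)^{-\alpha}$ as $t$ grows, reflecting that the interface speed grows like $\beta(t)^\alpha$.

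The paper obtains exactly this improvement by doing the Dahlberg step in the rescaled variables. The rescaled pressure $\tilde p(x,t):=\beta(t)^{-2}p(\beta(t)x,t)$ is superharmonic on $\tilde\Omega_t\subset B_{r_2}(0)$ and satisfies $\tilde p\ge C$ on $B_{r_1/2}(0)$ by the radial barrier; Dahlberg's lemma in the bounded rescaled geometry gives $\tilde p(x,t)\ge C\,d(x,\tilde\Omega_t^c)^{2-\alpha}$, which in original coordinates reads $p(x,t)\ge C\,\beta(t)^\alpha\,d(x,\Omega_t^c)^{2-\alpha}$. Running the barrier argument of Proposition~\ref{pressure:nondeg} with $\bar n$ replaced by $\beta(t)^\alpha$ gives $B_r(x_*)\subset\Omega_{t+C(r/\beta(t))^\alpha}$, hence $|T_x-T_y|\le C\bigl(|x-y|/\min\{\beta(T_x),\beta(T_y)\}\bigr)^\alpha$, and now the factor $\beta(t)^{-\alpha}$ exactly cancels the $\beta(t)^\alpha$ from rescaling, yielding a $t$-uniform $C^\alpha$ bound on $\tilde\eta$. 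The obstacle-problem step is then carried out for $\tilde w(x,t):=w(\beta(t)x,t)$, whose right-hand side involves $\tilde\eta$, giving (a) directly in rescaled variables. Your reduction to the second master dynamics is a correct heuristic (it explains why uniform-in-$t$ regularity should be an asymptotic statement for the autonomous problem), but the paper does not use Proposition~\ref{prop: master dynamics 2} in this proof; the rescaled-pressure/Dahlberg argument serves that role. Finally, a minor point: the constraint $|\tilde x|\ge 2\beta(t)^{-1}r_2^2/r_1$ is chosen to guarantee $T_x\ge T(2r_2)$ (so that the Lipschitz geometry of Corollary~\ref{star:shaped}(b) applies at time $T_x$), not merely to avoid the support of $\rho_0$.
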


\begin{proof}

From comparison with radial barriers (see Theorem \ref{thm:contraction} and Remark \ref{rmk: radial solution}), we find that 
\beq
B_{\beta(t)r_1}(0)\subset \Omega_t = \{\rho(\cdot,t)=1\}
\subset B_{\beta(t)r_2}(0)
\label{eqn: lower and upper bound for the tumor domain}
\eeq
up to measure-zero set.
%
%
Also, for some $C>0$ depending on $n_0$ and $r_1$,
$$
p(x,t) \geq C \beta^2(t) \hbox{ if } |x| \leq \frac{r_1 \beta(t)}{2}.
$$
Hence, the re-scaled pressure variable $\tilde{p}(x,t):=\beta^{-2}(t) p(\beta(t)x, t)$ satisfies
$$
\tilde{p}(x,t) \geq C \hbox{ if } |x|\leq\frac{r_1}{2}.
$$
On the other hand, let $T(\cdot)$ be introduced in Corollary~\ref{star:shaped}.
From Corollary \ref{star:shaped}, for $t\geq T(2r_2)$, we know that $\partial\{p(\cdot,t)>0\}$ is a Lipschitz graph with respect to the radial direction, with the Lipschitz constant less than $O((\beta(t)r_1)^{-1} r_2 )$.
Therefore, 
$\tilde{\Omega}_t =\{\tilde{p}(\cdot,t) >0\} \subset B_{r_2}(0)$,
with $\partial\tilde{\Omega}_t$ being uniformly Lipschitz with respect to the radial direction.
When $t$ is suitably large, depending on $r_1$, $r_2$, and $n_0$, we have the Lipschitz constant to be small enough for applying a similar argument as in Proposition \ref{pressure:nondeg}.

Since $\tilde{p}$ is superharmonic in its positive set, arguing with Dahlberg's Lemma as in the proof of Proposition~\ref{pressure:nondeg}, we conclude that there is a constant $\alpha\in (0,1)$ that is independent of the time such that, for given $\tilde{x}_* \in \partial\tilde{\Omega}_t$ and for $t\geq  T(2r_2)$ we have
\[
\tilde{p}(x,t) \geq C d(x, \tilde{\Omega}_t^c)^{2-\alpha} \hbox{ in } B_1(\tilde{x}_*).
\]
In the original coordinate, this corresponds to
\[
p(x,t) \geq
C \beta(t)^{\alpha} d(x,\Omega_t^c)^{2-\alpha}
\]
for any $x\in B_1(\beta(t)\tilde{x}_*)$ and $t \geq T(2r_2)$.
Thus the barrier argument as in the proof of Proposition \ref{pressure:nondeg}, with $\bar{n}$ replaced by $\beta(t)^{\alpha}$, yields that, for any $x_*\in \partial\Omega_t$ with $t\geq T(2r_2)$,
\beq
B_r(x_*) \subset \Omega_{t + C(\frac{r}{\beta(t)})^{\alpha}}.
\label{eqn: nondegeneracy of speed}
\eeq
for sufficiently small $r$.  This further implies
\beq
|T_x - T_y| \leq C\left(\frac{|x-y|}{\min\{\beta(T_x), \beta(T_y)\}}\right)^{\alpha} \hbox{ for any } x,y\mbox{ with }T_x,T_y\geq T(2r_2).
\label{eqn: difference of arrival time}
\eeq

To justify this, we assume $T_x<T_y$ without loss of generality.
We first consider the case where $|x-y|$ is large.
By \eqref{eqn: lower and upper bound for the tumor domain},
if $T_*>0$ satisfies that
\[
\beta(T_x+T_*)r_1 \geq |y|,
\]
then $|T_x-T_y|\leq T_*$.
Also by \eqref{eqn: lower and upper bound for the tumor domain},
\[
|y|\leq |x-y|+|x|\leq |x-y|+\beta(T_x)r_2.
\]
Hence, we let $T_*$ satisfy
\[
\beta(T_x+T_*)r_1 = |x-y|+\beta(T_x)r_2,
\]
which implies (c.f.\;\eqref{eqn: volume})
\[
e^{(n_0-1)T_*}\leq \f{m(T_x+T_*)}{m(T_x)} = \left(\f{|x-y|}{\beta(T_x)r_1}+\f{r_2}{r_1}\right)^{d},
\]
Therefore,
\[
|T_x-T_y|\leq \f{d}{n_0-1}\ln \left(\f{|x-y|}{\beta(T_x)r_1}+\f{r_2}{r_1}\right),
\]
which implies \eqref{eqn: difference of arrival time} whenever $|x-y|/\beta(T_x)$ is sufficiently large.
Otherwise, if $|x-y|/\beta(T_x)\leq C$ where $C$ depends on $r_1$, $r_2$, $d$, $n_0$, and $\alpha$, we may apply \eqref{eqn: nondegeneracy of speed} to obtain \eqref{eqn: difference of arrival time}.

Under the assumption $T_x<T_y$, we have that
\[
\big|\eta(x,t)-\eta(y,t)\big| \leq n_0 \left(1-e^{-|T_x-T_y|} \right) e^{-(t-T_y)_+}\leq n_0\min\big\{1,|T_x-T_y|\big\}e^{-(t-T_y)_+}.
\]
Hence, thanks to \eqref{eqn: difference of arrival time}, for any $x,y$ such that $T_x,T_y\geq T(2r_2)$,
\[
\f{|\eta(x,t)-\eta(y,t)|}{|x-y|^\alpha}
\leq C \min\big\{|x-y|^{-\alpha},\beta(T_x)^{-\alpha}\big\} e^{-(t-T_y)_+}.
\]
Since we defined $\tilde{\eta}(x,t):= \eta(\beta(t)x,t)$,
\beq
\f{|\tilde{\eta}(\beta(t)^{-1}x,t)-\eta(\beta(t)^{-1}y,t)|}{\beta(t)^{-\alpha}|x-y|^\alpha}
\leq C \beta(t)^{\alpha}\max\big\{|x-y|,\beta(T_x)\big\}^{-\alpha} e^{-(t-T_y)_+}.
\label{eqn: C alpha norm of rescaled eta}
\eeq
We claim that
\[
\beta(T_y)\leq C\big(|x-y|+\beta(T_x)\big)\leq C \max\big\{|x-y|,\beta(T_x)\big\},
\]
where $C$ may depend on $r_1$, $r_2$, $d$, and $n_0$.
Indeed, by the estimate for $T_*$ derived above
\[
\f{\beta(T_y)}{\beta(T_x)} \leq C e^{\f{(n_0-1)}{d}|T_y-T_x|}
\leq C e^{\f{(n_0-1)}{d}T_*}
\leq C\left(\f{|x-y|}{\beta(T_x)}+1\right).
\]
Hence, when $t\leq T_y$, the right-hand side of \eqref{eqn: C alpha norm of rescaled eta} is bounded by a universal constant that only depends on  $r_1$, $r_2$, $d$, $n_0$, and $\alpha$.
We may further assume $\alpha$ to be suitably small so that the right-hand side of \eqref{eqn: C alpha norm of rescaled eta} is uniformly bounded for $t\geq T_y$.
Now noticing that $|x|\geq 2r_2^2/r_1$ guarantees $T_x\geq T(2r_2)$ (c.f.\;\eqref{eqn: lower and upper bound for the tumor domain}), we can conclude (b), i.e., $\tilde{\eta}(x,t)$ has uniform-in-time H\"{o}lder regularity for $t\geq T(2r_2)$ for $|x|\geq 2\beta^{-1}(t)r_2^2/r_1$.

\medskip

Lastly, observe that $\tilde{w}(x,t):= w(\beta(t)x,t)$ solves the obstacle problem
$$
\Delta \tilde{w} = \tilde{f}\chi_{\{\tilde{w}(\cdot,t)>0\}}, \mbox{ where } \tilde{f} = 1- \tilde{\rho}_0-\tilde{\eta}.
$$
Then using the regularity of $\tilde{\eta}$, and Theorem 7.2 of \cite{Blank}, we can conclude (a).
\end{proof}

Before ending this section, let us briefly discuss the uniform boundary regularity issue in the case of non-constant $n_0$.

If $\|n_0\|_{L^\infty}\geq 1$, long-time asymptotics of the $\rho$-patches can be rather complicated, as it does not rule out that $n_0$ could be less than $1$ in some areas.
It is not even clear whether the total mass of the tumor would diverge.
Suitable conditions need to be imposed on $n_0$ in order to make the question of uniform regularity more meaningful.

For $\|n_0\|_{L^\infty}<1$, Theorem \ref{thm: convergence when n_0 is less than 1} states that $w(\cdot,t)$ monotone increases to converge to $w_{\infty}$, which features bounded support.
Thus, the pressure as well as $n$ vanishes in $\Omega_t$ as time tends to infinity, and the regularizing effect of the pressure variable vanishes over time.
On the other hand, under suitable assumptions, $\{w_{\infty}>0\}$ features smooth free boundary.
To see this, recall that $w_{\infty}$ solves the obstacle problem (c.f.~\eqref{eqn: eqn for stationary state})
$$
\Delta w_{\infty} = (1-\rho_0 - n_0)\chi_{\{w_{\infty}>0\}}.
$$
When $\rho_0 = \chi_{\Omega_0}$ with $\Omega_0$ having smooth (say $C^{1,1}$) boundary, the set $\{w_{\infty}>0\}$ lies strictly outside of the support of $\rho_0$, and thus near its free boundary $w_{\infty}$ solves $\Delta w_{\infty} = (1-n_0)\chi_{\{w_{\infty}>0\}}$. It follows that, in the setting of Corollary~\ref{cor:1}(b), $\partial\{w_{\infty}>0\}$ is $C^{\infty}$ provided that $n_0$ is smooth.
Nevertheless, it remains open whether one can use this asymptotic regularity of the free boundary to show uniform regularity of $\partial\{w(\cdot,t)>0\}$ in time.

\appendix
\section{The Proof of Lemma~\ref{lem: comparison}}
\label{appendix: comparison proof}
The proof closely follows the argument in \cite[Sections 3 and 5]{PQV} (also see \cite[Proposition 5.1]{GKM}),  with some extra efforts for handling unboundedness of the spatial domain.
\begin{proof}

By definition, for any non-negative $\psi \in H^1(Q_T)$ such that $\psi(\cdot,T)=0$,
\beqo
\begin{split}
&\; \int_0^T\int_{\RR^d}\nabla \psi\cdot \nabla \big(p^0-p^1\big)- \big(\rho^0-\rho^1\big)\partial_t \psi \,dx \,dt \\
= &\;
\int_{\RR^d} \psi(x,0)\big(\rho_0^0(x)-\rho_1^0(x)\big)\,dx + \int_0^T\int_{\RR^d} \psi \big(f^0-f^1\big) \,dx\,dt
\leq 0.
\end{split}
\eeqo
Hence, for any $R>0$ and any non-negative $\psi \in H^1(\BR^d\times [0,T])$ supported in $B_R\times [0,T]$ such that $\psi(\cdot,T)=0$,
\beq
\int_0^T\int_{B_R}  (\rho^0-\rho^1)\partial_t \psi
+ (p^0-p^1)\Delta \psi \,dx\,dt
\geq
\int_0^T\int_{\partial B_R} \frac{\partial \psi}{\partial \nu} \cdot \big(p^0-p^1\big)\,d\sigma(x)\,dt.
\label{eqn: weak formulation inequality reformulated model problem}
\eeq

%
Define
\[
A = \f{\rho^0-\rho^1}{\rho^0-\rho^1+p^0-p^1},\quad B = \f{p^0-p^1}{\rho^0-\rho^1+p^0-p^1}.
\]
We define $A = 0$ whenever $\rho^0 = \rho^1$ (even when $p^0 =p^1$), and $B = 0$ whenever $p^0 = p^1$ (even when $\rho^0 = \rho^1$).
Since $p^0\in P_\infty (\rho^0)$ and $p^1\in P_\infty(\rho^0)$, we have $A,B\in [0,1]$.
Then \eqref{eqn: weak formulation inequality reformulated model problem} can be written as
\beq
\int_0^T\int_{B_R}  \big(\rho^0-\rho^1+p^0-p^1\big)\big(A\partial_t \psi
+ B\Delta \psi\big)\,dx\,dt
\geq
\int_0^T\int_{\partial B_R} \frac{\partial \psi}{\partial \nu} \cdot \big(p^0-p^1\big)\,d\sigma(x)\,dt.
\label{eqn: weak formulation inequality reformulated model problem}
\eeq

Let $G$ be a compactly supported non-negative smooth function in $Q_T$.
Assume it is supported in $B_{R_0}\times [0,T]$ for some $R_0>0$.
Take an arbitrary $R\geq 2R_0$.
As in \cite{PQV}, we introduce smooth positive approximations of $A$ and $B$, denoted by $A_{n,R}$ and $B_{n,R}$, such that for some universal $C_R>0$ that depends on $R$,
\[
A_{n,R}, B_{n,R}\in \left[\f1n, 1\right],\quad
\|A_{n,R}-A\|_{L^2(B_R\times [0,T])}\leq \f{C_R}n,\quad
\|B_{n,R}-B\|_{L^2(B_R\times [0,T])}\leq \f{C_R}n.
\]
In the view of \eqref{eqn: weak formulation inequality reformulated model problem}, let $\psi_{n,R}$ solve the (mollified) dual equation
\[
\begin{split}
&\partial_t \psi_{n,R} + \frac{B_{n,R}}{A_{n,R}}\Delta \psi_{n,R} = - G\mbox{ in }B_R\times [0,T],\\
&\psi_{n,R}|_{\partial B_R\times [0,T]} = 0,\quad \psi_{n,R}(\cdot,T) = 0\mbox{ in }B_R.
\end{split}
\]
$\psi_{n,R}$ is then a smooth function on $B_R\times [0,T]$.
Plugging it into \eqref{eqn: weak formulation inequality reformulated model problem} as the test function, we find that
\[
\int_0^T\int_{B_R}  \big(\rho^0-\rho^1\big)(-G)\,dx\,dt + \mathcal{E}_{n,R}
\geq
\int_0^T\int_{\partial B_R} \frac{\partial \psi_{n,R}}{\partial \nu} \cdot \big(p^0-p^1\big)\,d\sigma(x)\,dt,
\]
where
\[
\mathcal{E}_{n,R}:= \int_0^T\int_{B_R}  \big(\rho^0-\rho^1+p^0-p^1\big)\left(B-\f{AB_{n,R}}{A_{n,R}}\right)\Delta \psi_{n,R} \,dx\,dt.
\]
We can argue as in \cite{PQV,GKM} to show that $\psi_{n,R}$ is non-negative and uniformly bounded on $B_R\times [0,T]$, whose bound only depends on $G$ and $T$, but not on $n$ or $R$.
We can also prove that $\mathcal{E}_{n,R}\to 0$ as $n\to +\infty$.
Moreover,
\beq
\frac{\partial \psi_{n,R}}{\partial \nu} \leq C R^{-(d-1)}\mbox{ on }\partial B_R\times [0,T],
\label{eqn: bound on normal derivative model problem}
\eeq
where $C$ only depends on $d$, $T$, and $G$, but not on $n$ or $R$.
To prove \eqref{eqn: bound on normal derivative model problem}, we recall that $R\geq 2R_0$ and $\psi_{n,R}\leq C_*$, where $C_* = C_*( G,T)$.
Let $\tilde{\psi}_R$ solve
\[
\Delta \tilde{\psi}_{R} = 0\mbox{ on }B_R\backslash \overline{B_{R_0}},\quad \tilde{\psi}_{R}\big|_{\partial B_{R_0}} = C_*,\quad \tilde{\psi}_{R}\big|_{\partial B_R} = 0.
\]
Then we find
\[
\begin{split}
&\partial_t \big(\tilde{\psi}_R- \psi_{n,R}\big) + \frac{B_{n,R}}{A_{n,R}}\Delta \big(\tilde{\psi}_R- \psi_{n,R}\big)= 0 \mbox{ in } (B_R\backslash \overline{B_{R_0}})\times [0,T],\\
&\big(\tilde{\psi}_R- \psi_{n,R}\big)\big|_{\partial (B_R\backslash \overline{B_{R_0}})\times [0,T]} \geq 0,\quad \big(\tilde{\psi}_R- \psi_{n,R}\big) =0\mbox{ in }B_R\backslash \overline{B_{R_0}}.
\end{split}
\]
By the maximum principle, $\psi_{n,R}\leq \tilde{\psi}_R$ on $(B_R\backslash \overline{B_{R_0}})\times [0,T]$, and thus \eqref{eqn: bound on normal derivative model problem} follows.
In fact, when $d = 1,2$, the bound can be improved.

Combining the above estimates yields that, whenever $R\geq 2R_0$,
\[
\begin{split}
\int_0^T\int_{B_R} \big(\rho^0-\rho^1\big)(-G)\,dx\,dt
\geq &\; - CR^{-(d-1)} \int_0^T \int_{\partial B_R} \big|p^0-p^1\big|\,d\sigma(x)\,dt\\
\geq &\; - CR^{-\frac{d-1}{2}} \left(\int_0^T \int_{\partial B_R} \big|p^0-p^1\big|^2\,d\sigma(x)\,dt\right)^{\frac12},
\end{split}
\]
where $C$ only depends on $d$, $T$, and $G$.
By Definition \ref{def: weak solution p equation}, $p^i\in L^2(Q_T)$.
Sending $R\to +\infty$, we obtain that
\[
\begin{split}
\int_0^T\int_{\BR^d} \big(\rho^0-\rho^1\big)(-G)\,dx\,dt
\geq &\; - C \liminf_{R\to +\infty} R^{-\frac{d-1}{2}} \left(\int_0^T \int_{\partial B_R} \big|p^0-p^1\big|^2\,d\sigma(x)\,dt\right)^{\frac12} = 0.
\end{split}
\]
Since $G$ is an arbitrary compactly supported non-negative smooth function in $Q_T$, we conclude that $\rho^0 \leq \rho^1$ almost everywhere.
\end{proof}

\bibliography{tumor_refs}
\bibliographystyle{amsalpha}

\end{document}